\documentclass[reqno]{amsart}
\usepackage{amsmath,amssymb,amsthm,mathtools,mathrsfs}
\usepackage{microtype,booktabs,enumitem,needspace}
\usepackage{hyperref}
\hypersetup{colorlinks=true,linkcolor=black,citecolor=black,urlcolor=black,
	pdftitle={Subvariety intervals and limit varieties of zero extensions of mirror-word semirings},
	pdfsubject={Continuum intervals, axiomatic rank and limit subvarieties},
	pdfauthor={Ruijing Wang and Mengya Yue}}
\setlist[enumerate]{label=\textup{(\roman*)},leftmargin=2.5em,itemsep=2pt}
\newtheorem{theorem}{Theorem}[section]
\newtheorem{proposition}[theorem]{Proposition}
\newtheorem{lemma}[theorem]{Lemma}
\newtheorem{corollary}[theorem]{Corollary}
\theoremstyle{definition}

\theoremstyle{remark}
\newtheorem{remark}[theorem]{Remark}
\newcommand{\Var}{\mathsf V}
\newcommand{\Id}{\operatorname{Id}}
\newcommand{\Fac}{\operatorname{Fac}^{+}}
\newcommand{\mword}{\boldsymbol m}
\newcommand{\W}{\mathcal W}
\newcommand{\V}{\mathcal V}
\newcommand{\PX}{\mathcal P_X}
\title[Subvariety intervals of mirror-word semirings]{Subvariety intervals and limit varieties\\of zero extensions of mirror-word semirings}
\author{Ruijing Wang}
\author{Mengya Yue}
\address{School of Mathematics, Northwest University, Xi'an, 710127, Shaanxi, P.~R.~China}
\email{202521377@stumail.nwu.edu.cn}
\address{School of Mathematics, Northwest University, Xi'an, 710127, Shaanxi, P.~R.~China}
\email{myayue@yeah.net}
\date{}
\subjclass[2020]{Primary 16Y60; Secondary 08B15, 08B05, 05C60}
\keywords{Additively idempotent semiring, mirror word, zero extension,
	infinite axiomatic rank, subvariety interval, limit variety}
\begin{document}
	\begin{abstract}
		For the mirror word $\mword_n=a_1\cdots a_na_n\cdots a_1$, let
		$\V_n=\Var(S(\mword_n))$ and $\W_n=\Var(S(\mword_n)^0)$.
		We show that $[\W_n,\W_{n+1}]$ ($n\ge1$) and
		$[\V_n,\W_n]$ ($n\ge2$) have cardinality $2^{\aleph_0}$.
		Every variety in $[\W_2,\bigvee_{n\ge2}\W_n]$ has infinite
		axiomatic rank. The unique limit subvariety of $\W_1$ is
		$\Var(SR_6)$. For $n\ge2$ we construct a distinct directed-path
		limit variety contained in $\W_n$ but not in $\V_n$.
	\end{abstract}
	\maketitle

\section{Introduction}
A \emph{variety} is a class of algebras closed under taking
subalgebras, homomorphic images, and arbitrary direct products.
By Birkhoff's theorem, varieties are precisely the classes
defined by sets of identities~\cite{BurrisSankappanavar81}. Such a set is called an
\emph{equational basis}. A variety is \emph{finitely based} if it
admits a finite equational basis, and \emph{nonfinitely based}
otherwise. An algebra has the corresponding property when the
variety it generates does. The \emph{finite basis problem} asks
which algebras and varieties admit finite equational bases.
A variety is \emph{hereditarily finitely based} if all its
subvarieties are finitely based; it is a \emph{limit variety} if
it is nonfinitely based but all its proper subvarieties are
finitely based. Thus limit varieties mark the boundary between
finite and nonfinite basability in subvariety lattices.

An \emph{additively idempotent semiring}, or \emph{ai-semiring},
is an algebra $(S,+,\cdot)$ in which $(S,+)$ is a commutative
idempotent semigroup, $(S,\cdot)$ is a semigroup, and the
distributive identities
\[
 x(y+z)\approx xy+xz,\qquad (x+y)z\approx xz+yz
\]
hold. Its addition induces the natural order: for $a,b\in S$, the relation $a\le b$ holds if and only if
$a+b=b$.
This is a partial order compatible with multiplication, and
$a+b$ is the join of $a$ and $b$. In particular, ai-semirings
may be viewed as semilattice-ordered semigroups. Throughout
this paper we work in the binary signature $(+,\cdot)$,
without named constants. The notation $\mathbf{p}\preceq \mathbf{u}$ for an
absorption identity between \emph{terms} will be introduced
separately in Section~\ref{sec:prelim}; it is distinct from
the order $\le$ on elements of a semiring.

The general theory of semirings and its applications is surveyed
in~\cite{Golan92,DrosteKuichVogler09}. Early work on finite
ai-semirings includes the nonfinitely based example of
Dolinka~\cite{Dolinka07}, the varieties of order-two
ai-semirings classified by Shao--Ren~\cite{ShaoRen15},
and the commutative Burnside-type varieties considered by
Ren--Zhao~\cite{RenZhao16}. Systematic classifications of
four-element ai-semirings appear in
\cite{RenLiuZengChen25,YueRenZengShao25,RenLiuYueChen26}.
Further finite- and nonfinite-basis phenomena in specific
inverse and Brandt semirings are exhibited in
\cite{Volkov21,Shaprynskii24}.

The finite basis problem for ai-semirings has also been studied in
particular for flat semirings and for semirings obtained by
adjoining a zero. Jackson--Ren--Zhao~\cite{JRZ22} developed
flat-semiring methods for nonfinite basability, and
Wu--Ren--Zhao~\cite{WuRenZhao24} proved that $S_7^0$ is
nonfinitely based. Gao--Jackson--Ren--Zhao
\cite{GaoJacksonRenZhao25} further analyzed ai-semirings
related to $S_7$. Gao--Ren--Zhao~\cite{GaoRenZhaoFlat26}
subsequently obtained general finite- and nonfinite-basis
results for flat word semirings $S(W)$, while their work on
zero adjunction~\cite{GaoRenZhao26} established continuum-sized
intervals between varieties generated by $S$ and $S^0$.
Related examples in which adjoining a zero changes the finite
basis behavior occur in~\cite{YueRenGao26}.
For the \emph{flat semirings themselves}, rather than their
zero extensions, Wang--Wang~\cite{WangWang26} established
finite identity bases for arbitrary sets of linear words.
Gao--Zhou~\cite{GaoZhou26} obtained further finite bases and
continuum-sized subvariety intervals using interval semirings
of posets. These linear-word results do not, by themselves,
settle the corresponding problems for zero extensions of
words with repeated letters.

The study of limit varieties provides another perspective on
the finite basis problem. Ren--Jackson--Zhao--Lei
\cite{RenJacksonZhaoLei23} constructed limit varieties from
flat extensions of finite groups. Lyu--Ren--Yue
\cite{LyuRenYue26} proved that the six-element semiring
$SR_6$ generates a limit variety, and Shao--Ren--Gao
\cite{ShaoRenGao26} established the corresponding property
for a different six-element semiring $TR_6$. These examples
motivate the search for limit subvarieties inside varieties
generated by zero extensions of flat word semirings.

We consider the \emph{mirror words}
\[
 \mword_n=a_1\cdots a_na_n\cdots a_1\qquad(n\ge1),
\]
where $a_1,a_2,\ldots$ are distinct letters. Let
$A_n=S(\mword_n)$ be the flat semiring of their nonempty
contiguous factors, with absorbing top element $\infty$,
and let $\widehat A_n=A_n^0$ be obtained by adjoining a new
additive identity and multiplicative zero $0\ne\infty$.
We write
\[
 \V_n=\Var(A_n),\qquad
 \W_n=\Var(\widehat A_n),\qquad
 \W_\infty=\bigvee_{n\ge2}\W_n.
\]
Since the mirror words contain repeated letters, the
associated zero extensions have features not covered by the
linear-word constructions.

Shao--Gao~\cite[Theorem~1.1]{ShaoGao26} determined the
finite basis property of $S(W)^0$ for every nonempty set of
nonempty words $W$. In particular, their theorem implies that
$\widehat A_n$ is finitely based precisely when $n=1$.
Their Corollary~4.8 also yields infinite axiomatic rank for
each $\W_n$ with $n\ge2$. More substantially, Shao--Gao
\cite[Theorems~1.3 and~14.2]{ShaoGao26} proved that
\emph{every} variety between a common limit subvariety
$\Var(B_{15})$ and the join of their linear-word zero-extension
varieties has infinite axiomatic rank. Thus interval-wide
infinite axiomatic rank is already known for zero extensions
of flat word semirings. A related uniform obstruction for
power semirings of finite groups appears in
\cite{ShaoGaoPower26}. Our result concerns a \emph{different}
interval. Indeed, $x^2y\approx xy^2$ holds in every
linear-word zero extension but fails in $\widehat A_2$
(take $x=b$, $y=a$ in $S(abba)^0$), so $\W_2$ is not
contained in their linear-word upper variety. We prove that
every variety in $[\W_2,\W_\infty]$ has infinite
axiomatic rank, using finite local quotients and a six-element
classification of evaluation kernels rather than the
linear-word argument of Shao--Gao.

We also investigate intervals of subvarieties. For every
$n\ge1$, we construct an order embedding of the power-set
order into $[\W_n,\W_{n+1}]$ using finite context quotients
and graph-separating identities. For $n\ge2$, we apply the
cyclic semirings and identities of Shao--Gao
\cite[Sections~10--11]{ShaoGao26}, after checking the necessary
endpoint conditions, to obtain a continuum-sized interval
$[\V_n,\W_n]$. These methods also imply that the chain
$\W_1\subsetneq\W_2\subsetneq\cdots$ is strict and that
$\W_\infty$ is locally finite.

Finally, we study limit subvarieties. We show that
$\Var(SR_6)$ is the unique limit subvariety of $\W_1$.
For $n\ge2$, we construct from an infinite directed path a
new limit variety $\mathcal U=\Var(T_\infty)$ such that
$\mathcal U\subseteq\W_n$ but $\mathcal U\nsubseteq\V_n$.
Moreover,
\[
 \mathcal U\cap\Var(B_{15})=\Var(S(abc)),
\]
and $\mathcal U$ is incomparable with both limit varieties
constructed by Shao--Gao
\cite[Theorems~1.2 and~22.1]{ShaoGao26}. Together with the
known acyclic-graph limit variety, these results yield at
least five pairwise incomparable limit subvarieties of
$\W_n$ for each $n\ge2$.

\begin{theorem}[Shao--Gao]\label{thm:main}
For every integer $n\ge1$, the semiring $\widehat A_n$ is
finitely based if and only if $n=1$.
\end{theorem}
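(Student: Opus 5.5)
The statement is attributed to Shao--Gao, and the most economical route is to derive it from their general classification \cite[Theorem~1.1]{ShaoGao26}, which decides the finite basis property of $S(W)^0$ for every nonempty set $W$ of nonempty words. The plan is therefore to check the combinatorial hypotheses of that theorem for $W=\{\mword_n\}$. For $n=1$ the word $\mword_1=a_1a_1$ is a square of a single letter. For $n\ge2$ the word $\mword_n$ contains the factor $a_{n-1}a_na_na_{n-1}$, so its letters are repeated in a non-adjacent, nested pattern. Checking that these two cases land respectively in the finitely based and the nonfinitely based branches of the Shao--Gao criterion then yields the theorem. I would write out this verification explicitly, listing the factors of $\mword_n$ and the relevant occurrence patterns that the criterion refers to.

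For a self-contained argument I would proceed as follows. For $n=1$, $\widehat A_1=S(aa)^0$ has only four elements: $0,a,aa,\infty$. I would write down a finite candidate basis. It consists of the ai-semiring axioms and the identity $x^3\approx x^4$. It also includes absorption-type identities that force $\infty$ whenever a letter occurs three times or two distinct letters occur. The basis is completed by identities controlling the role of $0$ through sums: each term $\mathbf u$ is compared with the sums of the words in it, via their content and multiplicity data. I would then show that every identity of $\widehat A_1$ is derivable from this basis, using a normal form for sums of words determined by evaluations into $\widehat A_1$. Since nonzero values are nonzero only when a word uses a single variable at most twice, the evaluation kernels are very limited, and the completeness check reduces to a finite case analysis.

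For $n\ge2$ I would prove nonfinite basability by the standard ``for each $k$'' scheme. For every $k$ I would construct an identity $\mathbf u_k\approx\mathbf u_k+\mathbf p_k$ valid in $\widehat A_n$, together with a finite ai-semiring $B_k$ satisfying all identities of $\widehat A_n$ in fewer than $k$ variables but violating $\mathbf u_k\approx\mathbf u_k+\mathbf p_k$. The natural candidates come from a cyclic or chain pattern of length $k$ built from copies of the nested factor $xyyx$, as in the cyclic semirings of \cite[Sections~10--11]{ShaoGao26}. The word $\mathbf p_k$ should be a cyclic rotation whose value cannot be reached by any evaluation in fewer variables.

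The main obstacle is the verification that $B_k$ satisfies all $(<k)$-variable identities of $\widehat A_n$. The key step I would attempt here is to show that $B_k$ lies in the variety generated by the $(k-1)$-generated subalgebras of a power of $\widehat A_n$. An equivalent route is to show that every $(k-1)$-generated subsemiring of $B_k$ embeds in a quotient of a product of copies of $\widehat A_n$ and $A_n$. Breaking the cycle at any one position should make the remaining pattern realizable inside $S(\mword_n)^0$.
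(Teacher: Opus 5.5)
Your first paragraph follows the paper's route: the paper proves this statement only by citing \cite[Theorem~1.1]{ShaoGao26}. What your version lacks is the criterion itself, and the features you propose to check are not the ones it uses. Shao--Gao's theorem says that $S(W)^0$ is finitely based if and only if every word in $W$ has length at most two. So the entire verification is $\ell(\mword_n)=2n\le 2\iff n=1$. No analysis of occurrence patterns is needed, and the nested factor $a_{n-1}a_na_na_{n-1}$ is not what causes nonfinite basability: by the same criterion, $S(abc)^0$, whose word has no repeated letter, is already nonfinitely based. Once you state the criterion, this route is complete.

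The self-contained sketch should be dropped or repaired, since both halves contain errors.

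For $n=1$, the word $xy$ takes the non-top value $aa$ at $x=y=a$, so words with two distinct letters cannot be forced to $\infty$. Only words of length at least three collapse, namely to $x^3\prod_{z\in c(\mathbf w)\setminus\{x\}}z$. The word $xy$ survives as a normal form, which is why a basis needs identities such as $xy\approx yx$ and $x^2+y^2\approx x^2+y^2+xy$ (compare Proposition~\ref{lem:six-valid}).

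For $n\ge2$, your first formulation of the key step would put $B_k$ itself in $\W_n$, forcing $B_k\models\mathbf u_k\approx\mathbf u_k+\mathbf p_k$. Only the second formulation, that each $(k-1)$-generated subsemiring of $B_k$ lies in $\W_n$, is the right condition, and the two are not equivalent. Modelling the identities on the cyclic semirings of Shao--Gao is also misdirected. By Lemma~\ref{lem:SG-cyclic-endpoints}, $E_p\in\W_n$, and $E_p$ refutes $\varepsilon_p$, so the cyclic identities of that construction are not identities of $\widehat A_n$ at all.

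The paper's independent argument (Theorem~\ref{ind:thm:main}) instead uses an open chain, the identity $\delta_m$. It is valid because the rigidity Lemma~\ref{ind:lem:center} forces $b_0=t_n$ and $b_m=t_n^2$ at the two ends of a chain that equates them. The substantive step is showing that bounded-variable identities cannot derive $\delta_m$ in the local quotients $D_{k,X}$, and that is exactly what your sketch leaves open.
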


\Needspace{7\baselineskip}
\begin{theorem}\label{thm:overview}
The inclusions $\W_1\subsetneq\W_2\subsetneq\cdots$ are strict.
Each interval $[\W_n,\W_{n+1}]$, $n\ge1$, contains an
order-embedded copy of $(\mathcal P(\mathbb N),\subseteq)$ and has
cardinality $2^{\aleph_0}$. The same cardinality and power-set
embedding hold for $[\V_n,\W_n]$ when $n\ge2$, by applying
the cyclic construction of Shao--Gao; also
$|[\V_n,\W_\infty]|=2^{\aleph_0}$.
Every $\mathcal K$ with $\W_2\subseteq\mathcal K\subseteq\W_\infty$
has infinite axiomatic rank.
The variety $\Var(SR_6)$ is the unique limit subvariety of $\W_1$.
For every $n\ge2$, a new limit variety
$\mathcal U=\Var(T_\infty)$ lies in $\W_n$ but not in $\V_n$;
it is incomparable with the two limit varieties of Shao--Gao.
In particular, $\W_n$ contains at least five pairwise
incomparable limit subvarieties.
\end{theorem}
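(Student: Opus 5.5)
Theorem~\ref{thm:overview} is a summary of several independent results, so the plan is to prove each clause separately, in the order stated.

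\emph{Strictness and the intervals $[\W_n,\W_{n+1}]$.} First, $\W_n\subseteq\W_{n+1}$. I will realize $\widehat A_n$ inside $\widehat A_{n+1}$ as the quotient of the subsemiring generated by the factors of $a_2\cdots a_{n+1}a_{n+1}\cdots a_2$, together with $0$ and $\infty$, collapsing everything outside this subword to $\infty$. Strictness will come from an identity in the letters $x_1,\dots,x_n$ that is satisfied by $\widehat A_n$ but fails in $\widehat A_{n+1}$ under the assignment $x_i\mapsto a_i$: for instance, an identity stating that an evaluation of $x_1\cdots x_{n+1}x_{n+1}\cdots x_1$ equals a sum absorbing it.

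For the continuum claim, I will attach to each graph $G$ in a fixed infinite family of pairwise incomparable finite graphs (for example cycles, or directed paths with decorations) a finite context quotient $Q_G$ of a subsemiring of a power of $\widehat A_{n+1}$. I will also attach an identity $\sigma_G$ that holds in $\widehat A_n$ and in $Q_H$ for $H\ne G$, but fails in $Q_G$. The map
\[
N\mapsto \W_n\vee\Var\{Q_G : G\in N\}
\]
is then order-embedding: if $G\notin N$, then $\sigma_G$ holds in the join but fails in the image of $N\cup\{G\}$. This gives $2^{\aleph_0}$ as a lower bound. The upper bound is automatic, since there are only countably many identities.

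For $[\V_n,\W_n]$ with $n\ge2$, I will check that $A_n$ satisfies the endpoint hypotheses of the Shao--Gao cyclic construction (Sections~10--11 of \cite{ShaoGao26}) and that their cyclic semirings lie in $\W_n$. Their embedding then applies directly. The claim about $[\V_n,\W_\infty]$ follows from this together with local finiteness of $\W_\infty$, which I will get from a uniform bound on the size of the $k$-generated free objects.

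\emph{Infinite axiomatic rank.} This is the step I expect to be hardest. For each $k$, I will construct an identity $\tau_k$ in more than $k$ variables that holds in $\W_\infty$, together with a finite semiring $B_k\in\W_2$ in which $\tau_k$ fails while every $k$-variable consequence of $\Id(\W_\infty)$ holds. Take any $\mathcal K$ between $\W_2$ and $\W_\infty$, and suppose it had a basis in $k$ variables. Then $B_k$ would satisfy that basis, forcing $B_k\in\mathcal K$. But $B_k$ fails $\tau_k$ while $\tau_k$ holds in $\W_\infty\supseteq\mathcal K$, a contradiction.

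To verify that every $k$-variable identity of $\W_\infty$ holds in $B_k$, I will take $B_k$ to be a finite local quotient, for instance generated by an evaluation that is nonzero along a long cycle. Every $k$-generated subsemiring of $B_k$ then factors through an evaluation into some $\widehat A_m$. I will classify the possible evaluation kernels into six types and check each type.

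\emph{Limit varieties.} For the uniqueness of $\Var(SR_6)$ in $\W_1$, I will first show $SR_6\in\W_1$. Next I will describe $\Id(\W_1)$ through its finite basis and locate the finitely many critical identities. Then I will show that every subvariety of $\W_1$ not containing $SR_6$ satisfies an identity that, together with the basis of $\W_1$, yields a finite basis; that is, such a subvariety is hereditarily finitely based. Combined with the result of \cite{LyuRenYue26}, this gives uniqueness.

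For $\mathcal U=\Var(T_\infty)$, I will take $T_\infty$ to be the path semiring of an infinite directed path, with $0$ and $\infty$ adjoined. I will embed it into an ultrapower or product of copies of $\widehat A_2$, so that $\mathcal U\subseteq\W_2\subseteq\W_n$. To see $\mathcal U\nsubseteq\V_n$, I will exhibit an identity of $A_n$ involving $0$-behavior (an identity with an unbalanced variable) that fails in $T_\infty$. To show $\mathcal U$ is a limit variety, I will prove that it is nonfinitely based, using paths of growing length as $k$-variable test algebras, and that every proper subvariety omits some long path and is therefore finitely based. The computation $\mathcal U\cap\Var(B_{15})=\Var(S(abc))$ is done by comparing identities. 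Incomparability with the two Shao--Gao limit varieties follows from separating identities in each direction. Together with $\Var(B_{15})$ and the acyclic-graph variety, these give five pairwise incomparable limit subvarieties of $\W_n$.
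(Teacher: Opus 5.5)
The chain, interval and cyclic-semiring parts follow the paper's outline. (If your graph family is undirected cycles, note that they are not a homomorphism antichain, since longer odd cycles map onto shorter ones; this is why the paper chooses graphs with increasing chromatic number and odd girth.) The real problems are elsewhere. First, your reduction for infinite axiomatic rank has its endpoints reversed, and as written the required $B_k$ cannot exist: you ask for $B_k\in\W_2$ failing an identity $\tau_k$ valid in $\W_\infty\supseteq\W_2$. Replacing $B_k\in\W_2$ by $B_k\notin\W_\infty$ does not rescue it. Satisfying the $k$-variable identities of $\W_\infty$ only excludes a $k$-variable basis for $\W_\infty$ itself. A $k$-variable basis $\Sigma$ of an intermediate $\mathcal K$ lies in $\Id(\mathcal K)$, which may be strictly larger than $\Id(\W_\infty)$, so ``$B_k$ would satisfy that basis'' does not follow. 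What you need is a finite semiring that satisfies $\Id_{\le k}(\widehat A_2)$, the $k$-variable identities of the \emph{bottom} endpoint, yet lies outside the \emph{top} endpoint $\W_\infty$. Then $\Sigma\subseteq\Id(\mathcal K)\subseteq\Id(\W_2)$ gives the contradiction; this is Theorem~\ref{ind:thm:finite}. Your proposed verification, via $k$-generated subsemirings factoring through some $\widehat A_m$, again targets $\W_\infty$. It also misplaces the difficulty. In the paper, $D_k=\PX/\Theta_{k,X}$ with $X=Z_{m(k)}$ satisfies $\Id_{\le k}(\widehat A_2)$ by construction. The six-element class $[\mathbf Q_m]_{\sim_m}$, together with the short-word bound of Lemma~\ref{ind:lem:sparse} and Theorem~\ref{ind:thm:bound} (using $|Z_m|>\max\{4k,16\}$), is what shows that $\Theta_{k,X}$ does \emph{not} identify $\mathbf Q_m$ with $\mathbf Q_m+\mathbf K_m$, so that $D_k$ still fails $\delta_m$.

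For $\mathcal U$ you work with the wrong algebra, and several steps cannot be carried out.
\begin{itemize}
\item The paper's $T_\infty$ is flat and zero-free, with elements $\infty,\omega,e,u_i,p_i,q_i$ and nonabsorbing products $u_ie=p_i$, $eu_i=q_i$, $p_iu_{i+1}=u_iq_{i+1}=\omega$. Adjoining a $0$ destroys the limit property: $T_\infty\le T_\infty^0$, and $x^3\approx y^3$ holds in $T_\infty$ but fails in $T_\infty^0$. So $\Var(T_\infty^0)$ would have the proper nonfinitely based subvariety $\Var(T_\infty)$.
\item For the zero-free $T_\infty$ there is no $0$-behavior to exploit against $\V_n$. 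If a variable occurs on only one side of an identity valid in the flat semiring $A_n$, substituting $\infty$ for it shows both sides are identically $\infty$ on $A_n$. The folding $u_i\mapsto a$, $e\mapsto bb$, $p_i\mapsto abb$, $q_i\mapsto bba$, $\omega\mapsto abba$ preserves every nonabsorbing product, so any term with a non-top value on $T_\infty$ has one on $A_2\le A_n$. Hence every such identity also holds in $T_\infty$. A balanced identity is required, such as \eqref{eq:path-separator}, which holds in $A_n$ by length comparison and cancellation but fails in $T_4$.
\item The same word equations rule out your embedding. A homomorphism $\varphi\colon T_4\to\widehat A_2$ with $\varphi(\omega)\ne\varphi(\infty)$ must send $\infty$ to the old top and $e,u_1,\dots,u_4$ to old factors with $U_1EU_2=U_2EU_3=U_3EU_4$. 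This forces $U_1=U_3$, whereas $u_1+u_3=\infty$ requires their sum to be the top. Thus $T_4\notin\mathrm{ISP}(\widehat A_2)$, and ultrapowers of the finite $\widehat A_2$ add nothing. You need a genuine homomorphic image of a subsemiring of $\widehat A_2^{\,1+k+\binom{k}{2}}$, as in Proposition~\ref{lem:path-embedding}.
\item If the paths $T_k$ are meant as countermodels to finite basability, they cannot be, because they lie in $\mathcal U$. The paper uses the directed cycles $C_m$, which fail $\chi_m$ although their $r$-generated subsemirings embed in $T_\infty$.
\item ``Omits a long path, hence finitely based'' skips the whole mechanism. That mechanism is the finitely based, locally finite ambient variety $\mathcal F$, the structure of its finite subdirectly irreducible members (Proposition~\ref{lem:path-components}), and the finite subvariety lattice of $\mathcal D_N$ (Proposition~\ref{lem:path-DN-finite-lattice}).
\end{itemize}
The $\W_1$ step has a similar hole. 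An identity $\varepsilon$ of $\mathcal K$ only places $\mathcal K$ inside $\W_1[\varepsilon]$. What must be proved is that the four varieties $\W_1[\lambda]$, $\W_1[\sigma]$, $\W_1[\rho]$, $\W_1[\eta]$ of Lemma~\ref{lem:limit-fold} are hereditarily finitely based, which is where the Dickson and Higman well-quasi-order arguments enter.
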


Section~\ref{sec:prelim} gives the notation and basic properties.
Section~\ref{sec:nonfinite} treats the uniform axiomatic-rank
obstruction, and Section~\ref{sec:variety} establishes the
continuum intervals and local finiteness.
Sections~\ref{sec:unique-limit} and~\ref{sec:path-limit-section}
prove the limit-subvariety results.

\section{Preliminaries and mirror-word zero extensions}\label{sec:prelim}

For a nonempty word $\mathbf{w}$, let $\Fac(\mathbf{w})$ be the set of its
nonempty contiguous factors. As in the standard construction
\cite[Lemma~2.5]{JRZ22}, the \emph{flat word semiring} $S(\mathbf{w})$
has underlying set $\Fac(\mathbf{w})\cup\{\infty\}$ and operations
\[
 \mathbf{u}+\mathbf{v}=\begin{cases}\mathbf{u},&\mathbf{u}=\mathbf{v},\\\infty,&\mathbf{u}\ne \mathbf{v},\end{cases}
 \qquad
 \mathbf{u}\cdot \mathbf{v}=\begin{cases}\mathbf{u}\mathbf{v},&\mathbf{u}\mathbf{v}\in\Fac(\mathbf{w}),\\\infty,&\text{otherwise},\end{cases}
\]
Here $\infty$ absorbs both operations. The factors are words in
the free semigroup, so no commutativity is imposed; in particular,
$S(\mathbf{w})$ is distinguished from the commutative word semiring
$S_c(\mathbf{w})$. For an ai-semiring $S$, its \emph{zero extension} $S^0$
is obtained by adjoining a new element $0$ such that
\[
 0+s=s+0=s,\qquad 0s=s0=0\qquad(s\in S^0).
\]
The new element $0$ is distinct from every element of $S$,
including any multiplicative zero already in $S$.

\begin{lemma}\label{lem:basic}
For every $n\ge1$, the mirror-word zero extension satisfies
$|\widehat A_n|=2n^2+2$. Moreover, every $x\in A_n$ satisfies
$x^3=\infty$, whereas $0^3=0$ in $\widehat A_n$.
The two-element distributive lattice semiring
$D_2=(\{0,1\},\vee,\wedge)$ is a homomorphic image of
$\widehat A_n$.
\end{lemma}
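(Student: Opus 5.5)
Three claims need to be checked: the size of $\widehat A_n$, the cube behaviour, and a homomorphism onto $D_2$.

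\emph{Counting.} Nonempty contiguous factors of $\mword_n=a_1\cdots a_na_n\cdots a_1$, a word of length $2n$, are counted as distinct words. Any factor is determined by its starting position $i$ and length $\ell$, but different positions may give the same word. Since $\mword_n$ is a palindrome, the factor at positions $[i,j]$ reversed equals the factor at $[2n+1-j,2n+1-i]$. The plan is to classify factors as follows.
\begin{itemize}
\item Factors lying entirely in the increasing half $a_1\cdots a_n$ are the words $a_i a_{i+1}\cdots a_j$ with $1\le i\le j\le n$. There are $n(n+1)/2$ of them.
\item Factors lying entirely in the decreasing half are their reverses. These are new words except for the single letters, so they contribute $n(n+1)/2-n$.
\item Factors crossing the middle contain $a_na_n$. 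They have the form $a_i\cdots a_na_n\cdots a_j$ for $i,j\in\{1,\dots,n\}$, giving $n^2$ distinct words, since $(i,j)$ is recovered from the word.
\end{itemize}
The total is $n(n+1)-n+n^2=2n^2$. To confirm distinctness across the three classes: words in the first class have strictly increasing indices, those in the second strictly decreasing, and those in the third contain the repeated letter $a_na_n$. Adding $\infty$ and $0$ gives $|\widehat A_n|=2n^2+2$.

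\emph{Cubes.} For $x\in\Fac(\mword_n)$, the word $x^3$ has length at least $3$. If $x=a_i$, then $a_ia_ia_i$ is not a factor, because each letter occurs at most twice and adjacent occurrences exist only for $a_n$, which appears exactly twice. If $|x|\ge2$, then $x^2$ already fails to be a factor unless $x$ overlaps itself suitably; in any case $|x^3|\ge 3|x|$, and each letter of $x$ would occur three times, exceeding the maximum of two. Hence $x^3=\infty$. Also $\infty^3=\infty$, and $0^3=0$ by definition.

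\emph{Homomorphism onto $D_2$.} Define $\varphi(0)=0$ and $\varphi(s)=1$ for every $s\in A_n$, including $\infty$.
\begin{itemize}
\item Addition: $0+s=s$ maps to $0\vee\varphi(s)$. A sum of two elements of $A_n$ lies in $A_n$, so it maps to $1=1\vee1$.
\item Multiplication: a product involving $0$ is $0$, and $\varphi$ of it is $0=0\wedge\cdot$. A product of two elements of $A_n$ is a factor or $\infty$, hence in $A_n$, and maps to $1=1\wedge1$.
\end{itemize}
Thus $\varphi$ is a surjective homomorphism, i.e.\ $A_n$ is an ideal-type congruence class.

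The only step needing care is the distinctness bookkeeping in the count. Everything else is immediate.
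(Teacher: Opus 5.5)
Your proof is correct. The cube argument and the homomorphism onto $D_2$ are the same as the paper's. Each letter of $\mword_n$ occurs exactly twice, so $x^3$, which contains every letter of $x$ three times, cannot be a factor. For $D_2$, you send $A_n$ to $1$ and $0$ to $0$.

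The count is where you take a different route. The paper observes that the $2n-1$ length-two factors $a_1a_2,\ldots,a_{n-1}a_n,a_na_n,a_na_{n-1},\ldots,a_2a_1$ are pairwise distinct. Hence every factor of length at least two is pinned down by its first two letters and occurs at exactly one position. The count is then just the number of position intervals of length at least two, $\binom{2n}{2}$, plus the $n$ letters, with no comparison between classes needed. Your partition into increasing-half, decreasing-half and centre-crossing factors needs extra bookkeeping: the overlap of the single letters (your subtraction of $n$) and distinctness across classes. You handle both correctly. In exchange you get an explicit description of $\Fac(\mword_n)$, for instance that the factors with a repeated letter are exactly the $n^2$ words $a_i\cdots a_na_n\cdots a_j$. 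This is the kind of positional information used later (compare Lemma~\ref{ind:lem:center}). The paper's argument instead yields the unique-occurrence property of long factors directly.

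Two cosmetic points, neither of which affects your direct verification:
\begin{itemize}
\item The aside about $x^2$ ``overlapping itself'' is unnecessary.
\item The closing remark that $A_n$ is an ``ideal-type'' class is inaccurate: $0\cdot s=0\notin A_n$ for $s\in A_n$, so $A_n$ is not an ideal of $\widehat A_n$.
\end{itemize}
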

\begin{proof}
The adjacent factors of length two in $\mword_n$ are pairwise
distinct. Consequently, every factor of length at least two
occurs exactly once, while the factors of length one are the $n$
distinct letters. Thus the number of nonempty factors is
$\binom{2n}{2}+n=2n^2$. Adjoining $\infty$ and $0$ gives the
asserted cardinality.
Since each letter occurs exactly twice in $\mword_n$, the cube
of any nonempty factor cannot be a factor; the same identity
$x^3=\infty$ holds when $x=\infty$.
Finally, sending the adjoined $0$ to $0$ and every element of
$A_n$ to $1$ gives a surjective semiring homomorphism onto $D_2$.
The same homomorphism exists for the zero extension of any
nonempty ai-semiring.
\end{proof}

Let $X$ be a countably infinite set of variables and $X^+$ the
free semigroup on $X$. By distributivity and additive
idempotence, an ai-semiring term may be identified with a finite
nonempty set of words in $X^+$; see
\cite[Theorem~2.5]{KurilPolak05}.
We use bold letters for formal words and terms, and ordinary italic letters for variables and elements of semirings.
We write $\mathbf{u}=\mathbf{u}_1+\cdots+\mathbf{u}_r$ for the term corresponding to
$\{\mathbf{u}_1,\ldots,\mathbf{u}_r\}$, and regard its word summands as members
of $\mathbf{u}$. For a word $\mathbf{p}$, let $c(\mathbf{p})$ denote its \emph{content}
and $\ell(\mathbf{p})$ its \emph{length}.
For a term $\mathbf{u}$, put $c(\mathbf{u})=\bigcup_{\mathbf{p}\in \mathbf{u}}c(\mathbf{p})$.
For elements $a,b$ of an ai-semiring, the \emph{natural order}
is defined by $a\le b$ if and only if $a+b=b$.
Following the identity notation of~\cite[Section~2]{ShaoRenGao26},
for ai-semiring terms $\mathbf{p},\mathbf{u}$ write $\mathbf{p}\preceq \mathbf{u}$ (or, equivalently,
$\mathbf{u}\succeq \mathbf{p}$) for the \emph{absorption identity}
$\mathbf{u}\approx \mathbf{u}+\mathbf{p}$. In particular, $S\models \mathbf{p}\preceq \mathbf{u}$
means that $\varphi(\mathbf{p})\le\varphi(\mathbf{u})$ in the natural order of
$S$ for every evaluation $\varphi$. Thus $\preceq$ denotes an
identity between formal terms, whereas $\le$ compares elements
of a semiring. Every identity between finite sums of words is
equivalent to its word-absorption identities; it therefore suffices
to consider $\mathbf{q}\preceq \mathbf{u}$ with $\mathbf{q}\in X^+$.

For a word $\mathbf{q}\in X^+$ and a term $\mathbf{u}$, following
Gao--Ren--Zhao~\cite[Proposition~2.1]{GaoRenZhao26}
and Shao--Gao~\cite[Eq.~(2)]{ShaoGao26}, put
\[
 D_\mathbf{q}(\mathbf{u})=\{\mathbf{p}\in \mathbf{u}:c(\mathbf{p})\subseteq c(\mathbf{q})\}.
\]
The set $D_\mathbf{q}(\mathbf{u})$ may be empty. In expressions involving
$\mathbf{q}\preceq D_\mathbf{q}(\mathbf{u})$, nonemptiness is explicitly required, since
an empty sum is not a term in the present binary signature.

The absorption criterion for zero adjunction gives a convenient
test for arbitrary term identities by applying it to each word
summand on both sides.
\begin{lemma}[Identity criterion for zero extensions]\label{lem:restriction}
Let $S$ be an ai-semiring, and let $\mathbf{u},\mathbf{v}$ be ai-semiring terms.
Then $S^0\models \mathbf{u}\approx \mathbf{v}$ if and only if
\[
\begin{aligned}
&D_\mathbf{p}(\mathbf{v})\ne\varnothing\ \text{and}\ S\models \mathbf{p}\preceq D_\mathbf{p}(\mathbf{v})
 &&\text{for every }\mathbf{p}\in \mathbf{u},\\
&D_\mathbf{q}(\mathbf{u})\ne\varnothing\ \text{and}\ S\models \mathbf{q}\preceq D_\mathbf{q}(\mathbf{u})
 &&\text{for every }\mathbf{q}\in \mathbf{v}.
\end{aligned}
\]
\end{lemma}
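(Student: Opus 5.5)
We reduce to word summands and then analyse evaluations into $S^0$ according to which variables are sent to $0$. Since $S^0$ is an ai-semiring, $S^0\models\mathbf u\approx\mathbf v$ holds if and only if $S^0\models\mathbf p\preceq\mathbf v$ for every $\mathbf p\in\mathbf u$ and $S^0\models\mathbf q\preceq\mathbf u$ for every $\mathbf q\in\mathbf v$. By symmetry it therefore suffices to prove the following claim for a word $\mathbf q$ and a term $\mathbf u$: $S^0\models\mathbf q\preceq\mathbf u$ if and only if $D_\mathbf q(\mathbf u)\ne\varnothing$ and $S\models\mathbf q\preceq D_\mathbf q(\mathbf u)$.

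For necessity, suppose first that $D_\mathbf q(\mathbf u)=\varnothing$. Then every $\mathbf p\in\mathbf u$ contains a variable outside $c(\mathbf q)$. Take any $s\in S$ (note $S$ is nonempty). Evaluate every variable of $c(\mathbf q)$ to $s$ and every other variable to $0$. Then every summand of $\mathbf u$ evaluates to $0$, so $\mathbf u\mapsto 0$, while $\mathbf q\mapsto s^{\ell(\mathbf q)}\in S$. Since $0$ is the additive identity, the absorption $0+s^{\ell(\mathbf q)}=0$ would force $s^{\ell(\mathbf q)}=0$. This is impossible because $S$ is a subsemiring of $S^0$ not containing $0$.

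So $D_\mathbf q(\mathbf u)\ne\varnothing$. Now take any evaluation $\varphi$ into $S$ of the variables in $c(\mathbf q)$, and extend it by sending all remaining variables to $0$. Summands outside $D_\mathbf q(\mathbf u)$ vanish, and $0$ is neutral for $+$. Hence the absorption in $S^0$ reads $\varphi(D_\mathbf q(\mathbf u))+\varphi(\mathbf q)=\varphi(D_\mathbf q(\mathbf u))$, and this computation takes place entirely inside $S$. Variables of $D_\mathbf q(\mathbf u)$ all lie in $c(\mathbf q)$, so this covers all evaluations relevant to $S\models\mathbf q\preceq D_\mathbf q(\mathbf u)$.

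For sufficiency, let $\varphi$ be any evaluation into $S^0$. If some variable of $\mathbf q$ is sent to $0$, then $\varphi(\mathbf q)=0$, and $0\le$ everything, so the absorption holds trivially. Otherwise $\varphi$ maps $c(\mathbf q)$ into $S$, and so $\varphi$ maps every summand of $D_\mathbf q(\mathbf u)$ into $S$. Every other summand of $\mathbf u$ evaluates either into $S$ or to $0$. Thus $\varphi(\mathbf u)=\varphi(D_\mathbf q(\mathbf u))+t$, where $t$ is either $0$ or an element of $S$; if all extra summands vanish, $t=0$ simply drops out. The hypothesis gives $\varphi(\mathbf q)\le\varphi(D_\mathbf q(\mathbf u))$. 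By monotonicity of $+$ in the natural order, $\varphi(\mathbf q)\le\varphi(D_\mathbf q(\mathbf u))+t=\varphi(\mathbf u)$.

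The only delicate point is the case $D_\mathbf q(\mathbf u)=\varnothing$. There one must use that $0\notin S$ and that $S$ is closed under multiplication, so $s^{\ell(\mathbf q)}\ne0$. The remaining steps are routine bookkeeping with the neutrality and absorption properties of the adjoined $0$.
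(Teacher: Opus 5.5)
Your proposal is correct and follows essentially the same route as the paper: reduce the identity to its word-absorption identities, then prove the zero-adjunction criterion of Proposition~\ref{cor:absorption}. For that criterion you argue as the paper does, sending every variable outside $c(\mathbf q)$ to $0$ for necessity and splitting on whether some variable of $\mathbf q$ takes the value $0$ for sufficiency. Your explicit treatment of the case $D_\mathbf q(\mathbf u)=\varnothing$, via $s^{\ell(\mathbf q)}\in S$ and $0\notin S$, just spells out the paper's remark that some word of $\mathbf u$ must survive.
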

\begin{proof}
An identity $\mathbf{u}\approx \mathbf{v}$ is equivalent to the absorptions
$\mathbf{p}\preceq \mathbf{v}$ for all $\mathbf{p}\in \mathbf{u}$ and $\mathbf{q}\preceq \mathbf{u}$ for all
$\mathbf{q}\in \mathbf{v}$. Applying the zero-adjunction absorption criterion
of Wu--Ren--Zhao~\cite[Proposition~1.5]{WuRenZhao24} (see
also Gao--Ren--Zhao~\cite[Proposition~2.1]{GaoRenZhao26})
to each of these absorptions gives precisely the stated
conditions. The criterion and its proof are recalled below.
\end{proof}

The following word-absorption criterion is an established
result of Wu--Ren--Zhao~\cite[Proposition~1.5]{WuRenZhao24}
and Gao--Ren--Zhao~\cite[Proposition~2.1]{GaoRenZhao26};
see also Shao--Gao~\cite[Lemma~4.5]{ShaoGao26} for flat
word semirings. We include a short proof for completeness.
\begin{proposition}[Zero-adjunction criterion]\label{cor:absorption}
Let $S$ be an ai-semiring, $\mathbf{u}$ an ai-semiring term, and
$\mathbf{q}\in X^+$ a word. Then the zero extension $S^0$ satisfies $\mathbf{q}\preceq \mathbf{u}$ if and
only if both $D_\mathbf{q}(\mathbf{u})\ne\varnothing$ and
$S\models \mathbf{q}\preceq D_\mathbf{q}(\mathbf{u})$.
\end{proposition}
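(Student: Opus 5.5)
We prove both directions by evaluating in $S^0$ and separating the evaluations that send every variable of $c(\mathbf{q})$ into $S$ from those that send some variable of $c(\mathbf{q})$ to $0$. Throughout, write $\varphi$ for an evaluation $X\to S^0$, extended to terms. We use the basic fact that for a word $\mathbf{p}$, $\varphi(\mathbf{p})=0$ if $\varphi(x)=0$ for some $x\in c(\mathbf{p})$. Otherwise $\varphi(\mathbf{p})$ is computed inside $S$. This holds because $S$ is a subsemiring of $S^0$ and $0$ is a multiplicative zero. For a term, $\varphi(\mathbf{u})$ is the sum of the values $\varphi(\mathbf{p})$ with $\mathbf{p}\in\mathbf{u}$. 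Since $0$ is the additive identity, the summands equal to $0$ contribute nothing. In $S^0$ the element $0$ is the least element, and $s+0=s$.

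\emph{Necessity.} Suppose $S^0\models\mathbf{q}\preceq\mathbf{u}$.

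First we show $D_\mathbf{q}(\mathbf{u})\ne\varnothing$. Fix any $s\in S$ and take $\varphi(x)=s$ for $x\in c(\mathbf{q})$ and $\varphi(x)=0$ otherwise. Then $\varphi(\mathbf{q})\in S$. Every $\mathbf{p}\in\mathbf{u}$ with $c(\mathbf{p})\not\subseteq c(\mathbf{q})$ is sent to $0$. If $D_\mathbf{q}(\mathbf{u})$ were empty, then $\varphi(\mathbf{u})=0$, and the identity would force $\varphi(\mathbf{q})+0=0$, i.e.\ $\varphi(\mathbf{q})=0$. This is impossible, since $0\notin S$.

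Next we show $S\models\mathbf{q}\preceq D_\mathbf{q}(\mathbf{u})$. Given any evaluation $\psi$ into $S$, define $\varphi$ to agree with $\psi$ on $c(\mathbf{q})$ and to send every other variable to $0$. Then $\varphi(\mathbf{u})=\psi(D_\mathbf{q}(\mathbf{u}))$ and $\varphi(\mathbf{q})=\psi(\mathbf{q})$. The identity $\varphi(\mathbf{u})+\varphi(\mathbf{q})=\varphi(\mathbf{u})$ therefore holds inside $S$, which is exactly the required absorption.

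\emph{Sufficiency.} Assume $D_\mathbf{q}(\mathbf{u})\ne\varnothing$ and $S\models\mathbf{q}\preceq D_\mathbf{q}(\mathbf{u})$, and let $\varphi$ be arbitrary.

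If $\varphi$ sends some variable of $c(\mathbf{q})$ to $0$, then $\varphi(\mathbf{q})=0$, and $\varphi(\mathbf{u})+0=\varphi(\mathbf{u})$ trivially.

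Otherwise $\varphi$ maps $c(\mathbf{q})$ into $S$. Let $\psi$ be the evaluation into $S$ that agrees with $\varphi$ on $c(\mathbf{q})$ and is arbitrary elsewhere. Then each $\mathbf{p}\in D_\mathbf{q}(\mathbf{u})$ has $\varphi(\mathbf{p})=\psi(\mathbf{p})$. Hence $\varphi(D_\mathbf{q}(\mathbf{u}))=\psi(D_\mathbf{q}(\mathbf{u}))$, which is an element of $S$ because the set is nonempty, and the hypothesis gives
\[
\varphi(\mathbf{q})+\varphi(D_\mathbf{q}(\mathbf{u}))=\varphi(D_\mathbf{q}(\mathbf{u})).
\]
Write $\varphi(\mathbf{u})=\varphi(D_\mathbf{q}(\mathbf{u}))+r$, where $r\in S^0$ is the sum of the values of the remaining summands, or $r=0$ if there are none. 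By commutativity and associativity of $+$,
\[
\varphi(\mathbf{u})+\varphi(\mathbf{q})
=\bigl(\varphi(D_\mathbf{q}(\mathbf{u}))+\varphi(\mathbf{q})\bigr)+r
=\varphi(D_\mathbf{q}(\mathbf{u}))+r
=\varphi(\mathbf{u}).
\]

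The only delicate point is the nonemptiness requirement. We must make sure the empty sum never arises as a term, and that its absence is exactly what the evaluation sending $c(\mathbf{u})\setminus c(\mathbf{q})$ to $0$ detects. This uses $0\notin S$, since $0$ is a newly adjoined element; no other step presents difficulty.
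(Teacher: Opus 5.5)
Your proof is correct and follows the paper's route. For necessity you send the variables outside $c(\mathbf{q})$ to the adjoined $0$ and those of $c(\mathbf{q})$ into $S$; for sufficiency you split on whether some variable of $c(\mathbf{q})$ is sent to $0$. You simply write out with the remainder $r$ and the fact $0\notin S$ what the paper compresses into the natural-order chain $\varphi(\mathbf{q})\le\varphi(D_\mathbf{q}(\mathbf{u}))\le\varphi(\mathbf{u})$.
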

\begin{proof}
Suppose $S^0\models \mathbf{q}\preceq \mathbf{u}$. Assign the adjoined zero to every
variable outside $c(\mathbf{q})$ and arbitrary elements of $S$ to those in
$c(\mathbf{q})$. The word $\mathbf{q}$ then has its value in $S$. Hence at least
one word of $\mathbf{u}$ must survive, so $D_\mathbf{q}(\mathbf{u})\ne\varnothing$; the
surviving words also give $S\models \mathbf{q}\preceq D_\mathbf{q}(\mathbf{u})$.

Conversely, consider an arbitrary evaluation in $S^0$. If a
variable occurring in $\mathbf{q}$ is assigned $0$, then $\mathbf{q}$ evaluates
to $0$, which is the additive least element of $S^0$. Otherwise
$\mathbf{q}$ and every word of $D_\mathbf{q}(\mathbf{u})$ evaluate in $S$, and the assumed absorption identity gives
$\varphi(\mathbf{q})\le\varphi(D_\mathbf{q}(\mathbf{u}))\le\varphi(\mathbf{u})$ in the natural
order of $S^0$. Thus $S^0\models \mathbf{q}\preceq \mathbf{u}$.
\end{proof}

\section{Infinite axiomatic rank in the mirror-word hierarchy}\label{sec:nonfinite}
Shao--Gao~\cite[Theorem~1.1 and Corollary~4.8]{ShaoGao26}
already determine the finite-basis property and infinite axiomatic
rank of each individual $\widehat A_n$. We address the stronger question of infinite axiomatic rank
for every intermediate variety in $[\W_2,\W_\infty]$.
The proof constructs finite countermodels from identities of
$\widehat A_2$ and does not invoke the equational theory of
the unextended word semirings.

\begin{theorem}\label{ind:thm:main}
Every variety $\mathcal K$ of ai-semirings satisfying
$\W_2\subseteq\mathcal K\subseteq\W_\infty$ has
\emph{infinite axiomatic rank}; that is, no identity basis of
$\mathcal K$ has a uniform finite bound on the number of
variables. In particular, it is nonfinitely based.
\end{theorem}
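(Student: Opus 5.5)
The plan is the standard ``finite $k$-variable countermodel'' scheme. For each $k\ge 2$ we want a finite ai-semiring $M_k$ with two properties: $M_k$ satisfies every identity in at most $k$ variables that holds in $\W_\infty$, and $M_k\notin\W_2$. Suppose such $M_k$ exist. Take any $\mathcal K$ with $\W_2\subseteq\mathcal K\subseteq\W_\infty$. Every $k$-variable identity of $\mathcal K$ holds in $\W_2$ and so fails in no member of $\W_2$, but we need the reverse direction. The correct argument runs as follows. The $k$-variable identities of $\mathcal K$ include all $k$-variable identities of $\W_\infty$ only in one direction, so instead we require that $M_k$ satisfy every $k$-variable identity of $\W_2$, since $\W_2\subseteq\mathcal K$ forces $\Id_k(\mathcal K)\subseteq\Id_k(\W_2)$. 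We also require $M_k\notin\W_\infty$ (hence $M_k\notin\mathcal K$). Then no basis of $\mathcal K$ bounded by $k$ variables exists. Because $\W_\infty$ is locally finite, we may equivalently produce, for each $k$, an identity $\sigma_k$ that holds in $\W_\infty$ but does not follow from the $k$-variable identities of $\W_2$; the countermodel $M_k$ witnesses the latter.

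The construction of $M_k$ follows the ``finite local quotient'' idea. Take the free object of $\W_2$ (or a suitable subsemiring of a power of $\widehat A_2$) on a long chain of letters $x_1,\dots,x_m$ with $m\gg k$. Then pass to the Rees-type quotient collapsing to $\infty$ every word that is not a factor of a designated cyclic or mirror pattern, such as $\mathbf w_m=x_1x_2\cdots x_m x_m\cdots x_1$ closed up cyclically. The intended identity $\sigma_k$ is an absorption $\mathbf q\preceq\mathbf u$. Here $\mathbf q$ is a long word in $m$ variables built from mirror blocks, and $\mathbf u$ is a sum of its ``rotations'' or local modifications. It holds in every $\widehat A_n$ by Proposition~\ref{cor:absorption}: any evaluation of $\mathbf q$ into $A_n$ avoiding $\infty$ must map all but boundedly many variables to short factors in a rigid way, and this forces a summand of $D_{\mathbf q}(\mathbf u)$ to take the same value. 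This has to be checked uniformly in $n$, using that each letter occurs twice in $\mword_n$ and that $x^3=\infty$ (Lemma~\ref{lem:basic}). In $M_k$ the identity fails at the generic evaluation, because the designated pattern survives while every summand of $\mathbf u$ is collapsed to $\infty$.

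The key step, and the main obstacle, is to show that $M_k$ satisfies all $k$-variable identities of $\W_2$. The approach is to reduce any evaluation of $k$ variables in $M_k$ to an evaluation into $\widehat A_2$ or a finite product of copies of it. Since only $k$ variables are used, their values cover at most $k$ ``cuts'' of the long pattern, and some segment of length about $m/k$ is untouched. Cutting the cyclic pattern there linearizes the situation, so the evaluation factors through a map into a subsemiring generated by factors of a finite mirror-like word. That subsemiring should lie in $\W_2$, being embeddable into products of $\widehat A_2$ and $D_2$ (Lemma~\ref{lem:basic}). To make this rigorous, I would classify the possible kernels of evaluations $X\to\widehat A_2$ restricted to words with content in at most $k$ variables. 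The abstract suggests there are six types: a value $0$, a value $\infty$, a letter, a two-letter factor, and so on. I would then show that each element of $M_k$ is separated from the others by evaluations of these six types, so that $M_k$ restricted to $k$-generated subalgebras is a subdirect product of members of $\W_2$. I expect the bookkeeping of this six-case kernel classification, and the check that the collapse to $\infty$ is compatible with it, to be the delicate part. Once it is established, the theorem follows, and nonfinite basability is immediate since a finite basis has a bounded number of variables.
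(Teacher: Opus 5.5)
\textbf{Review of the proposal.}

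Your reduction is the right one and matches the paper's: for each $k$ you need a finite $M_k$ with $M_k\models\Id_{\le k}(\widehat A_2)$ and $M_k\notin\W_\infty$. The gap is the construction of $M_k$, which cannot work as described, for three concrete reasons.

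First, suppose $M_k$ is a quotient of the free object of $\W_2$, or of a subsemiring of a power of $\widehat A_2$. Then $M_k\in\W_2\subseteq\W_\infty$, so it can fail no identity of $\W_\infty$.

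Second, suppose you mean a flat (Rees-type) quotient of the free ai-semiring whose nonabsorbing words are the factors of a long cyclic or mirror pattern in distinct letters. Then $M_k$ already fails a $4$-variable identity of $\W_2$. The identity $x_1x_2x_3x_4\approx x_1x_2x_3x_4+x_4x_2x_3x_1$ holds in $\widehat A_2$: a zero value kills both sides, and a proper value of a product of four old factors must be $abba=a\cdot b\cdot b\cdot a$, which forces $x_1=x_4$. It fails in your $M_k$ at $x_i\mapsto x_i$, since $x_1x_2x_3x_4$ is a factor of $x_1\cdots x_mx_m\cdots x_1$ (even cyclically closed) while $x_4x_2x_3x_1$ is not. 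Cutting the cycle does not help, because the offending subalgebra has only four generators. More generally, semirings of factors of long linear or mirror words, such as $S(a_1a_2a_3a_4)$ or $A_3$, do not lie in $\W_2$. So your claim that the linearized $k$-generated pieces lie in $\W_2$ is false. This is the essential difference from the cycle argument of Lemma~\ref{lem:path-NFB}, where the small pieces of $C_m$ embed back into the generator $T_\infty$ itself. Here the lower endpoint $\W_2$ is far smaller than the upper endpoint $\W_\infty$.

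Third, your failure mechanism is backwards. If every summand of $\mathbf u$ collapses to $\infty$, then $\mathbf u+\mathbf q=\mathbf u$, since $\infty$ is the additive top, so $\mathbf q\preceq\mathbf u$ \emph{holds}.

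The paper avoids any semantic check of local pieces. It takes $D_{k,X}=\PX/\Theta_{k,X}$: the finite power semiring of $\overline{X^+}$ (words of length at most four kept, longer words identified by content), modulo the congruence generated by all substitution instances of $k$-variable identities of $\widehat A_2$. Finiteness and $D_{k,X}\models\Id_{\le k}(\widehat A_2)$ are then automatic (Proposition~\ref{ind:lem:local}).

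All the work goes into showing that the $\W_\infty$-identity $\delta_m:\mathbf Q_m\approx\mathbf Q_m+\mathbf K_m$ fails there.
\begin{enumerate}
\item Its validity in every $\widehat A_n$ comes from the rigidity of $\mathbf s\mathbf t^2\mathbf s$.
\item The $\sim_m$-class of $\mathbf Q_m$ has exactly six elements (Theorem~\ref{ind:thm:fiber}). This is the ``six'', not six evaluation types. Hence any derivation must contain a single step from $F_m$ to $F_m+\lambda_{Z_m}$.
\item The deletion assignments $\theta_x$ yield the sparse bound (Lemma~\ref{ind:lem:sparse}). When $|Z_m|>\max\{4k,16\}$, a $k$-variable absorption step can only create words of length at most $16$, never the full-support class $\lambda_{Z_m}$.
\end{enumerate}
None of these ingredients is in your proposal: the choice of $\delta_m$, the kernel-class computation, and the sparse-substitution bound. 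So the decisive step is missing, not merely unverified.
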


\subsection{Finite local quotients}
Fix a nonempty finite variable set $X$. Define an equivalence on $X^+$ by keeping every word of length
at most four in a singleton class and identifying two words of
length at least five exactly when they have the same content.
This is a semigroup congruence: multiplication preserves the
length threshold, and the content of a product is the union of
the contents of its factors. Denote the resulting finite quotient
semigroup by $\overline{X^+}$. Identifying short words with
their singleton classes, we obtain
\[
\overline{X^+}=\{\mathbf{w}\in X^+:1\le \ell(\mathbf{w})\le 4\}\ \dot\cup\
\{\lambda_C:\varnothing\ne C\subseteq X\}.
\]
Here $\lambda_C$ is the class of words of length at least five with
content $C$. If $|X|=r$, then
$|\overline{X^+}|=r+r^2+r^3+r^4+2^r-1$.

Consider its \emph{power semiring}
\[
\PX:=\bigl(\mathcal P(\overline{X^+}),\cup,\cdot\bigr),\qquad
UV=\{\mathbf{u}\mathbf{v}:\mathbf{u}\in U,\ \mathbf{v}\in V\}.
\]
The empty subset is the additive identity and the
multiplicative zero. We identify each singleton subset with
its unique member.

\begin{lemma}[Evaluation extension]\label{ind:lem:evaluation}
Every assignment $\alpha:X\to\widehat A_2$ extends uniquely
to a semiring homomorphism
$\widehat\alpha:\PX\to\widehat A_2$ with
$\widehat\alpha(\varnothing)=0$. In particular,
$\widehat\alpha(\lambda_C)=0$ if some variable of $C$
has value $0$, and it equals $\infty$ otherwise.
\end{lemma}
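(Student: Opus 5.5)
We first extend $\alpha$ to the quotient semigroup $\overline{X^+}$, and then extend additively to subsets.

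\emph{Step 1 (free semigroup).} By freeness of $X^+$, $\alpha$ extends uniquely to a semigroup homomorphism $\alpha^+:X^+\to(\widehat A_2,\cdot)$.

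\emph{Step 2 (descent to $\overline{X^+}$).} We check that $\alpha^+$ is constant on each class $\lambda_C$. Let $\mathbf{w}$ have length at least five and content $C$. If some variable of $C$ is sent to $0$, then $\alpha^+(\mathbf{w})=0$, since $0$ is a multiplicative zero. Otherwise every letter of $\mathbf{w}$ has value in $A_2=S(abba)$. The longest nonempty factor has length four, so every product of at least five elements of $A_2$ equals $\infty$. To see this, note that any product of elements of $A_2$ that is not $\infty$ is a factor whose length is the sum of the factor lengths, hence at least the number of terms. Thus $\alpha^+(\mathbf{w})$ depends only on $C$ and $\alpha|_C$. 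This gives a well-defined semigroup homomorphism $\bar\alpha:\overline{X^+}\to\widehat A_2$ with the stated values on $\lambda_C$. Uniqueness holds because $\overline{X^+}$ is generated by $X$.

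\emph{Step 3 (additive extension).} Define $\widehat\alpha(\varnothing)=0$ and $\widehat\alpha(U)=\sum_{\mathbf{u}\in U}\bar\alpha(\mathbf{u})$ for nonempty $U$. This is well defined because $U$ is finite and $(\widehat A_2,+)$ is a commutative idempotent monoid with identity $0$. Union is preserved because of idempotence and commutativity. For products with $U,V$ nonempty, distributivity gives
\[
\widehat\alpha(UV)=\sum_{\mathbf{u}\in U,\mathbf{v}\in V}\bar\alpha(\mathbf{u})\bar\alpha(\mathbf{v})=\widehat\alpha(U)\widehat\alpha(V).
\]
If $U$ or $V$ is empty, both sides are $0$, since $0$ is multiplicatively absorbing in $\widehat A_2$.

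\emph{Step 4 (uniqueness).} Any homomorphism $\psi$ extending $\alpha$ with $\psi(\varnothing)=0$ agrees with $\bar\alpha$ on singletons, since these are generated multiplicatively by $X$. Every nonempty subset is a finite union of singletons, so $\psi=\widehat\alpha$.

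The main obstacle is Step 2: the well-definedness on the collapsed classes $\lambda_C$. This is exactly where the length bound four in $S(\mword_2)$ and the absorbing roles of $0$ and $\infty$ are used. The remaining steps are routine.
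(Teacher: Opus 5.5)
Your proof is correct and follows the same route as the paper. You show that a word of length at least five evaluates to $0$ if some variable takes the new zero and to $\infty$ otherwise, by the length bound in $S(abba)$, so evaluation descends to $\overline{X^+}$. You then extend additively to subsets using idempotence and distributivity, and get uniqueness from generation by the singletons together with the condition $\widehat\alpha(\varnothing)=0$, exactly as the paper does, while spelling out details the paper leaves implicit (the free-semigroup extension, the empty-product cases, and the length count showing that five old factors multiply to $\infty$).
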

\begin{proof}
For a word of length at least five, an evaluation gives the
new zero $0$ whenever one of its variables takes that value.
Under an all-old evaluation the word instead evaluates to
$\infty$, since five nonempty old factors cannot concatenate
to a factor of $abba$. Its value therefore depends only on
its variable content, which makes evaluation well defined on
$\overline{X^+}$.

Extend evaluation to a subset of $\overline{X^+}$ by summing
the values of its members, with the empty subset assigned $0$.
Union is preserved by construction, and distributivity gives
preservation of setwise multiplication. The resulting homomorphism
extends $\alpha$ and is unique, since the singleton classes
and the empty subset generate $\PX$.
\end{proof}

Let $\Id_{\le k}(\widehat A_2)$ be the set of identities
of $\widehat A_2$ involving at most $k$ variables. Denote by
$\Theta_{k,X}$ the semiring congruence on $\PX$ generated
by all substitution instances of these identities, allowing
arbitrary substitutions into $\PX$. Put
\[
D_{k,X}=\PX/\Theta_{k,X}.
\]
\begin{proposition}[Finite local model]\label{ind:lem:local}
The semiring $D_{k,X}$ is finite and satisfies every identity of $\widehat A_2$ involving at most $k$ variables. Moreover, if $U\mathrel{\Theta_{k,X}}V$, then $U$ and $V$ have the same value under every assignment $X\to\widehat A_2$.
\end{proposition}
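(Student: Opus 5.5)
Finiteness is immediate: $\overline{X^+}$ is finite, with $r+r^2+r^3+r^4+2^r-1$ elements when $|X|=r$. Hence $\PX$ is finite, and so is its quotient $D_{k,X}$. I would prove the two remaining assertions in reverse order, since the second one shows that $\Theta_{k,X}$ is contained in a concrete congruence.

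\emph{Step 1 (kernel containment).} For each assignment $\alpha:X\to\widehat A_2$, Lemma~\ref{ind:lem:evaluation} gives a homomorphism $\widehat\alpha:\PX\to\widehat A_2$. Its kernel $\ker\widehat\alpha$ is a semiring congruence. Put $K=\bigcap_\alpha\ker\widehat\alpha$, also a congruence. It suffices to show that every generating pair of $\Theta_{k,X}$ lies in $K$. A generating pair has the form $(\sigma(\mathbf{s}),\sigma(\mathbf{t}))$, where $\mathbf{s}\approx\mathbf{t}$ lies in $\Id_{\le k}(\widehat A_2)$ and $\sigma$ sends the variables of $\mathbf{s},\mathbf{t}$ to elements of $\PX$. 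Composing, $\widehat\alpha\circ\sigma$ is an evaluation of the variables of $\mathbf{s},\mathbf{t}$ in $\widehat A_2$, because $\widehat\alpha$ is a homomorphism. Since $\widehat A_2\models\mathbf{s}\approx\mathbf{t}$, we get $\widehat\alpha(\sigma(\mathbf{s}))=\widehat\alpha(\sigma(\mathbf{t}))$.

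One point needs care: a substitution may send a variable to $\varnothing$, and terms are nonempty sums. This is harmless, since $\widehat\alpha(\varnothing)=0$ and the term operations commute with homomorphisms regardless of which elements are substituted. Thus $\Theta_{k,X}\subseteq K$, which is the second assertion.

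\emph{Step 2 (local model).} Let $\mathbf{s}\approx\mathbf{t}$ be an identity of $\widehat A_2$ in at most $k$ variables. Take any evaluation $\beta$ of its variables in $D_{k,X}$. Lift each $\beta(x)$ to a representative in $\PX$; this defines a substitution $\sigma$ into $\PX$. By the definition of $\Theta_{k,X}$, the pair $(\sigma(\mathbf{s}),\sigma(\mathbf{t}))$ lies in $\Theta_{k,X}$. The quotient map $\PX\to D_{k,X}$ is a homomorphism, so it sends $\sigma(\mathbf{s})$ and $\sigma(\mathbf{t})$ to $\beta(\mathbf{s})$ and $\beta(\mathbf{t})$ respectively. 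Hence $\beta(\mathbf{s})=\beta(\mathbf{t})$. We also need $D_{k,X}$ to be an ai-semiring. This holds because $\PX$ is one: union is commutative and idempotent, and setwise multiplication is associative and distributes over union, using associativity in $\overline{X^+}$.

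\emph{Main obstacle.} The only delicate point is checking that Lemma~\ref{ind:lem:evaluation} is correctly applied. The composition argument in Step 1 needs $\widehat\alpha$ to be a genuine homomorphism on all of $\PX$, including at $\varnothing$ and at the classes $\lambda_C$. That lemma provides exactly this, so I expect the proof to be short and essentially formal.
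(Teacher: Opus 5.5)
Your proof is correct and follows the paper's argument. The quotient satisfies the bounded-variable identities because every lifted substitution instance is a generating pair, and $\Theta_{k,X}\subseteq\ker\widehat\alpha$ holds because the generating pairs lie in that kernel and kernels are congruences; finiteness is inherited from $\PX$. The differences are only cosmetic: you prove the two assertions in the opposite order, intersect the kernels into one congruence, and spell out the lifting of evaluations and the role of $\varnothing$, which the paper leaves implicit.
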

\begin{proof}
Every substitution instance of an identity involving at most
$k$ variables belongs to the generating congruence. Thus the
quotient satisfies each of these identities, as claimed.

To verify that the quotient relation preserves all evaluations,
fix $\alpha:X\to\widehat A_2$ and let
$\widehat\alpha$ be its extension from
Lemma~\ref{ind:lem:evaluation}. Every generating pair of
$\Theta_{k,X}$ lies in $\ker\widehat\alpha$, because
substitution in $\PX$ followed by $\widehat\alpha$ is an
ordinary evaluation in $\widehat A_2$. Since the kernel is a
congruence, it contains all of $\Theta_{k,X}$. Thus
$\Theta_{k,X}\subseteq\ker\widehat\alpha$, proving
the second assertion. Finally, $D_{k,X}$ is finite because $\PX$ is finite.
\end{proof}

\subsection{Separating identities and their evaluations}
We may freely rename the letters of $\mword_n$ as
$t_1,\ldots,t_n$ when discussing its factors.
Fix $m\ge2$ and introduce
\begin{align*}
Z_m&=\{a,b_0,\ldots,b_m,c_0,\ldots,c_{m-1}\},& |Z_m|&=2m+2,\\
\mathbf{Q}_m&=ab_0^2a+ab_ma+\sum_{i=0}^{m-1}(ab_ic_i+ab_{i+1}c_i),\\
\mathbf{K}_m&=\left(\prod_{z\in Z_m}z\right)^5.
\end{align*}
The order in the product is fixed but arbitrary. Its image in $\overline{Z_m^+}$ is $\lambda_{Z_m}$.

\begin{lemma}[Rigidity of the central square]\label{ind:lem:center}
Fix $n\ge2$ and, within this proof and the next, write
$\mword_n=t_1\cdots t_nt_n\cdots t_1$.
If nonempty words $\mathbf{s},\mathbf{t}$ satisfy $\mathbf{s}\mathbf{t}^2\mathbf{s}\in\Fac(\mword_n)$, then
\[
\mathbf{t}=t_n,\qquad \mathbf{s}=t_{n-1},\qquad
\mathbf{s}\mathbf{t}^2\mathbf{s}=t_{n-1}t_nt_nt_{n-1}.
\]
\end{lemma}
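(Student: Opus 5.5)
The argument is combinatorial and rests on the facts about $\mword_n$ already used in Lemma~\ref{lem:basic}: the $2n-1$ adjacent length-two factors are pairwise distinct, and the only square of a letter occurring as a factor is $t_nt_n$, at positions $n,n+1$. The plan is to locate the square $\mathbf{t}^2$ first and then read off $\mathbf{s}$ from the two sides.

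\emph{Step 1: $\mathbf{t}$ is a single letter.} Suppose $\ell(\mathbf{t})=k\ge2$. Then $\mathbf{t}^2$ is a factor of $\mword_n$ of length $2k$. Let $\mathbf{d}$ be the length-two word formed by the first two letters of $\mathbf{t}$. Then $\mathbf{d}$ occurs in $\mathbf{t}^2$ at positions $1,2$ and again at positions $k+1,k+2$. These are two distinct occurrences of the same length-two factor in $\mword_n$, contradicting the uniqueness of length-two factors. Hence $\ell(\mathbf{t})=1$. Since $\mathbf{t}^2$ is then a square of a letter, the only possibility is $\mathbf{t}=t_n$, and the occurrence of $\mathbf{t}^2$ is forced to sit at positions $n,n+1$.

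\emph{Step 2: $\mathbf{s}$ is a single letter.} The factor $\mathbf{s}t_nt_n\mathbf{s}$ occupies positions $n-\ell(\mathbf{s})+1$ through $n+1+\ell(\mathbf{s})$. So $\mathbf{s}$ equals both the factor of length $\ell(\mathbf{s})$ ending at position $n-1$ and the factor of length $\ell(\mathbf{s})$ starting at position $n+2$. The first of these is $t_{n-\ell(\mathbf{s})}\cdots t_{n-1}$, and the second is its reversal, namely $t_{n-1}\cdots t_{n-\ell(\mathbf{s})}$. If $\ell(\mathbf{s})\ge2$, equating first letters gives $t_{n-\ell(\mathbf{s})}=t_{n-1}$, which is impossible because the letters are distinct. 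Alternatively, the factor $\mathbf{s}$ of length at least two would occur twice, which is again impossible. Hence $\ell(\mathbf{s})=1$. Then $\mathbf{s}=t_{n-1}$, which requires $n\ge2$; this is exactly why the hypothesis is needed. The whole factor is therefore $t_{n-1}t_nt_nt_{n-1}$.

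No step here is a serious obstacle. The only point requiring care is in Step 1: the factor $\mathbf{t}^2$ must be placed inside $\mword_n$ so that the two occurrences of $\mathbf{d}$ are genuinely distinct positions. This holds automatically, since they are offset by $k\ge2$.
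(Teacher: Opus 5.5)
Your proof is correct. It has the same two-stage structure as the paper's: first pin $\mathbf t^2$ to positions $n,n+1$, then read off $\mathbf s$. Your Step~2 is essentially the paper's argument, comparing the first letters of the two copies of $\mathbf s$.

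The difference is in Step~1. The paper observes that the two copies of $\mathbf t$ must begin at the two occurrences $j$ and $2n+1-j$ of its first letter $t_j$, so $\ell(\mathbf t)=2n+1-2j$. For $j<n$ it then compares the second letter $t_{j+1}$ at position $j+1$ with the letter at position $2n+2-j$, and this forces $j=n$. You instead note that if $\ell(\mathbf t)\ge2$, then the first two letters of $\mathbf t$ occur twice in $\mathbf t^2$. This contradicts the uniqueness of length-two factors already recorded in Lemma~\ref{lem:basic}.

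Your route is shorter and needs no position bookkeeping. It also proves something more general: in any word whose length-two factors are pairwise distinct, every square factor is the square of a single letter. The paper's route instead works directly from where each letter sits in the mirror word.

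One small slip: the occurrence of $\mathbf s t_nt_n\mathbf s$ starts at position $n-\ell(\mathbf s)$, not $n-\ell(\mathbf s)+1$, since its length is $2\ell(\mathbf s)+2$. Your next sentence correctly identifies the two copies of $\mathbf s$ as $t_{n-\ell(\mathbf s)}\cdots t_{n-1}$ and its reversal, so nothing downstream is affected.
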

\begin{proof}
Number the positions of $\mword_n$ by $1,\ldots,2n$; the two
occurrences of $t_j$ are at $j$ and $2n+1-j$.  If the first
copy of $\mathbf{t}$ begins with $t_j$, the second begins at the other
occurrence, so
\[
\ell(\mathbf{t})=(2n+1-j)-j=2n+1-2j.
\]
For $j<n$ this length is at least $3$.  The second letter of
the first copy is then $t_{j+1}$ (at position $j+1$), whereas
the corresponding position in the second copy is $2n+2-j$,
which does not carry $t_{j+1}$.  Thus $j=n$ and $\mathbf{t}=t_n$.

The two copies of $\mathbf{t}$ occupy positions $n,n+1$.
Writing $h=\ell(\mathbf{s})\ge1$, the first letters of the two
copies of $\mathbf{s}$ occur at $n-h$ and $n+2$, respectively.  Hence
\[
t_{n-h}=t_{n-1},\qquad h=1,\qquad \mathbf{s}=t_{n-1},
\]
and the asserted central factor follows.
\end{proof}

\begin{lemma}[Global validity]\label{ind:lem:global}
For every $m\ge2$, the identity
\[
\delta_m:\qquad \mathbf{Q}_m\approx \mathbf{Q}_m+\mathbf{K}_m
\]
holds throughout $\W_\infty$.
\end{lemma}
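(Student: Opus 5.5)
Since $\W_\infty=\bigvee_{n\ge2}\W_n$ and $\delta_m$ is an identity, it suffices to show $\widehat A_n\models\delta_m$ for each fixed $n\ge2$. The identity $\mathbf{Q}_m\approx\mathbf{Q}_m+\mathbf{K}_m$ is the absorption $\mathbf{K}_m\preceq\mathbf{Q}_m$. I will verify it with Proposition~\ref{cor:absorption}.

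Because $c(\mathbf{K}_m)=Z_m\supseteq c(\mathbf{Q}_m)$, we have $D_{\mathbf{K}_m}(\mathbf{Q}_m)=\mathbf{Q}_m\ne\varnothing$. It therefore remains to prove $A_n\models\mathbf{K}_m\preceq\mathbf{Q}_m$. Under any assignment into $A_n$, the word $\mathbf{K}_m$ evaluates to the fifth power of some element $x\in A_n$. By Lemma~\ref{lem:basic}, $x^3=\infty$, so $\mathbf{K}_m$ always evaluates to $\infty$. Since $\infty$ is the top of the natural order, it suffices to show that $\mathbf{Q}_m$ also evaluates to $\infty$ under every assignment $\alpha:Z_m\to A_n$.

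Suppose instead that $\alpha(\mathbf{Q}_m)=e\ne\infty$. In a flat semiring, a sum differs from $\infty$ only when all summands are equal and different from $\infty$. Hence every word summand of $\mathbf{Q}_m$ evaluates to the same factor $e\in\Fac(\mword_n)$. In particular, each variable takes a value that is a nonempty word, and each summand's value is the genuine concatenation of these words.

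\begin{enumerate}
\item From $\alpha(ab_0^2a)=e$, Lemma~\ref{ind:lem:center} gives $\alpha(a)=t_{n-1}$, $\alpha(b_0)=t_n$ and $e=t_{n-1}t_nt_nt_{n-1}$.
\item From $\alpha(ab_ma)=e$, cancellation in the free semigroup gives $\alpha(b_m)=t_nt_n$, a word of length $2$.
\item From $\alpha(ab_ic_i)=\alpha(ab_{i+1}c_i)=e$, cancelling $t_{n-1}$ on the left gives $\alpha(b_i)\alpha(c_i)=\alpha(b_{i+1})\alpha(c_i)=t_nt_nt_{n-1}$. Right cancellation of $\alpha(c_i)$ then yields $\alpha(b_{i+1})=\alpha(b_i)$.
\end{enumerate}

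Induction on $i$ starting from $\alpha(b_0)=t_n$ gives $\alpha(b_m)=t_n$, which contradicts $\alpha(b_m)=t_nt_n$. Hence $\mathbf{Q}_m$ evaluates to $\infty$, and the absorption holds in $A_n$, hence in $\widehat A_n$.

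The only delicate point is the rigidity step, and it is already supplied by Lemma~\ref{ind:lem:center}. Everything else is cancellation in the free semigroup, which is legitimate precisely because all values are actual factors of $\mword_n$.
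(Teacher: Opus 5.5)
Your proof is correct and follows the paper's argument. You reduce to each $\widehat A_n$, observe that $\mathbf{K}_m$ is always $0$ or $\infty$, and then rule out a non-top value of $\mathbf{Q}_m$ using Lemma~\ref{ind:lem:center} and free-semigroup cancellation along $b_0,\ldots,b_m$. The differences are cosmetic: you dispose of the zero case through Proposition~\ref{cor:absorption} (with $D_{\mathbf{K}_m}(\mathbf{Q}_m)=\mathbf{Q}_m$) rather than checking it directly, and you obtain $\mathbf{K}_m=\infty$ from $x^3=\infty$ rather than by counting letter occurrences.
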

\begin{proof}
Fix $n\ge2$ and an evaluation in $\widehat A_n$.  If any
variable takes the new zero $0$, then $\mathbf{K}_m=0$, and the
absorption is automatic.  Suppose all variables take old
values.  Every such value is either $\infty$ or a nonempty
factor of $\mword_n$.  Thus
\[
\mathbf{K}_m=\infty:
\]
this holds if some factor is $\infty$, and otherwise because
$\mathbf{K}_m$ repeats every variable five times whereas each letter of
$\mword_n$ occurs only twice.

Assume $\mathbf{Q}_m\ne\infty$.  Flat addition forces all its
summands to be the same old non-top factor.  Applying
Lemma~\ref{ind:lem:center} to $ab_0^2a$, and then cancelling
the external letters in $ab_ma$, gives
\[
\begin{gathered}
a=t_{n-1},\qquad b_0=t_n,\qquad b_m=t_n^2.
\end{gathered}
\]
For $0\le i<m$ the summands $ab_ic_i$ and $ab_{i+1}c_i$
also have the same non-top value.  Left and right cancellation
in the free semigroup gives
\[
 b_i=b_{i+1}\quad(0\le i<m).
\]
Hence $b_m=b_0=t_n$, contrary to $b_m=t_n^2$ since $t_n$ is a nonempty word.
Therefore $\mathbf{Q}_m=\infty$ under every all-old evaluation, and
$\mathbf{Q}_m+\mathbf{K}_m=\mathbf{Q}_m$ in both cases.  Since $n$ was arbitrary,
$\delta_m$ holds in $\W_\infty$.
\end{proof}

\paragraph{Deletion assignments and absorption.}
Arrange the variables other than $a$ in a path
\[
P_m:\quad b_0,c_0,b_1,c_1,\ldots,c_{m-1},b_m.
\]
For every path vertex $x$, define an assignment
$\theta_x:Z_m\to\widehat A_2$ by
$\theta_x(a)=a$, $\theta_x(x)=0$, and the following table:
\[
\begin{array}{c|cc}
 & yx & xy\\\hline
 y=b_i&b&bb\\
 y=c_i&ba&a
\end{array}.
\]
Here $yx$ and $xy$ label only the relative positions of $y$
and $x$ along $P_m$, not necessarily adjacent letters in a word.
Let
\[
\begin{aligned}
\mathbf{s}_m&=c_{m-1}b_ma,\qquad \mathbf{t}_m=ab_mc_{m-1},\\
\mathscr S_m&=\{ab_0^2a,ab_ma,\mathbf{s}_m\}\\
&\quad\cup\{ab_ic_i,ab_{i+1}c_i:0\le i<m\}.
\end{aligned}
\]
For subsequent use, define the sum
\[
\mathbf{R}_m=ab_0^2a+ab_ma+
\sum_{i=0}^{m-2}(ab_ic_i+ab_{i+1}c_i)+ab_{m-1}c_{m-1}.
\]
Thus $\mathbf{Q}_m=\mathbf{R}_m+\mathbf{t}_m$; the sum $\mathbf{R}_m$ contains precisely
those summands of $\mathbf{Q}_m$ other than $\mathbf{t}_m$.

\begin{lemma}[Deletion assignment]\label{ind:lem:deletion}
For each $x\in P_m$ and each $\mathbf{v}\in\mathscr S_m$,
\[
\mathbf{v}^{\theta_x}=\begin{cases}0&x\in c(\mathbf{v}),\\abba&x\notin c(\mathbf{v}).\end{cases}
\qquad\text{and}\qquad \mathbf{Q}_m^{\theta_x}=abba.
\]
\end{lemma}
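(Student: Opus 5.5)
The plan is a direct computation. First I record the assignment rule along the path $P_m$. Fix $x\in P_m$. A path vertex $y\ne x$ is sent to $b$ or $ba$ (according as $y$ is a $b_i$ or a $c_i$) when $y$ precedes $x$ on $P_m$. It is sent to $bb$ or $a$ when $y$ follows $x$. The variable $a$ is always sent to the letter $a$, and $x$ is sent to $0$.

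The case $x\in c(\mathbf v)$ is immediate. The new element $0$ is a multiplicative zero of $\widehat A_2$, so $\mathbf v^{\theta_x}=0$. From now on I assume $x\notin c(\mathbf v)$ and compute $\mathbf v^{\theta_x}$ in $S(abba)$.

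The key observation is that every path variable occurring in a word of $\mathscr S_m$ lies either entirely before $x$ or entirely after $x$ on $P_m$. The only word using two path variables is one of $ab_ic_i$, $ab_{i+1}c_i$, $\mathbf s_m$, and in each case the two variables are adjacent on $P_m$, so $x$ cannot lie strictly between them. I then check the words one at a time.

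\begin{itemize}
\item[] For $ab_0^2a$: since $b_0$ is the first vertex of $P_m$, it precedes $x$. The word evaluates to $a\cdot b\cdot b\cdot a=abba$.
\item[] For $ab_ma$: since $b_m$ is the last vertex, it follows $x$. The word evaluates to $a\cdot bb\cdot a=abba$.
\item[] For $ab_ic_i$, where $b_i$ precedes $c_i$ on the path: if $x$ comes before both, the value is $a\cdot bb\cdot a$; if $x$ comes after both, it is $a\cdot b\cdot ba$. Both equal $abba$.
\item[] For $ab_{i+1}c_i$, where $c_i$ precedes $b_{i+1}$: the same two cases give $a\cdot bb\cdot a$ and $a\cdot b\cdot ba$, again both $abba$.
\item[] For $\mathbf s_m=c_{m-1}b_ma$: if $x$ is not one of $c_{m-1},b_m$, then $x$ precedes both. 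The value is $a\cdot bb\cdot a=abba$.
\end{itemize}

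In every multiplication the partial products are factors of $abba$, so no $\infty$ arises. This proves the first formula.

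For $\mathbf Q_m^{\theta_x}$, every summand of $\mathbf Q_m$ lies in $\mathscr S_m$ except $\mathbf t_m=ab_mc_{m-1}$. I handle $\mathbf t_m$ in the same way. If $x\in c(\mathbf t_m)$, its value is $0$. Otherwise $x$ precedes both $c_{m-1}$ and $b_m$, and the value is $a\cdot bb\cdot a=abba$.

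So each summand of $\mathbf Q_m$ evaluates to $0$ or to $abba$. Since $0$ is the additive identity and $abba+abba=abba$, the sum is $abba$ provided at least one summand avoids $x$. Such a summand exists: take $ab_ma$ if $x=b_0$, and $ab_0^2a$ otherwise.

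The only step needing care is the observation that $x$ never separates the two path variables of a single word; everything else is a routine case check.
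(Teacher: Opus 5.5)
Your proof is correct and follows the same route as the paper: a direct case check resting on the fact that the two path variables of each word in $\mathscr S_m$ are adjacent in $P_m$ and so lie on the same side of $x$, together with the observation that one of the two anchor monomials always avoids $x$. One small slip: $\mathbf t_m=ab_mc_{m-1}$ already belongs to $\mathscr S_m$ (it is the $i=m-1$ instance of $ab_{i+1}c_i$), so your separate treatment of it is redundant but harmless.
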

\begin{proof}
For each relational monomial $ab_ic_j$, the vertices $b_i,c_j$
are adjacent in $P_m$. If neither vertex is $x$, they lie on the
same side of $x$. In either case, substituting their prescribed
images into $ab_ic_j$ gives $abba$. When defined away from the deleted vertex, the left and
right anchors evaluate respectively as $ab^2a=abba$ and
$a(bb)a=abba$. The additional word $\mathbf{s}_m$ also evaluates to $abba$ whenever neither of its adjacent path vertices is deleted: they then lie to the right of $x$. Any word containing $x$ evaluates to $0$. At least one of the two anchor monomials avoids $x$, so the sum $\mathbf{Q}_m$ evaluates to $abba$.
\end{proof}

\begin{lemma}[A three-variable exchange identity]\label{ind:lem:exchange}
The following identity holds in $\widehat A_2$:
\[
xyx+xyz\approx xyx+zyx.
\]
Consequently,
\[
\mathbf{R}_m+\mathbf{t}_m\approx \mathbf{R}_m+\mathbf{t}_m+\mathbf{s}_m\approx \mathbf{R}_m+\mathbf{s}_m
\]
holds in $\widehat A_2$.
\end{lemma}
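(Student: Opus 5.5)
The plan is to verify the exchange identity with the identity criterion for zero extensions (Lemma~\ref{lem:restriction}), which reduces it to absorptions in the flat semiring $A_2=S(abba)$. After that, the consequence follows by substituting variables and adding summands.

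\emph{The exchange identity.} Put $\mathbf{u}=xyx+xyz$ and $\mathbf{v}=xyx+zyx$. Every summand on either side has content contained in $\{x,y,z\}$.
\begin{itemize}
\item The summand $xyx$ has content $\{x,y\}$, so it lies in $D_{xyx}(\mathbf{v})$ and in $D_{xyx}(\mathbf{u})$. The absorption $xyx\preceq D_{xyx}(\cdot)$ then holds trivially in $A_2$, because $xyx$ is itself a summand.
\item For the summand $xyz$ of $\mathbf{u}$, we have $D_{xyz}(\mathbf{v})=\{xyx,zyx\}$, which is nonempty. We must show $A_2\models xyz\preceq xyx+zyx$.
\item Symmetrically, for the summand $zyx$ of $\mathbf{v}$ we must show $A_2\models zyx\preceq xyx+xyz$.
\end{itemize}

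\emph{Checking the absorption in $A_2$.} Fix an evaluation in $A_2$. If $xyx+zyx$ evaluates to $\infty$, the absorption holds, since $\infty$ is the top element of the natural order. Otherwise no variable takes the value $\infty$, and flat addition forces $xyx=zyx=\mathbf{w}$ for some factor $\mathbf{w}\in\Fac(abba)$. Here the products are concatenations of the words assigned to the variables. Right cancellation of $yx$ in the free semigroup gives $x=z$ as words. Hence $xyz=xyx=\mathbf{w}$, which is the value of the sum, and the absorption holds. The other absorption follows by the same argument with $x$ and $z$ interchanged, using left cancellation of $xy$. By Lemma~\ref{lem:restriction}, $\widehat A_2\models xyx+xyz\approx xyx+zyx$.

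\emph{The consequence.} From the identity just proved,
\[
xyx+xyz\approx (xyx+xyz)+(xyx+zyx)=xyx+xyz+zyx .
\]
Substitute $x\mapsto a$, $y\mapsto b_m$, $z\mapsto c_{m-1}$. This sends $xyx$ to $ab_ma$, $xyz$ to $\mathbf{t}_m$, and $zyx$ to $\mathbf{s}_m$. So we obtain
\[
ab_ma+\mathbf{t}_m\approx ab_ma+\mathbf{t}_m+\mathbf{s}_m\approx ab_ma+\mathbf{s}_m .
\]
Since $ab_ma$ is a summand of $\mathbf{R}_m$, adding $\mathbf{R}_m$ to every term and using idempotence gives
\[
\mathbf{R}_m+\mathbf{t}_m\approx\mathbf{R}_m+\mathbf{t}_m+\mathbf{s}_m\approx\mathbf{R}_m+\mathbf{s}_m .
\]

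No step here is a real obstacle. The only point needing care is the cancellation in the free semigroup: a non-$\infty$ value is an honest concatenation of factor words, so equality of the values of $xyx$ and $zyx$ is equality of words.
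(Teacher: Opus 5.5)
Your proof is correct, but it reaches the exchange identity by a different route from the paper.

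The paper argues by direct inspection in $S(abba)^0$. Zero assignments are checked by hand. For all-old assignments it uses the word-specific fact that the only non-top value of $xyx$ in $S(abba)$ is $abba$, attained only at $x=a$, $y=bb$. It then observes that $xyz$ and $zyx$ are both $abba$ when $z=a$ and both $\infty$ otherwise.

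You instead handle every zero assignment at once through the identity criterion for zero extensions (Lemma~\ref{lem:restriction}). This leaves only the two absorptions $xyz\preceq xyx+zyx$ and $zyx\preceq xyx+xyz$ in $A_2$, which you settle by cancellation in the free semigroup.

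What each approach buys:
\begin{itemize}
\item Your argument never uses the shape of $abba$. It therefore shows that $xyx+xyz\approx xyx+zyx$ holds in $S(W)^0$ for every set of words $W$, in particular throughout $\W_\infty$. More generally, it holds in the zero extension of any flat $\infty$-cancellative ai-semiring.
\item The paper's computation is self-contained and does not rely on the cited criterion. It also gives the $abba$-specific pointwise information that $xyz$ and $zyx$ coincide whenever $xyx\ne\infty$ under an all-old assignment. That stronger fact fails for general flat word semirings, although the identity itself does not.
\end{itemize}

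Your derivation of the consequence matches the paper: you substitute $x\mapsto a$, $y\mapsto b_m$, $z\mapsto c_{m-1}$, pass through $\mathbf{u}\approx\mathbf{u}+\mathbf{v}\approx\mathbf{v}$, and absorb $ab_ma$ into $\mathbf{R}_m$ by idempotence.
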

\begin{proof}
If any variable is assigned $0$, direct inspection shows that both sides agree. For all-old assignments, if $xyx=\infty$, both sides are $\infty$. Otherwise, the only non-top value of a product $xyx$ in $S(abba)$ is $abba$, necessarily with $x=a$ and $y=bb$. Then $xyz$ and $zyx$ are simultaneously $abba$ when $z=a$, and otherwise both are $\infty$. This establishes the identity. Substitute $x=a$, $y=b_m$, $z=c_{m-1}$ and add the remaining monomials of $\mathbf{R}_m$. Adding the two equivalent forms by idempotent addition yields the intermediate sum with both $\mathbf{t}_m$ and $\mathbf{s}_m$.
\end{proof}

\begin{lemma}\label{ind:lem:short}
If $m\ge2$, $\mathbf{w}\in Z_m^+$, and $\ell(\mathbf{w})\le4$, then
the identity $\mathbf{Q}_m\approx \mathbf{Q}_m+\mathbf{w}$ holds in $\widehat A_2$
if and only if $\mathbf{w}\in\mathscr S_m$.
\end{lemma}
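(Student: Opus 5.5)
The plan is to reduce the statement, via Proposition~\ref{cor:absorption}, to a statement about $A_2=S(abba)$, and then pin down $\mathbf{w}$ with the deletion assignments of Lemma~\ref{ind:lem:deletion}. By that proposition, $\widehat A_2\models \mathbf{w}\preceq \mathbf{Q}_m$ holds if and only if $D_\mathbf{w}(\mathbf{Q}_m)\ne\varnothing$ and $A_2\models \mathbf{w}\preceq D_\mathbf{w}(\mathbf{Q}_m)$. This absorption is exactly the identity $\mathbf{Q}_m\approx\mathbf{Q}_m+\mathbf{w}$.

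\emph{Sufficiency.} If $\mathbf{w}$ is a summand of $\mathbf{Q}_m$, there is nothing to prove. The only member of $\mathscr S_m$ that is not a summand is $\mathbf{s}_m$. For it, Lemma~\ref{ind:lem:exchange} gives $\mathbf{Q}_m=\mathbf{R}_m+\mathbf{t}_m\approx \mathbf{R}_m+\mathbf{t}_m+\mathbf{s}_m=\mathbf{Q}_m+\mathbf{s}_m$ in $\widehat A_2$.

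\emph{Necessity: content.} Every summand of $\mathbf{Q}_m$ contains $a$, so $D_\mathbf{w}(\mathbf{Q}_m)\neq\varnothing$ forces $a\in c(\mathbf{w})$. It also forces $c(\mathbf{w})$ to contain $\{b_0\}$, $\{b_m\}$, or an adjacent pair $\{b_i,c_i\}$ or $\{b_{i+1},c_i\}$ of $P_m$. Since $\ell(\mathbf{w})\le4$ and $a\in c(\mathbf{w})$, at most three path vertices occur in $\mathbf{w}$. The path $P_m$ has $2m+1\ge5$ vertices, so some vertex $x\in P_m$ lies outside $c(\mathbf{w})$.

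\emph{Necessity: the test evaluation.} For such an $x$, I use a modification of $\theta_x$. Every variable outside $c(\mathbf{w})$ is sent to $0$, while the variables in $c(\mathbf{w})$ keep their $\theta_x$-values. Under this evaluation, each member of $D_\mathbf{w}(\mathbf{Q}_m)$ evaluates to $abba$, exactly as in the proof of Lemma~\ref{ind:lem:deletion}. All other summands of $\mathbf{Q}_m$ evaluate to $0$. Hence $\mathbf{Q}_m\mapsto abba$. On the other hand, $\mathbf{w}$ takes an old value, since all its variables are old. In the flat order, an old element lies below $abba$ only if it equals $abba$ (or $\infty$ would exceed it). The absorption therefore forces $\mathbf{w}\mapsto abba$.

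\emph{Necessity: combinatorics.} Now $\mathbf{w}$ is a word of length at most $4$ with $a\mapsto a$, each $b_i\mapsto b$ or $bb$, and each $c_i\mapsto ba$ or $a$, whose product is $abba$. The value of each path variable depends on whether it lies left or right of the chosen $x$. First I list the short words giving $abba$. Since $\mathbf{w}$ must begin with the image of $a$, the possible shapes are $a\,y\,y\,a$ with $y\mapsto b$; $a\,y\,a$ with $y\mapsto bb$; and $a\,y\,z$ with $y\mapsto b$, $z\mapsto ba$, or $y\mapsto bb$, $z\mapsto a$. There are also the shapes $z\,y\,a$ with $z\mapsto ab$? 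This shape cannot occur, because no variable takes a value beginning with $ab$. However, words beginning with $c_i\mapsto a$ do occur, namely $c\,y\,a$ with $y\mapsto bb$; this is exactly the shape of $\mathbf{s}_m$.

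I then vary $x$ among the missing path vertices, to the left and to the right of the vertices in $c(\mathbf{w})$ whenever possible. The requirement that $\mathbf{w}\mapsto abba$ for every such choice should force the following:
\begin{enumerate}
\item the middle letter(s) form $b_0^2$ or $b_m$;
\item in a word $a\,y\,z$, the variables $y$ and $z$ are adjacent in $P_m$ with $z=c_i$, so that $\mathbf{w}$ is some $ab_ic_i$ or $ab_{i+1}c_i$;
\item in a word $z\,y\,a$, we must have $z=c_{m-1}$ and $y=b_m$, which is $\mathbf{s}_m$.
\end{enumerate}
In case (i), the outcome $b_0$ versus $b_m$ is settled by the extra value choice $b_m\mapsto bb$, which is available since $b_m$ is the right endpoint.

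\emph{Main obstacle.} The hardest step is this final case analysis, in particular when every missing vertex lies on one side of $c(\mathbf{w})$, as happens near the endpoints $b_0$ or $b_m$. There the sidedness trick gives only one pattern of values. I would handle this first by choosing $x$ extremal (at $b_0$ or $b_m$ when missing). If that fails, I would supplement $\theta_x$ with further ad hoc all-old assignments into $A_2$ that keep each member of $D_\mathbf{w}(\mathbf{Q}_m)$ equal to $abba$ but break spurious candidates, for example $ab_1b_1a$ or $c_ib_{i+1}a$ with $i<m-1$. Each such candidate must be shown to take a value different from $abba$ under at least one admissible evaluation.
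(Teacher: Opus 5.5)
Your reduction is sound and essentially matches the paper's. In the end the only constraint used is $\mathbf{w}^{\theta_x}=abba$ for every path vertex $x\notin c(\mathbf{w})$. Zeroing the variables outside $c(\mathbf{w})$ is harmless but unnecessary, since Lemma~\ref{ind:lem:deletion} already gives $\mathbf{Q}_m^{\theta_x}=abba$.

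The gap is that the necessity direction stops exactly where its substance begins, so as written necessity is not proved:
\begin{enumerate}
\item Your list of shapes is incomplete. A four-letter word with value $abba$ need not have the form $a\,y\,y\,a$, since the two middle letters need not coincide a priori. Also, one endpoint may be a $c$-variable with image $a$, giving the shapes $c_hb_ib_ja$ and $ab_ib_jc_h$, which you never exclude.
\item Conclusions (i)--(iii) are only said to ``should'' follow.
\item The fallback to unspecified ad hoc assignments leaves open whether the family $\theta_x$ suffices at all.
\end{enumerate}

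The family $\theta_x$ does suffice, via a few specific deletions. Two facts drive everything. Since $b_m$ is the last vertex of $P_m$, whenever it occurs in $\mathbf{w}$ its image is $bb$. When $b_m\notin c(\mathbf{w})$, deleting it sends every $c_h$ to $ba$ and every $b_i$ to $b$.

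\emph{Four-letter case.} The middle letters are $b$-variables with image $b$, and the endpoints have image $a$. Suppose an endpoint is some $c_h$. Either $b_m\notin c(\mathbf{w})$, and deleting $b_m$ gives $c_h\mapsto ba$; or $b_m$ is a middle letter with image $bb$. Both fail, so $\mathbf{w}=ab_ib_ja$. Now deleting $c_0$ sends every $b_i$ with $i\ge1$ to $bb$, which forces $\mathbf{w}=ab_0^2a$.

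\emph{Three-letter case.} The middle letter is some $b_i$.
\begin{enumerate}
\item For $ab_ia$ and $c_hb_ia$, deleting $b_m$ forces $i=m$.
\item For $c_hb_ma$, deleting $c_{m-1}$ forces $h=m-1$.
\item For $ab_ic_j$ with $b_i,c_j$ nonadjacent in $P_m$, delete a vertex strictly between them. This yields $a\cdot b\cdot a$ or $a\cdot bb\cdot ba$, neither equal to $abba$.
\end{enumerate}

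So the one-sided value patterns you were worried about are exactly what eliminate the spurious candidates. For example, deleting $c_0$ kills $ab_1b_1a$, and deleting $b_m$ kills $c_ib_{i+1}a$ for $i<m-1$. No extra assignments are needed.
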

\begin{proof}
Every $\mathbf{w}\in\mathscr S_m$ is absorbed by $\mathbf{Q}_m$ by
idempotence and Lemma~\ref{ind:lem:exchange}.

Conversely, suppose $\widehat A_2\models \mathbf{Q}_m\approx \mathbf{Q}_m+\mathbf{w}$.
We must have $a\in c(\mathbf{w})$: assigning $a=0$ and the remaining
variables nonzero old factors gives $\mathbf{Q}_m=0$ but $\mathbf{w}\ne0$ if
$a\notin c(\mathbf{w})$.  For any omitted path vertex
$x\in P_m\setminus c(\mathbf{w})$, Lemma~\ref{ind:lem:deletion}
gives $\mathbf{Q}_m^{\theta_x}=abba$. Since $\mathbf{w}$ is absorbed by
$\mathbf{Q}_m$, flat addition then gives $\mathbf{w}^{\theta_x}=abba$.
Such a vertex exists, since $|P_m|=2m+1\ge5$ and $\mathbf{w}$
contains $a$ and at most three distinct path variables.
All nonzero $\theta_x$-images lie in $\{a,b,bb,ba\}$.
The only decompositions of $abba$ into three or four factors
from this set are
\[
abba=a\cdot b\cdot b\cdot a
     =a\cdot b\cdot ba
     =a\cdot bb\cdot a.
\]
Consequently $\ell(\mathbf{w})\in\{3,4\}$.

\smallskip\noindent\emph{Case $\ell(\mathbf{w})=4$.}
The middle letters are $b$-variables; the endpoints are
$a$ or $c$-variables, with at least one endpoint equal to $a$.
If the other endpoint is $c_h$ and $b_m\notin c(\mathbf{w})$, deletion
of $b_m$ makes $c_h$ evaluate to $ba$, not the required
endpoint $a$.  If $b_m\in c(\mathbf{w})$, it occurs in a middle
position; an omitted path vertex preceding $b_m$ makes its
image $bb$, not the required one-letter factor $b$.
Thus both endpoints are $a$ and $\mathbf{w}=ab_ib_ja$.
Deleting $c_0$ forces both middle variables to be $b_0$:
\[
\mathbf{w}=ab_0^2a.
\]

\smallskip\noindent\emph{Case $\ell(\mathbf{w})=3$.}
The middle position is a $b_i$, the endpoints belong to
$\{a\}\cup\{c_h:0\le h<m\}$, and at least one is $a$.
There are three possibilities:
\[
\mathbf{w}\in\{c_hb_ia,\ ab_ia,\ ab_ic_j\}.
\]
For $c_hb_ia$, deletion of $b_m$ excludes $i<m$, and
deletion of $c_{m-1}$ excludes $h<m-1$; hence
$\mathbf{w}=c_{m-1}b_ma=\mathbf{s}_m$.  For $ab_ia$, deletion of $b_m$
excludes $i<m$, giving $\mathbf{w}=ab_ma$.  Finally, for $ab_ic_j$,
the vertices $b_i,c_j$ must lie on the same side of every
omitted path vertex; otherwise their prescribed images cannot
multiply with $a$ to give $abba$.  Hence they are adjacent
in $P_m$, and
\[
\mathbf{w}\in\{ab_ic_i,\ ab_{i+1}c_i:0\le i<m\}.
\]
These are precisely the remaining members of $\mathscr S_m$.
\end{proof}

\begin{lemma}\label{ind:lem:long}
For every $\mathbf{w}\in Z_m^+$ with $\ell(\mathbf{w})\ge5$, the identity $\mathbf{Q}_m\approx \mathbf{Q}_m+\mathbf{w}$ holds in $\widehat A_2$
if and only if $c(\mathbf{w})=Z_m$.
\end{lemma}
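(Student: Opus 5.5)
Both directions of Lemma~\ref{ind:lem:long} come from the zero-adjunction criterion, Proposition~\ref{cor:absorption}, together with the deletion assignments of Lemma~\ref{ind:lem:deletion}.

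For sufficiency, suppose $c(\mathbf{w})=Z_m$ and $\ell(\mathbf{w})\ge5$. By Proposition~\ref{cor:absorption} we need $D_{\mathbf{w}}(\mathbf{Q}_m)\neq\varnothing$ and $S(abba)\models \mathbf{w}\preceq D_{\mathbf{w}}(\mathbf{Q}_m)$. Since every variable of $\mathbf{Q}_m$ lies in $Z_m=c(\mathbf{w})$, we have $D_{\mathbf{w}}(\mathbf{Q}_m)=\mathbf{Q}_m$, which is nonempty. Under any evaluation in $A_2=S(abba)$, the word $\mathbf{w}$ is a product of at least five old elements, so it equals $\infty$. The reason is the one used in Lemma~\ref{ind:lem:evaluation}: a nonempty factor of $abba$ has length at most $4$, and any product involving $\infty$ is $\infty$. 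The paper's Lemma~\ref{ind:lem:global} does show that $\mathbf{Q}_m$ is $\infty$ under all-old evaluations, which would give $\mathbf{w}\le\mathbf{Q}_m$ at once. Still, I will argue directly through the absorption criterion, because all-old evaluations are exactly what the criterion tests. The same argument as in Lemma~\ref{ind:lem:global}, specialised to $n=2$, shows that $\mathbf{Q}_m=\infty$ under every evaluation into $S(abba)$. Hence $\mathbf{w}^\varphi=\infty=\mathbf{Q}_m^\varphi$, and the absorption holds. Alternatively, one can note that $\mathbf{w}$ and $\mathbf{K}_m$ have the same image $\lambda_{Z_m}$ in $\overline{Z_m^+}$, so Lemma~\ref{ind:lem:evaluation} gives them identical evaluations, and then invoke Lemma~\ref{ind:lem:global} with $n=2$.

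For necessity, suppose $\widehat A_2\models\mathbf{Q}_m\approx\mathbf{Q}_m+\mathbf{w}$. First, $a\in c(\mathbf{w})$: otherwise assign $a=0$ and every other variable a nonzero old value. This makes $\mathbf{Q}_m=0$ while $\mathbf{w}\neq0$, since every summand of $\mathbf{Q}_m$ contains $a$. Next, suppose some path vertex $x\in P_m$ is missing from $c(\mathbf{w})$. Lemma~\ref{ind:lem:deletion} gives $\mathbf{Q}_m^{\theta_x}=abba$. Under $\theta_x$ every variable of $\mathbf{w}$ takes a nonempty old value in $\{a,b,bb,ba\}$, so $\mathbf{w}^{\theta_x}$ is a product of $\ell(\mathbf{w})\ge5$ nonempty factors. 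That product is $\infty$, which is not $\le abba$ in the flat order. This contradicts absorption, so $c(\mathbf{w})\supseteq P_m\cup\{a\}=Z_m$.

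I expect no real obstacle here. The only delicate point is making sure the all-old evaluation of $\mathbf{Q}_m$ in $S(abba)$ is $\infty$ in the sufficiency direction. The rigidity argument of Lemmas~\ref{ind:lem:center} and~\ref{ind:lem:global} covers this, and I would cite it rather than redo it.
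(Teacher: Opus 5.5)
Your proposal is correct and follows essentially the paper's own argument. For necessity you use the assignment $a=0$ and the deletion assignments $\theta_x$, exactly as the paper does. For sufficiency, Proposition~\ref{cor:absorption} packages the paper's split into new-zero evaluations and all-old evaluations, and in the all-old case both $\mathbf{w}$ and $\mathbf{Q}_m$ equal $\infty$ by the rigidity argument of Lemma~\ref{ind:lem:global}.
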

\begin{proof}
If the content is all of $Z_m$, any assignment with a new zero sends $\mathbf{w}$ to $0$, and any all-old assignment sends both $\mathbf{Q}_m$ (Lemma~\ref{ind:lem:global}) and $\mathbf{w}$ to $\infty$. Conversely, if $a\notin c(\mathbf{w})$, assign $a=0$ and all other variables nonzero old values: then $\mathbf{Q}_m=0$ and $\mathbf{w}=\infty$. If a path vertex $x$ is omitted from the support, use $\theta_x$: it sends $\mathbf{Q}_m$ to $abba$ but sends the length-at-least-five word $\mathbf{w}$, involving only nonzero old values, to $\infty$. Thus the identity fails unless the content is all of $Z_m$.
\end{proof}

\subsection{The six-element kernel class}
Hereafter fix $X=Z_m$. As in the singleton convention for
$\PX$, the notation $\lambda_{Z_m}$ also denotes
$\{\lambda_{Z_m}\}\in\PX$ when used as an additive
summand. For $U,V\in\PX$, write $U\sim_m V$ if and only if
\[
 \widehat\alpha(U)=\widehat\alpha(V)
 \quad\text{for every }\alpha:Z_m\to\widehat A_2.
\]
This is the kernel congruence of the joint evaluation map.
Since $\Theta_{k,Z_m}\subseteq\ker\widehat\alpha$ for each $\alpha$,
we have $\Theta_{k,Z_m}\subseteq\sim_m$.

\begin{lemma}[Necessary short monomials]\label{ind:lem:necessary}
If $U\sim_m \mathbf{Q}_m$, then the short-word part of $U$ contains every monomial of $\mathbf{R}_m$ and at least one of $\mathbf{t}_m,\mathbf{s}_m$.
\end{lemma}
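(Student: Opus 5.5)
The plan is to translate $U\sim_m \mathbf{Q}_m$ into absorption statements and then force each required monomial by evaluations chosen so that only a few members of $U$ can survive.

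\emph{Step 1: what $U$ can contain.} Every member $\mathbf{w}\in U$ satisfies $\widehat\alpha(\mathbf{w})\le\widehat\alpha(U)=\widehat\alpha(\mathbf{Q}_m)$ for all $\alpha:Z_m\to\widehat A_2$. Hence $\widehat A_2\models \mathbf{Q}_m\approx\mathbf{Q}_m+\mathbf{w}$ for short $\mathbf{w}$. Lemma~\ref{ind:lem:short} then gives $\mathbf{w}\in\mathscr S_m$. A long class $\lambda_C\in U$ satisfies the analogous absorption, so Lemma~\ref{ind:lem:long} gives $C=Z_m$. Thus
\[
U\subseteq\mathscr S_m\cup\{\lambda_{Z_m}\}.
\]

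\emph{Step 2: each monomial $\mathbf{p}$ of $\mathbf{R}_m$ lies in $U$.} Assign $0$ to every variable outside $c(\mathbf{p})$, and give the variables of $c(\mathbf{p})$ old values making $\mathbf{p}=abba$. By Lemma~\ref{ind:lem:evaluation}, $\lambda_{Z_m}\mapsto0$, and every member of $\mathscr S_m$ whose content is not inside $c(\mathbf{p})$ also goes to $0$.
\begin{itemize}
\item For $\mathbf{p}=ab_0^2a$, take $a=a$, $b_0=b$. The only member of $\mathscr S_m$ with content in $\{a,b_0\}$ is $\mathbf{p}$.
\item For $\mathbf{p}=ab_ma$, take $a=a$, $b_m=bb$. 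The only survivor is again $\mathbf{p}$.
\item For $\mathbf{p}=ab_ic_i$ or $ab_{i+1}c_i$ with $i+1\le m-1$, take $a=a$, $b=bb$, $c=a$. The survivors are $\mathbf{p}$ and possibly $ab_0^2a$ (when $b_0\in c(\mathbf{p})$), which evaluates to $abbbba=\infty$.
\end{itemize}
In each case $\mathbf{Q}_m$ evaluates to $abba$, because $\mathbf{p}\in\mathbf{Q}_m$ and every other summand is $0$ or (in the last case, for $ab_0^2a$) would be $\infty$. If $ab_0^2a$ survives with value $\infty$, it forces $\mathbf{Q}_m=\infty$; I will recheck this case and, if needed, instead use $b_0=b$, $c_0=ba$. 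Since $U$ must evaluate to $abba$ and its only possibly non-zero members are among the survivors, $\mathbf{p}\in U$.

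Note that $t_m=ab_mc_{m-1}$ is excluded from $\mathbf{R}_m$, so the content $\{a,b_m,c_{m-1}\}$, which also carries $ab_ma$ and $\mathbf{s}_m$, never has to be isolated. This is why the exchange pair can only be controlled jointly.

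\emph{Step 3: at least one of $\mathbf{t}_m,\mathbf{s}_m$ lies in $U$.} This is the main point. Suppose neither does. Then by Step 1, $U\subseteq\mathbf{R}_m\cup\{\lambda_{Z_m}\}$. Use the assignment
\[
a\mapsto a,\quad b_i\mapsto b\ (i<m),\quad b_m\mapsto bb,\quad c_i\mapsto ba\ (i\ne0),\quad c_0\mapsto0.
\]
Since $m\ge2$, we have $c_{m-1}\ne c_0$. Under this assignment:
\begin{itemize}
\item $\lambda_{Z_m}\mapsto0$, and the monomials of $\mathbf{R}_m$ containing $c_0$ vanish.
\item Every other monomial of $\mathbf{R}_m$ evaluates to $abba$: namely $ab_0^2a=abba$, $ab_ma=a\,bb\,a$, and $ab_ic_i=ab_{i+1}c_i=a\,b\,ba$ for $i+1\le m-1$.
\item $\mathbf{t}_m=a\,bb\,ba=\infty$ and $\mathbf{s}_m=ba\,bb\,a=\infty$.
\end{itemize}
So $\mathbf{Q}_m\mapsto\infty$, whereas $U\mapsto abba$. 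Here $U$ is nonempty on this evaluation because $ab_ma\in U$ by Step 2. This contradicts $U\sim_m\mathbf{Q}_m$.

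The delicate step is choosing this last assignment. It must make all of $\mathbf{R}_m$ coherent at $abba$ while breaking both $\mathbf{t}_m$ and $\mathbf{s}_m$, and simultaneously kill $\lambda_{Z_m}$ by a zero that touches no problematic monomial. The zero at $c_0$ does this, and it needs $m\ge2$.
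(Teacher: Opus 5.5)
\textbf{Gap in Step~2: the monomial $ab_0c_0$ is never forced.} Your Steps~1 and~3 are sound. In Step~3, the global assignment works: $c_0\mapsto 0$ kills $\lambda_{Z_m}$ and the two $c_0$-monomials, the remaining monomials of $\mathbf{R}_m$ go to $abba$, and $\mathbf{t}_m\mapsto abbba=\infty$, $\mathbf{s}_m\mapsto babba=\infty$. This is a legitimate alternative to the paper's local test ($a=a$, $b_m=bb$, $c_{m-1}=b$, all else $0$).

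The problem is that you left the case $ab_0c_0$ unresolved, and your proposed fallback does not close it. Any assignment that is nonzero on $c(ab_0c_0)=\{a,b_0,c_0\}$ also keeps $ab_0^2a$ alive, and $ab_0^2a\in U$ is already forced. So $ab_0c_0$ can never be isolated, and the template ``make $\mathbf{p}=abba$ and kill everything else'' cannot work for it.
\begin{enumerate}
\item Your first choice ($b_0=bb$, $c_0=a$) sends $ab_0^2a$ to $\infty$. Then $U$ and $\mathbf{Q}_m$ are both $\infty$.
\item Your fallback ($b_0=b$, $c_0=ba$) sends both $ab_0^2a$ and $ab_0c_0$ to $abba$. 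Then $U$ and $\mathbf{Q}_m$ both equal $abba$ whether or not $ab_0c_0\in U$.
\end{enumerate}
Neither yields a contradiction, so the proposal does not establish that $ab_0c_0$ lies in $U$, which the statement requires.

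\textbf{The fix.} Use the idea you already use in Step~3. Do not make $\mathbf{p}$ equal to $abba$. Instead, keep the already-forced $ab_0^2a$ at $abba$ and give $ab_0c_0$ a different value. Then $\mathbf{Q}_m$ becomes $\infty$, while $U$ without $ab_0c_0$ would not.

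Concretely, take $a\mapsto a$, $b_0\mapsto b$, $c_0\mapsto a$, and every other variable $\mapsto 0$. This gives $ab_0^2a=abba$ and $ab_0c_0=aba=\infty$. Every other member of $\mathscr S_m\cup\{\lambda_{Z_m}\}$ evaluates to $0$. Hence $\mathbf{Q}_m\mapsto\infty$, whereas $U\mapsto abba$ if $ab_0c_0\notin U$. This is exactly the paper's test~(ii).

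\textbf{Minor point.} Make sure your third bullet also covers $ab_{m-1}c_{m-1}$. The restriction $i+1\le m-1$ should apply only to $ab_{i+1}c_i$. Your assignment does isolate $ab_{m-1}c_{m-1}$, since $b_m\mapsto 0$ and $m\ge2$.
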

\begin{proof}
First, $U\sim_m \mathbf{Q}_m$ implies
$\widehat A_2\models \mathbf{Q}_m+U\approx \mathbf{Q}_m$.
Thus every member of $U$ is absorbed by $\mathbf{Q}_m$.  By
Lemmas~\ref{ind:lem:short} and \ref{ind:lem:long},
\[
U\subseteq\mathscr S_m\cup\{\lambda_{Z_m}\}.
\]
We now force the necessary short summands.  In each test
below, all unspecified variables take the new zero; hence
$\lambda_{Z_m}$ also has value $0$.
\begin{enumerate}[label=\textup{(\roman*)},leftmargin=2.2em]
\item Setting $a=a$, $b_0=b$ isolates $ab_0^2a=abba$.
Setting $a=a$, $b_m=bb$ isolates $ab_ma=abba$.
\item Setting $a=a$, $b_0=b$, $c_0=a$ gives
\[
(ab_0^2a)^\alpha=abba,\qquad
(ab_0c_0)^\alpha=\infty,
\]
while all other allowed monomials have value $0$; here
$aba$ is not a factor of $abba$.  This forces $ab_0c_0$.
\item For $1\le i<m$, setting
$a=a$, $b_i=b$, $c_i=ba$ isolates
$(ab_ic_i)^\alpha=abba$.
For $0\le i<m-1$, setting
$a=a$, $b_{i+1}=b$, $c_i=ba$ isolates
$(ab_{i+1}c_i)^\alpha=abba$.
\item Finally, setting $a=a$, $b_m=bb$, $c_{m-1}=b$ gives
\[
(ab_ma)^\alpha=abba,\qquad
(\mathbf{t}_m)^\alpha=(\mathbf{s}_m)^\alpha=\infty,
\]
with every other allowed monomial taking value $0$.
Therefore at least one of $\mathbf{t}_m,\mathbf{s}_m$ occurs in $U$.
\end{enumerate}
In each of (i)--(iii), omitting the indicated monomial would
make $U$ and $\mathbf{Q}_m$ evaluate differently; (iv) rules out
omitting both $\mathbf{t}_m,\mathbf{s}_m$.  This proves the claim.
\end{proof}

\begin{theorem}[The six-element kernel class]\label{ind:thm:fiber}
Put
\[
F_m=\{\mathbf{R}_m+\mathbf{t}_m,\ \mathbf{R}_m+\mathbf{s}_m,\ \mathbf{R}_m+\mathbf{t}_m+\mathbf{s}_m\},
\qquad F_m+\lambda_{Z_m}:=\{U+\lambda_{Z_m}:U\in F_m\}.
\]
Then, for every $m\ge2$,
\[
[\mathbf{Q}_m]_{\sim_m}=F_m\ \dot\cup\ (F_m+\lambda_{Z_m}).
\]
In particular, this kernel class has exactly six elements.
\end{theorem}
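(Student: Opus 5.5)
The proof will establish two inclusions. The upper bound comes from Lemma~\ref{ind:lem:necessary}, and the lower bound checks that each of the six listed sets really lies in the kernel class of $\mathbf{Q}_m$.

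For $\subseteq$, let $U\sim_m \mathbf{Q}_m$. The proof of Lemma~\ref{ind:lem:necessary}, through Lemmas~\ref{ind:lem:short} and~\ref{ind:lem:long}, gives
\[
U\subseteq\mathscr S_m\cup\{\lambda_{Z_m}\}
=\mathbf{R}_m\cup\{\mathbf{t}_m,\mathbf{s}_m,\lambda_{Z_m}\}
\]
as sets of monomials. Here we use that $\mathscr S_m$ consists exactly of the monomials of $\mathbf{R}_m$ together with $\mathbf{t}_m=ab_mc_{m-1}$ (the $i=m-1$ term $ab_{m}c_{m-1}$) and $\mathbf{s}_m$. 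Before relying on this, I will confirm that the index bookkeeping in the definition of $\mathbf{R}_m$ matches $\mathscr S_m$. The same lemma says the short part of $U$ contains all of $\mathbf{R}_m$ and at least one of $\mathbf{t}_m,\mathbf{s}_m$. So the short part of $U$ is one of the three members of $F_m$. The only remaining freedom is whether $\lambda_{Z_m}$ is present, which gives at most six possibilities. The two families are disjoint because membership of $\lambda_{Z_m}$ differs. The three members of $F_m$ are pairwise distinct subsets of $\overline{Z_m^+}$, since $\mathbf{t}_m\ne\mathbf{s}_m$ are distinct words of length three and so distinct singleton classes.

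For $\supseteq$, Lemma~\ref{ind:lem:exchange} gives $\widehat A_2\models \mathbf{R}_m+\mathbf{t}_m\approx \mathbf{R}_m+\mathbf{s}_m\approx \mathbf{R}_m+\mathbf{t}_m+\mathbf{s}_m$. Since $\mathbf{Q}_m=\mathbf{R}_m+\mathbf{t}_m$, every $U\in F_m$ has the same value as $\mathbf{Q}_m$ under each $\alpha$. To transfer this from identities of terms to elements of $\PX$, I will use that $\widehat\alpha$ is a homomorphism agreeing with term evaluation on short words (Lemma~\ref{ind:lem:evaluation}).

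It remains to show $U+\lambda_{Z_m}\sim_m U$. By Lemma~\ref{ind:lem:evaluation}, $\widehat\alpha(\lambda_{Z_m})$ is $0$ when some variable has value $0$, and adding $0$ changes nothing. Otherwise $\widehat\alpha(\lambda_{Z_m})=\infty$. But then, exactly as in Lemma~\ref{ind:lem:global} with $n=2$, every all-old evaluation sends $\mathbf{Q}_m$, and hence $U$, to $\infty$. So the sum is again $\infty$. Equivalently, this is Lemma~\ref{ind:lem:long} applied to $\mathbf{K}_m$, whose class is $\lambda_{Z_m}$.

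The only delicate step is the containment $U\subseteq \mathscr S_m\cup\{\lambda_{Z_m}\}$. It needs Lemma~\ref{ind:lem:long} to say that the only long class absorbed by $\mathbf{Q}_m$ is $\lambda_{Z_m}$, and it needs absorption on classes to be well defined. Both hold because evaluation factors through $\overline{Z_m^+}$ and the long-word criterion depends only on content.
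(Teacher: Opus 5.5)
Your proposal is correct and follows essentially the same route as the paper: the upper bound comes from Lemmas~\ref{ind:lem:short}, \ref{ind:lem:long} and~\ref{ind:lem:necessary}, and the reverse inclusion uses Lemma~\ref{ind:lem:exchange} for the three short representatives and Lemma~\ref{ind:lem:global} (via Lemma~\ref{ind:lem:evaluation}) to show that adjoining $\lambda_{Z_m}$ does not change any evaluation. Your additional checks fill in details the paper leaves implicit: that $\mathscr S_m$ is exactly the monomials of $\mathbf{R}_m$ together with $\mathbf{t}_m,\mathbf{s}_m$, and that the six candidates are pairwise distinct.
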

\begin{proof}
By Lemmas~\ref{ind:lem:short} and \ref{ind:lem:long}, the only possible members of an element semantically equivalent to $\mathbf{Q}_m$ are the short monomials of $\mathscr S_m$ and $\lambda_{Z_m}$. By Lemma~\ref{ind:lem:necessary}, the short part contains all monomials in $\mathbf{R}_m$ and at least one of $\mathbf{t}_m,\mathbf{s}_m$. This yields only the six stated candidates. Lemma~\ref{ind:lem:exchange} makes the three short candidates equivalent, while Lemma~\ref{ind:lem:global} makes adjoining $\lambda_{Z_m}$ semantically inert. Hence the kernel class contains exactly these six elements.
\end{proof}

\begin{lemma}\label{ind:lem:crossing}
If $[\mathbf{Q}_m]_{\Theta_{k,Z_m}}=[\mathbf{Q}_m+\mathbf{K}_m]_{\Theta_{k,Z_m}}$, then some elementary congruence step generated by a valid identity in at most $k$ variables, after substitution and application of a unary polynomial context, connects one member of $F_m$ with one of $(F_m+\lambda_{Z_m})$.
\end{lemma}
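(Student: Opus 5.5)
The plan is to unwind the definition of the generated congruence and then read off the crossing step from the six-element class of Theorem~\ref{ind:thm:fiber}.

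\emph{Step 1: description of $\Theta_{k,Z_m}$.} The congruence $\Theta_{k,Z_m}$ is generated by the pairs $(\mathbf{u}^\sigma,\mathbf{v}^\sigma)$, where $\mathbf{u}\approx\mathbf{v}$ ranges over $\Id_{\le k}(\widehat A_2)$ and $\sigma$ over substitutions of variables by elements of $\PX$. By the standard description of the congruence generated by a symmetric set of pairs, two elements are $\Theta_{k,Z_m}$-related exactly when they are joined by a finite chain
\[
\mathbf{Q}_m=W_0,W_1,\ldots,W_r=\mathbf{Q}_m+\mathbf{K}_m .
\]
In such a chain each consecutive pair has the form $\{W_{j-1},W_j\}=\{p(\mathbf{u}^\sigma),p(\mathbf{v}^\sigma)\}$ for some unary polynomial $p$ of $\PX$ and some generating pair. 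This is exactly an elementary congruence step. Here I take $\mathbf{K}_m$ to mean its image $\lambda_{Z_m}$ in $\PX$, so $W_r=\mathbf{Q}_m+\lambda_{Z_m}$.

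\emph{Step 2: the chain stays in the kernel class.} Proposition~\ref{ind:lem:local} gives $\Theta_{k,Z_m}\subseteq{\sim_m}$. The same holds for each single step: every generating pair lies in $\ker\widehat\alpha$, and kernels are closed under unary polynomials. Hence every $W_j$ lies in $[\mathbf{Q}_m]_{\sim_m}$. By Theorem~\ref{ind:thm:fiber}, every $W_j$ then belongs to $F_m\ \dot\cup\ (F_m+\lambda_{Z_m})$.

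\emph{Step 3: locating the crossing.} The endpoint $W_0=\mathbf{Q}_m=\mathbf{R}_m+\mathbf{t}_m$ lies in $F_m$. The endpoint $W_r=\mathbf{Q}_m+\lambda_{Z_m}$ lies in $F_m+\lambda_{Z_m}$. The two parts are disjoint, as Theorem~\ref{ind:thm:fiber} states, since one part contains $\lambda_{Z_m}$ as a member and the other does not. So there is a least index $j$ with $W_j\in F_m+\lambda_{Z_m}$. Then $W_{j-1}\in F_m$, and the step from $W_{j-1}$ to $W_j$ is the required elementary step connecting a member of $F_m$ with a member of $F_m+\lambda_{Z_m}$. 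By construction it comes from a valid identity in at most $k$ variables, after a substitution and a unary polynomial context.

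\emph{Main obstacle.} The only delicate point is Step 1. We need the congruence generated by all substitution instances of $\Id_{\le k}(\widehat A_2)$ to be the transitive closure of single replacements inside unary polynomial contexts. This is the usual Mal'cev description of generated congruences, and it holds because the generating set is symmetric and reflexive pairs are harmless. One must also check that unary polynomials of $\PX$ are built from $\cup$, setwise product and constants, so that Step 2 applies to them. Everything else is bookkeeping using Theorem~\ref{ind:thm:fiber}.
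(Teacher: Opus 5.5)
Your proposal is correct and follows the paper's own proof. Both arguments use the same three ingredients: the Mal'cev chain description of $\Theta_{k,Z_m}$; preservation of every evaluation along each step, which confines all intermediate elements to the six-element class of Theorem~\ref{ind:thm:fiber}; and a step at which the chain passes from $F_m$ into $F_m+\lambda_{Z_m}$ (your least-index choice makes the paper's ``some adjacent pair'' remark explicit).
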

\begin{proof}
A congruence generated by a family of pairs consists of
the pairs connected by finite chains of elementary replacements,
allowing substitutions, unary polynomial contexts, and reversed
steps. Apply this description to the presumed equality. Proposition~\ref{ind:lem:local} ensures that each step preserves every evaluation in $\widehat A_2$, so every intermediate element is one of the six listed in Theorem~\ref{ind:thm:fiber}. The initial element is $\mathbf{Q}_m\in F_m$; the final one is
$\mathbf{Q}_m+\mathbf{K}_m\in(F_m+\lambda_{Z_m})$. Some adjacent pair must therefore
lie in different parts of the kernel class. We use only that a finite chain connecting the two parts
must contain a crossing; the parts need not be congruence classes.
In particular, no bounded-variable derivation between distinct
short representatives is assumed. The argument also applies
when $k=1$ or $2$.
\end{proof}

\subsection{The uniform obstruction}
\begin{proposition}[Absorption normal form]\label{ind:lem:normal}
Over the axioms of additively idempotent semirings, an identity
$\mathbf{p}=\mathbf{u}_1+\cdots+\mathbf{u}_r\approx \mathbf{q}=\mathbf{v}_1+\cdots+\mathbf{v}_s$ is equivalent to the conjunction
\[
\mathbf{p}\approx \mathbf{p}+\mathbf{v}_j\quad(1\le j\le s),\qquad
\mathbf{q}\approx \mathbf{q}+\mathbf{u}_i\quad(1\le i\le r).
\]
Consequently, a replacement by an identity valid in $\widehat A_2$
can be factored into finitely many replacements by valid
single-monomial absorption identities, without introducing
new source variables.
\end{proposition}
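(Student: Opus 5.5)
The first assertion is the standard fact that an identity is equivalent to the conjunction of its absorptions in the variety of ai-semirings. I would argue it in two directions, purely from the ai-semiring axioms.

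\emph{Forward direction.} Assume $\mathbf{p}\approx \mathbf{q}$.
\begin{itemize}
\item Adding $\mathbf{v}_j$ to both sides gives $\mathbf{p}+\mathbf{v}_j\approx \mathbf{q}+\mathbf{v}_j$.
\item Since $\mathbf{v}_j$ is a summand of $\mathbf{q}$, idempotence and commutativity give $\mathbf{q}+\mathbf{v}_j\approx \mathbf{q}$.
\item Hence $\mathbf{p}+\mathbf{v}_j\approx \mathbf{q}\approx \mathbf{p}$.
\end{itemize}
The absorptions $\mathbf{q}\approx \mathbf{q}+\mathbf{u}_i$ follow symmetrically.

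\emph{Converse.} Assume all the absorptions $\mathbf{p}\approx\mathbf{p}+\mathbf{v}_j$ and $\mathbf{q}\approx\mathbf{q}+\mathbf{u}_i$.
\begin{itemize}
\item Apply the absorptions $\mathbf{p}\approx\mathbf{p}+\mathbf{v}_j$ successively for $j=1,\dots,s$. This yields $\mathbf{p}\approx \mathbf{p}+\mathbf{v}_1+\cdots+\mathbf{v}_s=\mathbf{p}+\mathbf{q}$.
\item Applying the other family in the same way yields $\mathbf{q}\approx \mathbf{q}+\mathbf{p}$.
\item Commutativity of $+$ then gives $\mathbf{p}\approx \mathbf{p}+\mathbf{q}\approx \mathbf{q}$.
\end{itemize}
Each absorption used involves only variables in $c(\mathbf{p})\cup c(\mathbf{q})$. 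Moreover, each is valid in $\widehat A_2$ whenever $\mathbf{p}\approx\mathbf{q}$ is, by the forward direction applied inside $\widehat A_2$.

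\emph{Consequence for replacement steps.} Consider one elementary step $C(\sigma(\mathbf{p}))\to C(\sigma(\mathbf{q}))$, where $\sigma$ is a substitution into $\PX$ and $C$ is a unary polynomial context. By the converse argument, the passage from $\mathbf{p}$ to $\mathbf{q}$ factors as a chain of $2s+\ldots$ elementary moves, each adding or deleting a single summand $\mathbf{v}_j$ or $\mathbf{u}_i$. Explicitly, the chain is
\[
\mathbf{p}\to\mathbf{p}+\mathbf{v}_1\to\cdots\to\mathbf{p}+\mathbf{q}\to\cdots\to\mathbf{q}+\mathbf{u}_r\to\cdots\to\mathbf{q}.
\]
In the middle part, the summands $\mathbf{u}_i$ are removed using the reversed absorptions $\mathbf{q}\approx\mathbf{q}+\mathbf{u}_i$. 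At every stage the current term has the form $\mathbf{x}+\mathbf{w}$, where $\mathbf{x}$ contains $\mathbf{p}$ or $\mathbf{q}$ as a subsum, and the move uses the absorption $\mathbf{p}\approx\mathbf{p}+\mathbf{v}_j$ or $\mathbf{q}\approx\mathbf{q}+\mathbf{u}_i$ inside the context ${-}+(\text{rest})$.

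\emph{The only delicate point.} This is bookkeeping: the extra summands must be absorbed into the unary polynomial context rather than into the identity. We take the new context $C'(z)=C(z+\sigma(\mathbf{r}))$, where $\mathbf{r}$ is the accumulated sum. This is still a unary polynomial of $\PX$, since the constants $\sigma(\mathbf{r})$ lie in $\PX$.

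Every move is then an instance of a single-monomial absorption in the variables of $\mathbf{p}\approx\mathbf{q}$, under the same substitution $\sigma$. So no new source variables appear, and the number of variables stays at most $k$. The chain is finite because $r$ and $s$ are finite.
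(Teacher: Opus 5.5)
Your proposal is correct and follows essentially the same route as the paper: both derive the two families of absorptions from $\mathbf{p}\approx\mathbf{q}$ and recover the identity by summing them, and your explicit chain with the enlarged context $C'(z)=C(z+\sigma(\mathbf{r}))$ spells out the paper's one-line remark that the same deductions, under any substitution and fixed context, give a finite chain. The only slip is your move count: the chain has $r+s$ moves, not ``$2s+\ldots$'', and this does not affect the argument.
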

\begin{proof}
If $\mathbf{p}\approx \mathbf{q}$, then $\mathbf{p}\approx \mathbf{p}+\mathbf{q}$. Since
$\mathbf{p}+\mathbf{v}_j$ is an additive subterm of $\mathbf{p}+\mathbf{q}$, each $\mathbf{v}_j$
is absorbed by $\mathbf{p}$; interchanging the two sides gives
the second family. Conversely, summing the first family yields $\mathbf{p}\approx \mathbf{p}+\mathbf{q}$, and summing the second yields $\mathbf{q}\approx \mathbf{p}+\mathbf{q}$, hence $\mathbf{p}\approx \mathbf{q}$. The same deductions, under any substitution and any fixed context, supply a finite chain from one side to the other.
\end{proof}

\begin{lemma}[Linearization of polynomial contexts]\label{ind:lem:contexts}
By distributivity, a polynomial context containing its
placeholder $z$ exactly once has the schematic form
\[
\mathbf{E}+\mathbf{M}z\mathbf{N},
\]
where $\mathbf{E}$ is a term independent of $z$ (possibly absent), and
$\mathbf{M},\mathbf{N}$ are products independent of $z$ (either factor may be
absent). These are local placeholders for the context, reused
in the proof of Theorem~\ref{ind:thm:bound}, rather than
additional operations or distinguished elements of the
semiring. A context containing several occurrences of $z$
can be handled by replacing the occurrences one at a time.
Omitted summands and factors are purely notational; they
do not introduce named additive or multiplicative constants.
\end{lemma}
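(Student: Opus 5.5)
We prove the statement by induction on the term structure of the context, using the term-as-set normal form of \cite[Theorem~2.5]{KurilPolak05}. A unary polynomial context $C(z)$ is built from $z$, terms not involving $z$, and the operations $+$ and $\cdot$. Suppose $z$ occurs exactly once. We claim that, modulo the ai-semiring axioms, $C(z)$ equals $\mathbf{E}+\mathbf{M}z\mathbf{N}$, where $\mathbf{E}$ is a (possibly absent) term without $z$ and $\mathbf{M},\mathbf{N}$ are (possibly absent) terms without $z$.

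The base case $C(z)=z$ gives $\mathbf{E},\mathbf{M},\mathbf{N}$ all absent. For the inductive step there are three cases.
\begin{enumerate}
\item If $C=C'+\mathbf{T}$ or $C=\mathbf{T}+C'$ with $\mathbf{T}$ free of $z$, commutativity and associativity of $+$ let us absorb $\mathbf{T}$ into $\mathbf{E}$.
\item If $C=\mathbf{T}\cdot C'$ with $C'=\mathbf{E}'+\mathbf{M}'z\mathbf{N}'$, left distributivity gives $\mathbf{T}\mathbf{E}'+(\mathbf{T}\mathbf{M}')z\mathbf{N}'$. So the new data are $\mathbf{E}=\mathbf{T}\mathbf{E}'$ (absent if $\mathbf{E}'$ is) and $\mathbf{M}=\mathbf{T}\mathbf{M}'$.
\item The case $C=C'\cdot\mathbf{T}$ is symmetric, using right distributivity.
\end{enumerate}

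The only subtlety is the word ``products'' in the statement. Here $\mathbf{M}$ and $\mathbf{N}$ are terms, hence sums of words. Expanding by distributivity gives $\mathbf{M}z\mathbf{N}=\sum_{i,j}\mathbf{m}_iz\mathbf{n}_j$ with words $\mathbf{m}_i,\mathbf{n}_j$. This is exactly the sense in which the schematic form should be read. It is harmless for the later use, since each summand is a genuine product of words with $z$ in the middle.

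For the second assertion, suppose $z$ occurs several times. Replacing $\mathbf{A}$ by $\mathbf{B}$ in $C(z)$ can then be done one occurrence at a time. Let $C_j$ be the context with the first $j$ occurrences set to $\mathbf{B}$, the remaining occurrences except the $(j+1)$st set to $\mathbf{A}$, and the $(j+1)$st left as a fresh placeholder. Each $C_j$ has exactly one placeholder. Passing from $C_j(\mathbf{A})$ to $C_j(\mathbf{B})$ is a single-occurrence step, and these steps chain from $C(\mathbf{A})$ to $C(\mathbf{B})$.

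Finally, no constants are introduced at any point, since ``absent'' only means the summand or factor is dropped from the expression. The only step needing care is bookkeeping the absent cases in the induction. I expect no genuine obstacle.
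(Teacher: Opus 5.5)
Your proposal is correct and follows essentially the same route as the paper. It uses induction on the expression tree with base case $z$ and the same three inductive cases (adding a $z$-free term, or multiplying on the left or right and distributing), and then the same occurrence-by-occurrence chain $C(\mathbf{A},\ldots,\mathbf{A}),C(\mathbf{B},\mathbf{A},\ldots,\mathbf{A}),\ldots,C(\mathbf{B},\ldots,\mathbf{B})$ for several occurrences; your remark that $\mathbf{M},\mathbf{N}$ may be sums is harmless and matches how Theorem~\ref{ind:thm:bound} later picks single witnesses $\mathbf{s},\mathbf{t}$ from them.
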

\begin{proof}
For a context with one occurrence of $z$, induct on its
expression tree.  The base $z$ has the stated form with
$\mathbf{E},\mathbf{M},\mathbf{N}$ formally omitted.  Adding a $z$-free term enlarges
$\mathbf{E}$; multiplying by a $z$-free term on either side, followed
by distributivity, incorporates that term into the
corresponding multiplier of the unique $z$.  Thus the context
has form $\mathbf{E}+\mathbf{M}z\mathbf{N}$, with the omissions interpreted as in the
statement.

For a context with several occurrences, distinguish them as
$z_1,\ldots,z_r$ and write $C(x_1,\ldots,x_r)$ for
the same expression with these occurrences separately named.
The successive terms in the replacement sequence have the forms
\[
\begin{gathered}
 C(\mathbf{p}^\beta,\ldots,\mathbf{p}^\beta),\\
 C(\mathbf{q}^\beta,\mathbf{p}^\beta,\ldots,\mathbf{p}^\beta),\\
 \ldots,\\
 C(\mathbf{q}^\beta,\ldots,\mathbf{q}^\beta).
\end{gathered}
\]
At each step, one occurrence is replaced using the same
substituted identity, with the other occurrences fixed.
Every step preserves evaluation in $\widehat A_2$; hence
all intermediate terms lie in $[\mathbf{Q}_m]_{\sim_m}$ whenever the
initial one does. The steps can also be performed in reverse order.
\end{proof}

\begin{lemma}[Common old-factor realization on a proper support]\label{ind:lem:realization}
If $S\subsetneq Z_m$ and $a\in S$, there is an assignment $\alpha:Z_m\to\widehat A_2$ such that every $\mathbf{v}\in\mathscr S_m$ with $ c(\mathbf{v})\subseteq S$ evaluates to $abba$.
\end{lemma}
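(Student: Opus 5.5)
Since $Z_m=\{a\}\,\dot\cup\,P_m$ and $a\in S\subsetneq Z_m$, the complement $Z_m\setminus S$ is a nonempty set of path vertices. I plan to fix one vertex $x\in P_m\setminus S$ and take $\alpha=\theta_x$, the deletion assignment defined before Lemma~\ref{ind:lem:deletion}. No new assignment is needed: the deletion assignments were built so that every member of $\mathscr S_m$ avoiding the deleted vertex evaluates to $abba$.

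The key steps are as follows.

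\emph{Step 1.} If $\mathbf{v}\in\mathscr S_m$ satisfies $c(\mathbf{v})\subseteq S$, then $x\notin c(\mathbf{v})$, because $x\notin S$.

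\emph{Step 2.} Lemma~\ref{ind:lem:deletion} then gives $\mathbf{v}^{\theta_x}=abba$ directly. The value $\theta_x(x)=0$ is irrelevant, since $x$ does not occur in such a $\mathbf{v}$.

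\emph{Step 3.} For safety I will re-verify the individual cases behind Lemma~\ref{ind:lem:deletion}.
\begin{itemize}
\item The anchors evaluate as $ab_0^2a\mapsto a\,b\,b\,a$, using the table value $b_0\mapsto b$ (when $x\ne b_0$, the vertex $b_0$ precedes $x$). The other anchor gives $ab_ma\mapsto a(bb)a$, since $b_m$ follows $x$ when $x\ne b_m$.
\item The relational monomials $ab_ic_i$ and $ab_{i+1}c_i$ have adjacent path vertices, so both lie on the same side of $x$. The side-consistent table values give $a\cdot b\cdot ba$ or $a\cdot bb\cdot a$, and in either case the product is $abba$.
\item For $\mathbf{s}_m=c_{m-1}b_ma$, the condition $x\notin\{c_{m-1},b_m\}$ forces both vertices to lie after $x$. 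Their images are then $a$ and $bb$, giving $a\cdot bb\cdot a=abba$.
\end{itemize}

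The only point requiring care is this last case, where the order of the letters in $\mathbf{s}_m$ is reversed relative to the path. It is exactly the situation covered in the proof of Lemma~\ref{ind:lem:deletion}, so I expect no real obstacle. The statement is essentially a restatement of that lemma for a single omitted vertex.
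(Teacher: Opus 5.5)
Your proposal is correct and is the paper's argument: choose $x\in Z_m\setminus S$, which lies in $P_m$ because $a\in S$, and take $\alpha=\theta_x$; every $\mathbf{v}\in\mathscr S_m$ with $c(\mathbf{v})\subseteq S$ avoids $x$ and so evaluates to $abba$ by Lemma~\ref{ind:lem:deletion}. Your case re-check is also right, with one misdescription: $c_{m-1},b_m$ occur in $\mathbf{s}_m$ in path order, and it is $ab_{i+1}c_i$ whose letters are reversed relative to $P_m$.
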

\begin{proof}
Choose $x\in Z_m\setminus S$; then $x\in P_m$. Every indicated $\mathbf{v}$ avoids $x$, so $\theta_x$ from Lemma~\ref{ind:lem:deletion} evaluates it to $abba$.
\end{proof}

\begin{lemma}\label{ind:lem:sparse}
Let $\mathbf{p}\approx \mathbf{p}+\mathbf{w}$ be a valid absorption identity of $\widehat A_2$ using at most $k$ source variables. Let $\beta$ be an arbitrary substitution into $\PX$. Fix actual short words $\mathbf{s},\mathbf{t}$, either of which may be formally omitted, and put $d=\ell(\mathbf{s})+\ell(\mathbf{t})$. Assume
\begin{equation}\label{ind:eq:sparse}
\varnothing\ne \mathbf{s}\mathbf{p}^\beta\mathbf{t}\subseteq\mathscr S_m,
\qquad \mathbf{s}\mathbf{w}^\beta\mathbf{t}\ne\varnothing,
\qquad |Z_m|>4k.
\end{equation}
Then $\ell(\mathbf{w})\le4$. Furthermore, every variable occurring in $\mathbf{w}$ is substituted by a nonempty set consisting entirely of words of length at most $4-d$; no long support class occurs in those substituted values.
\end{lemma}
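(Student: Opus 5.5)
The plan is to refine the set-valued substitution $\beta$ to a single-word substitution, turn $\mathbf{s}\mathbf{w}^\beta\mathbf{t}$ into an evaluation in $\widehat A_2$ using a deletion assignment $\theta_x$ from Lemma~\ref{ind:lem:deletion}, and read off the length bounds from flatness.

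\emph{Nonempty images.} The empty subset is the multiplicative zero of $\PX$, and $\mathbf{s}\mathbf{w}^\beta\mathbf{t}\ne\varnothing$. Hence $\beta(y)\ne\varnothing$ for every $y\in c(\mathbf{w})$.

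\emph{A short summand of $\mathbf{p}$.} By Proposition~\ref{cor:absorption}, $D_\mathbf{w}(\mathbf{p})\ne\varnothing$. Fix $\mathbf{p}'\in D_\mathbf{w}(\mathbf{p})$. Since $c(\mathbf{p}')\subseteq c(\mathbf{w})$, all variables of $\mathbf{p}'$ have nonempty images, so $\mathbf{s}\mathbf{p}'^\beta\mathbf{t}$ is nonempty. It is contained in $\mathbf{s}\mathbf{p}^\beta\mathbf{t}\subseteq\mathscr S_m$, whose members are words of length $3$ or $4$ containing $a$.

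\emph{Single-word refinement.} Choose $\beta'(y)\in\beta(y)$ for each source variable $y$ with nonempty image, taking the choices for $c(\mathbf{p}')$ so that $\mathbf{v}:=\mathbf{s}\mathbf{p}'^{\beta'}\mathbf{t}$ is one member of $\mathscr S_m$. Put $\mathbf{u}=\mathbf{s}\mathbf{w}^{\beta'}\mathbf{t}$, an element of $\overline{Z_m^+}$ contained in $\mathbf{s}\mathbf{w}^{\beta}\mathbf{t}$.

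\emph{Choice of the deleted vertex.} Let $Y\subseteq Z_m$ collect $c(\mathbf{s})$, $c(\mathbf{t})$, and the contents of all chosen \emph{short} images $\beta'(y)$. There are at most $k$ source variables, each short image has at most $4$ letters, and together with $\mathbf{s},\mathbf{t}$ this should give $|Y|\le 4k<|Z_m|$; I would check this count carefully against the bound $d\le4$. Pick $x\in Z_m\setminus Y$; since $a\in c(\mathbf{v})\subseteq Y$, we have $x\in P_m$. Evaluate by $\theta_x$ composed with $\beta'$, as in Lemma~\ref{ind:lem:evaluation}. This is a legitimate evaluation of the source identity $\mathbf{p}\approx\mathbf{p}+\mathbf{w}$ in $\widehat A_2$, and it extends to $\mathbf{s}(\cdot)\mathbf{t}$ by compatibility of the order with multiplication. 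Two facts then hold.
\begin{enumerate}
\item By Lemma~\ref{ind:lem:realization}, every member of $\mathbf{s}\mathbf{p}^{\beta'}\mathbf{t}$ lying in $\mathscr S_m$ with content in $Y$ evaluates to $abba$, while members containing $x$ evaluate to $0$.
\item The term $\mathbf{v}$ survives, so $\mathbf{s}\mathbf{p}^{\beta'}\mathbf{t}$ evaluates to $abba$.
\end{enumerate}
Absorption therefore gives that the value of $\mathbf{u}$ is at most $abba$, so it lies in $\{0,abba\}$.

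\emph{Forcing the value $abba$.} To exclude the value $0$, I need $x\notin c(\mathbf{u})$. This is the \textbf{main obstacle}: a long image $\lambda_C$ with large $C$, or long word images, could contain every admissible $x$. First I would try to exclude long images separately. If $\beta'(y)$ is long with content $C$ and some $x\notin Y\cup C$ is available, then $\mathbf{u}$ has an all-old value. That value is $\infty$ when the image is long, contradicting the bound $abba$. If no such $x$ exists, I would instead replace the choice $\beta'(y)$ by another element of $\beta(y)$, or use an assignment that is zero on $Z_m\setminus(Y\cup C)$, and exploit that $\mathbf{v}$ still survives. Some case analysis is expected here, and the hypothesis $|Z_m|>4k$ is used exactly at this point.

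\emph{Conclusion.} Once $\mathbf{u}$ evaluates to $abba$ with all letters old, the argument of Lemma~\ref{ind:lem:long} shows $\mathbf{u}$ is a short word. Thus $\ell(\mathbf{s})+\ell(\mathbf{t})+\sum_{y}\ell(\beta'(y))\cdot(\text{multiplicity of }y\text{ in }\mathbf{w})\le4$, which gives $\ell(\mathbf{w})\le4$. Running the argument over all choices $\beta'(y)\in\beta(y)$ shows that every member of each $\beta(y)$ is short, of length at most $4-d$, and in particular that no $\lambda_C$ occurs in these images.
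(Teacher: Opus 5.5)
There is a genuine gap, and it is exactly the step you label the main obstacle; that step is the entire content of the second assertion. Evaluating the substituted identity only through the deletion assignments $\theta_x$ cannot succeed. Suppose, say, $\beta(y)=\{\lambda_{Z_m}\}$ for some $y\in c(\mathbf{w})$. Then your $\mathbf{u}$ is $\lambda_{Z_m}$ and $\mathbf{u}^{\theta_x}=0$ for every path vertex $x$, which is perfectly consistent with absorption. There is no other representative to choose, and assigning $0$ to further letters only produces more zeros. The count $|Y|\le4k$ is also unsupported: short images may have length $4$, so you only get $|Y|\le d+4k$. Moreover, putting the images of the variables of $\mathbf{w}$ into $Y$ already presupposes that they are short.

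Two ideas are missing. First, a purely combinatorial bound. Let $\mathbf{p}_0$ be the (nonempty) sum of those monomials of $\mathbf{p}$ all of whose variables have nonempty $\beta$-image. For $y\in c(\mathbf{p}_0)$, every member of $\beta(y)$ is a factor of an expansion lying in $\mathbf{s}\mathbf{p}^\beta\mathbf{t}\subseteq\mathscr S_m$, so it is a word of length at most $4-d$. Choose one $\mathbf{v}_y\in\beta(y)$ for each $y\in c(\mathbf{p}_0)$; then the support $S=c(\mathbf{s})\cup c(\mathbf{t})\cup\bigcup_y c(\mathbf{v}_y)$ satisfies $|S|\le d+k(4-d)\le4k<|Z_m|$.

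Second, evaluate the \emph{source} identity directly in $\widehat A_2$. For $x\notin S$ put $\gamma(y)=\mathbf{v}_y^{\theta_x}$ on $c(\mathbf{p}_0)$, $\gamma(y)=0$ when $\beta(y)=\varnothing$, and $\gamma(y)=\infty$ for a putative $y\in c(\mathbf{w})\setminus c(\mathbf{p}_0)$. Cancelling $\mathbf{s}^{\theta_x}$ and $\mathbf{t}^{\theta_x}$ in the equalities with $abba$ shows that all monomials of $\mathbf{p}_0$ share one non-top value $h$, so $\mathbf{p}^\gamma=h\ne\infty$. Absorption then forces $c(\mathbf{w})\subseteq c(\mathbf{p}_0)$ and, by flatness, $\ell(\mathbf{w})\le4$. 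The first step then bounds the images of all variables of $\mathbf{w}$.

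In your own framework the same repair is to take $\beta'(y)=\mathbf{v}_y$ on $c(\mathbf{p}_0)$ and the all-old assignment equal to $\theta_x$ on $S$ and to $\infty$ (not $0$) off $S$. The $\mathbf{p}$-side still evaluates to $abba$, while any long class, or any letter outside $S$, in $\mathbf{u}$ forces the value $\infty$. Your observation $\mathbf{p}'\in D_{\mathbf{w}}(\mathbf{p})$ yields only $c(\mathbf{p}')\subseteq c(\mathbf{w})$, the reverse of the inclusion $c(\mathbf{w})\subseteq c(\mathbf{p}_0)$ that is needed.
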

\begin{proof}
\emph{1. Remove empty substitution images.}
Since $\varnothing\ne \mathbf{s}\mathbf{p}^\beta\mathbf{t}\subseteq\mathscr S_m$,
all its words have length three or four and $d\le3$.
For a formally omitted multiplier, its length is counted as
zero and its content as empty; this is notation only, not
a multiplicative identity adjoined to the semiring.
Put
\[
Y_0=\{y:\beta(y)=\varnothing\},
\]
and delete from $\mathbf{p}$ all monomials containing a variable in
$Y_0$; denote the nonempty remaining sum by $\mathbf{p}_0$.
The condition $\mathbf{s}\mathbf{w}^\beta\mathbf{t}\ne\varnothing$ in
\eqref{ind:eq:sparse} implies
$c(\mathbf{w})\cap Y_0=\varnothing$.  Assigning the new zero to all
variables of $Y_0$ in the valid source identity gives
\[
\widehat A_2\models \mathbf{p}_0\approx \mathbf{p}_0+\mathbf{w}.
\]
Every $y\in c(\mathbf{p}_0)$ has $\beta(y)\ne\varnothing$.

\emph{2. Bound every substituted word.}
Fix $y\in c(\mathbf{p}_0)$ and any $\mathbf{v}\in\beta(y)$.  Choose a
surviving monomial of $\mathbf{p}_0$ containing $y$ and a member of
the nonempty image for each of its other variable
occurrences.  The resulting product belongs to
$\mathbf{s}\mathbf{p}^\beta\mathbf{t}\subseteq\mathscr S_m$.  A long-support class
among its factors would produce a long class, while a
genuine word of length greater than $4-d$ would give a
product longer than four (or a long class).  Both contradict
the displayed containment.  Thus, for
\emph{every} $y\in c(\mathbf{p}_0)$,
\[
\varnothing\ne\beta(y)
\subseteq\{\mathbf{v}\in Z_m^+:1\le\ell(\mathbf{v})\le4-d\}.
\]
This applies to all members of each substitution image, not
only to the factors of one chosen expansion.

\emph{3. Construct a common old evaluation.}
For each $y\in c(\mathbf{p}_0)$ choose one fixed
$\mathbf{v}_y\in\beta(y)$ and use it at \emph{every} occurrence of
$y$.  Let
\[
S=c(\mathbf{s})\cup c(\mathbf{t})\cup\bigcup_{y\in c(\mathbf{p}_0)}c(\mathbf{v}_y).
\]
As $|c(\mathbf{p}_0)|\le k$ and each chosen word $\mathbf{v}_y$
contains at most $4-d$ variable occurrences, while
$|c(\mathbf{s})\cup c(\mathbf{t})|\le d$, we have
\[
|S|\le d+|c(\mathbf{p}_0)|(4-d)\le d+k(4-d)
\le4k<|Z_m|.
\]
The last inequality uses $0\le d\le3$ and $k\ge1$;
no distinctness among the letters of different $\mathbf{v}_y$
is assumed.
Every surviving monomial $\mathbf{u}$ of $\mathbf{p}_0$ yields a nonempty
expansion $\mathbf{s}\,\mathbf{u}(\mathbf{v}_y)\,\mathbf{t}\in\mathscr S_m$, so $a\in S$.
Lemma~\ref{ind:lem:realization} now provides an assignment
$\alpha:Z_m\to\widehat A_2$ under which all these expansions
have value $abba$.  The actual factors among $\mathbf{s},\mathbf{t},\mathbf{v}_y$ therefore take
old non-top values.  Set $\gamma(y)=\mathbf{v}_y^\alpha$ for
each $y\in c(\mathbf{p}_0)$.  For every monomial $\mathbf{u}$ of $\mathbf{p}_0$,
\[
\mathbf{s}^\alpha \mathbf{u}^\gamma \mathbf{t}^\alpha=abba.
\]
These are non-top equalities of concatenated words.
Cancelling the same left and right factors yields one old
non-top word $h$ such that
\[
\mathbf{u}^\gamma=h\quad(\mathbf{u}\text{ a monomial of }\mathbf{p}_0),
\qquad \mathbf{p}_0^\gamma=h\ne\infty.
\]
The consistency of the choice $\mathbf{v}_y$ at all occurrences is
essential for this common assignment $\gamma$.

\emph{4. Bound the query word.}
If some $y\in c(\mathbf{w})\setminus c(\mathbf{p}_0)$ existed, extend
$\gamma$ by assigning $y=\infty$ and arbitrary old
nonabsorbing factors to the remaining variables outside
$c(\mathbf{p}_0)$.  Then
\[
\mathbf{p}_0^\gamma=h\ne\infty,\qquad \mathbf{w}^\gamma=\infty,
\]
contrary to $\mathbf{p}_0\approx \mathbf{p}_0+\mathbf{w}$.
Consequently $c(\mathbf{w})\subseteq c(\mathbf{p}_0)$.  All variables of
$\mathbf{w}$ therefore take old non-top factor values under
$\gamma$.  If $\ell(\mathbf{w})\ge5$, their product is $\infty$,
again contradicting $\mathbf{p}_0^\gamma=h\ne\infty$.  Hence
\[
\ell(\mathbf{w})\le4,\qquad
\varnothing\ne\beta(y)
\subseteq\{\mathbf{v}\in Z_m^+:1\le\ell(\mathbf{v})\le4-d\}
\quad(y\in c(\mathbf{w})),
\]
as required.  Both bounds will be used in
Theorem~\ref{ind:thm:bound}.
\end{proof}

\begin{theorem}\label{ind:thm:bound}
For $m\ge2,k\ge1$, suppose
\[
|Z_m|=2m+2>\max\{4k,16\}.
\]
No identity valid in $\widehat A_2$ using at most $k$ variables can, by arbitrary polynomial substitution and arbitrary polynomial context, induce an elementary congruence step from $F_m$ to $(F_m+\lambda_{Z_m})$.
\end{theorem}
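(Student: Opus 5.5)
We argue by contradiction. Suppose some elementary step connects $U\in F_m$ with $U'=U_1+\lambda_{Z_m}$, $U_1\in F_m$. By Proposition~\ref{ind:lem:normal} we may assume the step uses a valid single-monomial absorption $\mathbf{p}\approx\mathbf{p}+\mathbf{w}$ in at most $k$ variables, applied in some direction. By Lemma~\ref{ind:lem:contexts} we may also assume a single occurrence of the placeholder in a context $\mathbf{E}+\mathbf{M}z\mathbf{N}$. The two ends of the step are then
\[
\mathbf{E}+\mathbf{M}\mathbf{p}^\beta\mathbf{N},\qquad \mathbf{E}+\mathbf{M}\mathbf{p}^\beta\mathbf{N}+\mathbf{M}\mathbf{w}^\beta\mathbf{N}.
\]
Intermediate elements stay in $[\mathbf{Q}_m]_{\sim_m}$, so both ends lie in the six-element class of Theorem~\ref{ind:thm:fiber}. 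The summand $\lambda_{Z_m}$ must appear on exactly one side. Since the first end is contained in the second, $\lambda_{Z_m}$ must lie in $\mathbf{M}\mathbf{w}^\beta\mathbf{N}$ but in neither $\mathbf{E}$ nor $\mathbf{M}\mathbf{p}^\beta\mathbf{N}$. Hence the smaller end is in $F_m$, and $\mathbf{M}\mathbf{p}^\beta\mathbf{N}\subseteq\mathscr S_m$.

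The first task is to reduce to the hypotheses of Lemma~\ref{ind:lem:sparse}. We need $\mathbf{M}\mathbf{p}^\beta\mathbf{N}\ne\varnothing$. If it were empty, the first end would be $\mathbf{E}$ and the second would contain $\mathbf{M}\mathbf{w}^\beta\mathbf{N}\ne\varnothing$. I would rule this out by deleting: assign the new zero to the variables making $\mathbf{p}^\beta$ vanish, use validity of $\mathbf{p}\approx\mathbf{p}+\mathbf{w}$, and conclude that $\mathbf{w}^\beta$ also vanishes. More precisely, $\mathbf{p}^\beta=\varnothing$ forces each monomial of $\mathbf{p}$ to contain a variable with empty image. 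Proposition~\ref{cor:absorption} gives $D_\mathbf{w}(\mathbf{p})\ne\varnothing$ with contents inside $c(\mathbf{w})$, so some variable of $\mathbf{w}$ has empty image, a contradiction.

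Next, $\mathbf{M},\mathbf{N}$ are elements of $\PX$, hence sets, not words. I would expand them by distributivity over members $\mathbf{s}\in\mathbf{M}$ and $\mathbf{t}\in\mathbf{N}$. I choose a pair with $\mathbf{s}\mathbf{w}^\beta\mathbf{t}\ni\lambda_{Z_m}$, so in particular $\mathbf{s}\mathbf{w}^\beta\mathbf{t}\ne\varnothing$. Such a pair also has $\mathbf{s}\mathbf{p}^\beta\mathbf{t}$ nonempty, by the argument just given. Because $\mathbf{s}\mathbf{p}^\beta\mathbf{t}\subseteq\mathbf{M}\mathbf{p}^\beta\mathbf{N}\subseteq\mathscr S_m$, the words $\mathbf{s},\mathbf{t}$ are genuine short words, not long classes. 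Finally, $|Z_m|>4k$ holds by assumption.

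Lemma~\ref{ind:lem:sparse} now yields $\ell(\mathbf{w})\le4$. It also shows every variable of $\mathbf{w}$ is substituted by nonempty sets of words of length at most $4-d$, with no long support class, where $d=\ell(\mathbf{s})+\ell(\mathbf{t})$. So every word in $\mathbf{s}\mathbf{w}^\beta\mathbf{t}$ is a concatenation of at most $d+4(4-d)\le16$ letters. A product of genuine words of total length at most $16$ has content of size at most $16$. The class $\lambda_{Z_m}$ requires content all of $Z_m$, and $|Z_m|>16$, so it cannot occur in $\mathbf{s}\mathbf{w}^\beta\mathbf{t}$. This contradicts the choice of $\mathbf{s},\mathbf{t}$. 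This is exactly where the bound $16$ in the hypothesis is used.

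The main obstacle is the bookkeeping in the reduction: multiple placeholder occurrences, reversed steps, and the nonemptiness of $\mathbf{s}\mathbf{p}^\beta\mathbf{t}$. I would handle the multiple occurrences one at a time via Lemma~\ref{ind:lem:contexts}, noting that each single-occurrence sub-step already lies inside the kernel class. Reversed steps are covered by the symmetric choice of which end is smaller. Nonemptiness is handled by the zero-assignment argument through Proposition~\ref{cor:absorption}.
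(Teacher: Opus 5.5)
Your proposal is correct and follows the paper's proof essentially step for step: you refine to a single-monomial absorption in a one-placeholder context, use the six-element class to get $\mathbf{M}\mathbf{p}^\beta\mathbf{N}\subseteq\mathscr S_m$ and $\lambda_{Z_m}\in\mathbf{M}\mathbf{w}^\beta\mathbf{N}$, fix a single witness pair $(\mathbf{s},\mathbf{t})$, and apply Lemma~\ref{ind:lem:sparse} to bound the witness length by $16-3d<|Z_m|$. The only cosmetic difference is how you get $\mathbf{p}^\beta\ne\varnothing$: you use $D_{\mathbf{w}}(\mathbf{p})\ne\varnothing$ from Proposition~\ref{cor:absorption}, whereas the paper composes $\beta$ with the $\{0,\infty\}$-valued homomorphism on $\PX$, which is the same zero-assignment idea.
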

\begin{proof}
Suppose that such a crossing step exists. By
Proposition~\ref{ind:lem:normal} and
Lemma~\ref{ind:lem:contexts},
refine it into a finite chain of valid monomial-absorption
steps
\[
\mathbf{E}+\mathbf{M}\mathbf{p}^\beta\mathbf{N}\approx \mathbf{E}+\mathbf{M}(\mathbf{p}+\mathbf{w})^\beta \mathbf{N},
\]
where $\mathbf{p}\approx \mathbf{p}+\mathbf{w}$ has at most $k$ source variables.
Every intermediate expression remains in
$[\mathbf{Q}_m]_{\sim_m}$, so at least one refined step crosses
between $F_m$ and $F_m+\lambda_{Z_m}$.  Orient that step
from the part without $\lambda_{Z_m}$ to the other part.
By Theorem~\ref{ind:thm:fiber},
\[
\mathbf{M}\mathbf{p}^\beta\mathbf{N}\subseteq\mathscr S_m,
\qquad \lambda_{Z_m}\in \mathbf{M}\mathbf{w}^\beta\mathbf{N}.
\]

\emph{The old-side product is nonempty.}
A formally omitted multiplier contributes no factor;
$\varnothing\in\PX$, by contrast, annihilates products.
The second displayed condition
therefore ensures that every actual multiplier is nonempty.
If $\mathbf{M}\mathbf{p}^\beta\mathbf{N}=\varnothing$, then $\mathbf{p}^\beta=\varnothing$
and $\mathbf{w}^\beta\ne\varnothing$.  But the map
\[
\PX\longrightarrow\{0,\infty\}\le\widehat A_2,
\qquad
U\longmapsto
\begin{cases}0,&U=\varnothing,\\
\infty,&U\ne\varnothing,
\end{cases}
\]
is a semiring homomorphism: the union of sets and the
product of nonempty subsets of $\overline{X^+}$ preserve
nonemptiness.  Applying this map after $\beta$ to the
valid identity $\mathbf{p}\approx \mathbf{p}+\mathbf{w}$ would give
$0=\infty$, a contradiction.  Thus
\[
\mathbf{M}\mathbf{p}^\beta\mathbf{N}\ne\varnothing.
\]

\emph{Fix a witness and apply the sparse bound.}
Choose a \emph{single} product witness for
$\lambda_{Z_m}\in \mathbf{M}\mathbf{w}^\beta\mathbf{N}$, taking its left and right
factors to be $\mathbf{s},\mathbf{t}$ (with a formally omitted side absent).
Fixing these same factors, $\mathbf{p}^\beta\ne\varnothing$
ensures
\[
\varnothing\ne \mathbf{s}\mathbf{p}^\beta\mathbf{t}\subseteq\mathscr S_m,
\qquad \lambda_{Z_m}\in \mathbf{s}\mathbf{w}^\beta\mathbf{t}.
\]
No chosen multiplier can be a long-support class, since the
first product consists of short words.  Write
$d=\ell(\mathbf{s})+\ell(\mathbf{t})$.  Lemma~\ref{ind:lem:sparse} gives
$\ell(\mathbf{w})\le4$ and bounds each substituted word occurring in
an expansion of $\mathbf{w}$ by $4-d$.  The concrete concatenation
representing the chosen full-support class will be denoted by $\mathbf{v}$.
Its length satisfies
\[
\ell(\mathbf{v})
\le d+\ell(\mathbf{w})(4-d)
\le d+4(4-d)=16-3d\le16.
\]
On the other hand, a word in $\lambda_{Z_m}$ contains all
$|Z_m|>16$ distinct variables; its length is at least
$|Z_m|$.  This contradiction rules out the crossing step.
\end{proof}

\subsection{Finite countermodels and consequences}
For each $k\ge1$, take
\[
m(k)=\max\{2k,8\},\qquad X_k=Z_{m(k)},\qquad D_k=D_{k,X_k}.
\]
We have $|X_k|=2m(k)+2>\max\{4k,16\}$.

\begin{theorem}\label{ind:thm:finite}
For every $k\ge1$, the finite idempotent semiring $D_k$ satisfies
\[
D_k\models\Id_{\le k}(\widehat A_2),\qquad D_k\notin\W_\infty.
\]
\end{theorem}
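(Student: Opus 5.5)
The statement has two halves, and the plan treats them separately. The first half, $D_k\models\Id_{\le k}(\widehat A_2)$, is exactly the first assertion of Proposition~\ref{ind:lem:local} with $X=X_k$. I would also record that $D_k$ is finite and additively idempotent, since it is a quotient of the power semiring $\mathcal P_{X_k}$ under union. So the real work is showing $D_k\notin\W_\infty$. For this I would exhibit one identity valid in $\W_\infty$ that fails in $D_k$. The natural candidate is $\delta_{m(k)}$, that is, $\mathbf{Q}_m\approx\mathbf{Q}_m+\mathbf{K}_m$ with $m=m(k)$. By Lemma~\ref{ind:lem:global} this identity holds throughout $\W_\infty$, so it suffices to show it fails in $D_k$.

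To see the failure, evaluate both sides in $D_k$ under the canonical assignment sending each variable $z\in Z_m$ to the class of the singleton $\{z\}$. The left side then becomes the class $[\mathbf{Q}_m]_{\Theta_{k,Z_m}}$. The right side becomes $[\mathbf{Q}_m+\lambda_{Z_m}]$, because the image of $\mathbf{K}_m$ in $\overline{Z_m^+}$ is $\lambda_{Z_m}$. Suppose these two classes were equal. Lemma~\ref{ind:lem:crossing} then yields an elementary congruence step, generated by a valid identity of $\widehat A_2$ in at most $k$ variables (after substitution and a unary polynomial context), that joins a member of $F_m$ to a member of $F_m+\lambda_{Z_m}$. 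Since $|X_k|=2m(k)+2>\max\{4k,16\}$, Theorem~\ref{ind:thm:bound} forbids exactly such a step. Hence the two classes differ, $\delta_m$ fails in $D_k$, and $D_k\notin\W_\infty$.

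The one point needing care is the legitimacy of the canonical assignment. Because $D_k$ is generated by the images of the singletons, evaluating the term $\mathbf{Q}_m$ (a finite set of words) at these generators gives the class of the corresponding subset of $\overline{Z_m^+}$. This holds since the quotient map is a homomorphism and the power-semiring operations compute setwise sums and products.

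I would also note why $\mathbf{Q}_m$ and $\mathbf{Q}_m+\lambda_{Z_m}$ are distinct elements of $\mathcal P_{X_k}$ to begin with: $\lambda_{Z_m}\notin\mathscr S_m$. This distinctness is what makes the crossing in Lemma~\ref{ind:lem:crossing} meaningful. Otherwise the proof simply assembles the previous results, and I expect no serious obstacle beyond checking that the size hypothesis of Theorem~\ref{ind:thm:bound} is met by the choice $m(k)=\max\{2k,8\}$. That check holds because $2m+2\ge4k+2>4k$ and $2m+2\ge18>16$.
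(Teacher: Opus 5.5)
Your proposal is correct and follows the paper's proof step for step. The first half is Proposition~\ref{ind:lem:local}, and the second half evaluates $\delta_{m(k)}$, which is valid in $\W_\infty$ by Lemma~\ref{ind:lem:global}, at the canonical singleton generators, so that equality of the classes of $\mathbf{Q}_m$ and $\mathbf{Q}_m+\mathbf{K}_m$ would force a crossing step by Lemma~\ref{ind:lem:crossing}, and Theorem~\ref{ind:thm:bound} excludes such a step; your check that $m(k)=\max\{2k,8\}$ gives $|X_k|>\max\{4k,16\}$ is the same size verification the paper records just before the theorem.
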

\begin{proof}
By Proposition~\ref{ind:lem:local},
$D_k\models\Id_{\le k}(\widehat A_2)$.
If $D_k\in\W_\infty$, then Lemma~\ref{ind:lem:global}
would yield
\[
D_k\models Q_{m(k)}\approx Q_{m(k)}+K_{m(k)}.
\]
Evaluating at the canonical singleton generators gives
\[
[Q_{m(k)}]_{\Theta_{k,X_k}}
=[Q_{m(k)}+K_{m(k)}]_{\Theta_{k,X_k}}.
\]
By Lemma~\ref{ind:lem:crossing}, this requires an elementary
crossing from $F_{m(k)}$ to
$F_{m(k)}+\lambda_{Z_{m(k)}}$, contrary to
Theorem~\ref{ind:thm:bound}.  Hence $D_k\notin\W_\infty$.
\end{proof}

We now complete the proof of Theorem~\ref{ind:thm:main}.
\begin{proof}
Suppose, more generally, that $\mathcal K$ has an identity
basis $\Sigma$ (possibly infinite) whose members each involve at
most $k$ variables, and that
$\W_2\subseteq\mathcal K\subseteq\W_\infty$.
As $\W_2\subseteq\mathcal K$, every identity of $\Sigma$
holds in $\widehat A_2$. By Theorem~\ref{ind:thm:finite},
$D_k\models\Sigma$, whence $D_k\in\mathcal K\subseteq\W_\infty$,
contrary to the construction of $D_k$.

For $n\ge2$, the central four-letter block of $\mword_n$ is $t_{n-1}t_nt_nt_{n-1}$. Its nonempty factors, together with the old top and the new zero, form a subsemiring of $\widehat A_n$ isomorphic to $\widehat A_2$: both letters of the central block occur nowhere outside this block in $\mword_n$, so no new concatenation factor involving these letters arises. Consequently $\W_2\subseteq\W_n\subseteq\W_\infty$.
\end{proof}

The individual finite-basis classification in
Theorem~\ref{thm:main} follows directly from the theorem of Shao--Gao.
\begin{proof}
Shao--Gao~\cite[Theorem~1.1]{ShaoGao26} proved that the
zero extension $S(W)^0$ of a flat word semiring is finitely
based in the binary signature if and only if every defining word
has length at most two. Since $|\mword_n|=2n$, the classification
follows immediately.
\end{proof}

\begin{corollary}[Shao--Gao]\label{cor:individual-rank}
For every $n\ge2$, the variety $\W_n$ has infinite axiomatic rank.
\end{corollary}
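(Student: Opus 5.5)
The statement that every variety $\W_n$ with $n\ge2$ has infinite axiomatic rank follows from Theorem~\ref{ind:thm:main} by specialization. The plan is to take $\mathcal K=\W_n$ there. The hypothesis of that theorem requires $\W_2\subseteq\W_n\subseteq\W_\infty$. The second inclusion holds by the definition $\W_\infty=\bigvee_{n\ge2}\W_n$. The first was established at the end of the proof of Theorem~\ref{ind:thm:main}: the central block $t_{n-1}t_nt_nt_{n-1}$ of $\mword_n$, together with $\infty$ and $0$, spans a subsemiring of $\widehat A_n$ isomorphic to $\widehat A_2$. This works because the letters $t_{n-1},t_n$ occur in $\mword_n$ only inside that block, so no product of its factors can leave it except to $\infty$. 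Hence $\widehat A_2\in\mathsf S(\widehat A_n)\subseteq\W_n$.

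With both inclusions verified, Theorem~\ref{ind:thm:main} applies directly. Concretely, suppose $\W_n$ had a basis $\Sigma$ in which every identity involves at most $k$ variables. Each member of $\Sigma$ then holds in $\widehat A_2$. By Theorem~\ref{ind:thm:finite}, the finite semiring $D_k$ satisfies $\Sigma$, so $D_k\in\W_n\subseteq\W_\infty$. This contradicts $D_k\notin\W_\infty$.

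No step is a genuine obstacle; all the work lies in Theorem~\ref{ind:thm:bound}. The only point to state explicitly is the subsemiring embedding, which gives $\W_2\subseteq\W_n$. Alternatively, one could cite Shao--Gao~\cite[Corollary~4.8]{ShaoGao26} directly, as the attribution indicates. The derivation above shows the corollary is also recovered independently by the paper's own argument.
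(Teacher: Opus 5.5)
Your argument is correct, but it is not the route the paper takes. The paper's proof is a one-line citation: Shao--Gao's Corollary~4.8 applies to the finite word set $\{\mword_n\}$ because that set contains a word of length $2n\ge4$.

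You instead obtain the corollary as the special case $\mathcal K=\W_n$ of Theorem~\ref{ind:thm:main}. For this you only need two inclusions. The inclusion $\W_n\subseteq\W_\infty$ holds by definition of the join. The inclusion $\W_2\subseteq\W_n$ holds because the central block $t_{n-1}t_nt_nt_{n-1}$, together with $\infty$ and $0$, spans a copy of $\widehat A_2$ (see also Proposition~\ref{lem:chains}). There is no circularity, since the proof of Theorem~\ref{ind:thm:main} never uses this corollary.

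Your route buys self-containment and uniformity. The same finite countermodels $D_k$ witness infinite rank for every $\W_n$ at once, with no dependence on the external classification; they are built only from $\Id_{\le k}(\widehat A_2)$ and refuted by the identities $\delta_m$. The cost is that it rests on the full Section~3 machinery, namely the six-element kernel class and the sparse bound of Theorem~\ref{ind:thm:bound}. The citation, by contrast, needs only the trivial check of Shao--Gao's length hypothesis.
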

\begin{proof}
The word set $\{\mword_n\}$ is finite and contains a word of
length $2n\ge4$, so the cited corollary applies.
\end{proof}

\begin{corollary}\label{cor:adjacent-all-NFB}
For every $n\ge2$, every variety in $[\W_n,\W_{n+1}]$ has
infinite axiomatic rank, and hence is nonfinitely based. The interval $[\W_1,\W_2]$ contains both a
finitely based variety and a nonfinitely based variety.
\end{corollary}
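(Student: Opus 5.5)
The first assertion follows from Theorem~\ref{ind:thm:main} and the subsemiring embedding in its proof. For $n\ge2$, the proof of Theorem~\ref{ind:thm:main} shows that the central block $t_{n-1}t_nt_nt_{n-1}$ yields a copy of $\widehat A_2$ inside $\widehat A_n$. Applying the same argument to $\mword_{n+1}$, the factors of the suffix-free prefix $t_1\cdots t_n$ region together with its mirror give $\widehat A_n$ as a subsemiring of $\widehat A_{n+1}$. More directly, one checks that the nonempty factors of $\mword_{n+1}$ avoiding $t_1$ form, with $\infty$ and $0$, a copy of $\widehat A_n$. After renaming, these are exactly the factors of $t_2\cdots t_{n+1}t_{n+1}\cdots t_2$; their products either remain inside this middle block or equal $\infty$, since any product that is a factor of $\mword_{n+1}$ and avoids $t_1$ lies in the middle block. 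Hence
\[
\W_2\subseteq\W_n\subseteq\W_{n+1}\subseteq\W_\infty .
\]
Every $\mathcal K\in[\W_n,\W_{n+1}]$ therefore lies in $[\W_2,\W_\infty]$, so Theorem~\ref{ind:thm:main} gives infinite axiomatic rank, and in particular $\mathcal K$ is nonfinitely based.

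For the second assertion, the endpoints of $[\W_1,\W_2]$ already supply the required varieties. By Theorem~\ref{thm:main}, $\W_1=\Var(\widehat A_1)$ is finitely based, because $\mword_1=a_1a_1$ has length two. By the same theorem, or by Corollary~\ref{cor:individual-rank}, $\W_2$ is nonfinitely based. Both endpoints lie in the interval because $\W_1\subseteq\W_2$, which is again the embedding argument above with $n=1$: the factors of $\mword_2$ avoiding $t_1$ are $t_2$ and $t_2t_2$, which form $S(t_2t_2)\cong A_1$.

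The main point needing care is the subsemiring claim. A concatenation of two factors avoiding $t_1$ that is a factor of $\mword_{n+1}$ must itself avoid $t_1$, so it is a contiguous factor of the middle block. Multiplication, and the flat addition, therefore agree with those of $\widehat A_n$, with every other product collapsing to $\infty$. Everything else is a direct appeal to earlier results.
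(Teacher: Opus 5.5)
Your proposal is correct and follows the paper's proof. The chain $\W_2\subseteq\W_n\subseteq\W_{n+1}\subseteq\W_\infty$ places $[\W_n,\W_{n+1}]$ inside $[\W_2,\W_\infty]$, so Theorem~\ref{ind:thm:main} applies; your $t_1$-avoiding block is exactly the central-block embedding of Proposition~\ref{lem:chains}. The endpoints $\W_1,\W_2$ then settle the second claim via Theorem~\ref{thm:main}.

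Delete the earlier sentence about the ``suffix-free prefix $t_1\cdots t_n$ region together with its mirror''. It is garbled: for instance, $t_nt_n$ is not a factor of $\mword_{n+1}$. The $t_1$-avoiding argument you give immediately afterwards is the right one.
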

\begin{proof}
The first assertion is a consequence of
Theorem~\ref{ind:thm:main}. For the second, Theorem~\ref{thm:main}
shows that $\W_1$ is finitely based whereas $\W_2$ is not.
\end{proof}

\begin{corollary}[Lower bound on identity-variable complexity]\label{ind:cor:complexity}
Let
\[
\rho(m)=\min\{k\ge1:[\mathbf{Q}_m]_{\Theta_{k,Z_m}}=[\mathbf{Q}_m+\mathbf{K}_m]_{\Theta_{k,Z_m}}\}.
\]
For $m\ge8$,
\[
\left\lceil\frac{m+1}{2}\right\rceil\le\rho(m)\le 2m+2,
\qquad\text{and thus }\rho(m)\longrightarrow\infty.
\]
\end{corollary}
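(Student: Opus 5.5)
The two inequalities need separate arguments: the lower bound comes from the no-crossing results, the upper bound from an explicit identity.

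\emph{Lower bound.} Fix $m\ge8$ and let $k\ge1$ satisfy $k<\lceil (m+1)/2\rceil$, so that $2k\le m$. We have
\[
|Z_m|=2m+2\ge 4k+2>4k,\qquad |Z_m|=2m+2\ge18>16,
\]
so $|Z_m|>\max\{4k,16\}$ and Theorem~\ref{ind:thm:bound} applies to $k$ and $m$. Suppose $[\mathbf{Q}_m]_{\Theta_{k,Z_m}}=[\mathbf{Q}_m+\mathbf{K}_m]_{\Theta_{k,Z_m}}$. Lemma~\ref{ind:lem:crossing} then produces an elementary step from $F_m$ to $F_m+\lambda_{Z_m}$, induced by a valid identity in at most $k$ variables. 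Theorem~\ref{ind:thm:bound} forbids exactly such a step, so the two classes differ. Hence every $k$ with $2k\le m$ fails, and $\rho(m)\ge\lfloor m/2\rfloor+1=\lceil (m+1)/2\rceil$.

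\emph{Upper bound.} By Lemma~\ref{ind:lem:global}, the identity $\delta_m\colon\mathbf{Q}_m\approx\mathbf{Q}_m+\mathbf{K}_m$ holds in $\W_\infty\supseteq\W_2$, hence in $\widehat A_2$. It uses exactly the $|Z_m|=2m+2$ variables of $Z_m$, so it lies in $\Id_{\le 2m+2}(\widehat A_2)$. Substitute each variable by its own singleton class in $\PX$ with $X=Z_m$. Under this identity substitution, $\mathbf{K}_m$ is sent to $\lambda_{Z_m}$, and $\mathbf{Q}_m$ is sent to the set of its short monomials. The resulting pair is therefore a generating pair of $\Theta_{2m+2,Z_m}$, which gives $\rho(m)\le 2m+2$.

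The minimum defining $\rho(m)$ is attained because the set of admissible $k$ is nonempty. It is upward closed, since $\Theta_{k,Z_m}\subseteq\Theta_{k',Z_m}$ for $k\le k'$. Since $\lceil (m+1)/2\rceil\to\infty$, the lower bound gives $\rho(m)\to\infty$.

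The main point to check is the threshold arithmetic: Theorem~\ref{ind:thm:bound} needs the \emph{strict} inequality $2m+2>4k$, which holds precisely when $k\le m/2$. This is where the exact constant $\lceil (m+1)/2\rceil$ comes from, and the hypothesis $m\ge8$ is what secures $|Z_m|>16$. Everything else is direct invocation of the earlier results.
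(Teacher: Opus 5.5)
Your proposal is correct and follows the paper's proof. The upper bound comes from the valid $(2m+2)$-variable identity $\delta_m$; your explicit singleton substitution simply spells out why it gives a generating pair of $\Theta_{2m+2,Z_m}$. The lower bound comes from combining Lemma~\ref{ind:lem:crossing} with Theorem~\ref{ind:thm:bound}, and your arithmetic ($k<\lceil (m+1)/2\rceil$ iff $2k\le m$, hence $4k<2m+2$, with $m\ge8$ giving $2m+2>16$) is right.
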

\begin{proof}
The identity $\delta_m$ itself uses $2m+2$ variables and is valid in $\widehat A_2$, giving the upper bound. For $k<\lceil(m+1)/2\rceil$ one has $4k<2m+2$, and for $m\ge8$ also $16<2m+2$. Theorem~\ref{ind:thm:bound}, combined with Lemma~\ref{ind:lem:crossing}, excludes equality in the defining set for such $k$.
\end{proof}

\begin{remark}[Scope of the construction]
For every fixed $k$, the constructed quotient is finite, while the
separating identity $\delta_m$ has unbounded variable support.
The argument does not assume that every subvariety of $\W_\infty$
is nonfinitely based: the proved interval has lower endpoint $\W_2$.
\end{remark}

\section{Variety inclusions and continuum intervals}\label{sec:variety}

We first establish the strict ascending chain of zero-extension
varieties and then obtain continuum-sized consecutive intervals
by an independent graph construction. The intervals between a
flat word variety and its zero extension are treated separately,
using the cyclic semirings of Shao--Gao and checking their
endpoint conditions.

\subsection{The chain of mirror-word varieties}

\begin{proposition}\label{lem:chains}
For every $n\ge1$,
$\W_n\subsetneq\W_{n+1}$ and
$\V_n\subsetneq\W_n$.
\end{proposition}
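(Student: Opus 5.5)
The proposition has four parts: the two inclusions $\W_n\subseteq\W_{n+1}$ and $\V_n\subseteq\W_n$, and their strictness. For each strictness claim I would give an identity that holds in the smaller variety and fails in a generator of the larger one.

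\emph{Inclusions.} The inclusion $\V_n\subseteq\W_n$ is immediate, since $A_n$ is a subsemiring of $\widehat A_n=A_n^0$.

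For $\W_n\subseteq\W_{n+1}$ I would embed $\widehat A_n$ into $\widehat A_{n+1}$, as was done for the central block in the proof of Theorem~\ref{ind:thm:main}. Rename the letters so that $\mword_{n+1}=t_1\cdots t_{n+1}t_{n+1}\cdots t_1$. The central factor $t_2\cdots t_{n+1}t_{n+1}\cdots t_2$ is a copy of $\mword_n$, and its letters occur in $\mword_{n+1}$ only inside this block. Hence a concatenation of factors of the block is a factor of $\mword_{n+1}$ exactly when it is a factor of the block. So the nonempty factors of the block, together with $\infty$ and $0$, form a subsemiring of $\widehat A_{n+1}$ isomorphic to $\widehat A_n$.

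\emph{Strictness of $\V_n\subsetneq\W_n$.} I would use an identity of flat word semirings that is killed by zero adjunction. The plan is to invoke Proposition~\ref{cor:absorption} with a word $\mathbf q$ and a term $\mathbf u$ such that:
\begin{itemize}[label={}]
\item every word of $\mathbf u$ has content outside $c(\mathbf q)$, so $D_{\mathbf q}(\mathbf u)=\varnothing$ and $\widehat A_n\not\models \mathbf q\preceq\mathbf u$;
\item yet $A_n\models \mathbf q\preceq\mathbf u$.
\end{itemize}
The candidate is $x\preceq x+y$, i.e.\ $x+y\approx x+y+x$, but this is trivial. So instead take the identity $x+y\approx x+y+xy$, or simpler, $x^3y\approx y x^3$. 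I would verify the latter in $A_n$ via Lemma~\ref{lem:basic}: $x^3=\infty$ there, so both sides are $\infty$. In $\widehat A_n$ it fails at $x=0$, $y$ a letter: the left side is $0$, but $yx^3=0$ too, so this fails. Better is $x^3\approx x^3+y$. In $A_n$ both sides are $\infty$. In $\widehat A_n$, take $x=0$ and $y$ a letter: the left side is $0$ and the right side is the letter. Thus the identity holds in $\V_n$ and fails in $\W_n$.

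\emph{Strictness of $\W_n\subsetneq\W_{n+1}$.} This is the main obstacle. I need an identity true in $\widehat A_n$ but false in $\widehat A_{n+1}$. The natural choice uses length: every all-old product of more than $2n$ old factors is $\infty$ in $A_n$. Take
\[
\mathbf w=x_1x_2\cdots x_{2n+2},\qquad \text{the identity } \mathbf w\approx \mathbf w+\mathbf w^2 .
\]
In $\widehat A_n$:
\begin{itemize}[label={}]
\item if some variable is $0$, both sides are $0$;
\item otherwise $\mathbf w=\infty$, since it is a product of $2n+2>2n$ old elements, so both sides are $\infty$.
\end{itemize}
In $\widehat A_{n+1}$, assign the letters of $\mword_{n+1}$ in order to $x_1,\dots,x_{2n+2}$. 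Then $\mathbf w=\mword_{n+1}$, while $\mathbf w^2=\infty$, so the identity fails. By Proposition~\ref{cor:absorption} it suffices to check this on $A_n$ versus $A_{n+1}$, since $D_{\mathbf w}$ contains both words. The only care needed is the length count: at $2n+1$ variables the word already fails to be a factor, so any length $>2n$ works. I would double-check the all-old claim using the fact that each old factor has length at least $1$.
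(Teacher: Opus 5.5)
Your proof is correct and follows the paper's argument. The three pieces correspond as follows: the central-block embedding gives $\W_n\subseteq\W_{n+1}$; a cube identity refuted at $x=0$ gives $\V_n\subsetneq\W_n$ (your $x^3\approx x^3+y$ in place of the paper's $x^3\approx y^3$); and a length-$(2n+2)$ word with a same-content absorbed summand gives $\W_n\subsetneq\W_{n+1}$ (your linear $\mathbf w\approx\mathbf w+\mathbf w^2$ in place of the paper's mirror word with $z_1^3\cdots z_{n+1}^3$).

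In the write-up, delete the abandoned candidates; note that $x+y\approx x+y+xy$ actually fails in $A_n$ at $x=y=a_n$. Also restrict the closing remark to lengths $L$ with $2n<L\le 2n+2$, because for longer linear words the identity holds in $\widehat A_{n+1}$ as well and no longer separates.
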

\begin{proof}
The central mirror block embeds $A_n$ into $A_{n+1}$, and the
embedding extends to the new zero. The identity
\[
 z_1\cdots z_{n+1}z_{n+1}\cdots z_1
 \approx z_1\cdots z_{n+1}z_{n+1}\cdots z_1
      +z_1^3\cdots z_{n+1}^3
\]
holds in $\widehat A_n$: with a zero
variable both words are zero, and otherwise both have absorbing
old value. It fails in $\widehat A_{n+1}$ under $z_i=a_i$.
The old algebra is a subalgebra of its extension. The identity
$x^3\approx y^3$ holds in $A_n$ and fails in the extension at
$x=0,y=a_n$, so the containment is proper.
\end{proof}

\subsection{Graph constructions and separating identities}

We use finite context quotients to encode graph homomorphisms
in the consecutive intervals; compare~\cite{GaoRenZhao26}.
This is the restriction of the conjunctive syntactic semiring
congruence of a language, as used by
Pol\'ak~\cite{Polak03Syntactic}, to the free ai-semiring of
nonempty finite subsets of $X^+$. Thus we use its constant-free
positive-word fragment, rather than introducing a new operation.

Let $X$ be a finite alphabet and let $P\subseteq X^+$ be a finite,
nonempty language. Let $\mathcal P_f(X^+)$ be the free ai-semiring on $X$: its elements are
finite nonempty subsets of $X^+$, with union and setwise concatenation
as operations. For each nonempty finite $F\subseteq X^+$, define its context profile by
\[
 \rho_P(F)=\{(\mathbf{u},\mathbf{v})\in X^*\times X^*: \mathbf{u}F\mathbf{v}\subseteq P\}.
\]
Declare $F\equiv_P G$ when $\rho_P(F)=\rho_P(G)$.
For nonempty finite $F,G,C\subseteq X^+$ and contexts
$\mathbf{u},\mathbf{v}\in X^*$, addition is reflected in the profile identity
\[
 \rho_P(F\cup G)=\rho_P(F)\cap\rho_P(G).
\]
Moreover, $(\mathbf{u},\mathbf{v})\in\rho_P(FC)$ if and only if
$(\mathbf{u},\mathbf{c}\mathbf{v})\in\rho_P(F)$ for every $\mathbf{c}\in C$.
Similarly, $(\mathbf{u},\mathbf{v})\in\rho_P(CF)$ if and only if
$(\mathbf{u}\mathbf{c},\mathbf{v})\in\rho_P(F)$ for every $\mathbf{c}\in C$.
Thus $\equiv_P$ is a semiring congruence. Denote the quotient by
$D(P)$, with absorbing element $\infty$ represented by an empty
context profile. Put $L=\max\{\ell(\mathbf{p}):\mathbf{p}\in P\}$. If
$\rho_P(F)\ne\varnothing$, every $f\in F$ has length at most $L$
and every $(\mathbf{u},\mathbf{v})\in\rho_P(F)$ satisfies $\ell(\mathbf{u})+\ell(\mathbf{v})<L$.
There are therefore only finitely many profiles, and $D(P)$ is finite.
The empty word is used solely in contexts, not as a semiring constant.

In the notation of Section~\ref{sec:prelim}, for each finite
nonempty set $P=\{\mathbf{p}_1,\ldots,\mathbf{p}_r\}$ the corresponding term is
\[
 \mathbf{w}=\mathbf{p}_1+\cdots+\mathbf{p}_r.
\]
We use only this ordinary finite sum of words; its order is
immaterial, and the empty sum is not a term.

\begin{proposition}\label{lem:syntactic-membership}
Let $\mathcal K$ be a variety of ai-semirings and let
$P=\{\mathbf{p}_1,\ldots,\mathbf{p}_r\}\subseteq X^+$ be finite and nonempty.
Put $\mathbf{w}=\mathbf{p}_1+\cdots+\mathbf{p}_r$. Suppose that every $\mathbf{q}\in X^+$ satisfying
$\mathcal K\models \mathbf{q}\preceq \mathbf{w}$ belongs to $P$.
Then $D(P)\in\mathcal K$.
\end{proposition}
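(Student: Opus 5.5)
The idea is to show that $D(P)$ is a quotient of the relatively free algebra $F_{\mathcal K}(X)$. Then $D(P)\in\mathcal K$ because varieties are closed under homomorphic images.

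Let $\theta_{\mathcal K}$ be the fully invariant congruence on $\mathcal P_f(X^+)$ consisting of the pairs $(F,G)$ with $\mathcal K\models F\approx G$, where finite sets are read as terms. The quotient $\mathcal P_f(X^+)/\theta_{\mathcal K}$ is the $\mathcal K$-free algebra on $X$ and lies in $\mathcal K$. It therefore suffices to prove $\theta_{\mathcal K}\subseteq\,\equiv_P$. Then $D(P)=\mathcal P_f(X^+)/{\equiv_P}$ is a homomorphic image of the free algebra, and we are done.

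So suppose $\mathcal K\models F\approx G$. We must show $\rho_P(F)=\rho_P(G)$, and by symmetry it is enough to prove $\rho_P(F)\subseteq\rho_P(G)$. Fix $(\mathbf{u},\mathbf{v})\in\rho_P(F)$, so that $\mathbf{u}F\mathbf{v}\subseteq P$. Multiplying the identity by the words $\mathbf{u},\mathbf{v}$ on the two sides gives $\mathcal K\models \mathbf{u}F\mathbf{v}\approx\mathbf{u}G\mathbf{v}$. When $\mathbf{u}$ or $\mathbf{v}$ is empty, that side is simply not multiplied; this is an ordinary consequence of compatibility in the constant-free signature. Take any $\mathbf{g}\in G$. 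Then $\mathbf{u}\mathbf{g}\mathbf{v}$ is a summand of $\mathbf{u}G\mathbf{v}$, so $\mathcal K\models \mathbf{u}\mathbf{g}\mathbf{v}\preceq \mathbf{u}F\mathbf{v}$.

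Next we enlarge the right side. Since $\mathbf{u}F\mathbf{v}\subseteq P$, the term $\mathbf{w}$ equals $\mathbf{u}F\mathbf{v}+\mathbf{w}$ as a term. Hence $\mathbf{u}\mathbf{g}\mathbf{v}\preceq\mathbf{u}F\mathbf{v}$ yields $\mathcal K\models \mathbf{u}\mathbf{g}\mathbf{v}\preceq\mathbf{w}$: just add $\mathbf{w}$ to both sides of $\mathbf{u}F\mathbf{v}\approx\mathbf{u}F\mathbf{v}+\mathbf{u}\mathbf{g}\mathbf{v}$. The hypothesis of the proposition now gives $\mathbf{u}\mathbf{g}\mathbf{v}\in P$. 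As $\mathbf{g}\in G$ was arbitrary, $\mathbf{u}G\mathbf{v}\subseteq P$, that is, $(\mathbf{u},\mathbf{v})\in\rho_P(G)$.

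There is no real obstacle. The only step needing care is the monotonicity one: passing from absorption by the subterm $\mathbf{u}F\mathbf{v}$ to absorption by the whole of $\mathbf{w}$. This uses only additive idempotence and the fact that $\preceq$ is preserved when the larger side grows. The finiteness of $D(P)$, already noted in the text, is not needed for membership.
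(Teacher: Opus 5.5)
Your proof is correct and follows the paper's argument. Both multiply $F\approx G$ by the context $(\mathbf{u},\mathbf{v})$, absorb each $\mathbf{u}\mathbf{g}\mathbf{v}$ into $\mathbf{w}$ using $\mathbf{u}F\mathbf{v}\subseteq P$, and then invoke the hypothesis to get $\mathbf{u}G\mathbf{v}\subseteq P$. The only difference is the framing of the conclusion: you exhibit $D(P)$ as a quotient of the $\mathcal K$-free algebra $\mathcal P_f(X^+)/\theta_{\mathcal K}$, whereas the paper concludes that $D(P)$ satisfies every identity of $\mathcal K$ under arbitrary nonempty-language substitutions.
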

\begin{proof}
Let $\mathbf{s}\approx \mathbf{t}$ hold in $\mathcal K$, and substitute arbitrary
nonempty finite languages for its variables. Write $F,G$ for
the resulting nonempty word sets. Thus
\[
 \mathcal K\models F\approx G,
\]
where, as in Section~\ref{sec:prelim}, each finite word set is
identified with its additive term.

Fix $\boldsymbol{\alpha},\boldsymbol{\beta}\in X^*$ with $\boldsymbol{\alpha}F\boldsymbol{\beta}\subseteq P$. Context closure of
$\mathbf{s}\approx \mathbf{t}$, followed by additive idempotence, gives
\[
 \mathcal K\models
 \boldsymbol{\alpha}F\boldsymbol{\beta}\approx \boldsymbol{\alpha}G\boldsymbol{\beta},
 \qquad
 \mathbf{w}\approx \mathbf{w}+\boldsymbol{\alpha}F\boldsymbol{\beta},
 \qquad
 \mathbf{w}\approx \mathbf{w}+\boldsymbol{\alpha}G\boldsymbol{\beta}.
\]
For every $\mathbf{g}\in G$ we obtain
$\mathcal K\models\boldsymbol{\alpha}\mathbf{g}\boldsymbol{\beta}\preceq \mathbf{w}$.
The hypothesis therefore gives $\boldsymbol{\alpha}\mathbf{g}\boldsymbol{\beta}\in P$.
Hence $\boldsymbol{\alpha}G\boldsymbol{\beta}\subseteq P$. Interchanging $F,G$ proves the
reverse implication, so $\rho_P(F)=\rho_P(G)$.
Every identity valid in $\mathcal K$ is therefore valid in
$D(P)$, whence $D(P)\in\mathcal K$.
\end{proof}

For a finite simple graph $G$ without isolated vertices and with at
least one edge, the following constructions are defined using a
variable $y_v$ for each vertex $v\in V(G)$. The anchor and vertex symbols in these languages are formal variables;
letters in the fixed defining words are distinguished by context.

For $n\ge2$, put $h=2n$. Choose $h-2$ distinct anchor
letters indexed by
\[
 a_1,\ldots,a_{n-1},\quad a_{n+2},\ldots,a_h,
\]
and a distinct letter $y_v$ for each vertex of $G$. For every oriented edge $(u,v)$, form the word
of length $h$ with $y_u,y_v$ in positions $n,n+1$ and the anchors
in their indicated positions. Include also all words obtained by
independently interchanging the letters in positions
$(j,h+1-j)$ for $2\le j\le n-1$. The pair of outer positions
$(1,h)$ is never interchanged. Let $P_{n,G}$ be this finite set
of linear words and put
\[
 T_{n,G}=D(P_{n,G}),\qquad
 \mathbf{u}_{n,G}=\mathbf{p}_1+\cdots+\mathbf{p}_r
 \quad\text{for }P_{n,G}=\{\mathbf{p}_1,\ldots,\mathbf{p}_r\}.
\]

For $n=1$, put
\[
 \begin{aligned}
 P_{1,G}&=\{a y_u y_v:(u,v)\text{ is an oriented edge of }G\},\\
 T_{1,G}&=D(P_{1,G}),\qquad
 \mathbf{u}_{1,G}=\mathbf{p}_1+\cdots+\mathbf{p}_r
 \quad\text{for }P_{1,G}=\{\mathbf{p}_1,\ldots,\mathbf{p}_r\}.
 \end{aligned}
\]

\begin{proposition}\label{lem:adjacent-graph}
For every $n\ge1$, the finite semiring $T_{n,G}$ belongs to
$\W_{n+1}$. For every
finite simple graph $H$ without isolated vertices and with at least
one edge, there is an identity $\beta_{n,H}$ that
holds in $\W_n$.
It holds in $T_{n,G}$ if and only if there is no graph
homomorphism from $H$ to $G$.
\end{proposition}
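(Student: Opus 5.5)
The plan has three parts: membership of $T_{n,G}$ in $\W_{n+1}$ via Proposition~\ref{lem:syntactic-membership}, a long-word absorption identity $\beta_{n,H}$ that holds in $\W_n$, and a decoding argument in $T_{n,G}$ that turns a failure of $\beta_{n,H}$ into a graph homomorphism.

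\emph{Membership.} By Proposition~\ref{lem:syntactic-membership} it suffices to show the following. If $\mathbf q\in X^+$ and $\widehat A_{n+1}\models\mathbf q\preceq\mathbf u_{n,G}$, then $\mathbf q\in P_{n,G}$. By Proposition~\ref{cor:absorption}, the hypothesis means $D_{\mathbf q}(\mathbf u_{n,G})\neq\varnothing$ and $A_{n+1}\models\mathbf q\preceq D_{\mathbf q}(\mathbf u_{n,G})$. Every variable of $\mathbf q$ occurs in some word of $P_{n,G}$; otherwise assigning $\infty$ to it gives a contradiction. I then use one uniform evaluation $\gamma$ into $S(\mword_{n+1})$:
\begin{itemize}
\item every vertex variable $y_v\mapsto a_{n+1}$;
\item the outer anchor in position $1$ $\mapsto a_1a_2$, and the outer anchor in position $h$ $\mapsto a_2a_1$;
\item the two anchors in positions $j$ and $h+1-j$ ($2\le j\le n-1$) both $\mapsto a_{j+1}$.
\end{itemize}
For $n=1$ take $a\mapsto a_1$ and $y_v\mapsto a_2$ instead. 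Under $\gamma$ every word of $P_{n,G}$ evaluates to $\mword_{n+1}$ (respectively to $a_1a_2a_2$). This is exactly why the interchanges of positions $(j,h+1-j)$ were admitted and the outer pair was not. So $D_{\mathbf q}(\mathbf u_{n,G})^\gamma$ is a single non-top factor, and flatness forces $\mathbf q^\gamma$ to equal it.

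Decoding this equality letter by letter shows three things. First, the two outer anchors occupy the two ends in their prescribed order. Second, each mirror pair of inner anchors fills positions $j$ and $h+1-j$ in some order. Third, the centre is a product $y_uy_w$ of two vertex variables. Hence $c(\mathbf q)$ contains exactly the two vertex variables $y_u,y_w$ (with $u=w$ possible). If $\{u,w\}$ were not an edge, then $D_{\mathbf q}(\mathbf u_{n,G})$ would contain no word, since every word of $P_{n,G}$ involves the two endpoints of an edge. This contradicts nonemptiness, so $(u,w)$ is an oriented edge and $\mathbf q\in P_{n,G}$.

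\emph{The identity.} Let $\mathbf K_H$ be the fifth power of the product of all variables of $\mathbf u_{n,H}$, and take
\[
\beta_{n,H}:\ \mathbf u_{n,H}\approx\mathbf u_{n,H}+\mathbf K_H .
\]
Validity in $\W_n$ follows as in Lemma~\ref{ind:lem:global}. If some variable is assigned $0$, then $\mathbf K_H=0$. Under an all-old evaluation, every word of $\mathbf u_{n,H}$ is linear of length $2n$. Such a word cannot be a factor of $\mword_n$, whose linear factors have length at most $n$, so it evaluates to $\infty$. Likewise $\mathbf K_H=\infty$.

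\emph{Failure when a homomorphism exists.} Suppose $\varphi:H\to G$ is a homomorphism. Substitute $y_v\mapsto y_{\varphi(v)}$ and fix the anchors. Then $\mathbf u_{n,H}$ maps into $P_{n,G}$, so the empty context lies in its profile. The long word $\mathbf K_H$ is not in $P_{n,G}$, so the profile of the right-hand side lacks that context, and $\beta_{n,H}$ fails in $T_{n,G}$.

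\emph{Validity when no homomorphism exists.} This is the main obstacle. Take any substitution $\sigma$ with $F=\mathbf u_{n,H}^\sigma$, and suppose some context $(\mathbf s,\mathbf t)$ satisfies $\mathbf sF\mathbf t\subseteq P_{n,G}$. Every word of $P_{n,G}$ is linear of fixed length $h$, and the anchor positions are rigid. A length-and-position count over all words of $\mathbf u_{n,H}$, which share the anchor skeleton, forces three things:
\begin{itemize}
\item each vertex variable $y_v$ of $H$ is sent to a set of single vertex letters;
\item the anchors together with $\mathbf s,\mathbf t$ reproduce the anchor frame;
\item each oriented edge $(u,v)$ of $H$ yields a word with centre $y_{u'}y_{v'}$, where $u'\in\sigma(y_u)$, $v'\in\sigma(y_v)$, and $(u',v')$ is an edge of $G$.
\end{itemize}
Because $H$ has no isolated vertices, every vertex variable occurs in a centre. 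Choosing one letter from each $\sigma(y_v)$ then defines a homomorphism $H\to G$, which is impossible. Hence $\rho(F)=\varnothing=\rho(F+\mathbf K_H^\sigma)$, and $\beta_{n,H}$ holds in $T_{n,G}$.

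The delicate points are in the position analysis. Some substituted images might be longer words straddling anchors, and the inner-anchor interchanges might be exploited. Both have to be ruled out using linearity and the fact that the outer anchors are never swapped. The case $n=1$, with the single anchor $a$, is handled in the same way.
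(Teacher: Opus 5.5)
Your identity $\beta_{n,H}:\mathbf{u}_{n,H}\approx\mathbf{u}_{n,H}+\mathbf{K}_H$ does not hold in $\W_n$ when $n\ge2$, so the second claim of the statement is not established.

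The faulty step is ``every word of $\mathbf{u}_{n,H}$ is linear of length $2n$, so it cannot be a factor of $\mword_n$.'' This confuses linearity in the variables with linearity of the value: distinct variables may receive the same letter. Write $\mword_n=t_1\cdots t_nt_n\cdots t_1$ and use the folding evaluation. Send the anchors in positions $j$ and $h+1-j$ (the outer pair included) to $t_j$, and every vertex variable to $t_n$. Every summand of $\mathbf{u}_{n,H}$ then evaluates to the proper factor $\mword_n$; the internal interchanges and both edge orientations are invisible. Meanwhile $\mathbf{K}_H$ evaluates to $\infty$, so $\mathbf{u}_{n,H}+\mathbf{K}_H=\infty\neq\mathbf{u}_{n,H}$. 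Already for $n=2$ and $H=K_2$, sending both outer anchors to $a$ and both vertex variables to $b$ in $S(abba)^0$ gives $abba$ on the left and $\infty$ on the right.

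Lemma~\ref{ind:lem:global} worked only because $\mathbf{Q}_m$ is forced to $\infty$ under every all-old evaluation; $\mathbf{u}_{n,H}$ is not. For $n=1$ your identity does hold, since all triple products of old elements of $\widehat A_1$ are $\infty$.

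The missing idea is that the query must agree with the premise whenever $\widehat A_n$ gives the premise a proper value, yet must lie outside $P_{n,G}$. The definition of $P_{n,G}$ is designed for this. The inner pairs and the centre are already symmetrized, and the only asymmetry is the outer pair. The word $\mword_n$ cannot detect it, because both of its ends are $t_1$, whereas $\W_{n+1}$ can, as your $\gamma$ with $a_1a_2$ versus $a_2a_1$ shows.

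The paper therefore takes $\mathbf{q}_H$ to be a word $\mathbf{t}_H\in P_{n,H}$ with its two outer letters interchanged.
\begin{itemize}
\item In $\widehat A_n$, a proper value of $\mathbf{t}_H$ forces all $2n$ variables to single letters spelling $\mword_n$, so the swap changes nothing.
\item If $\mathbf{t}_H$ evaluates to $0$, then so does $\mathbf{q}_H$, since the two words have the same content.
\item In $T_{n,G}$, under your homomorphism substitution, $\mathbf{q}_H$ becomes a word of length $h$ beginning with the wrong outer anchor, hence outside $P_{n,G}$.
\end{itemize}

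With this query the rest of your proposal survives. The ``no homomorphism'' direction uses only the premise. Your membership proof is a valid shortcut: it works directly in $\widehat A_{n+1}$ with the single evaluation $\gamma$ (the paper's step~(v) evaluation pushed through $\varphi$), instead of the paper's five restrictions in $\W_n\vee\Var(S(\mathbf{w}_n))$.

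One correction there: ``each mirror pair fills positions $j$ and $h+1-j$'' does not follow from $\mathbf{q}^\gamma=\mword_{n+1}$ alone, since one anchor could occupy both positions. It follows from $D_{\mathbf{q}}(\mathbf{u}_{n,G})\neq\varnothing$. That gives some $\mathbf{r}\in P_{n,G}$ with $c(\mathbf{r})\subseteq c(\mathbf{q})$ and $|c(\mathbf{r})|=2n=\ell(\mathbf{q})$, so $\mathbf{q}$ is linear with $c(\mathbf{q})=c(\mathbf{r})$.
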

\begin{proof}
\emph{1. Membership for $n\ge2$.}
Put
\[
 \mathbf{w}_n=b_1\cdots b_nb_n\cdots b_2c,
\]
where $c$ is distinct from the $b_i$, and define the
nonerasing morphism
\[
 \varphi(b_1)=a_1a_2,\qquad
 \varphi(b_j)=a_{j+1}\ (2\le j\le n),\qquad
 \varphi(c)=a_2a_1.
\]
The codewords have distinct initial letters, so $\varphi$ is
injective, and $\varphi(\mathbf{w}_n)=\mword_{n+1}$. Moreover,
every occurrence of a codeword in $\mword_{n+1}$ starts and
ends at a block boundary: $a_1a_2$ occurs only at the
beginning, $a_2a_1$ only at the end, and the remaining
codewords are single letters. Consequently, the word $\mathbf{t}$ is a factor of $\mathbf{w}_n$
if and only if $\varphi(\mathbf{t})$ is a factor of $\mword_{n+1}$.
In particular $\varphi$ preserves factor and nonfactor
products, while injectivity preserves the flat addition.
The map on factors, extended by $\infty\mapsto\infty$
and then by $0\mapsto0$, gives embeddings
\[
 S(\mathbf{w}_n)\hookrightarrow A_{n+1},\qquad
 S(\mathbf{w}_n)^0\hookrightarrow\widehat A_{n+1}.
\]

We apply Proposition~\ref{lem:syntactic-membership} with
$P=P_{n,G}$ and
$\mathcal K=\W_n\vee\Var(S(\mathbf{w}_n))$.
Suppose that
\[
 \mathcal K\models \mathbf{q}\preceq \mathbf{u}_{n,G}
 \qquad(\mathbf{q}\in X^+).
\]
We prove $\mathbf{q}\in P$ by five restrictions.

\smallskip\noindent\emph{(i) Variables.}
Fold the paired anchors to the corresponding letters of
$\mword_n$ and all vertex variables to its central letter.
Each summand of $\mathbf{u}_{n,G}$ then evaluates to the proper
factor $\mword_n$. Assigning $\infty$ to any extra
variable of $\mathbf{q}$ would violate absorption; hence
\[
 c(\mathbf{q})\subseteq c(P).
\]

\smallskip\noindent\emph{(ii) Edge support.}
Since $D_2\in\W_n$, assign $1$ precisely to the variables
in $c(\mathbf{q})$, and $0$ to the others. The value of $\mathbf{q}$ is $1$;
absorption forces some $\mathbf{r}\in P$ to take value $1$. Thus
\[
 c(\mathbf{r})\subseteq c(\mathbf{q}),\qquad |c(\mathbf{r})|=h=2n.
\]
Here $\mathbf{r}$ contains the $h-2$ distinct anchors and the two
distinct endpoints of an edge.

\smallskip\noindent\emph{(iii) Length.}
Under the folding assignment in (i), absorption forces $\mathbf{q}$
to have the same proper value $\mword_n$. Each variable
has a one-letter image, so $\ell(\mathbf{q})=h$. Together with (ii),
this says that $\mathbf{q}$ is a linear word consisting of exactly
the $h$ letters of some $\mathbf{r}\in P$.

\smallskip\noindent\emph{(iv) Internal positions.}
Equality of the folded word with $\mword_n$ restricts every
letter of $\mathbf{q}$ to one of the two mirror positions associated
with its letter of $\mathbf{r}$. The defining language $P$ includes
all exchanges of internal pairs $2\le j<n$ and both
orientations of the central edge.

\smallskip\noindent\emph{(v) Outer positions.}
In $S(\mathbf{w}_n)$, assign the outer anchors $b_1,c$, each internal
mirror pair $b_j$, and all vertex variables $b_n$.
Every member of $P$ evaluates to $\mathbf{w}_n$. If the two outer
anchors of $\mathbf{q}$ were interchanged, its value would begin
with $c$ and end with $b_1$, hence be $\infty$, contrary
to the assumed absorption. Thus the outer pair is fixed,
and (i)--(v) give $\mathbf{q}\in P$.
Proposition~\ref{lem:syntactic-membership} now yields
\[
 T_{n,G}\in\W_n\vee\Var(S(\mathbf{w}_n))
 \subseteq\W_{n+1}.
\]

\medskip\noindent\emph{2. Separation for $n\ge2$.}
Choose $\mathbf{t}_H\in P_{n,H}$ and interchange its two outer
letters to obtain $\mathbf{q}_H$. Define the absorption identity
\[
 \beta_{n,H}:\qquad
 \mathbf{u}_{n,H}\approx \mathbf{u}_{n,H}+\mathbf{q}_H.
\]
We first show $\W_n\models\beta_{n,H}$. Fix an
evaluation in $\widehat A_n$ and let $f$ be the value of
$\mathbf{u}_{n,H}$.
If $f=\infty$, absorption is immediate. If $f=0$,
then the summand $\mathbf{t}_H$ has value $0$, so one of its
variables takes the new zero. Since
$c(\mathbf{q}_H)=c(\mathbf{t}_H)$, the value of $\mathbf{q}_H$ is also $0$.
Finally, suppose $f$ is a proper old factor. Then $\mathbf{t}_H$
has the same proper value. Its $h=2n$ factors are all
nonempty old factors, and their concatenation is a factor
of $\mword_n$ of length at most $2n$. Hence each factor
is a single letter and their product is exactly
$\mword_n$. The outer letters coincide, so their
interchange leaves the product unchanged. In all cases,
$\beta_{n,H}$ holds in $\widehat A_n$.

We next prove that $T_{n,G}$ fails $\beta_{n,H}$ if and
only if a graph homomorphism from $H$ to $G$ exists.
If $\psi:H\to G$ is a graph homomorphism, substitute
singleton anchor letters and
$y_v\mapsto y_{\psi(v)}$. Every summand of
$\mathbf{u}_{n,H}$ belongs to $P_{n,G}$, whereas $\mathbf{q}_H$ with its
reversed outer pair does not. Since all these words have
length $h$, the empty-context pair belongs to the premise
profile but not to the query profile. Thus
$T_{n,G}\not\models\beta_{n,H}$.

Conversely, suppose $\beta_{n,H}$ fails in $D(P_{n,G})$.
Lift the witnessing valuation to nonempty finite word sets.
Because addition in the quotient is intersection of context
profiles, there exist $\mathbf{u},\mathbf{v}\in X^*$ such that
\[
 \mathbf{u}F\mathbf{v}\subseteq P_{n,G},\qquad
 \mathbf{u}Q\mathbf{v}\not\subseteq P_{n,G},
\]
where $F$ is the expanded premise language and $Q$ is the
expanded query language. Every word of $P_{n,G}$ has length
$h$, while each expanded premise monomial contains $h$
nonempty word factors. Therefore $\mathbf{u}=\mathbf{v}=\varepsilon$, and
every word in each substituted language occurring in a
premise monomial has length one. Since $H$ has no isolated
vertices, this applies to every vertex variable.
The positional restrictions in $P_{n,G}$ must also be used.
For each position $1\le i\le h$, let $\Lambda_i$ be the set of
letters that occur in position $i$ among the words of
$P_{n,G}$. For a variable $z$ of the premise, let $J_z$
be the set of positions occupied by $z$ in some defining
word of $P_{n,H}$. Since \emph{every} word obtained by
independently choosing one letter from each substituted
language lies in $P_{n,G}$, the (nonempty) one-letter
language $L_z$ assigned to $z$ satisfies
\[
       L_z\subseteq\bigcap_{i\in J_z}\Lambda_i.
\]
The outer anchors therefore have their prescribed outer
letters as their only possible images; internal anchors
have images among the anchor letters permitted in their
mirror-position pair. The two central positions admit only
vertex letters of $G$. Every vertex variable of $H$ occurs
in an edge word, since $H$ has no isolated vertices; hence
its image language contains only vertex letters of $G$.
The latter conclusion uses full-language containment as
well as the positional restriction, not merely the length
bound.

The language substituted for $y_w$ thus consists of vertex
letters of $G$. Let $B_w\ne\varnothing$ be their set of
vertex indices. For each edge $\{w,w'\}\in E(H)$ and \emph{every} choice
of $b\in B_w$ and $b'\in B_{w'}$, choose these letters in
the corresponding expanded premise monomial and choose any
letters from the nonempty anchor languages. Full-language
containment ensures that each resulting word belongs to
$P_{n,G}$, not merely that one such choice belongs to it.
Its two central letters therefore form an oriented edge of
$G$. Both orientations of each source edge are included,
so
\[
 B_w\times B_{w'}\subseteq E^{\mathrm{or}}(G),
 \qquad
 E^{\mathrm{or}}(G)
 =\{(z,z'):\{z,z'\}\in E(G)\}.
\]
Choose $\psi(w)\in B_w$ for every $w\in V(H)$.
The displayed containment implies
$\{\psi(w),\psi(w')\}\in E(G)$ whenever
$\{w,w'\}\in E(H)$. Thus $\psi:H\to G$.
The argument allows arbitrary nonempty finite-language
substitutions, not merely singleton valuations.

\medskip\noindent\emph{3. Membership for $n=1$.}
We apply Proposition~\ref{lem:syntactic-membership} directly
in $\W_2$. Suppose
\[
 \widehat A_2\models \mathbf{q}\preceq \mathbf{u}_{1,G}.
\]
Assign the anchor $a$ to the old letter $a$ and every
vertex variable to $b$. The premise then has proper value
$abb$. Assigning $\infty$ to an extra variable of $\mathbf{q}$
shows
\[
 c(\mathbf{q})\subseteq c(P_{1,G}).
\]
Next apply the homomorphism $\widehat A_2\to D_2$
sending the new zero to $0$ and each old element to $1$.
Assign $1$ precisely to $c(\mathbf{q})$, and $0$ elsewhere.
Absorption forces some $\mathbf{r}=ay_uy_v\in P_{1,G}$ to satisfy
\[
 c(\mathbf{r})\subseteq c(\mathbf{q}).
\]
Under the all-old assignment $a\mapsto a$, $y_v\mapsto b$,
the value of $\mathbf{q}$ must be $abb$. Therefore $\mathbf{q}$ has length
three, contains exactly the distinct variables of $\mathbf{r}$,
and begins with $a$. Since both orientations of every
edge are in $P_{1,G}$, it follows that $\mathbf{q}\in P_{1,G}$;
hence $T_{1,G}\in\W_2$.

\medskip\noindent\emph{4. Separation for $n=1$.}
Choose an oriented edge $(u,v)$ of $H$, let
$\mathbf{q}_H=y_uay_v$, and define
\[
 \beta_{1,H}:\qquad
 \mathbf{u}_{1,H}\approx \mathbf{u}_{1,H}+\mathbf{q}_H.
\]
This identity holds in $\W_1$. When the premise is
$\infty$, absorption is immediate. When it is $0$, the
summand $ay_uy_v$ and $\mathbf{q}_H$ are both $0$. The premise
cannot take a proper old value, since every product of
three old elements in $\widehat A_1$ equals $\infty$.

A homomorphism $\psi:H\to G$ gives a counterevaluation
in $D(P_{1,G})$ by substituting singleton letters, since
the premise has a nonempty context profile while
$\mathbf{q}_H$, beginning with a vertex letter, has empty profile.
Conversely, lift any counterevaluation to nonempty finite
languages. The common length three of the defining words
forces an empty context and one-letter factors in all
expanded premise words. The first position of every word in $P_{1,G}$ is $a$,
whereas the other two positions contain only vertex letters;
thus the anchor language consists only of $a$. Since every
vertex of $H$ occurs in an oriented premise word, its
one-letter language consists entirely of vertex letters
of $G$. For each source edge and every choice of one
letter from either endpoint language, the corresponding
expanded premise word lies in $P_{1,G}$. Consequently the
Cartesian product of the two endpoint index sets is
contained in $E^{\mathrm{or}}(G)$, not merely one selected
pair. Choosing one index from each nonempty vertex language
defines the required homomorphism $H\to G$.
This proves both statements when $n=1$.
\end{proof}

\subsection{Continuum intervals}

\begin{theorem}[Consecutive mirror-word intervals]\label{thm:adjacent-cardinality}
For every $n\ge1$, the interval $[\W_n,\W_{n+1}]$ has
cardinality $2^{\aleph_0}$ and contains an order-embedded
copy of $(\mathcal P(\mathbb N),\subseteq)$.
\end{theorem}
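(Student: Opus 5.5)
We will combine Proposition~\ref{lem:adjacent-graph} with a countably infinite family of pairwise homomorphism-incomparable finite graphs. Fix such a family $G_1,G_2,\ldots$ of finite simple connected graphs, each without isolated vertices and with at least one edge, such that there is no graph homomorphism $G_i\to G_j$ for $i\ne j$. A standard choice is a family of cores that are pairwise incomparable, for instance odd cycles of distinct lengths combined with increasing chromatic and girth data. Since only the existence of such a family is needed, we may cite the classical Ne\v{s}et\v{r}il--Pultr theorem that the homomorphism order contains infinite antichains of connected graphs. For a set $I\subseteq\mathbb N$ put
\[
 \mathcal K_I=\W_n\vee\Var\{T_{n,G_i}:i\in I\}.
\]
By Proposition~\ref{lem:adjacent-graph}, each $T_{n,G_i}\in\W_{n+1}$, so $\mathcal K_I\in[\W_n,\W_{n+1}]$. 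Clearly $I\subseteq J$ implies $\mathcal K_I\subseteq\mathcal K_J$.

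For order reflection, suppose $j\notin I$. We claim that $\beta_{n,G_j}$ holds in $\mathcal K_I$ and fails in $T_{n,G_j}$, so that $\mathcal K_J\not\subseteq\mathcal K_I$ whenever $J\not\subseteq I$. Proposition~\ref{lem:adjacent-graph} gives $\W_n\models\beta_{n,G_j}$. It also shows that $T_{n,G_i}\models\beta_{n,G_j}$ exactly when there is no homomorphism $G_j\to G_i$, which holds for $i\ne j$ by the antichain property. Since the variety generated by a class satisfies every identity common to its generators, and a join satisfies the identities valid in both parts, $\mathcal K_I\models\beta_{n,G_j}$. On the other hand, the identity map $G_j\to G_j$ is a homomorphism, so $T_{n,G_j}\not\models\beta_{n,G_j}$. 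Hence $I\mapsto\mathcal K_I$ is an order embedding of $(\mathcal P(\mathbb N),\subseteq)$, and $|[\W_n,\W_{n+1}]|\ge2^{\aleph_0}$.

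For the upper bound, every variety of ai-semirings is determined by a set of identities over a countable set of variables. There are only countably many identities, so there are at most $2^{\aleph_0}$ varieties, which gives equality.

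The main obstacle is supplying the antichain with the required properties: no isolated vertices, at least one edge, and pairwise no homomorphisms in either direction. Everything else follows directly from Proposition~\ref{lem:adjacent-graph}. We would first try the cited infinite-antichain result, restricted to connected cores, which have no isolated vertices. If an explicit family is preferred, a fallback is graphs $G_k$ of chromatic number $k+2$ whose odd girth exceeds $|V(G_{k'})|$ for all $k'<k$, chosen inductively. A homomorphism $G_k\to G_{k'}$ with $k'<k$ is impossible by chromatic number. A homomorphism $G_{k'}\to G_k$ is impossible because a short odd cycle of $G_{k'}$ would map to a closed odd walk of length at most $|V(G_{k'})|$, forcing an odd cycle of that length in $G_k$. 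Such graphs exist by Erd\H{o}s' theorem.
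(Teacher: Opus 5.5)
Your proposal is correct and matches the paper's proof: the same joins $\W_n\vee\bigvee_{i\in I}\Var(T_{n,G_i})$, the same order reflection through $\beta_{n,G_j}$ from Proposition~\ref{lem:adjacent-graph}, and the same countable-signature upper bound. Your Erd\H{o}s-based fallback antichain (chromatic number $k+2$, odd girth exceeding the vertex counts of earlier graphs) is a slightly stronger form of the paper's choice of strictly increasing chromatic numbers and odd girths, and it works for the same reason. Note that odd cycles on their own form a chain rather than an antichain, so it is the fallback, not that remark, that supplies the antichain.
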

\begin{proof}
Choose finite simple graphs $G_0,G_1,\ldots$ without isolated
vertices such that both their chromatic numbers and their odd
girths strictly increase, with all chromatic numbers at least
three and all odd girths at least five. Their existence follows
from Erd\H{o}s's theorem~\cite{Erdos59}. The graphs form an
antichain for graph homomorphisms: a homomorphism cannot
increase chromatic number, and it cannot map an odd cycle
into a graph of larger odd girth.

For $J\subseteq\mathbb N$ set
\[
 \mathcal K_{n,J}
   =\W_n\vee\bigvee_{j\in J}\Var(T_{n,G_j}).
\]
By Proposition~\ref{lem:adjacent-graph},
\[
 \W_n\subseteq\mathcal K_{n,J}\subseteq\W_{n+1}.
\]
The identity $\beta_{n,G_i}$ holds in $\mathcal K_{n,J}$
if and only if $i\notin J$. Consequently, for subsets
$J,J'\subseteq\mathbb N$, the inclusion
$J\subseteq J'$ holds if and only if
$\mathcal K_{n,J}\subseteq\mathcal K_{n,J'}$.
Thus $J\mapsto\mathcal K_{n,J}$ is an order embedding of
$(\mathcal P(\mathbb N),\subseteq)$ into the interval.
This gives the lower bound $2^{\aleph_0}$. The binary
semiring signature has countably many identities, hence
at most $2^{\aleph_0}$ equational theories, giving the
matching upper bound.
\end{proof}

\subsection{Zero-extension intervals via cyclic semirings}

We apply the cyclic semiring family $E_p$ and separating identities
of Shao--Gao~\cite[Sections~10--11]{ShaoGao26}, rather than
repeating a second graph construction. Their letter notation for
linear words is kept distinct from our mirror-word notation.
For every integer $p\ge2$, with indices taken modulo $p$, put
\[
 \mathbf{c}_p=\sum_{i\in\mathbb Z/p\mathbb Z}
  (x_i y_i z_0+x_i y_{i+1}z_1),\qquad
 \mathbf{q}_p=x_0y_0z_1,
\]
and write
\[
 \varepsilon_p:\qquad \mathbf{c}_p\approx \mathbf{c}_p+\mathbf{q}_p.
\]
By Shao--Gao~\cite[Proposition~10.2]{ShaoGao26}, for
$m,p\ge2$ the semiring $E_m$ satisfies
$\varepsilon_p$ if and only if $m\nmid p$.

\begin{lemma}[Endpoint verification]\label{lem:SG-cyclic-endpoints}
For every $n\ge2$ and $p\ge2$,
\[
 E_p\in\W_n,\qquad A_n\models\varepsilon_p.
\]
\end{lemma}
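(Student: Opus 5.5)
The statement has two parts: $A_n\models\varepsilon_p$, which I would check directly in the flat semiring, and $E_p\in\W_n$, which I would reduce to Proposition~\ref{lem:syntactic-membership}. I expect the membership part to be the harder one.

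\emph{The identity $\varepsilon_p$ in $A_n$.} Fix an evaluation in $A_n$. If $\mathbf{c}_p$ takes the value $\infty$, absorption is automatic. Otherwise flat addition forces all $2p$ summands to take one common proper factor $\mathbf{f}$ of $\mword_n$. Then every variable takes a nonempty factor value, and each summand is a concatenation of three words equal to $\mathbf{f}$. Write $\xi_i,\eta_i$ for the lengths of the values of $x_i,y_i$, and $\alpha,\beta$ for those of $z_0,z_1$. Comparing $x_iy_iz_0=\mathbf{f}=x_iy_{i+1}z_1$ gives
\[
\eta_{i+1}=\eta_i+\alpha-\beta \qquad (i\in\mathbb Z/p\mathbb Z).
\]
Summing around the cycle gives $p(\alpha-\beta)=0$, hence $\alpha=\beta$. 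The values of $z_0$ and $z_1$ are then both the suffix of $\mathbf{f}$ of length $\alpha$, so they are equal words. Therefore $x_0y_0z_1=x_0y_0z_0=\mathbf{f}$, and $\mathbf{q}_p$ is absorbed. This part uses only free-semigroup cancellation, so it holds for every $n$, and in particular for $n\ge2$.

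\emph{Membership $E_p\in\W_n$.} Since $\W_2\subseteq\W_n$ for $n\ge2$ (the central block embedding used in the proof of Theorem~\ref{ind:thm:main}), it suffices to show $E_p\in\W_2$. I would recall from Shao--Gao that $E_p$ is, or is a homomorphic image or subalgebra of, the context quotient $D(P_p)$, where $P_p$ is the set of the $2p$ linear summands of $\mathbf{c}_p$. If their construction is presented differently, this identification is the step to verify first. I would then apply Proposition~\ref{lem:syntactic-membership} with $\mathcal K=\W_2$: given $\mathbf{q}\in X^+$ with $\widehat A_2\models \mathbf{q}\preceq\mathbf{c}_p$, I must show $\mathbf{q}\in P_p$. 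The restrictions mirror part~3 of the proof of Proposition~\ref{lem:adjacent-graph}.
\begin{enumerate}
\item \emph{Content.} Assign $x_i\mapsto a$, $y_i\mapsto b$, $z_j\mapsto ba$ in $\widehat A_2$, so that every summand takes the value $abba$. Sending any extra variable of $\mathbf{q}$ to $\infty$ gives $c(\mathbf{q})\subseteq c(P_p)$.
\item \emph{Support.} Apply the homomorphism onto $D_2$, giving value $1$ exactly to the variables in $c(\mathbf{q})$. Some summand $\mathbf{r}$ then satisfies $c(\mathbf{r})\subseteq c(\mathbf{q})$.
\item \emph{Shape.} Under the assignment in (i), $\mathbf{q}$ must evaluate to $abba$. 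The only factorization of $abba$ into images from $\{a,b,ba\}$ compatible with this assignment is $a\cdot b\cdot ba$. Hence $\mathbf{q}=x_iy_jz_k$ for some indices.
\item \emph{Indices.} Combining (iii) with (ii), the three distinct variables of $\mathbf{r}$ must be exactly those of $\mathbf{q}$, so $\mathbf{q}=\mathbf{r}\in P_p$.
\end{enumerate}

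The main obstacle is the exact match with Shao--Gao's definition of $E_p$. If $E_p$ is not literally $D(P_p)$, I would fall back to citing their own membership criterion: $E_p\in\Var(S(W)^0)$ for a set $W$ of linear words of length $3$. I would then embed the relevant flat word semiring $S(W)^0$ into $\widehat A_2$ (or $\widehat A_n$) by a nonerasing block morphism such as $x\mapsto a$, $y\mapsto b$, $z\mapsto ba$. This morphism sends $xyz$ to $abba$, so the required factor-preservation check is the same as in part~1 of the proof of Proposition~\ref{lem:adjacent-graph}.
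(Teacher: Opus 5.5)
Your check of $A_n\models\varepsilon_p$ is correct and is essentially the paper's argument: count lengths around the cycle, then cancel. You even avoid needing $V_i=V_{i+1}$. The gap is in the membership $E_p\in\W_n$.

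\textbf{The main route is conditional.} It rests on the assertion that Shao--Gao's $E_p$ is, or lies in the variety of, the context quotient $D(P_p)$. You do not justify this, and neither the statement nor the paper supplies it. The paper treats $E_p$ as a black box and uses only Shao--Gao's inclusion $E_p\in\Var(R_3)\subseteq\Var(S(abc)^0)$. What your four restrictions actually prove is $D(P_p)\in\W_2$, and even that needs a repair in (iii). There, $abba=a\cdot b\cdot b\cdot a$ is a second factorization into images from $\{a,b,ba\}$, so every word $x_iy_jy_kx_l$ also evaluates to $abba$. These words are excluded only by (ii), because each summand of $\mathbf{c}_p$ contains $z_0$ or $z_1$. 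If $E_p$ were literally $D(P_p)$, this would be a pleasant self-contained alternative to the paper's citations. As written, it is a conditional argument.

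\textbf{The fallback fails.} The map $x\mapsto a$, $y\mapsto b$, $z\mapsto ba$ is not a homomorphism $S(xyz)\to S(abba)$. In $S(xyz)$ we have $y\cdot y=\infty$ and $xy\cdot y=\infty$, but $b\cdot b=bb$ and $ab\cdot b=abb$ are factors of $abba$. The block-boundary property used in Proposition~\ref{lem:adjacent-graph} fails because the codeword $b$ also occurs at the start of the block $ba$.

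Worse, $S(abc)$ admits no embedding into $\widehat A_2$ at all. The images $A,B,C$ of $a,b,c$ would have to be old non-top elements with $ABC\ne\infty$. A non-top triple product in $S(abba)$ equals $abb$, $bba$ or $abba$. Every factorization either has middle factor $b$, whose square $bb$ is not $\infty$ although $b^2=\infty$ in $S(abc)$, or has both outer factors equal to $a$, contradicting injectivity.

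So $\Var(S(abc))\subseteq\W_2$ is a genuinely variety-level fact, and this is the missing idea. The paper first checks that $xyz\approx yxz$, $xyz\approx xzy$ and $xyz\approx zyx$ all fail in $\widehat A_2$, with witnesses $(a,b,b)$ and $(b,b,a)$. It then invokes Shao--Gao's Theorem~23.2 to get $\Var(S(abc))\subseteq\W_2$, and lifts this to $\Var(S(abc)^0)\subseteq\W_2$ by their Theorem~22.2. Finally it combines $E_p\in\Var(R_3)\subseteq\Var(S(abc)^0)$ with $\W_2\subseteq\W_n$. Your proposal needs either this chain or an actual verification that Shao--Gao's $E_p$ lies in $\Var(D(P_p))$.
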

\begin{proof}
We begin with $E_p\in\W_n$. This requires checking
the mirror-word endpoint rather than directly transferring
the linear-word result.
The three-word permutation identities
\[
 xyz\approx yxz,\qquad xyz\approx xzy,\qquad
 xyz\approx zyx
\]
all fail in $\widehat A_2=S(abba)^0$. Indeed, with
$(x,y,z)=(a,b,b)$, the nonabsorbing product $abb$ differs
from $bab$ and $bba$, while with $(x,y,z)=(b,b,a)$ the
nonabsorbing product $bba$ differs from $bab$. Thus
Shao--Gao~\cite[Theorem~23.2]{ShaoGao26} yields
$\Var(S(abc))\subseteq\W_2$ for distinct letters $a,b,c$.
Their Theorem~22.2 then gives
\[
 \Var(S(abc)^0)\subseteq\W_2.
\]
Their cyclic construction places
$E_p\in\Var(R_3)\subseteq\Var(S(abc)^0)$
(see~\cite[Proposition~10.1 and Section~5]{ShaoGao26}).
Consequently $E_p\in\W_2\subseteq\W_n$, where the last
inclusion follows from Proposition~\ref{lem:chains}.

\medskip\noindent We next verify $A_n\models\varepsilon_p$.
Evaluate $\mathbf{c}_p$ in $A_n=S(\mword_n)$. When its value is
$\infty$, the absorption $\varepsilon_p$ is automatic.
Otherwise all its cubic summands have one and the same value,
a nonempty factor of $\mword_n$. In particular, the assigned
values $U_i,V_i,Z_0,Z_1$ of $x_i,y_i,z_0,z_1$ are nonempty
words, and
\[
 U_iV_iZ_0=U_iV_{i+1}Z_1
 \quad(i\in\mathbb Z/p\mathbb Z).
\]
Taking word lengths gives
$|V_{i+1}|-|V_i|=|Z_0|-|Z_1|$.
Summing around the cycle shows $|Z_0|=|Z_1|$ and hence
$|V_i|=|V_{i+1}|$ for all $i$. Cancellation in the free
semigroup, followed by comparison of these equal-length
prefixes and suffixes, gives $V_i=V_{i+1}$ and $Z_0=Z_1$.
It follows that $U_0V_0Z_1=U_0V_0Z_0$, which is an existing
summand value of $\mathbf{c}_p$. Thus $A_n\models\varepsilon_p$.
\end{proof}

\begin{corollary}[Zero-extension continuum intervals]\label{cor:zero-extension-continuum}
For every $n\ge2$, the interval $[\V_n,\W_n]$ has
cardinality $2^{\aleph_0}$ and contains an order-embedded
copy of $(\mathcal P(\mathbb N),\subseteq)$.
\end{corollary}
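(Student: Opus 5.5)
The plan is to mimic the proof of Theorem~\ref{thm:adjacent-cardinality}, with the cyclic semirings $E_p$ in place of the graph semirings $T_{n,G}$ and the identities $\varepsilon_p$ in place of $\beta_{n,H}$. First fix an infinite set of moduli with no divisibility relations among them. The natural choice is the primes $p_0<p_1<\cdots$. For $J\subseteq\mathbb N$ put
\[
 \mathcal L_{n,J}=\V_n\vee\bigvee_{j\in J}\Var(E_{p_j}).
\]
By Lemma~\ref{lem:SG-cyclic-endpoints}, each $E_{p_j}\in\W_n$. Proposition~\ref{lem:chains} gives $\V_n\subseteq\W_n$. Hence $\V_n\subseteq\mathcal L_{n,J}\subseteq\W_n$ for every $J$.

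Next I will show that $\varepsilon_{p_i}$ holds in $\mathcal L_{n,J}$ if and only if $i\notin J$. A join of varieties satisfies an identity exactly when every joinand does, so it suffices to check the generators. By the second half of Lemma~\ref{lem:SG-cyclic-endpoints}, $A_n\models\varepsilon_{p_i}$. By Shao--Gao~\cite[Proposition~10.2]{ShaoGao26}, $E_{p_j}\models\varepsilon_{p_i}$ if and only if $p_j\nmid p_i$, and for distinct primes this happens if and only if $j\ne i$.

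From this, $J\subseteq J'$ holds exactly when $\mathcal L_{n,J}\subseteq\mathcal L_{n,J'}$. The forward direction is clear. For the converse, take $i\in J\setminus J'$: then $\varepsilon_{p_i}$ holds in $\mathcal L_{n,J'}$ but fails in $\mathcal L_{n,J}$, so the inclusion fails. This gives the order embedding and the lower bound $2^{\aleph_0}$. The upper bound is the same counting argument as before: there are countably many identities in the binary signature, hence at most $2^{\aleph_0}$ equational theories.

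The only delicate point is applying the cited Proposition~10.2 with $m=p_j$ and $p=p_i$. That proposition requires $m,p\ge2$, which primes satisfy. The hypothesis $n\ge2$ is needed only through Lemma~\ref{lem:SG-cyclic-endpoints}, which is exactly where $E_p\in\W_2$ was established. Everything else is routine.
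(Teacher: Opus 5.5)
Your proposal is correct and takes essentially the same approach as the paper. The paper also forms $\V_n\vee\bigvee_{p\in J}\Var(E_p)$ over sets $J$ of primes, places these in $[\V_n,\W_n]$ via Lemma~\ref{lem:SG-cyclic-endpoints} and $A_n\le\widehat A_n$, separates them by $\varepsilon_p$ using Shao--Gao's Proposition~10.2, and finishes with the countable-signature upper bound; your indexing of the primes by $\mathbb N$ is only a cosmetic difference.
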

\begin{proof}
For each set $J$ of prime numbers, define
\[
 \mathcal J_{n,J}=\V_n\vee\bigvee_{p\in J}\Var(E_p).
\]
By Lemma~\ref{lem:SG-cyclic-endpoints} and
$A_n\le\widehat A_n$, each $\mathcal J_{n,J}$ belongs to
$[\V_n,\W_n]$. For every prime $p$, the same lemma and
Shao--Gao~\cite[Proposition~10.2]{ShaoGao26} show that
the identity $\varepsilon_p$ holds in $\mathcal J_{n,J}$
if and only if $p\notin J$.
Hence $J\mapsto\mathcal J_{n,J}$ is an order embedding.
The power set of the primes has cardinality $2^{\aleph_0}$,
and the general countable-signature bound gives the reverse
inequality.
\end{proof}

\begin{corollary}\label{cor:large-interval-continuum}
For every $n\ge2$, the interval $[\V_n,\W_\infty]$ has cardinality
$2^{\aleph_0}$ and contains a chain and an antichain of that
cardinality.
\end{corollary}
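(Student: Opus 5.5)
The plan is to reuse the power-set embedding of Corollary~\ref{cor:zero-extension-continuum} and then enlarge the top endpoint. For $n\ge2$, $\V_n\subseteq\W_n\subseteq\W_\infty$ by Proposition~\ref{lem:chains} and the definition of $\W_\infty$. So every variety $\mathcal J_{n,J}=\V_n\vee\bigvee_{p\in J}\Var(E_p)$, with $J$ a set of primes, already lies in $[\V_n,\W_\infty]$. The map $J\mapsto\mathcal J_{n,J}$ is an order embedding of $(\mathcal P(\text{primes}),\subseteq)$, because $\varepsilon_p$ holds in $\mathcal J_{n,J}$ exactly when $p\notin J$. Hence the interval has cardinality at least $2^{\aleph_0}$. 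The upper bound $2^{\aleph_0}$ comes from the countable-signature argument: there are countably many identities, hence at most continuum many equational theories.

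For the antichain, the plan is to transport a continuum-sized antichain of $\mathcal P(\mathbb N)$ through the order embedding. An order embedding reflects and preserves $\subseteq$, so incomparable sets go to incomparable varieties. A standard choice is an almost disjoint family $\{J_r\}_{r\in\mathbb R}$ of infinite subsets of the primes. For instance, enumerate the primes along the rationals and let $J_r$ be the primes indexed by a fixed sequence of rationals converging to $r$. Distinct members of such a family meet in a finite set, and infinite sets with finite intersection are incomparable. Alternatively, pair primes as $p_{2k},p_{2k+1}$ and, for each $A\subseteq\mathbb N$, take one prime from each pair according to membership in $A$. Distinct choices are then incomparable, since each set contains exactly one prime from each pair.

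For the chain, the plan is again to pull back a continuum chain of $\mathcal P(\mathbb N)$. Identify the primes with $\mathbb Q$ by a bijection, and set $J_r=\{q\in\mathbb Q:q<r\}$ for $r\in\mathbb R$. If $r<s$, then $J_r\subsetneq J_s$, by density of $\mathbb Q$. Applying the order embedding gives a strictly increasing chain $\mathcal J_{n,J_r}$ of continuum length inside $[\V_n,\W_\infty]$.

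None of these steps is really an obstacle. The only point to check is that the embedding of Corollary~\ref{cor:zero-extension-continuum} is genuinely order-reflecting, so that both strictness of the chain and incomparability of the antichain transfer. This follows from the separating identities $\varepsilon_p$: if $J\not\subseteq J'$, pick $p\in J\setminus J'$. Then $\varepsilon_p$ holds in $\mathcal J_{n,J'}$ but fails in $\mathcal J_{n,J}$, so $\mathcal J_{n,J}\not\subseteq\mathcal J_{n,J'}$.
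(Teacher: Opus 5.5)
Your proof is correct, but it takes the continuum family from a different sub-interval than the paper does.

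The paper observes that $[\W_n,\W_{n+1}]\subseteq[\V_n,\W_\infty]$ and invokes Theorem~\ref{thm:adjacent-cardinality}. That is the graph-homomorphism embedding $J\mapsto\mathcal K_{n,J}$, built from the context quotients $T_{n,G_j}$ and Erd\H{o}s's graphs of large chromatic number and odd girth. You instead use $[\V_n,\W_n]\subseteq[\V_n,\W_\infty]$ and the cyclic embedding $J\mapsto\mathcal J_{n,J}$ of Corollary~\ref{cor:zero-extension-continuum}. Both arguments reduce in one line to an existing power-set embedding followed by the countable-signature bound, so neither is shorter.

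What each route buys:
\begin{itemize}
\item Your route places the continuum chain and antichain already inside the lower interval $[\V_n,\W_n]$. It inherits the dependence on the cited Shao--Gao results on the cyclic semirings $E_p$: their Proposition~10.2 and the inclusion $E_p\in\W_n$ of Lemma~\ref{lem:SG-cyclic-endpoints}.
\item The paper's route uses only its own graph construction. It also works verbatim for $n=1$, since $\V_1\subseteq\W_1$ and Theorem~\ref{thm:adjacent-cardinality} holds for $n\ge1$.
\end{itemize}

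You also spell out details the paper leaves implicit. These are the extraction of a continuum chain (cuts of $\mathbb Q$) and a continuum antichain (one prime from each pair) from the power set, and the order-reflection argument via $\varepsilon_p$. In the almost-disjoint variant, take injective sequences so that each $J_r$ is infinite. The pairing construction needs no such care.
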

\begin{proof}
The interval $[\W_n,\W_{n+1}]$ is contained in
$[\V_n,\W_\infty]$. Apply Theorem~\ref{thm:adjacent-cardinality}
and the countable-signature upper bound.
\end{proof}

\begin{corollary}\label{cor:infinite-continuum}
The subvariety lattice of $\W_\infty$ has cardinality
$2^{\aleph_0}$.
\end{corollary}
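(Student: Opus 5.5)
The statement asks only for the cardinality of the subvariety lattice $L(\W_\infty)$. I would prove it by a two-sided bound, using results already established.

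\emph{Lower bound.} Fix any $n\ge1$; the case $n=2$ suffices. Every variety in the interval $[\W_n,\W_{n+1}]$ is contained in $\W_{n+1}$. When $n+1\ge2$, that variety is contained in $\W_\infty=\bigvee_{k\ge2}\W_k$, so it is a subvariety of $\W_\infty$. Hence $[\W_n,\W_{n+1}]\subseteq L(\W_\infty)$. By Theorem~\ref{thm:adjacent-cardinality}, this interval contains an order-embedded copy of $(\mathcal P(\mathbb N),\subseteq)$, namely the varieties $\mathcal K_{n,J}$. In particular it has $2^{\aleph_0}$ distinct members, which gives $|L(\W_\infty)|\ge2^{\aleph_0}$. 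Alternatively, one can quote Corollary~\ref{cor:large-interval-continuum} directly, since $[\V_n,\W_\infty]\subseteq L(\W_\infty)$.

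\emph{Upper bound.} Work in the binary signature $(+,\cdot)$ over a countable set of variables. There are only countably many terms, hence only countably many identities. Every variety of ai-semirings is determined by its equational theory, which is a subset of this countable set of identities. So there are at most $2^{\aleph_0}$ varieties in total, and in particular at most $2^{\aleph_0}$ subvarieties of $\W_\infty$.

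Combining the two bounds gives $|L(\W_\infty)|=2^{\aleph_0}$. There is no real obstacle here. The only point to check is that the chosen interval actually lies below $\W_\infty$. This holds because $\W_\infty$ is the join of all $\W_k$ with $k\ge2$, and the upper endpoint $\W_{n+1}$ has index at least $2$.
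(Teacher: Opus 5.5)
Your proposal is correct and is essentially the paper's own proof. The paper likewise takes the lower bound from the continuum interval $[\W_2,\W_3]$, which lies in the subvariety lattice of $\W_\infty$, via Theorem~\ref{thm:adjacent-cardinality}, and the upper bound from the countable-signature count of equational theories.
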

\begin{proof}
The interval $[\W_2,\W_3]$ is contained in
the subvariety lattice of $\W_\infty$; the theorem gives
the lower bound and the countable-signature argument gives the
upper bound.
\end{proof}

\subsection{Local finiteness}\label{sec:infinite-join}

\begin{proposition}\label{prop:infinite-local}
The variety $\W_\infty$ is locally finite.
\end{proposition}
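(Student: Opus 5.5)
To prove that $\W_\infty$ is locally finite, I will show that for each finite variable set $X$ with $|X|=r$, the free algebra $F_{\W_\infty}(X)$ is finite. This free algebra is the quotient of $\mathcal P_f(X^+)$ by the kernel of the joint evaluation into all $\widehat A_n$. Since $\W_\infty$ is generated by the $\widehat A_n$, it suffices to bound uniformly, independently of $n$, the number of distinct term functions of $r$ variables on $\widehat A_n$. Equivalently, I will show that two terms agreeing in a fixed finite quotient agree in every $\widehat A_n$.

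\textbf{Step 1: a length threshold.} Each letter occurs exactly twice in $\mword_n$, so any word $\mathbf{w}\in X^+$ of length at least $2r+1$ has some variable occurring at least three times. Under an all-old evaluation such a word evaluates to $\infty$, because a nonempty factor repeated three times would force some letter to appear three times. Under an evaluation sending a variable of $c(\mathbf{w})$ to $0$, the word evaluates to $0$. Hence, in every $\widehat A_n$, the value of a long word depends only on its content. I will therefore use the congruence on $X^+$ that keeps words of length at most $2r$ as singleton classes and identifies longer words with equal content, exactly as in the construction of $\overline{X^+}$ but with threshold $2r$. This relation is a semigroup congruence for the same reasons given there.

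\textbf{Step 2: passing to the power semiring.} Every assignment $X\to\widehat A_n$ factors through the finite power semiring $\mathcal P(\overline{X^+})$, by the argument of Lemma~\ref{ind:lem:evaluation} with the new threshold. This is the point that needs care. For the earlier lemma the value of a long class was $\infty$ because products of more than four old factors cannot lie inside $abba$. For general $n$, one instead uses Step 1: an all-old word of length greater than $2r$ with only $r$ variables is $\infty$. Consequently the map from terms in $r$ variables to term functions on $\prod_n\widehat A_n$ factors through the nonempty subsets of $\overline{X^+}$. The free algebra $F_{\W_\infty}(X)$ is then a homomorphic image of a subsemiring of $\mathcal P(\overline{X^+})$, which has at most $2^{\,r+\cdots+r^{2r}+2^r-1}$ elements.

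\textbf{Step 3: conclusion.} A variety generated by a class of algebras is locally finite as soon as its $r$-generated free algebras are finite for every $r$. This follows from Birkhoff's construction of the free algebra as a subalgebra of a product of the generators, using only the term functions it realizes. The only real obstacle is making sure that the threshold in Step 1 is uniform in $n$; this is exactly the pigeonhole argument on the number of variables, so I expect no further difficulty.
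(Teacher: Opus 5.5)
Your proposal is correct and follows essentially the same route as the paper. Both rest on one fact: in every $\widehat A_n$, a word in which some variable occurs at least three times evaluates to $0$ or $\infty$ according only to its content, so only finitely many word classes remain in $r$ variables. The paper states this directly as the identity $\mathbf{w}\approx\prod_{x\in c(\mathbf{w})}x^3$ for such words, whereas you use the pigeonhole threshold $2r$ and factor evaluations through $\mathcal P(\overline{X^+})$; the difference is only one of presentation.
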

\begin{proof}
Fix an $r$-element variable set $X_r$. Only finitely many words over
$X_r$ contain each variable at most twice. For any other word $\mathbf{w}$ put
$D=c(\mathbf{w})$. In every $\widehat A_n$ we have
\[
 \mathbf{w}\approx\prod_{x\in D}x^3,
\]
with the order of factors on the right fixed arbitrarily. Indeed,
if some variable in $D$ takes the new value $0$, both sides are $0$;
otherwise both are $\infty$. Thus every term over $X_r$ is equivalent
to a nonempty sum of words from a finite set: the words in which each
variable occurs at most twice, together with one word
$\prod_{x\in D}x^3$ for each nonempty $D\subseteq X_r$.
Idempotence of addition now implies that the $r$-generated free
semiring of $\W_\infty$ is finite.
\end{proof}

\section{The unique limit subvariety of \texorpdfstring{$\W_1$}{W1}}\label{sec:unique-limit}
\subsection{Identities and normal forms in the four-element variety}\label{sec:positive}

Throughout this section, a finite nonempty set of words
$P=\{\mathbf{p}_1,\ldots,\mathbf{p}_r\}$ is represented by the ordinary ai-semiring
term $\mathbf{p}_1+\cdots+\mathbf{p}_r$, exactly as in
Section~\ref{sec:prelim}. We never use an empty additive term.

By Zhao--Wu~\cite{ZhaoWu25}, the variety
$\W_1=\Var(\widehat A_1)$, where
$\widehat A_1\cong S_c(a^2)^0$, is finitely based.
We now give a self-contained completeness proof for the following
six-identity basis in the binary ai-semiring signature.
The proof uses the absorption criterion in
Proposition~\ref{cor:absorption} and the normal forms below.

Set $b=a^2$. The operations on $\widehat A_1=\{0,a,b,\infty\}$ are
\[
\begin{array}{c|cccc}
 +&0&a&b&\infty\\\hline
 0&0&a&b&\infty\\
 a&a&a&\infty&\infty\\
 b&b&\infty&b&\infty\\
 \infty&\infty&\infty&\infty&\infty
\end{array}
\qquad
\begin{array}{c|cccc}
 \cdot&0&a&b&\infty\\\hline
 0&0&0&0&0\\
 a&0&b&\infty&\infty\\
 b&0&\infty&\infty&\infty\\
 \infty&0&\infty&\infty&\infty
\end{array}.
\]

\begin{proposition}[A six-identity equational basis]\label{lem:six-valid}
Relative to the ai-semiring axioms, the following six identities
constitute an equational basis of $\W_1$:
\begin{align}
 xy&\approx yx, \label{eq:basis-comm}\\
 xyz&\approx x^3yz, \label{eq:basis-cubic}\\
 x+x^2&\approx x^3, \label{eq:basis-square}\\
 x+xy&\approx x+xy+x^3y, \label{eq:basis-mixed}\\
 x^3+xy&\approx x^3, \label{eq:basis-absorb}\\
 x^2+y^2&\approx x^2+y^2+xy. \label{eq:basis-quadratic}
\end{align}
\end{proposition}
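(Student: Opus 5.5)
Verification and completeness are handled separately. Write $\Sigma$ for the six identities (\ref{eq:basis-comm})--(\ref{eq:basis-absorb}), (\ref{eq:basis-quadratic}), together with the ai-semiring axioms.

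\emph{Validity.} I would check each identity in $\widehat A_1$ using Lemma~\ref{lem:restriction}. This reduces the check to two things: a content condition, and validity of the corresponding absorptions in $A_1=S_c(a^2)=\{a,b,\infty\}$. In $A_1$ the only non-top products are $a\cdot a=b$, every word of length at least three is $\infty$, and $x^3=\infty$. So each check is a short case analysis on whether the variables take the value $a$.
\begin{itemize}
\item For (\ref{eq:basis-square}), both sides have the same content, and in $A_1$ the value $x+x^2$ is either $a+b=\infty$ or $\infty$.
\item For (\ref{eq:basis-quadratic}), a non-top value of $x^2+y^2$ forces $x=y=a$, and then $xy=b$.
\item The remaining identities are equally short.
\end{itemize}

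\emph{Completeness: normal form.} By (\ref{eq:basis-comm}) every word is a commutative monomial. By (\ref{eq:basis-cubic}), together with commutativity, any word of length at least three with content $C$ equals $\prod_{x\in C}x^3$; I denote this $\lambda_C$. Indeed, $x^3yz\approx x^3y^3z$ by iterating, and $x^3\cdot x\approx x^3$ follows from (\ref{eq:basis-cubic}) as well. Hence every term is $\Sigma$-equivalent to a sum of three kinds of summands: single variables $x$, quadratic monomials $x^2$ or $xy$, and classes $\lambda_C$.

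Next I would saturate a term $\mathbf u$ by the derivable absorptions.
\begin{itemize}
\item If $x$ and $x^2$ are present, replace them by $\lambda_{\{x\}}$, using (\ref{eq:basis-square}).
\item If $x$ and $xy$ are present, add $\lambda_{\{x,y\}}$, using (\ref{eq:basis-mixed}).
\item If $\lambda_C$ is present, delete any $xy$ with $x\in C$ and $c(xy)\subseteq C$, using (\ref{eq:basis-absorb}) after multiplying by $\lambda_{C\setminus\{x\}}$ appropriately.
\item If $x^2$ and $y^2$ are present, add $xy$, using (\ref{eq:basis-quadratic}).
\item Add $\lambda_C$ whenever $C$ is a union of contents of summands covering a long word that is forced. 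This is derived by multiplying summands and using (\ref{eq:basis-cubic}).
\end{itemize}

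\emph{Completeness: matching the semantics.} For any word $\mathbf q$ in normal form with $\widehat A_1\models\mathbf q\preceq\mathbf u$, Proposition~\ref{cor:absorption} gives $D_{\mathbf q}(\mathbf u)\ne\varnothing$ and $A_1\models\mathbf q\preceq D_{\mathbf q}(\mathbf u)$. I would then classify by the shape of $\mathbf q$:
\begin{itemize}
\item For $\mathbf q=\lambda_C$, the absorption holds in $A_1$ trivially, so the only condition is that some summand has content inside $C$. I must then derive $\lambda_C\preceq\mathbf u$ from such a summand $\mathbf p$ via $\mathbf p\approx\mathbf p+\ldots$. This I expect to get from (\ref{eq:basis-mixed}), (\ref{eq:basis-square}) and multiplication by variables: for instance $x\preceq x$ gives $x^3\cdot(\text{stuff})$ through $x+x^2\approx x^3$.
\item For $\mathbf q=x$, the evaluation $x=a$ with the other variables $\infty$ forces $x\in\mathbf u$.
\item For $\mathbf q=x^2$ or $\mathbf q=xy$, the evaluations $x=y=a$ force the presence of $x^2$, of $xy$, or of both squares, which is exactly (\ref{eq:basis-quadratic}).
\end{itemize}
This would show that the saturated normal form of $\mathbf u$ contains every summand semantically absorbed by $\mathbf u$. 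Consequently two terms are equal in $\W_1$ if and only if their saturated normal forms coincide, and $\Sigma$ is complete.

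The main obstacle is the long-word case: deriving $\lambda_C\preceq\mathbf u$ from $\Sigma$ whenever some summand has content in $C$, especially when the only such summand is a single variable $x$ or a square. Here (\ref{eq:basis-absorb}) points the wrong way, and one must instead combine (\ref{eq:basis-square}) and (\ref{eq:basis-mixed}). I would try to show first that $x\approx x+\lambda_{\{x\}}y$ fails in general but that $\mathbf u\succeq\lambda_C$ is still derivable. If it fails, I would revisit the semantics, since the value of $x$ alone can be $a\ne\infty$, and this may restrict the $\lambda_C$ to those forced by (\ref{eq:basis-mixed}). Pinning down exactly which $\lambda_C$ are semantically absorbed, and matching them to this closure, is the delicate step.
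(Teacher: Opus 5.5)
Your overall architecture matches the paper's: validity through Lemma~\ref{lem:restriction}, commutative normal forms, then a classification of valid absorptions $\mathbf q\preceq\mathbf u$ by the shape of $\mathbf q$ via Proposition~\ref{cor:absorption}. \textbf{The gap is the decisive case $\mathbf q=\lambda_C$: it rests on a false semantic claim and is left open.} In $A_1=S(a^2)$ every word of length at least three takes the \emph{top} value $\infty$. So $A_1\models\lambda_C\preceq D_{\mathbf q}(\mathbf u)$ is not automatic: it says that $D_{\mathbf q}(\mathbf u)$ evaluates to $\infty$ under every old assignment. A summand with content in $C$ is therefore not enough. For example $x^3\preceq x$, $x^3\preceq x^2$ and $\lambda_{\{x,y\}}\preceq x+y$ all fail at $x=y=a$. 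Thus the ``obstacle'' you describe is an attempt to derive invalid identities. Your last saturation rule (adding $\lambda_C$ by multiplying summands) is likewise unsound.

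The missing criterion is: $\lambda_C\preceq\mathbf u$ holds in $\widehat A_1$ if and only if $D_{\mathbf q}(\mathbf u)$ contains a word of length at least three, or a linear summand $x$ together with a quadratic summand containing $x$.
\begin{itemize}
\item \emph{Necessity.} If neither occurs, the linear variables $L$ and the quadratic variables of $D_{\mathbf q}(\mathbf u)$ are disjoint. Assign $b$ on $L$ and $a$ on $C\setminus L$. Every word of $D_{\mathbf q}(\mathbf u)$ then has value $b$, while $\lambda_C=\infty$.
\item \emph{Sufficiency.} Here \eqref{eq:basis-absorb} points exactly the right way. Multiply $x^3+x\mathbf r\approx x^3$ (or $x^3+x\approx x^3$, from \eqref{eq:basis-square}) by the remaining variables of $D$. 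This gives $\lambda_D+\mathbf m\approx\lambda_D$ for every word $\mathbf m$ with $D\subseteq c(\mathbf m)$. An overlap produces $\lambda_{c(\mathbf p)}$ by \eqref{eq:basis-square} or \eqref{eq:basis-mixed}.
\end{itemize}

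The short-query cases have the same blind spot. A high-degree word in $D_{\mathbf q}(\mathbf u)$ absorbs every word over its content: $x\preceq x^3$, $x^2\preceq x^3$ and $xy\preceq x^3$ are all valid.
\begin{itemize}
\item Hence ``$x=a$ forces $x\in\mathbf u$'' is false. Your first saturation rule even deletes $x$ from $x+x^2$ although $x$ remains absorbed. This contradicts your claim that the saturated form contains all absorbed words.
\item For $\mathbf q=xy$, the valuation $x=y=a$ does not refute $xy\preceq x^2$ or $xy\preceq x^2+y$. You need $x=a$, $y=b$, and symmetrically $x=b$, $y=a$.
\item The list of sufficient alternatives for $\mathbf q=xy$ must also include a high-degree word $\lambda_D$ with $D\subseteq\{x,y\}$, and a linear--quadratic overlap.
\item $x^4\approx x^3$ does not follow from \eqref{eq:basis-comm} and \eqref{eq:basis-cubic} alone. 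The power semiring of $\mathbb Z_2=\{1,g\}$ satisfies both, since $X^3=X$ for every subset $X$, but $\{g\}^4\ne\{g\}^3$. Derive it instead as $x^4\approx x^2+x^3\approx x^3$ using \eqref{eq:basis-square}.
\end{itemize}
With these repairs you do not need canonical saturated forms. It suffices to derive each valid absorption $\mathbf q\preceq\mathbf u$ separately and then conclude $\mathbf u\approx\mathbf u+\mathbf v\approx\mathbf v$, as the paper does.
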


Each identity is verified directly from the four-element Cayley
tables of $\widehat A_1$. We derive the required word normal forms
before completing the proof of Proposition~\ref{lem:six-valid}.

First, we derive two additional identities from the six
identities above. By \eqref{eq:basis-square} and additive
idempotence,
\[
 x^2+x^3\approx x^2+(x+x^2)
 \approx x+x^2\approx x^3.
\]
Multiplying \eqref{eq:basis-square} by $x$ and distributing gives
$x^2+x^3\approx x^4$. Hence
\begin{equation}\label{eq:derived-power}
 x^3\approx x^4.
\end{equation}
Using \eqref{eq:basis-cubic} with $(x,y,z)=(x,x,t)$ and
then \eqref{eq:derived-power}, we obtain
\begin{equation}\label{eq:six-square-cube}
 x^2t\approx x^4t\approx x^3t.
\end{equation}
By commutativity, these relations give
\[
\begin{aligned}
 x^2y
 &\approx x^2y^3 &&\text{by \eqref{eq:basis-cubic} with }(y,x,x),\\
 &\approx x^3y^3 &&\text{by \eqref{eq:six-square-cube} with }t=y^3,\\
 &\approx x^3y^2 &&\text{by \eqref{eq:six-square-cube} with }x,y
                  \text{ interchanged, in reverse},\\
 &\approx xy^2 &&\text{by \eqref{eq:basis-cubic} with }(x,y,y),
                   \text{ in reverse}.
\end{aligned}
\]
Thus we may also use
\begin{equation}\label{eq:derived-square}
 x^2y\approx xy^2.
\end{equation}
The preceding reductions yield a convenient normal form.
Fix a total order on the variable set.

\begin{lemma}\label{lem:norm-four}
Modulo \eqref{eq:basis-comm}, \eqref{eq:basis-cubic},
\eqref{eq:derived-power}, and \eqref{eq:derived-square},
every word is equivalent either to a linear word $x$, to a
quadratic word $x^2$ or $xy$ with $x<y$, or to
\[
 x^3\prod_{z\in c(\mathbf{w})\setminus\{x\}}z,
\]
where $\ell(\mathbf{w})\ge3$ and $x$ is the least variable in $c(\mathbf{w})$.
An empty final product is omitted.
\end{lemma}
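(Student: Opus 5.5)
The plan is a case analysis on the length $\ell(\mathbf{w})$, reducing words by the four listed identities until they reach one of the displayed shapes.

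\emph{Length one and two.} If $\ell(\mathbf{w})=1$, then $\mathbf{w}$ is a variable $x$ and there is nothing to do. If $\ell(\mathbf{w})=2$, then $\mathbf{w}$ is either $x^2$ or a product of two distinct variables. In the second case \eqref{eq:basis-comm} lets us write it as $xy$ with $x<y$.

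\emph{Length at least three.} Let $\mathbf{w}$ have $\ell(\mathbf{w})\ge3$, and let $x$ be the least variable of $c(\mathbf{w})$. The steps are as follows.
\begin{enumerate}
\item Since multiplication is commutative by \eqref{eq:basis-comm}, and commutativity gives all permutations of a product, rearrange $\mathbf{w}$ as $x^{e}\prod_{z}z^{e_z}$. Here $z$ ranges over $c(\mathbf{w})\setminus\{x\}$, and $e,e_z\ge1$ with $e+\sum e_z=\ell(\mathbf{w})\ge3$.
\item Raise the exponent of $x$ to at least three. If $e\ge3$ we are done. If $e\le2$, some other letter is present or $e$ was already large. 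Apply \eqref{eq:basis-cubic} in the form $xyz\approx x^3yz$, with its first variable specialised to $x$ and $y,z$ specialised to suitable subwords of the remaining factor. This needs the remainder to be nonempty and of total length at least two after removing one copy of $x$, which is guaranteed by $\ell(\mathbf{w})\ge3$. The identity then replaces one $x$ by $x^3$, so $e\ge3$.
\item Reduce the exponent of $x$ to exactly three using \eqref{eq:derived-power}, $x^3\approx x^4$, in the form $x^k\approx x^3$ for $k\ge3$. This is applied inside the product via context closure.
\item For each $z\ne x$ with $e_z\ge2$, reduce $e_z$ to $1$. Use \eqref{eq:derived-square}, read as $z^2x\approx zx^2$, to transfer extra copies of $z$ onto $x$. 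Each transfer raises the power of $x$, which step (iii) then collapses back to $x^3$. Alternatively, use $z^2t\approx z^3t$ from \eqref{eq:six-square-cube} together with $z^3\approx z^4$; however, $z^2y\approx zy^2$ with $y=x$ is the cleanest route.
\item Iterate (iii) and (iv). Each round strictly decreases $\sum_z e_z$, so the process terminates at $x^3\prod_{z\in c(\mathbf{w})\setminus\{x\}}z$.
\end{enumerate}
If $c(\mathbf{w})=\{x\}$, the product in the displayed normal form is empty and the result is simply $x^3$, by \eqref{eq:derived-power}.

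\emph{Main obstacle.} The delicate point is step (ii) when $e=1$ or $e=2$. One must make sure the substitution instance of \eqref{eq:basis-cubic} really has nonempty $y$ and $z$. If $\mathbf{w}=x z^{k}$ with $k\ge2$, take $y=z$ and $z=z^{k-1}$. If $\mathbf{w}=x^2z$, take $y=x$ and $z=z$. In each subcase I would spell out the specialisation explicitly. Finally, note that all steps preserve content and keep the length at least three, so the final form is attained.
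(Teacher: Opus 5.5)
Your proof is correct and follows essentially the paper's route: sort the letters with \eqref{eq:basis-comm}, create a cubic pivot with \eqref{eq:basis-cubic}, cap its exponent with $x^4\approx x^3$, and move surplus copies of every other letter onto the pivot via $z^2x\approx zx^2$. The only difference is that you put the least variable into the first slot of \eqref{eq:basis-cubic} from the start, which makes the paper's final pivot transfer $x^3y\approx x^2y^2\approx xy^3$ unnecessary. (Your parenthetical alternative via $z^2t\approx z^3t$ and $z^3\approx z^4$ would not by itself reach exponent one, since these only move among exponents $\ge2$; it would also need \eqref{eq:basis-cubic} read backwards, $z^3\mathbf{y}\mathbf{t}\approx z\mathbf{y}\mathbf{t}$, but your main route does not depend on it.)
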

\begin{proof}
By commutativity, order the letters of $\mathbf{w}$. The identities
\eqref{eq:basis-cubic} (with two equal factors) and
\eqref{eq:derived-power} give
\[
 x^2y\approx x^3y,\qquad x^k\approx x^3\quad(k\ge3).
\]
Thus any word of length at least three has a cubic pivot:
use a repeated letter if one exists, and otherwise apply
\eqref{eq:basis-cubic} to a square-free word.
For each other letter, \eqref{eq:derived-square} gives
\[
 x^3y^2\approx x^4y\approx x^3y,
\]
so its multiplicity reduces to one. Finally,
\[
 x^3y\approx x^2y^2\approx xy^3
\]
(multiply \eqref{eq:derived-square} by $x$ and by $y$)
transfers the pivot to the least letter of $c(\mathbf{w})$.
The degree-one and degree-two forms require no reduction.
\end{proof}

\begin{proof}[Completion of the proof of Proposition~\ref{lem:six-valid}]
It remains to prove completeness. We work with normal forms from
Lemma~\ref{lem:norm-four}. For a nonempty finite variable set
$C$, let $c$ be its least element and set
\[
 \mathbf{t}_{C}=c^3\prod_{z\in C\setminus\{c\}}z.
\]
As usual, the final product is omitted when $C=\{c\}$.
The preliminary identities below hold modulo the six displayed
identities.

\smallskip\noindent
\emph{Absorption consequences.}
First, \eqref{eq:basis-square} and idempotence give
$c^3+c\approx c^3$. For every word $\mathbf{m}$ and every nonempty
$D\subseteq c(\mathbf{m})$, we claim
\[
 \mathbf{t}_{D}+\mathbf{m}\approx \mathbf{t}_{D}.
\]
Indeed, if $d$ is the least variable of $D$ and $\mathbf{v}$ lists
once each variable of $D\setminus\{d\}$, commutativity permits
$\mathbf{m}=d\mathbf{v}\mathbf{r}$ for some possibly empty word $\mathbf{r}$. Thus the claim
follows by multiplying $d^3+d\mathbf{r}\approx d^3$ by $\mathbf{v}$;
when $\mathbf{r}$ is omitted we use $d^3+d\approx d^3$, and otherwise
we use \eqref{eq:basis-absorb} with $y=\mathbf{r}$.
No multiplicative identity is introduced by an omitted word.

Second, if a term $\mathbf{u}$ has a linear summand $x$ and also a
quadratic summand $\mathbf{p}$ containing $x$, then
\[
 \mathbf{u}+\mathbf{t}_{c(\mathbf{p})}\approx \mathbf{u}.
\]
For $\mathbf{p}=x^2$, apply \eqref{eq:basis-square};
for $\mathbf{p}=xy$ with $x\ne y$, apply \eqref{eq:basis-mixed}
and normalize the added cubic word using
Lemma~\ref{lem:norm-four}. All remaining summands of $\mathbf{u}$
are unchanged.

\smallskip\noindent
\emph{Exhaustion of valid normal-word absorptions.}
Let $\mathbf{u}$ be a nonempty term whose summands are normal words,
and let $\mathbf{q}$ be a normal word. The case $\mathbf{q}\in \mathbf{u}$ is immediate, so we assume $\mathbf{q}\notin \mathbf{u}$.
By the zero-adjunction
criterion (Proposition~\ref{cor:absorption}), this absorption
is valid in $\widehat A_1$ precisely when
\[
 D_\mathbf{q}(\mathbf{u})\ne\varnothing,
 \qquad S(a^2)\models \mathbf{q}\preceq D_\mathbf{q}(\mathbf{u}).
\]
We now describe all such absorptions. In the following
cases, the mentioned summands belong to the restricted support
$D_\mathbf{q}(\mathbf{u})$, and the necessity arguments use the old flat
semiring $S(a^2)=\{a,b,\infty\}$, with $b=a^2$.

\smallskip\noindent
\emph{Case 1: $\mathbf{q}=x$.}
The restricted support can contain only $x,x^2,x^3$.
Validity forces $x$ or $x^3$ to occur. Otherwise the only
possible surviving word is $x^2$; assigning $x=a$ gives the
source value $b$ and the target value $a$, contradicting
absorption. Conversely, $x$ is already absorbed by itself,
and $x^3+x\approx x^3$ handles the other possibility.

\smallskip\noindent
\emph{Case 2: $\mathbf{q}=x^2$.}
Again the restricted support consists of words among
$x,x^2,x^3$. Validity forces $x^2$ or $x^3$ to occur:
otherwise evaluation at $x=a$ separates the source $x$
from the target $b$. Conversely, idempotence handles $x^2$,
and $x^3+x^2\approx x^3$ follows from the already derived
relation $x^2+x^3\approx x^3$.

\smallskip\noindent
\emph{Case 3: $\mathbf{q}=xy$, $x\ne y$.}
The valid cases are precisely the following: $xy$ already
occurs; both $x^2$ and $y^2$ occur; a high-degree normal word
$\mathbf{t}_{D}$ with $\varnothing\ne D\subseteq\{x,y\}$ occurs; or
a linear summand also occurs in one of the quadratic
summands. These cases are derivable, respectively, by
idempotence, \eqref{eq:basis-quadratic},
the high-degree absorption proved above, or first
the linear--quadratic overlap reduction proved above and then
the high-degree absorption proved above.
To prove necessity, assume that none occurs. Then the
restricted support has no high-degree word, has no $xy$,
has at most one of $x^2,y^2$, and has no linear--quadratic
overlap. With no square, the assignment $x=y=a$ makes the
source either $0$ (if the restriction is empty) or $a$, while
the target has value $b$. If $x^2$ is the sole square,
the only possible linear word is $y$; take $x=a$ and $y=b$.
The source has value $b$ and the target $xy$ has value
$\infty$. The argument with $y^2$ is symmetric. Thus the
absorption fails unless one of the stated alternatives holds.

\smallskip\noindent
\emph{Case 4: $\mathbf{q}=\mathbf{t}_{C}$ for nonempty $C$.}
The absorption is valid precisely when the restricted source
$D_\mathbf{q}(\mathbf{u})$ contains a high-degree normal word, or when one of
its linear variables also occurs in a quadratic summand.
For sufficiency, a high-degree word must be $\mathbf{t}_{D}$ with
$\varnothing\ne D\subseteq C$, and
the high-degree absorption proved above absorbs $\mathbf{q}$.
A linear--quadratic overlap first gives such a $\mathbf{t}_{D}$ via
the linear--quadratic overlap reduction proved above.
For necessity, suppose neither condition holds. Let $L$
be the set of variables of linear summands of $D_\mathbf{q}(\mathbf{u})$, and
let $Q$ be the set of variables occurring in its quadratic
summands; these sets are disjoint. Assign $b$ to the
variables in $L$, and $a$ to the variables of $C\setminus L$.
Every surviving source word then has value $b$ (or the
restricted source is empty), while $\mathbf{t}_{C}$ has value $\infty$.
Hence the required absorption fails in $\widehat A_1$.

\smallskip\noindent
\emph{Completion.}
Every term is equivalent, using Lemma~\ref{lem:norm-four},
to a nonempty sum of the four kinds of normal words treated
above. Every valid absorption of one normal word by such
a term is covered by one of the four cases, and each case
shown to be valid has been derived from the six identities.
Finally, let $\mathbf{u}\approx \mathbf{v}$ be an arbitrary identity of
$\widehat A_1$. Normalize the word summands of both sides.
For each word $\mathbf{p}$ of $\mathbf{u}$, derive $\mathbf{p}\preceq \mathbf{v}$; for each
word $\mathbf{q}$ of $\mathbf{v}$, derive $\mathbf{q}\preceq \mathbf{u}$.
Adding the finitely many absorbed words one at a time then
gives
\[
 \mathbf{u}\approx \mathbf{u}+\mathbf{v}\approx \mathbf{v}.
\]
Thus every identity of $\widehat A_1$ follows from the six
displayed identities and the ai-semiring axioms. This proves
that they form an equational basis of $\W_1$.
\end{proof}

\subsection{The position of \texorpdfstring{$SR_6$}{SR6} in \texorpdfstring{$\W_1$}{W1}}

\begin{proposition}\label{lem:sr6-hsp}
The variety $\Var(SR_6)$ is a subvariety of $\W_1$.
\end{proposition}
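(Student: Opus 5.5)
The plan is to use the equational basis of Proposition~\ref{lem:six-valid} rather than constructing $SR_6$ inside $\mathsf{HSP}(\widehat A_1)$ directly. Since the six identities \eqref{eq:basis-comm}--\eqref{eq:basis-quadratic} axiomatize $\W_1$ relative to the ai-semiring axioms, the inclusion $\Var(SR_6)\subseteq\W_1$ is equivalent to $SR_6\models$ \eqref{eq:basis-comm}--\eqref{eq:basis-quadratic}. Those identities then pass to every member of $\Var(SR_6)$. So the proof reduces to a finite verification on the six-element semiring $SR_6$ of Lyu--Ren--Yue~\cite{LyuRenYue26}, using its Cayley tables.

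\emph{Structural facts.} I would first record the facts about $SR_6$ that make the verification short. These are: commutativity of multiplication; the set of cubes $\{s^3:s\in SR_6\}$ and the fact that $s^3$ is additively absorbing over the ideal it generates; $s^3=s^4$; and the fact that every product of three elements equals the product obtained by cubing any one factor. With these in hand, \eqref{eq:basis-comm}, \eqref{eq:basis-cubic} and the power identity \eqref{eq:basis-square} are checked elementwise: the first two on all triples, the third on all six elements. If $SR_6$ has, like $\widehat A_1$, a zero $0$ that is additive identity and multiplicative zero, then the criterion of Proposition~\ref{cor:absorption} can be applied to $SR_6$ viewed as $S^0$ for its nonzero part. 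This simplifies every check to assignments avoiding $0$.

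\emph{Absorption identities.} Next I would treat the genuinely two-variable absorption identities \eqref{eq:basis-mixed}, \eqref{eq:basis-absorb} and \eqref{eq:basis-quadratic}. Identity \eqref{eq:basis-absorb} follows from the absorbing behaviour of cubes noted above. For \eqref{eq:basis-mixed}, I would split according to whether $x+xy$ equals the top element. If it is not the top, the flat-type addition forces $x$ and $xy$ to lie in a comparable chain, and then $x^3y\le x+xy$ is read off the table. For \eqref{eq:basis-quadratic}, I would enumerate the pairs $(x,y)$ with $x^2+y^2$ not absorbing. There are only a few, since squares in $SR_6$ form a small set, and for each such pair I would check $xy\le x^2+y^2$.

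\emph{Main obstacle and fallback.} I expect the main obstacle to be \eqref{eq:basis-quadratic} together with \eqref{eq:basis-mixed}, because they are the identities that genuinely distinguish $\W_1$ from larger varieties, and they require the precise shape of the order on $SR_6$. If the tables make this awkward, the fallback is to take the finite basis of $\Var(SR_6)$ given in~\cite{LyuRenYue26} and derive each of the six identities from it syntactically. A third option is to exhibit $SR_6$ as a subsemiring of $\widehat A_1^{\,k}$ for small $k$: for instance, embed it via the coordinate maps given by the homomorphisms $SR_6\to\widehat A_1$, which separate points. Checking that these finitely many maps are homomorphisms and jointly injective also proves the statement directly, by $\mathsf{S}\mathsf{P}$-closure.
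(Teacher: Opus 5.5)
Your main route is correct, but it differs from the paper's. The paper proves the statement by citation: Shao--Gao \cite[Proposition~25.3]{ShaoGao26} give $\Var(SR_6)\subseteq\Var(S(a^2)^0)$, and $\widehat A_1\cong S(a^2)^0$. As noted in the proof of Theorem~\ref{thm:unique-limit}, this deliberately avoids any identity basis of $\W_1$.

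You instead invoke the completeness half of Proposition~\ref{lem:six-valid}, which is proved earlier and so is legitimately available. That reduces the statement to checking the six identities in $SR_6$, and the check is much easier than you anticipate. In $SR_6$ the top $1$ is multiplicatively absorbing, and every square and every product of length at least three equals $1$. Hence:
\begin{itemize}
\item both sides of \eqref{eq:basis-cubic}, \eqref{eq:basis-square}, \eqref{eq:basis-absorb} and \eqref{eq:basis-quadratic} are always $1$;
\item \eqref{eq:basis-comm} is just commutativity of $SR_6$;
\item \eqref{eq:basis-mixed} reduces to $x+xy=1$, which holds because a non-top $xy$ equals $3$, forcing $x\in\{2,4,5,6\}$ and hence $x+3=1$.
\end{itemize}
So the obstacle you expected around \eqref{eq:basis-mixed} and \eqref{eq:basis-quadratic} disappears, and no information about the order on $\{2,4,5,6\}$ is needed.

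As for what each route buys: yours is internal to the paper and elementary once the basis is known, but it makes Proposition~\ref{lem:sr6-hsp} depend on the full completeness proof of Proposition~\ref{lem:six-valid}. The paper's route keeps the inclusion independent of that basis, at the cost of an external reference.

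Your hedges and fallbacks are not viable and should be dropped.
\begin{itemize}
\item $SR_6$ has no element that is both an additive identity and a multiplicative zero. Such an element would equal its own square, but every square is the additive top. So Proposition~\ref{cor:absorption} does not apply to $SR_6$ itself.
\item $\Var(SR_6)$ is a limit variety, hence nonfinitely based, so there is no finite basis of it to derive the six identities from.
\item $SR_6$ does not lie in $\mathsf{SP}(\widehat A_1)$, because it is subdirectly irreducible. The ideal $\{1,3\}$ gives a nontrivial Rees congruence. Every nontrivial congruence identifies $1$ and $3$: multiply a pair of distinct elements by a suitable $z$, using $3z=1$ for all $z$ and the fact that the neighbourhoods of $2,4,5,6$ in the product graph with edges $\{2,6\},\{6,5\},\{5,4\}$ are pairwise distinct. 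An SI member of $\mathsf{SP}(\widehat A_1)$ would embed in the four-element $\widehat A_1$, which is impossible. Any direct construction therefore requires the operator $\mathsf H$, so point-separating homomorphisms $SR_6\to\widehat A_1$ do not exist.
\end{itemize}
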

\begin{proof}
Shao--Gao~\cite[Proposition~25.3]{ShaoGao26} prove that
$\Var(SR_6)\subseteq\Var(S(a^2)^0)$.
Since $\widehat A_1\cong S_c(a^2)^0\cong S(a^2)^0$,
the asserted inclusion follows.
\end{proof}

\subsection{The limit subvariety and the hereditary finite-basis reduction}

By Lyu--Ren--Yue~\cite{LyuRenYue26}, $\Var(SR_6)$ is a limit
variety. Yue~\cite[Corollary~3.35]{Yue26} identifies $\Var(SR_6)$ and
$\Var(S_c(abc))$ as the only limit subvarieties of
$\Var(S_7^0)$. The arguments below use the valid identities of
Proposition~\ref{lem:six-valid} and the resulting normal forms in
Lemma~\ref{lem:norm-four}.

We use the semiring-inequality notation introduced in
Section~\ref{sec:prelim}. A variety is \emph{hereditarily
finitely based} if all its subvarieties are finitely based.
The following theorem yields the asserted uniqueness.

\begin{theorem}\label{thm:unique-limit}
Let $\mathcal K\subseteq\W_1$ be a subvariety such that
$\Var(SR_6)\nsubseteq\mathcal K$. Then $\mathcal K$ is
hereditarily finitely based. Consequently, $\Var(SR_6)$ is
the unique limit subvariety of $\W_1$.
\end{theorem}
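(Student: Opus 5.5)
The second assertion reduces to the first, so I would dispose of it first and spend the effort on the hereditary finite basis statement. Let $\mathcal L\subseteq\W_1$ be a limit subvariety. If $\Var(SR_6)\nsubseteq\mathcal L$, the first assertion makes $\mathcal L$ hereditarily finitely based, and in particular finitely based, which is impossible. Hence $\Var(SR_6)\subseteq\mathcal L$. Since $\Var(SR_6)$ is nonfinitely based by Lyu--Ren--Yue, it cannot be a proper subvariety of the limit variety $\mathcal L$, so $\mathcal L=\Var(SR_6)$. Proposition~\ref{lem:sr6-hsp} shows that $\Var(SR_6)$ does lie in $\W_1$, which gives existence.

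For the main assertion I would use that $SR_6$ is finite. Then $\Var(SR_6)\nsubseteq\mathcal K$ holds exactly when $SR_6\notin\mathcal K$, that is, when $\mathcal K$ satisfies some identity $\sigma$ that fails in $SR_6$. Working modulo the six-identity basis of Proposition~\ref{lem:six-valid}, I may assume both sides of $\sigma$ are sums of normal words from Lemma~\ref{lem:norm-four}. By Proposition~\ref{ind:lem:normal}, I may also assume $\sigma$ is a single absorption $\mathbf q\preceq\mathbf u$ with $\mathbf q$ a normal word. The case analysis in the completeness proof (Cases 1--4) lists exactly which such absorptions hold in $\widehat A_1$. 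A valid absorption of $\widehat A_1$ also holds in $SR_6$, so $\sigma$ must be \emph{invalid} in $\widehat A_1$, or at least not derivable from the valid ones.

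The core step is to show that every such $\sigma$ failing in $SR_6$ implies, modulo the basis of $\W_1$, at least one identity from a short explicit list of \emph{key identities}. I expect these to be the collapses that destroy the quadratic layer in which the nonfinite basis phenomenon of $SR_6$ lives. My first candidates are
\[
xy\approx x^3y,\qquad x^2\approx x^3,\qquad x\approx x^3,\qquad x^2+y^2\approx x^3y,
\]
together with degenerations of the absorption $xy\preceq x^2+y^2$ in~\eqref{eq:basis-quadratic}. To extract a key identity from $\sigma$, I would identify variables and substitute variables by powers or by one another. This reduces $\mathbf q$ and $\mathbf u$ to at most two or three variables while keeping the failure in $SR_6$, which can be checked on the Cayley table of $SR_6$. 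I expect this reduction to be the main obstacle. It requires knowing precisely which quadratic-sum absorptions (sums of $x_i^2$ and $x_iy_j$) separate $\W_1$ from $\Var(SR_6)$, and showing that any separating one can be shrunk to a bounded number of variables. I would try first a graph-theoretic encoding of a quadratic sum: take vertices for the variables, loops for the squares and edges for the mixed monomials. I would then argue that an absorption failing in $SR_6$ already fails after collapsing the graph to an edge or a loop.

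Finally, for each key identity $\kappa$, I would show that every subvariety of $\W_1\wedge[\kappa]$ is finitely based. Once the quadratic layer collapses, the normal forms of Lemma~\ref{lem:norm-four} reduce to linear words, squares, and the content words $\mathbf t_C$, possibly with some of these merged. Reasoning as in Cases 1--4, an absorption $\mathbf q\preceq\mathbf u$ is then determined by a bounded amount of data: which of the few normal types survive in $D_{\mathbf q}(\mathbf u)$, and overlaps between pairs of them. Hence every identity is equivalent, within $\W_1\wedge[\kappa]$, to identities in at most three variables. A locally finite variety in which all identities follow from those in a bounded number of variables has only finitely many subvarieties, each finitely based, because the bounded-variable free algebra is finite. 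This yields hereditary finite basability of $\mathcal K$.
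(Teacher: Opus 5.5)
Your derivation of uniqueness from the first assertion is correct and matches how the paper concludes. The genuine gap is in the core reduction. Consider a square-free query $xy$ whose endpoints lie on opposite shores of one bipartite component of the source graph, with $xy$ not a summand. This configuration cannot in general be collapsed to an edge or a loop while keeping the failure in $SR_6$. The paper instead substitutes along a shortest odd path and obtains the four-variable identity $\eta$ (Lemma~\ref{lem:limit-fold}).

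This cannot be pushed below four variables. In $SR_6$ the only evaluations falsifying $\eta$ are $(2,6,5,4)$ and its reverse. Moreover, none of $2,4,5,6$ lies in the subsemiring generated by the remaining elements:
\begin{enumerate}
\item all products are $1$ or $3$;
\item a sum with a summand $1$ or $3$ is $1$ or $3$;
\item if $z=g_1+\cdots+g_r$ with all $g_i\in\{2,4,5,6\}$, distributivity gives $\{w:zw=3\}=\bigcap_i\{w:g_iw=3\}$, which forces $z\in\{g_1,\ldots,g_r\}$.
\end{enumerate}
Hence every subsemiring of $SR_6$ generated by at most three elements satisfies $\eta$ and lies in $\W_1[\eta]$. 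So every identity of $\W_1[\eta]$ in at most three variables holds in $SR_6$, even though $SR_6\notin\W_1[\eta]$. For $\mathcal K=\W_1[\eta]$ your shrinking step therefore has nothing to produce, and no list of two- or three-variable key identities can work.

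Two of your candidates, $x^2\approx x^3$ and $x^2+y^2\approx x^3y$, also hold in $SR_6$: all squares and all products of length at least three equal its top $1$. So they cannot serve as key identities at all. The paper's list is $\lambda,\sigma,\rho,\eta$.

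The final step is also unsound as stated. A locally finite variety whose identities follow from those in boundedly many variables need not have finitely many subvarieties. Indeed, $\W_1$ itself has a basis in three variables (Proposition~\ref{lem:six-valid}), yet contains the nonfinitely based $\Var(SR_6)$. What you would need is that every identity \emph{added} to the branch variety is equivalent there to bounded-variable identities. Cases 1--4 only describe validity in $\widehat A_1$, not derivability in arbitrary subvarieties.

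In the $\eta$-branch the quadratic layer does not collapse at all. Terms reduce to graph normal forms with arbitrarily many complete bipartite components. The paper obtains finite bases by a well-quasi-order on encodings of absorptions, using Dickson's and Higman's lemmas, with each comparison realized by one injective renaming. The other three branches are identified explicitly as $\Var(E_{\mathrm{nil}})$, $\V_1$ and $\Var(S(a)^0)$ and handled directly.
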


The reduction will be formulated using four additional identities:
\begin{align*}
  \lambda &: x+xy\approx xy, \\
  \sigma &: xy\approx x^2+y^2, \\
  \rho &: xy\approx x^2y, \\
  \eta &: x_1x_2+x_2x_3+x_3x_4
     \approx x_1x_2+x_2x_3+x_3x_4+x_1x_4.
\end{align*}
Denote by $\W_1[\varepsilon]$ the subvariety of $\W_1$
defined by an additional identity $\varepsilon$.

\begin{proposition}\label{lem:limit-small-slices}
The varieties $\W_1[\lambda]$, $\W_1[\sigma]$, and
$\W_1[\rho]$ are hereditarily finitely based. More precisely,
\[
  \W_1[\lambda]=\Var(E_{\mathrm{nil}}),\qquad
  \W_1[\sigma]=\V_1,\qquad
  \W_1[\rho]=\Var(R_{\mathrm{ch}}),
\]
where $E_{\mathrm{nil}}=\{e,\infty\}$ has $e+\infty=\infty$ and all products
$\infty$, and $R_{\mathrm{ch}}=\{0,e,\infty\}$ is the chain $0<e<\infty$
with $0$ multiplicatively absorbing and all products of elements
of $\{e,\infty\}$ equal to $\infty$.
\end{proposition}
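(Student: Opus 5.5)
For each $\varepsilon\in\{\lambda,\sigma,\rho\}$ the plan is the same: show that the stated small algebra lies in $\W_1[\varepsilon]$, compute the normal forms of $\W_1[\varepsilon]$ from Lemma~\ref{lem:norm-four}, and show that every identity of the small algebra already follows from the six identities of Proposition~\ref{lem:six-valid} together with $\varepsilon$. This gives equality of the two varieties. Hereditary finite basability then follows because each small algebra has at most three elements and a very short list of normal-form term types. Hence its subvariety lattice is finite and can be listed explicitly, each member being cut out by one extra identity.

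\emph{Membership.} $E_{\mathrm{nil}}$ is the two-element subsemiring $\{b,\infty\}$ of $\widehat A_1$, since $b+\infty=\infty$ and $b^2=\infty$. $R_{\mathrm{ch}}$ is the subsemiring $\{0,b,\infty\}$. $A_1=S(a^2)$ is a subalgebra of $\widehat A_1$. One checks directly that $\lambda$ holds in $E_{\mathrm{nil}}$ (both sides are $\infty$). The identity $\rho$ holds in $R_{\mathrm{ch}}$: both sides are $0$ when a variable is $0$, and $\infty$ otherwise. The identity $\sigma$ holds in $S(a^2)$: if $x=y=a$ both sides are $b$, and otherwise both are $\infty$.

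\emph{Syntactic collapse.}
\begin{enumerate}
\item Under $\lambda$, put $y=x^2$ to get $x+x^3\approx x^3$. Combined with $x+x^2\approx x^3$ and Case~4 of the proof of Proposition~\ref{lem:six-valid}, this should make every term with content $C$ equal to $\mathbf t_C$ whenever it contains a summand of length at least two. Then $x\approx x+xy$ and linear words must be reduced as well. I expect the free objects to collapse to "all terms with the same content are equal". That is exactly $\Id(E_{\mathrm{nil}})$ (its only nontrivial feature being $c(\mathbf u)=c(\mathbf v)$), so $\W_1[\lambda]=\Var(E_{\mathrm{nil}})$.
\item Under $\rho$, one gets $xy\approx x^3y$, so every word of length at least two equals $\mathbf t_{c}$. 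I expect the remaining identities to say that a term is determined by the family of contents of its minimal summands, matching the absorption criterion for $R_{\mathrm{ch}}=\{e,\infty\}^0$ from Proposition~\ref{cor:absorption}.
\item Under $\sigma$, every mixed word $xy$ is replaced by $x^2+y^2$. This removes the zero-specific failures: in $\widehat A_1$, $\sigma$ fails at $x=0$, $y=a$, since $0\ne b$. One should then verify that every identity of $S(a^2)$ follows. A cleaner route is to quote the known finite basis of $\V_1=\Var(S(a^2))$ and derive its members from the six identities plus $\sigma$.
\end{enumerate}

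\emph{Hereditary part.} Describe the lattices. $\Var(E_{\mathrm{nil}})$ has only the trivial proper subvariety. $\Var(R_{\mathrm{ch}})$ has subvarieties among the trivial variety, $\Var(D_2)$, $\Var(E_{\mathrm{nil}})$ and itself. $\V_1$ has a small known subvariety lattice, which can be quoted from the order-three classification of Shao--Ren~\cite{ShaoRen15} or from Zhao--Wu~\cite{ZhaoWu25}. Each listed variety is generated by a finite algebra with a finite basis, and there are only finitely many of them, so hereditary finite basability follows.

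\emph{Main obstacle.} The hardest step is the completeness argument in the $\sigma$ case. One must show that the six identities together with $\sigma$ prove every identity of $S(a^2)$, including those in which $D_\mathbf q(\mathbf u)$ is empty, since zero-support no longer matters there. My first attempt would be to prove $x\approx x+\mathbf t_{\{x,y\}}$-type consequences that allow arbitrary variables to be adjoined to cubic words. I would then show that each term normalizes to either a sum of letters and squares with prescribed overlap pattern, or to $x^3$-type absorbing words.
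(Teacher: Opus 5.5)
\textbf{There are genuine gaps.} Your overall plan matches the paper: membership via the subsemirings $\{b,\infty\}$, $\{0,b,\infty\}$ and $A_1$ of $\widehat A_1$, then normal forms, then separation in the small algebra. Your membership checks are correct, and the $\rho$-branch outline points the right way.

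\textbf{The $\lambda$-branch aims at a false target.} $\Id(E_{\mathrm{nil}})$ is not ``same content''. The identity $x\approx x^2$ fails in $E_{\mathrm{nil}}$ at $x=e$, while $x^2\approx y^2$ holds there because every product is $\infty$. So the collapse you anticipate cannot be derived. Your auxiliary $x\approx x+xy$ is also false there: with $\lambda$ it would give $x\approx xy$. Your substitution $y=x^2$ only returns $x+x^3\approx x^3$, which already holds in $\W_1$. The step you need is $y=x$:
\begin{enumerate}
\item $\lambda$ and \eqref{eq:basis-square} give $x^2\approx x^3$.
\item Multiplying $\lambda$ by $x$ and using \eqref{eq:basis-absorb} gives $x^2y\approx x^2$, hence $x^2\approx y^2=:\mathbf c$.
\item \eqref{eq:basis-mixed} together with $\lambda$ gives $xy\approx x+xy\approx\mathbf c$.
\end{enumerate}
Every term is then a sum of variables or $\mathbf c$, and $E_{\mathrm{nil}}$ separates these.

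\textbf{The $\sigma$-branch leaves completeness open.} The missing idea is that all cubes coincide. By $\sigma$ at $x^2,y^2$, \eqref{eq:derived-power}, \eqref{eq:derived-square} and \eqref{eq:basis-absorb}, one gets $x^3+y^3\approx x^2y^2\approx x^3y\preceq x^3$, so $x^3\approx y^3=:\mathbf c$. Every term is then $\mathbf c$ or $\mathbf F(L,Q)$ with $L\cap Q=\varnothing$. The valuation taking $b$ on $L$, $a$ on $Q$ and $\infty$ elsewhere separates distinct forms in $A_1$. Note that, as written, your proposed consequence $x\approx x+\mathbf t_{\{x,y\}}$ fails in $A_1$ at $x=a$.

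\textbf{The hereditary argument is not established.}
\begin{enumerate}
\item Small size alone does not justify it: three-element ai-semirings need not even be finitely based (cf.\ $S_7$ in \cite{JRZ22}).
\item Your list for $\Var(R_{\mathrm{ch}})$ omits $\Var(D_2)\vee\Var(E_{\mathrm{nil}})$. This join is a proper subvariety: $x+y^2\approx x+y^2+x^2$ holds in $D_2$ and in $E_{\mathrm{nil}}$, but fails in $R_{\mathrm{ch}}$ at $x=e$, $y=0$.
\item \cite{ShaoRen15} concerns order two, not order three.
\end{enumerate}
The paper instead uses $R_{\mathrm{ch}}\cong S(a)^0$ and Shao--Gao's theorem that $\Var(S(a)^0)$ has exactly five subvarieties, all finitely based. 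For $\V_1$, it proves hereditary finite basability directly, applying Dickson's lemma to the unmarked support sizes $(|L\setminus M|,|Q\setminus M|)$ of the absorptions.
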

\begin{proof}
\emph{1. The $\lambda$-branch.}
Suppose first that $\lambda$ holds in $\W_1$. Substituting $y=x$ into
$\lambda$ and using \eqref{eq:basis-square} gives $x^2\approx x^3$.
Multiplication of $\lambda$ by $x$ and \eqref{eq:basis-absorb} now yield
\[
  x^2+x^2y\approx x^2y,
  \qquad x^2+x^2y\approx x^2;
\]
thus $x^2y\approx x^2$. By \eqref{eq:derived-square} and commutativity,
\[
             x^2\approx x^2y\approx xy^2\approx y^2.
\]
Denote this common square term by $\mathbf{c}$. Equations \eqref{eq:basis-square}--\eqref{eq:basis-absorb}
then give $x+\mathbf{c}\approx \mathbf{c}$, $\mathbf{c}x\approx \mathbf{c}$, and
$x+xy\approx \mathbf{c}$. Using $\lambda$ we obtain $xy\approx \mathbf{c}$.
Consequently the variety is that of join semilattices with a
greatest element which is the value of every product. Such a
semilattice is separated by homomorphisms into $E_{\mathrm{nil}}$: for
$a\not\le b$, the map taking elements below $b$ to $e$ and the
remaining elements to $\infty$ is a semilattice homomorphism
(and preserves multiplication). Hence $\W_1[\lambda]=\Var(E_{\mathrm{nil}})$.
Every nontrivial subvariety contains $E_{\mathrm{nil}}$, so this variety has
only two subvarieties.

\medskip\noindent\emph{2. The $\sigma$-branch.}
Equation~\eqref{eq:basis-quadratic}
shows that $\sigma$ is equivalent over $\W_1$
to $x^2+xy\approx xy$. In $\W_1[\sigma]$,
\[
  x^3+y^3\approx x^2y^2\approx x^3y\preceq x^3.
\]
Here the first equality follows by applying $\sigma$ to $x^2,y^2$
and using \eqref{eq:derived-power}, the second from \eqref{eq:derived-square}, and the absorption from
\eqref{eq:basis-absorb}, with $y$ replaced by $x^2y$. Interchanging $x,y$ gives $x^3\approx y^3$.
Let $\mathbf{c}$ denote this common term. By \eqref{eq:basis-square}, $x+\mathbf{c}\approx \mathbf{c}$;
\eqref{eq:basis-absorb} makes $\mathbf{c}$ absorb every quadratic term. Every term is therefore reducible either to $\mathbf{c}$ or to
\begin{equation}\label{eq:limit-V1-normal}
\begin{aligned}
 \mathbf{F}(L,Q)&=\mathbf{p}_1+\cdots+\mathbf{p}_r,\\
 \{\mathbf{p}_1,\ldots,\mathbf{p}_r\}&=L\cup\{y^2:y\in Q\},\\
 L\cap Q&=\varnothing,\qquad L\cup Q\ne\varnothing.
 \end{aligned}
\end{equation}
Indeed, $\sigma$ removes mixed quadratic terms;
\eqref{eq:basis-comm} and \eqref{eq:basis-cubic}, together with
\eqref{eq:derived-power} and \eqref{eq:derived-square},
reduce higher-degree terms to $\mathbf{c}$, and \eqref{eq:basis-square} removes a linear
term accompanied by its square. For separation, put $b=a^2$. For each pair $(L,Q)$, define the valuation
\[
 \nu_{L,Q}(x)=
 \begin{cases}
 b,&x\in L,\\
 a,&x\in Q,\\
 \infty,&x\notin L\cup Q.
 \end{cases}
\]
Then $\mathbf{F}(L,Q)^{\nu_{L,Q}}=b$, whereas $\mathbf{c}^{\nu_{L,Q}}=\infty$.
Moreover, for a nonempty $\mathbf{F}(L',Q')$ to have value $b$ under this
valuation, each variable of $L'$ must lie in $L$ and each variable
of $Q'$ must lie in $Q$. Indeed, a linear summand has value $b$
only at $b$, and a squared summand has value $b$ only at
$a$; any other value of a summand makes the flat sum
different from $b$. Consequently, if neither $(L',Q')$ nor
$(L,Q)$ is coordinatewise contained in the other, one of the two
associated valuations distinguishes the forms. If
$(L',Q')\ne(L,Q)$ and, say, $L'\subseteq L$, $Q'\subseteq Q$,
then $\nu_{L',Q'}$ gives $b$ on $\mathbf{F}(L',Q')$ and not on $\mathbf{F}(L,Q)$.
The latter contains a summand with value $\infty$, from a variable
in $(L\setminus L')\cup(Q\setminus Q')$. This also covers changes from linear to quadratic status.
Thus the forms, including $\mathbf{c}$, are pairwise distinct in $A_1$.
For clarity, both inclusions are needed here. The old
subsemiring $A_1=\{a,b,\infty\}$ of $\widehat A_1$
satisfies $\sigma$ by its multiplication and flat-addition
tables. Hence $\Var(A_1)\subseteq\W_1[\sigma]$.
Conversely, the preceding normal-form reduction applies in
$\W_1[\sigma]$, and $A_1$ separates any two distinct
normal forms. Therefore every identity holding in $A_1$
holds in $\W_1[\sigma]$, proving
$\W_1[\sigma]\subseteq\Var(A_1)$.

For hereditary finite basability, reduce identities to
absorptions $\mathbf{q}\preceq \mathbf{u}$, with $\mathbf{u}=\mathbf{F}(L,Q)$ and,
after decomposing a quadratic query under $\sigma$,
$\mathbf{q}\in\{x,x^2,\mathbf{c}\}$ up to the identities of $\V_1$.
Mark the query variable when $\mathbf{q}=x$ or $x^2$; for $\mathbf{c}$,
use the cube of a fresh query variable. The finite state
records the query type and whether its mark lies in $L$, in
$Q$, or outside $L\cup Q$. Let $M$ be the set of marks and put
\[
 \mathbf d(\mathbf{u},\mathbf{q})=
 \bigl(|L\setminus M|,|Q\setminus M|\bigr)\in\mathbb N^2.
\]
For equal states, compare the vectors coordinatewise.
By Dickson's lemma this is a well-quasi-order. If
$\mathbf d(\mathbf{u},\mathbf{q})\le\mathbf d(\mathbf{u}',\mathbf{q}')$, map the marked
variable to its corresponding mark, and inject the remaining
$L$- and $Q$-variables separately into their larger supports.
Map a fresh cube variable to a fresh cube variable when
$\mathbf{q}=\mathbf{c}$. This single renaming satisfies
$\theta(\mathbf{q})\approx \mathbf{q}'$ and $\mathbf{u}'\approx\theta(\mathbf{u})+\mathbf{r}$,
so the absorption derivation is
\[
 \mathbf{u}'+\mathbf{q}'\approx\theta(\mathbf{u})+\mathbf{r}+\theta(\mathbf{q})\approx \mathbf{u}'.
\]
Here the last step uses the smaller absorption under
$\theta$; if there are no extra summands, omit $\mathbf{r}$. Therefore the valid absorptions
of any subvariety have finitely many minimal generators
among the finitely many states. Together with $\sigma$
and a finite basis for $\W_1$~\cite{ZhaoWu25}, these
form a finite basis for that subvariety.

\medskip\noindent\emph{3. The $\rho$-branch.}
Substitution of $y=x$ gives
$x^2\approx x^3$. By $\rho$, \eqref{eq:derived-square} and \eqref{eq:basis-cubic}, any word
of degree at least two depends only on its content: for
$\varnothing\ne Y$, represent it by
\[
 \mathbf{u}_Y=\begin{cases}x^2,&Y=\{x\},\\
                    \prod_{x\in Y}x,&|Y|\ge2.
       \end{cases}
\]
Whenever $Y\subseteq Z$, \eqref{eq:basis-absorb} and $\rho$ imply
$\mathbf{u}_Y+\mathbf{u}_Z\approx \mathbf{u}_Y$. Indeed, $\mathbf{u}_Y^2\approx \mathbf{u}_Y$, so
\eqref{eq:basis-absorb} with $x=\mathbf{u}_Y$ yields
$\mathbf{u}_Y+\mathbf{u}_Yz\approx \mathbf{u}_Y$. If $|Y|\ge2$, the content
reduction identifies $\mathbf{u}_Z$ with $\mathbf{u}_Y$ multiplied by the
variables in $Z\setminus Y$. If $Y=\{x\}$ and
$Z\supsetneq Y$, the $\rho$-instance
$xz\approx x^2z$, where $z$ is the product of the
variables of $Z\setminus\{x\}$, gives the same
identification with $\mathbf{u}_Y=x^2$. The case $Z=Y$ follows
by additive idempotence. Moreover \eqref{eq:basis-square} gives
$x+x^2\approx x^2$. Hence each term has the finite-word form
\[
 \mathbf{F}(L,\mathcal A)=\mathbf{p}_1+\cdots+\mathbf{p}_r,\qquad
 \{\mathbf{p}_1,\ldots,\mathbf{p}_r\}=L\cup\{\mathbf{u}_Y:Y\in\mathcal A\},
\]
where $\mathcal A$ is an antichain of nonempty subsets of
variables, $x\notin L$ if $\{x\}\in\mathcal A$, and the
support is nonempty. Distinct forms are separated in $R_{\mathrm{ch}}$ as follows. For each nonempty
finite set $Y$ of variables, let $\nu_Y$ take value $e$ on $Y$
and $0$ elsewhere. For this evaluation, $\mathbf{u}_Z^{\nu_Y}=\infty$
if and only if $Z\subseteq Y$, while every linear
summand takes value at most $e$.
Suppose $\mathcal A\ne\mathcal A'$. Choose an inclusion-minimal set $Y$ that belongs to
exactly one of $\mathcal A$ and $\mathcal A'$, and, after
interchanging the two forms if necessary, let
$Y\in\mathcal A\setminus\mathcal A'$. No member
$Z\in\mathcal A'$ can be contained in $Y$: equality
is excluded, and a proper subset would also belong to exactly one of the two antichains,
contrary to the choice of $Y$. Under
$\nu_Y$, the first form therefore takes value $\infty$
while the second takes value at most $e$. This works
regardless of their linear supports.
If $\mathcal A=\mathcal A'$ but $L\ne L'$, choose $x$ belonging to exactly one of $L,L'$, and assume
$x\in L\setminus L'$.
The normalization condition implies $\{x\}\notin\mathcal A$.
Under $\nu_{\{x\}}$, every quadratic summand of either
form is $0$, and only the first form contains the linear
summand $x=e$. Hence the two values are $e$ and $0$, respectively.
The normal forms are therefore distinct in $R_{\mathrm{ch}}$.
The subset $\{0,b,\infty\}$ of $\widehat A_1$
is a subsemiring isomorphic to $R_{\mathrm{ch}}$
(the product of any two elements in $\{b,\infty\}$
is $\infty$). Its table satisfies $\rho$, so
$\Var(R_{\mathrm{ch}})\subseteq\W_1[\rho]$.
The preceding normal-form reduction and separation show
that every identity of $R_{\mathrm{ch}}$ holds in
$\W_1[\rho]$. Hence
$\W_1[\rho]\subseteq\Var(R_{\mathrm{ch}})$,
and the required equality follows.

The three-element semiring $R_{\mathrm{ch}}$ is isomorphic
to the zero extension $S(a)^0$: its elements are $0,a,\infty$,
its addition is $0<a<\infty$, and all products of the two
old elements $a,\infty$ equal $\infty$. Shao--Gao
\cite[Theorem~25.2]{ShaoGao26} prove that $\Var(S(a)^0)$
has precisely five subvarieties and that each is finitely
based. Since we established $\W_1[\rho]=\Var(R_{\mathrm{ch}})$
above, this proves the hereditary finite-basis assertion
directly. Note that Shao--Gao denote $\Var(S(a)^0)$ by
$\mathcal V_1$, whereas our $\V_1=\Var(S(a^2))$;
these two notations refer to different varieties.
\end{proof}

We treat $\W_1[\eta]$ by means of graph normal forms
and a well-quasi-order on their finite encodings.

\begin{proposition}\label{lem:limit-eta-HFB}
Every subvariety of $\W_1[\eta]$ is finitely based.
\end{proposition}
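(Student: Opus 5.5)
The plan is to reduce every identity of a subvariety $\mathcal K\subseteq\W_1[\eta]$ to word absorptions $\mathbf{q}\preceq\mathbf{u}$ in normal form, and to encode each such pair by a finite combinatorial object. The valid absorptions of $\mathcal K$ should then form an upward-closed set in a well-quasi-order, and every comparison in that order should be realised by a substitution followed by adjoining extra summands. Finitely many minimal pairs, together with the six identities of Proposition~\ref{lem:six-valid} and $\eta$, will then give a finite basis for $\mathcal K$. This repeats the Dickson's-lemma scheme used for the $\sigma$-branch in Proposition~\ref{lem:limit-small-slices}, but with a richer encoding.

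\emph{Normal forms.} By Lemma~\ref{lem:norm-four}, a term of $\W_1$ is a sum of four kinds of normal words: linear words $x$ (a set $L$), squares $x^2$ (a set $Q$), mixed words $xy$ (an edge set $E$ of a simple graph), and high-degree words $\mathbf{t}_C$. First I use the absorption consequences from the completion proof of Proposition~\ref{lem:six-valid} to saturate a term. That is, I adjoin:
\begin{itemize}
\item $xy$ whenever $x^2,y^2$ both occur, by \eqref{eq:basis-quadratic};
\item $\mathbf{t}_{c(\mathbf{p})}$ whenever a linear--quadratic overlap occurs;
\item $\mathbf{t}_D$ for every $D\supseteq C$ once $\mathbf{t}_C$ occurs.
\end{itemize}
Next I saturate under $\eta$ read with commutativity: whenever $x_1x_2,x_2x_3,x_3x_4$ are edges (a walk of length three, vertices not necessarily distinct), the edge $x_1x_4$ is added. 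I claim each connected component of a fully saturated edge graph is then either a complete bipartite graph $K_{s,t}$ with no squares, or a complete graph all of whose vertices carry squares. The argument: closure under three-step walks makes any two vertices at odd distance adjacent; an odd cycle then forces all of them adjacent, and walks $x\,y\,x\,y$-type together with \eqref{eq:basis-quadratic} and \eqref{eq:basis-square} control the squares. I would verify this, together with the interaction with $L$ and with the $\mathbf{t}_C$ (an overlap kills the component into a cubic word), case by case. The result is a normal form consisting of finitely many components, each described by a type (bipartite, or square-complete), part sizes, and which vertices are linear, plus an upward-closed family of cubic supports generated by finitely many minimal sets.

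\emph{Encoding and well-quasi-order.} For an absorption $\mathbf{q}\preceq\mathbf{u}$ with $\mathbf{q}$ of one of the four normal types, I mark the at most two variables of $\mathbf{q}$, or a fresh cube variable when $\mathbf{q}=\mathbf{t}_C$. By Proposition~\ref{cor:absorption} I may restrict $\mathbf{u}$ to $D_\mathbf{q}(\mathbf{u})$ first, so $\mathbf{t}_C$ only involves $C$.
\begin{itemize}
\item The bounded part of the encoding is a finite state: the query type, and the position of the marks in the components.
\item The unbounded part is the finite multiset of unmarked components, each component recorded as a vector of part sizes and linear counts in $\mathbb N^k$ for a fixed $k$.
\end{itemize}
I order multisets by Higman's lemma over Dickson's order, which gives a well-quasi-order. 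If the encoding of $(\mathbf{u},\mathbf{q})$ lies below that of $(\mathbf{u}',\mathbf{q}')$, I map the variables of $\mathbf{u}$ injectively into corresponding parts of $\mathbf{u}'$, keeping marks fixed. This yields $\theta(\mathbf{q})\approx\mathbf{q}'$ and $\mathbf{u}'\approx\theta(\mathbf{u})+\mathbf{r}$ modulo the basis. Indeed, the image of a complete bipartite or complete component is a subgraph of the larger component of the same type, and the missing edges, squares and linear words are put into $\mathbf{r}$. Hence $\mathbf{u}'+\mathbf{q}'\approx\theta(\mathbf{u})+\mathbf{r}+\theta(\mathbf{q})\approx\mathbf{u}'$, exactly as in the $\sigma$-branch.

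\emph{Main obstacle.} The delicate step is the structural claim about $\eta$-saturated components. I must check that saturation is compatible with the cubic words and linear overlaps, so that the component description is a genuine normal form, and that the embedding $\theta$ never creates a new overlap or odd walk in $\theta(\mathbf{u})$ that fails to appear in $\mathbf{u}'$. Injectivity of $\theta$ on marks and parts is chosen precisely to avoid this. I would first prove the bipartite/complete dichotomy for a connected edge set. After that, I would treat the attached linear and square labels by noting that any linear--quadratic overlap collapses the whole component to cubic words; such components can then be recorded merely by their support, which is again a Dickson vector.
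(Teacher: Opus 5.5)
\textbf{Gap: the cube-collapse lemma is missing.} Your architecture matches the paper's: $\eta$-saturation of the edge graph into complete bipartite components plus one complete looped component, marked query variables, Dickson plus Higman, and a single injective renaming $\theta$. What you omit is the lemma the paper proves first, on which the rest depends: all cubes coincide in $\W_1[\eta]$.

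\begin{itemize}
\item \emph{The lemma.} The instance $(x_1,x_2,x_3,x_4)=(x^3,y^3,y^3,x^3)$ of $\eta$, simplified by \eqref{eq:derived-power} and \eqref{eq:basis-absorb}, gives $y^3\approx y^3+x^3$. By symmetry $x^3\approx y^3=:\mathbf c$. Then $\mathbf c$ absorbs every word, $\mathbf c x\approx\mathbf c$, and \eqref{eq:basis-mixed} yields $x+xy\approx\mathbf c$.
\item \emph{Its effect.} Every term containing a word of length at least three, or a linear--quadratic overlap, equals $\mathbf c$, and absorptions with source $\mathbf c$ are trivial. The only sources left are the graph forms $P(L,G)$ with $L\cap V(G)=\varnothing$.
\end{itemize}

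\textbf{Why your plan does not close without it.}
\begin{itemize}
\item Replacing a query $\mathbf t_C$ by a fresh cube variable presupposes $\mathbf t_C\approx\mathbf t_D$ for all $C,D$. That is exactly the unproved collapse, borrowed from the $\sigma$-branch where it was derived.
\item Your source normal forms keep an antichain of cubic supports, i.e.\ a hypergraph on the variables. A support vector does not record this faithfully: $\mathbf t_{\{z_1,z_2\}}+\mathbf t_{\{z_2,z_3\}}+\mathbf t_{\{z_3,z_1\}}$ and $\mathbf t_{\{z_1,z_2,z_3\}}$ have the same support but differ in $\widehat A_1$ (take $z_1=0$, $z_2=z_3=a$).
\item Comparing such sources by injective renaming plus adjoined summands amounts to embedding one hypergraph into another. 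Under that order the cyclic families $\sum_{i\in\mathbb Z/m\mathbb Z}\mathbf t_{\{z_i,z_{i+1}\}}$ form an infinite antichain. So neither the well-quasi-order nor the step $\mathbf u'\approx\theta(\mathbf u)+\mathbf r$ is available.
\item Your description of overlaps is also inaccurate: in $\W_1[\eta]$ an overlap collapses the entire term to $\mathbf c$, not one component to cubic words.
\end{itemize}

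\textbf{A second, independent error: the restriction to $D_{\mathbf q}(\mathbf u)$.} You cannot restrict $\mathbf u$ to $D_{\mathbf q}(\mathbf u)$ using Proposition~\ref{cor:absorption}. That criterion characterizes absorptions valid in $\widehat A_1$, not in a proper subvariety $\mathcal K$. For example, $\W_1[\lambda]=\Var(E_{\mathrm{nil}})$ lies in $\W_1[\eta]$, since every product in $E_{\mathrm{nil}}$ is the top. It satisfies $x\preceq xy$, although $D_x(xy)=\varnothing$. The paper instead keeps the whole source and records, in the finite state, whether each mark is absent from the source, lies in $L$, or lies in a particular component.

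If you insert the cube-collapse lemma and drop the restriction, your argument becomes the paper's.
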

\begin{proof}
\emph{1. Collapse of all words of length at least three.}
We first show that all cubes agree in $\W_1[\eta]$.
Put $a=x^3$, $b=y^3$ and substitute $(a,b,b,a)$
into $\eta$. Since $ab+b^2\approx b$ by
\eqref{eq:derived-power} and \eqref{eq:basis-absorb},
this instance reduces to $b+a^2\approx b$.
As $a^2\approx a$ by \eqref{eq:derived-power}, we obtain
$b+a\approx b$. Interchanging $x$ and $y$ gives
$a+b\approx a$. By commutativity of addition,
$x^3\approx y^3$. Denote the common cube value by $\mathbf{c}$.
The derived identities now give
\[
 \begin{aligned}
 x^3y&\approx y^3y\approx y^4\approx y^3=\mathbf{c},\\
 x+\mathbf{c}&\approx x+x^2+x^3\approx x^3=\mathbf{c},\\
 \mathbf{c}+xy&\approx \mathbf{c}.
 \end{aligned}
\]
Here the first line uses the common cube and
\eqref{eq:derived-power}, the second uses
\eqref{eq:basis-square}, and the third uses
\eqref{eq:basis-absorb}. Finally, \eqref{eq:basis-mixed} yields the explicit deduction
\[
\begin{aligned}
 x+xy
 &\approx x+xy+x^3y &&\text{by \eqref{eq:basis-mixed}}\\
 &\approx x+xy+\mathbf{c} &&\text{since }x^3y\approx \mathbf{c}\\
 &\approx x+\mathbf{c} &&\text{since }\mathbf{c}+xy\approx \mathbf{c}\\
 &\approx \mathbf{c} &&\text{since }x+\mathbf{c}\approx \mathbf{c}.
\end{aligned}
\]
Consequently,
\begin{equation}\label{eq:limit-cube-conseq}
       x^3\approx \mathbf{c},\qquad \mathbf{c}x\approx \mathbf{c},\qquad
       x+\mathbf{c}\approx \mathbf{c},\qquad x+xy\approx \mathbf{c}.
\end{equation}
In particular every word of length at least three represents
$\mathbf{c}$, and a linear term occurring also in a quadratic term
makes the entire sum equal to $\mathbf{c}$.

\medskip\noindent\emph{2. Graph normal forms.}
We next describe the remaining terms by finite graphs. By
Lemma~\ref{lem:norm-four} and \eqref{eq:limit-cube-conseq}, a term other than
$\mathbf{c}$ is a sum of distinct linear words indexed by a set $L$
and quadratic words indexed by the edges (loops permitted)
of a finite graph $G$, with $L\cap V(G)=\varnothing$.
The identity $\eta$ adds an edge across every three-edge
walk. For an odd walk $v_0,\ldots,v_{2m+1}$, induct on $m$:
\[
 \begin{aligned}
 m&=0:&&v_0v_1\in E(G),\\
 m&=1:&&v_0v_3\text{ is added by }\eta,\\
   m&\ge2:&&v_0v_{2m-1},\ 
 v_{2m-1}v_{2m},\ v_{2m}v_{2m+1},\\
 &&&v_0v_{2m+1}\quad\text{by }\eta.
 \end{aligned}
\]
The induction hypothesis supplies the first edge in the
last line. Each induction step applies $\eta$ in the
additive context of the unchanged summands.
Consequently each bipartite
connected component of $G$ may be replaced by a complete
bipartite graph, while each nonbipartite component becomes a
complete graph with a loop at every vertex. If two looped
components occur, choose a loop $x^2$ in the first and a loop
$y^2$ in the second. The instance
$\,x^2+y^2\approx x^2+y^2+xy\,$ of
\eqref{eq:basis-quadratic} joins them by an edge; odd-walk
closure then combines them into a single complete looped
component. Repeating the argument merges all such components.
Thus every term is equivalent either to $\mathbf{c}$ or to
\begin{equation}\label{eq:limit-graph-normal}
 \mathbf{u}=\mathbf{p}_1+\cdots+\mathbf{p}_r,\qquad
 P(L,G)=\{\mathbf{p}_1,\ldots,\mathbf{p}_r\}=L\cup\{xy:xy\in E(G)\},
\end{equation}
where each undirected edge is represented by either order of
its endpoints (the result is independent of this choice by
\eqref{eq:basis-comm}), and where the linear vertices are disjoint from $G$, the nonlooped
components are complete bipartite graphs with both shores
nonempty, and there is at most one complete looped component.
The sum in \eqref{eq:limit-graph-normal} is nonempty. We do
not assert that these representatives are unique; the subsequent
argument uses their existence and the fact that the displayed
operations are obtained by identities valid in $\W_1[\eta]$.

\medskip\noindent\emph{3. Finite encoding of absorption identities.}
Let $\mathcal K\subseteq\W_1[\eta]$
and consider its absorption identities. In an ai-semiring,
for nonempty sums $\mathbf{u}=\mathbf{p}_1+\cdots+\mathbf{p}_r$ and
$\mathbf{v}=\mathbf{q}_1+\cdots+\mathbf{q}_s$, the identity $\mathbf{u}\approx \mathbf{v}$ holds
if and only if all the absorptions
\[
 \mathbf{p}_i\preceq \mathbf{v}\quad(1\le i\le r),\qquad
 \mathbf{q}_j\preceq \mathbf{u}\quad(1\le j\le s)
\]
hold. Indeed, the first family gives $\mathbf{u}+\mathbf{v}\approx \mathbf{v}$,
whereas the second gives $\mathbf{u}+\mathbf{v}\approx \mathbf{u}$.
Thus, modulo the six identities in
Proposition~\ref{lem:six-valid} and $\eta$, it suffices
to consider absorptions $\mathbf{q}\preceq \mathbf{u}$ with $\mathbf{q}$ a word
and $\mathbf{u}$ a nonempty sum of words. Absorptions with source
$\mathbf{u}=\mathbf{c}$ are automatic
by \eqref{eq:limit-cube-conseq} and may be discarded.
For the remaining absorptions, $\mathbf{u}$ has the form
\eqref{eq:limit-graph-normal} and $\mathbf{q}$ is either $\mathbf{c}$, a
linear word, or a quadratic word (a loop is allowed).
In the latter two cases mark the at most two variables
of $\mathbf{q}$. We shall use the following comparison of the
resulting pairs $(\mathbf{u},\mathbf{q})$.

The marked configurations are classified by the form of the query
and the positions of its variables. After normalization, the query
is either $\mathbf{c}$, $x$, $x^2$, or $xy$ with $x\ne y$. Each marked
variable is either absent from the source, occurs in its linear
part, or belongs to a bipartite or looped component of the graph.
For $xy$, we also record whether its variables belong to the same
graph component; if they lie in the same bipartite component, we
distinguish the same shore from opposite shores. A square has one
marked variable, whereas $xy$ has two distinct marks. Since the
linear part is disjoint from the graph and there is at most one
looped component, these possibilities exhaust all configurations.
The presence or absence of the looped component is also recorded
in the finite state.
Within a fixed state, distinguish the at most two bipartite
components containing marked vertices, and orient their shores
consistently with the marks. The state also records how many
of the two query marks occupy each distinguished component,
so the sizes below count only \emph{unmarked} vertices.
For an unmarked bipartite component choose either shore
orientation; these components can be listed in any fixed order.
The source is encoded by a vector of nonnegative integers
consisting of the unmarked linear vertices, the unmarked
vertices of the looped component (if present), and the two
unmarked shore sizes in each distinguished bipartite component,
together with a finite sequence
\[
              (s_1,t_1),\ldots,(s_j,t_j)
                  \qquad(s_i,t_i\ge1)
\]
for the unmarked bipartite components. Whether the looped
component is present is included in the finite state.

To avoid comparing arbitrarily chosen representations, fix
\emph{one encoding for each pair} $(\mathbf{u},\mathbf{q})$. Orient the shores of
each unmarked component so that the smaller shore comes first,
and list these components in lexicographic order of the resulting
size pairs. Resolve ties by a fixed convention. Represent a chosen
encoding as
\[
 e=(\tau;\mathbf d;(s_1,t_1),\ldots,(s_j,t_j)),
\]
where $\tau$ is the finite state and $\mathbf d$ the
finite vector just described. Declare $e\le e'$ precisely
when their states agree, $\mathbf d\le\mathbf d'$ coordinatewise,
and some indices $1\le i_1<\cdots<i_j\le j'$ satisfy
\[
 (s_k,t_k)\le(s'_{i_k},t'_{i_k})
 \qquad(1\le k\le j)
\]
coordinatewise. Dickson's lemma for the vectors and
$\mathbb N^2$, followed by Higman's lemma~\cite{Higman52},
implies that every infinite sequence of fixed encodings has
an increasing pair (there are only finitely many states).
The empty list encodes zero unmarked components; absence
of the unique looped component belongs to $\tau$.
Neither denotes an empty polynomial.

\medskip\noindent\emph{4. A single substitution for each comparison.}
We realize every encoding comparison by one substitution and
an equational derivation.
Suppose $(\mathbf{u},\mathbf{q})$ has an encoding below that of $(\mathbf{u}',\mathbf{q}')$. Match the
(at most two) marked variables of $\mathbf{q}$ to the corresponding
variables of $\mathbf{q}'$, preserving whether they coincide and which
part of the source each occupies. Send the unmarked linear
variables injectively into the corresponding linear variables
of $\mathbf{u}'$; the coordinate inequality guarantees room for them.
Do the same for the unique looped component, if present, and
for each of the at most two marked bipartite components,
embedding the two shores separately while fixing all marked
vertices. For an unmarked bipartite component, the Higman
subsequence matching selects a distinct target component;
possibly reverse its shores and inject each of the two vertex
sets into the corresponding target shore. Map every query
variable outside $\mathbf{u}$ to its designated query variable outside
$\mathbf{u}'$. The choices are made simultaneously and define a single
injective renaming $\theta$. In particular, every occurrence
of a source variable receives the same image; choosing separate
images for individual graph edges would not define a substitution.

The construction handles every source variable according to
one of four disjoint cases:
\begin{enumerate}
\item A marked variable is sent to the matching marked
variable, preserving equality or distinctness of the marks.
\item An unmarked linear variable is sent injectively to an
unmarked linear variable; a mark in the linear part has
already been fixed in the first case.
\item An unmarked vertex of the looped component or of a
marked bipartite component is sent injectively within the
corresponding target component and shore, avoiding the
previously fixed marks.
\item A vertex of an unmarked bipartite component is sent
into the distinct target component selected by the Higman
subsequence match, preserving its shore after the permitted
orientation reversal.
\end{enumerate}
Variables occurring only in $\mathbf{q}$ belong to the first case;
when $\mathbf{q}=\mathbf{c}$ a fresh cubic variable is fixed separately.
The recorded finite state fixes the positions of all marks,
so the injections in the other three cases have disjoint
images. Completeness of the bipartite and looped components
ensures that every source edge maps to a target edge;
linear summands map to linear summands. Consequently
$\theta$ sends every summand of $\mathbf{u}$ to a summand of $\mathbf{u}'$.
This is a single injective renaming, not a family of
independent edgewise assignments.
If $\mathbf{q}$ is linear or quadratic, $\theta(\mathbf{q})=\mathbf{q}'$ by the marks;
if $\mathbf{q}=\mathbf{c}$, represent it by $z^3$ with a fresh $z$ and set
$\theta(z)=z'$, using the equality of all cubes. Hence,
modulo the identities of $\W_1[\eta]$,
\[
 \mathbf{q}'\approx\theta(\mathbf{q}),\qquad
 \mathbf{u}'\approx\theta(\mathbf{u})+\mathbf{r},
\]
where $\mathbf{r}$ consists of the extra summands of $\mathbf{u}'$. When
none remain, the second relation reads $\mathbf{u}'\approx\theta(\mathbf{u})$
with no empty-sum term.

For completeness, the actual equational deduction is
\[
 \begin{aligned}
 \mathbf{u}'+\mathbf{q}'&\approx\theta(\mathbf{u})+\mathbf{r}+\theta(\mathbf{q})\\
      &\approx\theta(\mathbf{u})+\theta(\mathbf{q})+\mathbf{r}\\
      &\approx\theta(\mathbf{u})+\mathbf{r}\approx \mathbf{u}'.
 \end{aligned}
\]
The middle step is the $\theta$-instance of the assumed
absorption $\mathbf{u}+\mathbf{q}\approx \mathbf{u}$, used inside an additive context.
If $\mathbf{r}$ is absent, the same calculation is read without it.

\medskip\noindent\emph{5. Finite basis.}
The displayed derivation shows that the absorptions valid
in $\mathcal K$ form an upward-closed set in the encoding order.
This order is a well-quasi-order by Dickson's and Higman's
lemmas; hence the set has finitely many minimal generators. These absorptions, together with $\eta$ and a finite
basis for $\W_1$ (available by~\cite{ZhaoWu25}), form a
finite equational basis for $\mathcal K$.
\end{proof}

For the remaining reduction we record the evaluations in $SR_6$. We label its elements $1,\ldots,6$ as in
\cite{LyuRenYue26}. The element
$1$ is the additive top and is multiplicatively absorbing;
all squares and all products of length at least three are $1$.
The products distinct from $1$ are
\begin{equation*}
       2\cdot6=5\cdot6=5\cdot4=3
\end{equation*}
and their symmetric counterparts. The element $3$ has
$3+z=1$ for every $z\ne3$. Direct evaluation in the Cayley tables shows that the substitution
$(x_1,x_2,x_3,x_4)=(2,6,5,4)$ gives the left side of
$\eta$ value $3$ and its right side value $1$; hence
$SR_6$ does not satisfy $\eta$.

\begin{lemma}\label{lem:limit-fold}
Let $\mathcal K\subseteq\W_1$ and suppose
$\Var(SR_6)\nsubseteq\mathcal K$. Then $\mathcal K$
is contained in one of
\[
  \W_1[\lambda],\qquad \W_1[\sigma],\qquad
  \W_1[\rho],\qquad \W_1[\eta].
\]
\end{lemma}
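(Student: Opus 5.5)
We argue contrapositively. Suppose $\mathcal K\subseteq\W_1$ satisfies none of $\lambda,\sigma,\rho,\eta$. We aim to show $SR_6\in\mathcal K$, which contradicts $\Var(SR_6)\nsubseteq\mathcal K$. The strategy is the standard "fold" argument for limit varieties. Work in the $\mathcal K$-free object $F$ on a small finite set of generators, which is finite because $\W_1$ is locally finite; this follows from Lemma~\ref{lem:norm-four}, since there are only finitely many normal words over a finite alphabet. We then exhibit six elements of $F$ whose sums and products reproduce the Cayley table of $SR_6$, modulo a suitable ideal-type congruence. Equivalently, we show that every identity of $\mathcal K$ holds in $SR_6$, by checking that any identity failing in $SR_6$ implies one of the four identities over $\W_1$.

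For the identity-theoretic formulation, take an identity $\mathbf u\approx\mathbf v$ of $\mathcal K$, reduce it by Proposition~\ref{lem:six-valid} and Lemma~\ref{lem:norm-four} to absorptions $\mathbf q\preceq\mathbf u$ between normal words, and classify them by the four cases in the completeness proof.

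\emph{Failure of $\lambda$.} Since $\lambda$ fails, $\mathcal K$ does not identify linear summands with products. So $\mathcal K$ does not satisfy any absorption that adds a linear word to a sum of quadratic and cubic words.

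\emph{Failure of $\sigma$.} Since $\sigma$ fails, $\mathcal K\nvDash xy\preceq x^2+y^2$ in the reverse direction. That is, a mixed product $xy$ is not absorbed by, or does not absorb, squares in the way $\sigma$ prescribes.

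\emph{Failure of $\rho$.} Since $\rho$ fails, $xy$ and $x^2y$ remain distinct in $\mathcal K$.

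\emph{Failure of $\eta$.} Since $\eta$ fails, path sums $x_1x_2+x_2x_3+x_3x_4$ do not absorb $x_1x_4$.

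The key step is to show that every absorption $\mathbf q\preceq\mathbf u$ failing in $SR_6$ yields, by a substitution and deletion (setting variables outside a chosen set equal to cubes, or identifying variables), one of $\lambda,\sigma,\rho,\eta$ or a consequence that implies one of them over $\W_1$. Concretely, evaluations in $SR_6$ with non-top value $3$ come only from sums of products $x_ix_j$ arranged along the bipartite pattern $2\text{--}6\text{--}5\text{--}4$. The elements $2,5$ form one shore, $6,4$ the other, and the edge $2\cdot4$ is missing. Hence a failing absorption in $SR_6$ provides a quadratic source graph together with a query $\mathbf q$ that is not a product of two adjacent vertices, or a linear, square or cubic query. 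The cases are as follows.
\begin{enumerate}
\item If $\mathbf q$ is linear, we substitute to obtain $\lambda$.
\item If $\mathbf q$ is a square, or the source contains squares, we obtain $\sigma$.
\item If $\mathbf q$ has degree at least three, we obtain $\rho$ through $x^2y$ versus $xy$.
\item If $\mathbf q=xy$ with $x,y$ at odd distance at least three in the source graph, we collapse the path to length three and obtain $\eta$.
\end{enumerate}
Even distance, or $x,y$ in different components, is ruled out because those absorptions already fail in $\widehat A_1$ (Case 3 of the completeness proof). We use them only to check consistency with membership in $\W_1$.

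I expect the main obstacle to be this case analysis of the $SR_6$ evaluations. It requires a precise description of when a sum of words takes a non-top value in $SR_6$. We would extract that description from the product table recorded above: only $3$ is a non-top product, and all squares and longer words evaluate to $1$. We also need to make sure each reduction substitution stays inside $\W_1$-valid manipulations. To manage this, I would first tabulate all valid absorptions of $SR_6$, following Lyu--Ren--Yue~\cite{LyuRenYue26}. Then I would show that each absorption of $\widehat A_1$ not valid in $SR_6$ implies one of the four identities, treating the case split by the type of the query word $\mathbf q$ as in Cases 1--4 of Proposition~\ref{lem:six-valid}.
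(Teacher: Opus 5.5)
Your overall plan matches the paper's. You pass to a single absorption $\mathbf q\preceq\mathbf u$ that is valid in $\mathcal K$ but fails in $SR_6$. You note that $\mathbf u$ must consist of a linear part $L$ plus a bipartite graph of square-free quadratic words, and you split by the type of $\mathbf q$. Your odd-path-to-$\eta$ step is also the paper's key step.

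There is, however, a genuine gap at square-free quadratic queries $\mathbf q=xy$. You treat only the case where $x,y$ lie on opposite shores of one component. You dismiss same-shore and different-component queries ``because those absorptions already fail in $\widehat A_1$'', and you never address endpoints lying in $L$ or outside $c(\mathbf u)$. The dismissal is invalid: $\mathcal K$ is a subvariety of $\W_1$, so its identities need not hold in $\widehat A_1$. Worse, since $\Var(SR_6)\subseteq\W_1$ (Proposition~\ref{lem:sr6-hsp}), \emph{every} absorption failing in $SR_6$ also fails in $\widehat A_1$. That criterion would therefore exclude every case, including the ones you keep. These cases genuinely occur and must each be turned into one of the four identities.

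The missing ingredient is one collapsing substitution. Orient each component's shores $U,V$ independently and put $\theta(z)=s$ on $U$, $\theta(z)=t$ on $V$, and $\theta(z)=st$ on $L$; then $\theta(\mathbf u)\approx st$.
\begin{itemize}
\item If $x,y$ lie in different components or on the same shore, orient so both go to $s$. Then $\mathcal K\models s^2\preceq st$; together with its $s\leftrightarrow t$ mirror and \eqref{eq:basis-quadratic} this gives $\sigma$.
\item If an endpoint lies in $L$ or outside $c(\mathbf u)$, send it to $st$. The image has degree at least three, so it reduces modulo $\W_1$ to $s^3$, $t^3$ or $s^2t$, giving $\lambda$ or $\rho$.
\end{itemize}

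The same substitution is what makes your other cases precise; ``setting variables equal to cubes'' is not specified. In the $\eta$ case you also need to say where the other edges, components and linear summands go, so that no image equals $x_1x_4$; the paper sends them all to $x_2x_3$.

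Smaller points:
\begin{itemize}
\item A source containing a square is always top in $SR_6$, so that subcase of your item (ii) is vacuous rather than a source of $\sigma$.
\item A query $x^3$ with $x\in U$ gives $s^3\preceq st$, i.e.\ $\lambda$, not directly $\rho$. This is harmless because $\W_1[\lambda]\subseteq\W_1[\rho]$, but you should say so.
\item Your remark under ``Failure of $\lambda$'' is false as stated, since $x\preceq x^3$ holds in $\W_1$.
\end{itemize}
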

\begin{proof}
\emph{1. A failing absorption and its bipartite source.}
Choose an identity holding in $\mathcal K$ but failing in $SR_6$.
By distributivity and additive idempotence, one of its
component absorptions $\mathbf{q}\preceq \mathbf{u}$ is valid in $\mathcal K$
but fails in $SR_6$; here $\mathbf{q}$ is a word and $\mathbf{u}$ a nonempty
sum of words. All words may be taken commutative. Since
$SR_6$ has a valuation at which $\mathbf{u}\ne1$, its multiplication
table shows that $\mathbf{u}$ contains only linear words and
square-free quadratic words. Write
\[
 \mathbf{u}=\mathbf{p}_1+\cdots+\mathbf{p}_r,\quad \{\mathbf{p}_1,\ldots,\mathbf{p}_r\}=P(L,G),\qquad
 P(L,G)=L\cup\{xy:xy\in E(G)\},
\]
with each edge word represented in either order, as permitted
by \eqref{eq:basis-comm}.
The graph $G$ is bipartite and $L\cap V(G)=\varnothing$:
indeed every quadratic summand must have value $3$, so the
endpoints of its edges must be assigned elements in the two
classes $\{2,5\}$ and $\{4,6\}$; an odd cycle is impossible,
and a variable occurring also as a linear summand cannot
simultaneously have value $3$. Conversely, such a sum has a
valuation of value $3$, by assigning $5,6$ on the two shores
and $3$ to the linear variables. The case $E(G)=\varnothing$
is included.

\medskip\noindent\emph{2. Reduction to a two-letter source.}
For each bipartite component choose its shores $U,V$, and
use the simultaneous substitution
\[
 \theta(z)=
 \begin{cases}
 s,&z\in U,\\
 t,&z\in V,\\
 st,&z\in L.
 \end{cases}
\]
For this simultaneous substitution, $\theta(\mathbf{u})\approx st$.
Variables outside $c(\mathbf{u})$ may be sent to $s,t$, or $st$
as specified in the cases below. Each component is oriented
independently, but every occurrence of a variable receives
the same image.

\medskip\noindent\emph{3. Linear, square, and higher-degree queries.}
If $\mathbf{q}$ has degree at least three, or is a square, the
image $\theta(\mathbf{q})$ is a word of degree at least two and is not $st$.
By commutativity, \eqref{eq:derived-power} and
\eqref{eq:derived-square}, its possibilities modulo the identities
of $\W_1$ reduce to
\[
 s^2,\quad t^2,\quad s^3,\quad t^3,\quad s^2t
 \quad(\text{with }s^2t\approx st^2).
\]
One-variable powers of degree at least three become cubes,
and mixed words of degree at least three become $s^2t$
by \eqref{eq:derived-power} and \eqref{eq:derived-square}.
Because $\mathcal K\models\theta(\mathbf{q})\preceq st$, the
possibilities yield the following consequences,
with their hypotheses stated explicitly:
\[
\begin{array}{ll}
 \text{hypothesis }st+s^2\approx st:&
 st\approx st+s^2+t^2\approx s^2+t^2\quad(\sigma),\\[3pt]
 \text{hypothesis }st+s^3\approx st:&
 st+s+s^2\approx st+s^3\approx st\quad(\lambda),\\[3pt]
 \text{hypothesis }st+s^2t\approx st:&
 st\approx s^2t\quad(\rho).
\end{array}
\]
For the first line interchange $s$ and $t$ and use
\eqref{eq:basis-quadratic}; for the second use
\eqref{eq:basis-square}; for the third use
$st+s^2t\approx s^2t$, obtained by multiplying
\eqref{eq:basis-square} by $t$ and applying
\eqref{eq:six-square-cube}. The $t^2,t^3$ cases are
symmetric. If $\mathbf{q}$ is linear, it cannot belong to $L$
(because $\mathbf{q}\preceq \mathbf{u}$ would then hold in $SR_6$).
Orient its graph component, or assign it if absent from
$c(\mathbf{u})$, so that $\theta(\mathbf{q})=s$. Thus
$\mathcal K\models s\preceq st$, giving $\lambda$.

\medskip\noindent\emph{4. Square-free quadratic queries.}
Let $\mathbf{q}=xy$ with $x\ne y$. If either
endpoint belongs to $L$ or is absent from $\mathbf{u}$, map that
endpoint to $st$ and orient the remaining components as
above. The image of $\mathbf{q}$ has degree at least three. If
$x,y$ lie in different components of $G$, or on the same
shore of one component, the orientations can instead be
chosen so that both map to $s$. We obtain $st\succeq s^2$
and hence $\sigma$.

In the remaining case $x,y$ lie in opposite shores of the
same connected component. Since the absorption fails in
$SR_6$, the word $xy$ cannot be a summand of $\mathbf{u}$.
Choose a shortest odd path $x=v_0,\ldots,v_{2m+1}=y$
with $m\ge1$. Let $C_0$ be their component, with shores
$U\ni x$ and $V\ni y$. For each other component $C$, choose
shores $U_C,V_C$. Define the single substitution
\[
\theta(z)=
\begin{cases}
 x_1,&z=x,\\
 x_4,&z=y,\\
 x_3,&z\in (U\setminus\{x\})\cup\bigcup_{C\ne C_0}U_C,\\
 x_2,&z\in (V\setminus\{y\})\cup\bigcup_{C\ne C_0}V_C,\\
 x_2x_3,&z\in L.
\end{cases}
\]
The cases are disjoint and assign one word to each source
variable. Thus
\[
 \theta(\mathbf{u})\approx x_1x_2+x_2x_3+x_3x_4,
 \qquad \theta(\mathbf{q})=x_1x_4.
\]
Indeed the first and last edges of the chosen path produce
$x_1x_2$ and $x_3x_4$; an internal edge produces $x_2x_3$.
No edge produces $x_1x_4$, since $xy\notin E(G)$.
Here no additional edge image is possible: an edge incident
with $x$ maps to $x_1x_2$, one incident with $y$ maps to
$x_3x_4$, and every other edge within this component maps
to $x_2x_3$. All other components and all linear summands
also map to $x_2x_3$. The shortest path has length at least
three and supplies all three displayed source terms. The image
of the query is $x_1x_4$, so $\mathcal K$ satisfies $\eta$.
The cases exhaust every query word: linear, square or
higher-degree, or square-free quadratic; for the last type,
the endpoints are outside/in the linear part of the source,
in distinct components, on the same shore, or on opposite
shores of one component. Each construction uses one
substitution for \emph{all} source summands. Hence one of
$\lambda,\sigma,\rho,\eta$ holds in $\mathcal K$.
The final odd-path substitution also appears in
Yue\,\cite[Proposition~3.21]{Yue26}.
\end{proof}

The hereditary finite-basis reduction now yields
Theorem~\ref{thm:unique-limit}.
\begin{proof}
Proposition~\ref{lem:sr6-hsp} gives $\Var(SR_6)\subseteq\W_1$
without using a defining identity basis for $\W_1$.
The variety $\Var(SR_6)$ is a limit variety
by~\cite{LyuRenYue26}.

Let $\mathcal K\subseteq\W_1$ with
$\Var(SR_6)\nsubseteq\mathcal K$. By
Lemma~\ref{lem:limit-fold}, $\mathcal K$ lies in one of the
four subvarieties listed there. Each of these is
hereditarily finitely based by
Propositions~\ref{lem:limit-small-slices} and
\ref{lem:limit-eta-HFB}.
This proves the first assertion.

Suppose that $\mathcal K$ is a limit subvariety of $\W_1$
distinct from $\Var(SR_6)$. Then either
$\Var(SR_6)\nsubseteq\mathcal K$, contradicting the first
assertion, or $\Var(SR_6)\subsetneq\mathcal K$,
contradicting the definition of a limit variety.
Therefore $\Var(SR_6)$ is the unique limit subvariety of
$\W_1$.
\end{proof}

\section{A limit variety generated by \texorpdfstring{$T_\infty$}{T-infinity}}\label{sec:path-limit-section}

The limit property of $\Var(SR_6)$ is due to
Lyu--Ren--Yue~\cite{LyuRenYue26}; the uniqueness statement
proved in Section~\ref{sec:unique-limit} is relative to
$\W_1$. We construct a further limit variety, distinct from
$\Var(SR_6)$,
which lies in every $\W_n$ for $n\ge2$ but in no $\V_n$.
The proof has three parts: nonabsorbing triple products encode
directed edges; finite directed cycles provide countermodels to
finite basability; and a bound on directed-path length controls
proper subvarieties. The embedding and separating identities
are verified before the structural argument.

\subsection{Semirings attached to directed paths and their location in \texorpdfstring{$\W_2$}{W2}}\label{sec:path-construct}

For $k\ge2$, let
\[
 T_k=\{\infty,\omega,e\}\mathbin{\dot\cup}
      \{u_i,p_i,q_i:1\le i\le k\}.
\]
Addition is flat: $a+a=a$ and $a+b=\infty$ for distinct elements.
The products different from $\infty$ are precisely
\begin{equation*}
\begin{aligned}
 u_i e&=p_i,& e u_i&=q_i &&(1\le i\le k),\\
 p_i u_{i+1}&=\omega,&u_i q_{i+1}&=\omega &&(1\le i<k).
\end{aligned}
\end{equation*}
All remaining products, including products involving $\omega$ or
$\infty$, equal $\infty$. Define $T_\infty$ by the same rules with indices
in the positive integers and put
\begin{equation*}
 \mathcal U=\Var(T_\infty).
\end{equation*}
We retain the endpoint elements $p_k$ and $q_1$, although
neither occurs in a nonabsorbing triple product of $T_k$. Thus
$T_k$ is a subsemiring of $T_{k+1}$ and
$T_\infty=\bigcup_{k\ge2}T_k$.

\begin{lemma}\label{lem:path-flat}
The algebras $T_k$ and $T_\infty$ are flat ai-semirings. They satisfy
$x^2\approx x^3$, and every product of four elements equals $\infty$.
Moreover
\[
 \mathcal U=\Var(\{T_k:k\ge2\}).
\]
\end{lemma}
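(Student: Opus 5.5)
The plan is to verify the claims directly from the product rules and then use a local-finiteness argument for the variety equality.

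\emph{Flatness.} Addition is flat by definition, so $(T_k,+)$ is a semilattice with top $\infty$. Associativity of multiplication follows by checking that every nonabsorbing product of length at most three is computed uniquely. The non-$\infty$ binary products are $u_ie$, $eu_i$, $p_iu_{i+1}$ and $u_iq_{i+1}$. A nonabsorbing triple must be either $(u_ie)u_{i+1}$ or $u_i(eu_{i+1})$. For the first, the other bracketing is $u_i(eu_{i+1})=u_iq_{i+1}=\omega$, which agrees; for the second the same computation runs in reverse. All other bracketings contain a binary product equal to $\infty$, or a product involving $\omega$ or $\infty$, so both sides are $\infty$.

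\emph{Distributivity.} Flatness gives distributivity once we know that no element has two distinct nonabsorbing left or right multiples by a single factor in a way that breaks $x(y+z)=xy+xz$. For $y=z$ the identity is trivial. If $y\ne z$, the left side is $x\infty=\infty$, so we need $xy\ne xz$ or one of them equal to $\infty$. Cancellativity on nonabsorbing products gives exactly this: $u_iy=u_iz\ne\infty$ forces $y=z$, since $u_ie=p_i$ and $u_iq_{i+1}=\omega$ are the only options and each has a single right factor, and similarly for the other generators. The one case to check with care is $\omega$. We have $\omega=p_iu_{i+1}=u_iq_{i+1}$ for every $i$, but with a fixed left factor $p_i$ or $u_i$ the right factor is determined, and symmetrically for a fixed right factor. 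So flat distributivity holds, and the same argument works for $T_\infty$.

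\emph{The identities.} From the rule list, any product of four elements is $\infty$: a nonabsorbing product of three elements equals $\omega$, and $\omega$ times anything is $\infty$. The nonabsorbing products are all of length at most three and never involve a repeated element, since $u_ie$, $eu_i$, $p_iu_{i+1}$, $u_iq_{i+1}$ and the triples $u_ieu_{i+1}$ all use distinct entries. Hence $x^2=\infty=x^3$ for every $x$.

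\emph{The variety equality.} One inclusion is immediate, since each $T_k$ is a subsemiring of $T_\infty$. For the converse, any failure of an identity $\mathbf{u}\approx\mathbf{v}$ in $T_\infty$ uses a finite evaluation. Its values involve only finitely many indices $i$, so they lie in some $T_k$. Because $T_k$ is a subsemiring, the values of $\mathbf{u}$ and $\mathbf{v}$ computed in $T_\infty$ coincide with those computed in $T_k$, and the identity already fails in $T_k$. Therefore $\Id(T_\infty)=\bigcap_k\Id(T_k)$, which gives $\mathcal U=\Var(\{T_k:k\ge2\})$.

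The main obstacle is the case check in the distributivity step, in particular the element $\omega$, which has many factorizations and is the only place where the cancellativity argument needs attention.
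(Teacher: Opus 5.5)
Your proposal is correct and follows essentially the same route as the paper. Both arguments use $\infty$-cancellation in each row and column for flat distributivity, the unique nonabsorbing triple $(u_ie)u_{i+1}=u_i(eu_{i+1})=\omega$ for associativity, read nilpotence off the product rules, and derive $\mathcal U=\Var(\{T_k:k\ge2\})$ from the fact that any finite evaluation in $T_\infty$ lands in a subsemiring $T_k$.
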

\begin{proof}
First consider distributivity.
In each row and column of the multiplication table, all values
other than $\infty$ are distinct. This is exactly the
$\infty$-cancellation property: if $ab=ac\ne\infty$ then
$b=c$, and symmetrically on the right. For flat addition,
$\infty$-cancellation ensures that multiplying the sum of two
distinct elements agrees with the sum of their products. The
case of equal summands is immediate from idempotence.
Hence both distributive laws hold.

\medskip\noindent We next verify associativity and nilpotence.
The only nonabsorbing products of length three are
\[
 (u_i e)u_{i+1}=u_i(eu_{i+1})=\omega.
\]
Conversely, a triple product can be nonabsorbing only if its
first two factors form $u_ie=p_i$ and its last two form
$eu_{i+1}=q_{i+1}$, so the two possible associations agree.
Every other triple product evaluates to $\infty$ under both
associations, proving associativity. All squares equal $\infty$,
and every product of four factors is $\infty$.

\medskip\noindent Finally, consider the generated variety.
Each finite set of elements of $T_\infty$ is contained in some
$T_k$. Since an identity involves only finitely many variables,
it holds in $T_\infty$ exactly when it holds in every $T_k$;
the asserted equality of generated varieties follows.
\end{proof}

For later use, if $G$ is a finite directed path with at least two
vertices (all edges coherently oriented), or a directed cycle of
length at least four, write $T(G)$ for the semiring with elements
$\infty,\omega,e,u_i,p_i,q_i$ indexed by the vertices of $G$,
with $u_i e=p_i$ and $eu_i=q_i$. The products
$p_i u_j$ and $u_iq_j$ both equal $\omega$ if and only if
$i\to j$ is an edge of $G$; all other products are $\infty$. Thus $T_k=T(P_k)$, where $P_k$
has $k$ vertices, and we put $C_m=T(\vec C_m)$ for the directed
$m$-cycle, $m\ge4$.

\begin{proposition}\label{lem:path-embedding}
For every finite directed path $G$ with at least two vertices
and coherently oriented edges, and for every directed cycle $G$
of length at least four,
\[
 T(G)\in\W_2.
\]
Consequently $\mathcal U\subseteq\W_n$ for all $n\ge2$.
\end{proposition}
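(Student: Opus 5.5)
Since $T(G)$ is finite, I will use the syntactic membership criterion, Proposition~\ref{lem:syntactic-membership}, with $\mathcal K=\W_2$. Alternatively I will show directly that $T(G)$ is a homomorphic image of a subsemiring of a power of $\widehat A_2$. The syntactic route is the more uniform one. Take one variable $y_v$ for each vertex $v$ and one extra variable $z$, playing the role of $e$. Put
\[
P_G=\{y_i z y_j : i\to j\in E(G)\}.
\]
Then $D(P_G)$ contains the images of $y_i$, $y_iz$, $zy_j$, $z$, the full product $y_izy_j$ and $\infty$. This is exactly the pattern of $T(G)$: the images give $u_i$, $p_i$, $q_i$, $e$, $\omega$ and $\infty$ respectively. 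The one condition to check is that the profile of $y_i$ is not that of a sum of other elements. I will check that $T(G)$ is isomorphic to the subsemiring of $D(P_G)$ generated by these classes, or a quotient of it, by comparing context profiles. An element is non-top only if it has a nonempty profile, and flat addition corresponds to the profile intersection becoming empty for distinct generators. So the main task is the absorption hypothesis of Proposition~\ref{lem:syntactic-membership}: every word $\mathbf q$ with $\widehat A_2\models \mathbf q\preceq \mathbf u_G$ must lie in $P_G$, where $\mathbf u_G$ is the sum of $P_G$.

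\emph{Verifying the hypothesis.} I will test with evaluations into $S(abba)^0$, mirroring steps (i)--(iii) of Proposition~\ref{lem:adjacent-graph}. First, send $z\mapsto b$. Then use a $2$-colouring of the vertices of $G$ into $\{a,b\}$ for which every edge word evaluates to the same proper factor. For the edge $i\to j$ the choice $y_i\mapsto a$, $y_j\mapsto b$ gives $abb$. I will instead use length-one folding: $z\mapsto b$, and each vertex $\mapsto a$ or $b$ so that $y_izy_j\in\{abb,bba,\dots\}$ is constant.

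Because a directed path or a cycle may need edges of both colour types, constancy is not automatic. So I expect to need several evaluations, each built from the deletion trick of Lemma~\ref{ind:lem:deletion}. Send one vertex to the new zero. The remaining graph is then a union of paths, and these can be "folded" consistently, for example $y\mapsto a$ on tails and $y\mapsto b$ on heads, with $z\mapsto b$, giving $abb$. The steps are as follows.
\begin{enumerate}
\item The content argument shows $c(\mathbf q)\subseteq c(P_G)$.
\item The $D_2$ homomorphism of Lemma~\ref{lem:basic} shows that $\mathbf q$ contains the content $\{y_i,z,y_j\}$ of some edge word.
\item A length count under a folding shows $\ell(\mathbf q)=3$.
\item A positional argument places $z$ in the middle, and a deletion of a third vertex shows that $y_i$ precedes $y_j$ with $i\to j$.
\end{enumerate}
The requirement that directed cycles have length at least four is what allows a vertex to be deleted while keeping the edge $i\to j$ and excluding the reverse orientation.

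\emph{Main obstacle.} The hardest step is excluding the reversed word $y_jzy_i$ and non-edge pairs $y_izy_k$. For this I need an evaluation under which all edge words agree on a proper value but the candidate does not. I would first try deleting every vertex outside $\{i,j\}$, which leaves the single edge word $y_izy_j$. Then take $y_i\mapsto a$, $z\mapsto b$, $y_j\mapsto b$: this gives $abb$, while the reverse gives $bba\ne abb$.

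For a non-edge $\{i,k\}$, deleting all other vertices leaves no edge word, so $D_\mathbf q$ is empty and the candidate is not absorbed. Since $D_\mathbf q(\mathbf u)$ only keeps summands with content inside $c(\mathbf q)$, these local tests suffice via Proposition~\ref{cor:absorption}.

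The final consequence, $\mathcal U\subseteq\W_n$, follows from Lemma~\ref{lem:path-flat} together with $\W_2\subseteq\W_n$ (Proposition~\ref{lem:chains}), since $T_k=T(P_k)$.
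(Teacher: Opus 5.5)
Using Proposition~\ref{lem:syntactic-membership} is a legitimate alternative to the paper's explicit subsemiring of a finite power of $\widehat A_2$. As written, however, two steps fail.

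The first gap is in the absorption hypothesis. With $z\mapsto b$ and one-letter vertex images, $y_izy_j$ takes a proper value only as $abb$ (with $y_i=a$, $y_j=b$) or as $bba$ (with $y_i=b$, $y_j=a$). So any vertex that is both a head and a tail destroys constancy. Deleting one vertex of an $m$-cycle still leaves a path with $m-2\ge2$ edges, so ``$a$ on tails, $b$ on heads'' is not a valid assignment. Deletions also cannot replace an all-old test. If $\mathbf q$ contains every vertex variable, sending any vertex to $0$ sends $\mathbf q$ to $0$, and the test is vacuous. Your plan therefore cannot refute absorption of, say, a long word over all of $X$, and steps (iii)--(iv) are unsupported.

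The missing evaluation is $z\mapsto bb$ and $y_v\mapsto a$ for every $v$. All edge words then take the value $abba$. Since $a\cdot bb\cdot a$ is the only decomposition of $abba$ into blocks $a$ and $bb$, an absorbed $\mathbf q$ must have the form $y_szy_t$. Step (ii) then gives $\mathbf q\in\{y_izy_j,\,y_jzy_i\}$ for some edge $i\to j$, and your single-edge test excludes the reverse. Step (i) is automatic if you take $X=c(P_G)$. This evaluation is exactly the common coordinate $g_i=a$, $h=bb$ in the paper's proof.

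The second gap is the identification of $D(P_G)$ with $T(G)$ when $G$ is a path $1\to2\to\cdots\to k$. No word of $P_G$ contains the factor $zy_1$ or $y_kz$. Hence $\rho_{P_G}(zy_1)=\rho_{P_G}(y_kz)=\varnothing$, and both classes equal $\infty$. But $q_1=eu_1$ and $p_k=u_ke$ are non-top elements of $T(G)$; the paper keeps them deliberately. Since $D(P_G)$ is generated by the letter classes, it is the proper quotient of $T(G)$ that collapses $\{\infty,q_1,p_k\}$. So your claim that $T(G)$ is ``isomorphic to the subsemiring, or a quotient of it'' goes the wrong way.

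You can repair this by treating cycles first. For $\vec C_m$ with $m\ge4$, every letter-class profile is nonempty and distinct classes have disjoint profiles, so $D(P_{\vec C_m})\cong T(\vec C_m)$. Then $T(P_k)$ is a subsemiring of $T(\vec C_m)$ whenever $m\ge\max\{4,k+1\}$; compare the external vertices $v_0,v_{k+1}$ in the proof of Theorem~\ref{thm:path-limit}.

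With both fixes your argument is shorter than the paper's. It replaces the coordinatewise kernel comparison, with singleton and pair coordinates separating indices and non-edges, by one profile computation and a three-step absorption test.
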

\begin{proof}
Let $G$ have vertex set $\{1,\ldots,k\}$ and put
$S=\widehat A_2=S(abba)^0$. We compare the kernels of
evaluation in $S$ and in $T(G)$ on the same generating
variables. Denote the newly adjoined zero by $0$, so that $0\ne\infty$.
We construct a subsemiring of a finite power of $S$ that
admits $T(G)$ as a homomorphic image. The required power is
$S^{1+k+\binom{k}{2}}$. Define vectors $g_1,\ldots,g_k,h$ as follows.
In a common coordinate, put $g_i=a$ for every $i$ and $h=bb$.
In the coordinate labelled $\{i\}$, put $g_i=a$, all other
$g_j=0$, and $h=bb$. In the coordinate labelled $\{i,j\}$,
when $i\to j$, put $g_i=a$, $g_j=b$, $h=b$, and all other $g_s=0$;
reverse the assignments to $g_i,g_j$ when $j\to i$.
If $i,j$ are nonadjacent, put $g_i=g_j=a$, $h=bb$, and the remaining
$g_s=0$. These instructions are unambiguous because $G$ has
neither loops nor opposite directed edges; for a directed path the
coherent orientation hypothesis is used throughout.

Let $H=\langle g_1,\ldots,g_k,h\rangle\le S^{1+k+\binom{k}{2}}$.
Our claim is that the assignment
\[
 g_i\longmapsto u_i\quad(1\le i\le k),
 \qquad h\longmapsto e
\]
induces a surjective homomorphism $H\twoheadrightarrow T(G)$.
To prove that the assignment is well defined, it suffices to
show that whenever two polynomial terms have the same value in
$H$, they also have the same value under the corresponding
assignment in $T(G)$.
Every term on $g_1,\ldots,g_k,h$ expands into a nonempty sum of
multiplicative words. In the common coordinate, its potentially
nonabsorbing words have exactly the five forms
\[
 g_i,\qquad h,\qquad g_i h,\qquad h g_i,\qquad g_i h g_j,
\]
All other words have common-coordinate value $\infty$. For the five listed forms, the coordinate values and their images in $T(G)$ are
\[
\begin{array}{c|ccccc}
 \text{word}&g_i&h&g_ih&hg_i&g_ihg_j\\ \hline
 \text{common coordinate}&a&bb&abb&bba&abba\\
 \text{image}&u_i&e&p_i&q_i&
 \begin{cases}\omega,&i\to j,\\\infty,&\text{otherwise.}\end{cases}
\end{array}
\]
Consequently a term with nonabsorbing image has one
of these five common-coordinate types. A term with two different
types, or with any common-coordinate absorbing word, has absorbing
image and absorbing common coordinate, so cannot coincide as a vector
with a term having nonabsorbing image.

\medskip\noindent We next distinguish the values represented within each
common-coordinate type. For the first four
types, the $k$ singleton coordinates record the relevant
indices: at coordinate $\{i\}$ the relevant
value is respectively $a,bb,abb,bba$ for the indicated index
(the common $h$ has value $bb$ in every singleton coordinate),
whereas indices absent from that coordinate contribute the added
zero $0$. Thus a term of type $g_i$, $g_i h$, or $h g_i$ has a
nonabsorbing image only if all its words carry one and the same
index; two different nonabsorbing images cannot yield the same vector.
The type $h$ has only the image $e$.

\medskip\noindent For the fifth type, we must distinguish
edges from nonedges: a term has nonabsorbing image precisely
when every word $g_i h g_j$ corresponds to an edge $i\to j$;
its image is then $\omega$. Suppose the term includes a non-edge
$g_i h g_j$. If $i=j$, at the singleton coordinate $\{i\}$
this word equals $abba$, while every edge word equals $0$.
If $i\ne j$ are nonadjacent, their pair coordinate has the same
property. If the reverse edge $j\to i$ is present, at its pair
coordinate the non-edge word $g_i h g_j$ has value $bba$,
while the unique edge word supported on the pair has value $abb$;
all other edge words give $0$. In all three cases, adding the
non-edge word changes the coordinate value of any sum consisting
only of edge words: the result is respectively $abba$ or $\infty$
(and in the reversed-edge case $bba$ or $\infty$), never the
edge-only value $0$ or $abb$. Hence a term with absorbing image
cannot share its vector value with a term whose image is $\omega$.

\medskip\noindent We now complete the HSP construction.
The coordinate tests prove the required inclusion of evaluation
kernels, and hence the map on generators extends uniquely to a
well-defined semiring homomorphism $H\to T(G)$. Its image
contains $u_i$ and $e$, and hence
$p_i,q_i,\omega$ and $\infty$, so the map is surjective.
Thus $T(G)\in\mathrm{HS}(S^{1+k+\binom{k}{2}})$.

In particular $T_k\in\W_2$ for all $k\ge2$ and hence
$\mathcal U\subseteq\W_2$ at the level of generated varieties. Finally,
$\widehat A_2$ is a subsemiring of $\widehat A_n$ obtained from
the central factor $abba$ of $\mword_n$ (renaming its two
letters). This gives $\W_2\subseteq\W_n$.
\end{proof}

\subsection{Nonfinite basis and separation}

\begin{lemma}\label{lem:path-NFB}
The variety $\Var(T_\infty)$ is nonfinitely based.
\end{lemma}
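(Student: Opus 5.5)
We follow the roadmap stated at the beginning of this section: finite directed cycles serve as countermodels. For each $m\ge4$ we consider the directed cycle semiring $C_m=T(\vec C_m)$ and the cycle identity
\[
 \gamma_m:\qquad \sum_{i\in\mathbb Z/m\mathbb Z} x_i y x_{i+1}\ \approx\
 \sum_{i\in\mathbb Z/m\mathbb Z} x_i y x_{i+1}+x_0yx_0 .
\]
The plan has two halves. First, $T_\infty\models\gamma_m$. Second, for every $k$ there is $m$ such that $C_m$ satisfies every identity of $T_\infty$ in at most $k$ variables but fails $\gamma_m$. Together these give infinite axiomatic rank, hence nonfinite basability.

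\emph{Validity of $\gamma_m$ in $T_\infty$.} Evaluate the left side. If it is $\infty$, the absorption holds trivially. Otherwise every summand takes the same non-top value. By Lemma~\ref{lem:path-flat}, the only non-top triple product is $u_ieu_{i+1}=\omega$, so $y=e$, each $x_i=u_{f(i)}$, and $f(i+1)=f(i)+1$ around the cycle. Summing around the cycle gives $m=0$, a contradiction. Hence the left side is always $\infty$. Under the assignment $x_i=u_i$, $y=e$ in $C_m$, the left side is $\omega$ while $x_0yx_0=\infty$; so $C_m\not\models\gamma_m$.

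\emph{Local satisfaction.} Fix $k$ and take $m>2k$, say $m\ge 4k+4$. Let $\mathbf u\approx\mathbf v$ be an identity of $T_\infty$ in at most $k$ variables, and suppose it fails in $C_m$ under an assignment $\alpha$. Because addition is flat and products of four elements are $\infty$, only words of length at most three matter, and each variable receives a single element. The assignment $\alpha$ uses at most $k$ of the elements $u_i,p_i,q_i$. Their indices occupy at most $k$ vertices of $\vec C_m$, so some vertex $j$ of the cycle is unused. Cutting the cycle at $j$ turns the used part into a disjoint union of directed paths, and this union embeds edge-preservingly and injectively into one long directed path, by spacing the pieces apart so that no new edges arise. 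We relabel the indices accordingly to obtain an assignment $\alpha'$ into some $T_K$. The key claim is that the value of every word of length at most three is transported exactly. This holds because non-top products depend only on the equalities $i=j$ and the adjacencies $i\to j$ among the used indices, and the relabeling preserves and reflects both; flat sums are preserved by injectivity. Therefore $\mathbf u\approx\mathbf v$ fails in $T_K\le T_\infty$, a contradiction.

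The main obstacle is this transport step, which must be stated carefully. One must verify, case by case from the multiplication rules of $T(G)$, that every nonabsorbing value of a word of length at most three, namely $u_i,e,p_i,q_i,\omega$, is determined by edge and identity data among at most three indices, and that the relabeling reflects nonedges. In particular the piece endpoints must be separated by gaps so that no spurious edge $i\to i+1$ appears in the path. With this verified, $C_m\models\Id_{\le k}(T_\infty)$ while $C_m\notin\mathcal U$. Hence no basis of $\mathcal U$ has bounded variable count, and $\Var(T_\infty)$ is nonfinitely based.
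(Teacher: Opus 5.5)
Your proof is correct and follows the paper's argument. The paper likewise shows that a cycle identity (written there as $\chi_m:\sum_{i=1}^m x_iyx_{i+1}\approx z^3$ rather than your absorption form $\gamma_m$) holds in $T_\infty$ but fails in $C_m$, and that cutting $\vec C_m$ at a vertex not indexed by any of the boundedly many chosen elements embeds them into $T_\infty$. Your transport step is easier than you fear: $C_m\setminus\{u_j,p_j,q_j\}$ is itself a subsemiring isomorphic to $T_{m-1}$, so neither gap-spacing nor a case check on words of length three is needed.
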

\begin{proof}
We begin with a family of identities valid in $T_\infty$.
For $m\ge1$ consider
\begin{equation}\label{eq:path-cycle}
 \chi_m:\quad \sum_{i=1}^{m}x_i yx_{i+1}\approx z^3,
 \qquad x_{m+1}=x_1.
\end{equation}
It holds in $T_\infty$: if the left-hand side were different from
$\infty$, all its summands would equal $\omega$. The multiplication
table then forces $y=e$, each $x_i=u_{j_i}$ and
$j_{i+1}=j_i+1$, including the return from $j_m$ to $j_1$.
This is impossible. On the other hand, $C_m$ violates $\chi_m$
for $m\ge4$, under $x_i=u_i$ and $y=e$.

\medskip\noindent We then examine the small generated subsemirings of the countermodels.
Fix $r<m$. Apart from $\infty,\omega,e$, each of any
$r$ chosen elements of $C_m$ involves at most one vertex index.
If $S$ is the set of
indices occurring in $r$ chosen generators, their generated
subsemiring is contained in
\[
 \{\infty,\omega,e\}\cup\{u_i,p_i,q_i:i\in S\}.
\]
Choose a vertex outside $S$ and cut the directed cycle there.
The remaining vertices form a directed path, and translation
along this path, preserving gaps between the indices in $S$,
embeds the generated subsemiring into $T_\infty$. Thus every $r$-generated subsemiring of $C_m$ belongs to
$\mathcal U$, and consequently $C_m$ satisfies every identity
of $T_\infty$ involving at most $r$ variables. If
$\mathcal U$ had a finite basis, choose $r$ bounding
the number of its variables and then $m>\max\{r,3\}$.
The semiring $C_m$ would satisfy the basis but violate
\eqref{eq:path-cycle}, a contradiction.
\end{proof}

\begin{proposition}\label{prop:path-separation}
For every $n\ge2$,
$\mathcal U\nsubseteq\V_n$.
\end{proposition}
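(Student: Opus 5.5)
To prove Proposition~\ref{prop:path-separation}, I will find a single identity that holds in $A_n=S(\mword_n)$ but fails in $T_\infty$. The natural source is the path structure of $T_\infty$. There a chain of nonabsorbing triple products $u_1eu_2,\,u_2eu_3,\,u_3eu_4$ has the common value $\omega$, while the ``shortcut'' $u_1eu_4$ is absorbing. I therefore propose the four-variable identity
\[
 \pi:\qquad x_1yx_2+x_2yx_3+x_3yx_4\approx x_1yx_2+x_2yx_3+x_3yx_4+x_1yx_4 .
\]

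\emph{Failure in $T_\infty$.} Evaluate $x_i=u_i$ for $1\le i\le4$ and $y=e$. By the multiplication rules for $T_k$, each of the three left summands equals $\omega$, so the left side is $\omega$. The product $u_1eu_4=p_1u_4$ is $\infty$ because $4\ne 2$, so the right side is $\infty$. This is the routine step.

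\emph{Validity in $A_n$.} Fix an evaluation in $A_n$. If the left side is $\infty$, the absorption is immediate. Otherwise flat addition forces all three summands to be one and the same non-top factor $w$ of $\mword_n$. In particular, the values $X_1,\dots,X_4,Y$ are nonempty words with
\[
 X_1YX_2=X_2YX_3=X_3YX_4=w
\]
in the free semigroup. Comparing lengths in the first and last equalities gives $|X_1|+|X_2|=|X_3|+|X_4|$. The middle equality gives $|X_2|=|X_4|$ via the second and third words, and hence $|X_1|=|X_3|$. The equality $X_1YX_2=X_3YX_4$ with $|X_1|=|X_3|$ and $|X_2|=|X_4|$ then yields $X_1=X_3$ and $X_2=X_4$ by comparing prefixes and suffixes of equal length. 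Hence $X_1YX_4=X_1YX_2=w$, and the right side also equals $w$. So $A_n\models\pi$, and therefore $\V_n\models\pi$.

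Since $T_\infty\not\models\pi$, we get $T_\infty\notin\V_n$, i.e.\ $\mathcal U\nsubseteq\V_n$. The only point needing care is the length bookkeeping in the free semigroup, the same cancellation style used in Lemma~\ref{lem:SG-cyclic-endpoints}. I expect it to be straightforward once the three equalities are written as equalities of concatenated words. No zero-extension argument is needed, since $A_n$ has no adjoined zero.
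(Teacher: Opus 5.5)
Your proposal is correct and follows essentially the paper's proof: the same four-variable shortcut identity, the same counterevaluation $x_i=u_i$, $y=e$ (the paper works in $T_4\le T_\infty$), and the same length comparison followed by prefix/suffix cancellation in the free semigroup to show $A_n$ satisfies it. You compare the equations in a slightly different order (first with last, then the middle one) to obtain $|X_1|=|X_3|$ and $|X_2|=|X_4|$, but this is immaterial.
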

\begin{proof}
Let
\begin{equation}\label{eq:path-separator}
 x_1yx_2+x_2yx_3+x_3yx_4
 \approx x_1yx_2+x_2yx_3+x_3yx_4+x_1yx_4.
\end{equation}
We verify the identity in $A_n$ directly from the factor
multiplication. If one of the first three summands is $\infty$,
their sum is $\infty$, and absorption makes the equation true.
Otherwise the three values must coincide with one nonabsorbing
factor $\mathbf{F}$ of $\mword_n$, by flatness.
Express their values as $\mathbf{U}_1\mathbf{V}\mathbf{U}_2=\mathbf{U}_2\mathbf{V}\mathbf{U}_3=\mathbf{U}_3\mathbf{V}\mathbf{U}_4=\mathbf{F}$, with all
$\mathbf{U}_i,\mathbf{V}$ nonempty words. Comparing the lengths of the first and second factorizations,
and then of the second and third, yields
$\ell(\mathbf{U}_1)=\ell(\mathbf{U}_3)$ and $\ell(\mathbf{U}_2)=\ell(\mathbf{U}_4)$.
The first and third expressions represent the very same word
$\mathbf{F}$. Their prefixes of length $\ell(\mathbf{U}_1)=\ell(\mathbf{U}_3)$
therefore agree, giving $\mathbf{U}_1=\mathbf{U}_3$; comparing suffixes in the
same way gives $\mathbf{U}_2=\mathbf{U}_4$. Consequently
$\mathbf{U}_1\mathbf{V}\mathbf{U}_4=\mathbf{U}_1\mathbf{V}\mathbf{U}_2=\mathbf{F}$, which is exactly the required
absorbed fourth summand.
In $T_4$, however, put $x_i=u_i$ and $y=e$. The first three
summands all equal $\omega$, whereas $u_1eu_4=\infty$.
Thus \eqref{eq:path-separator} fails in $T_4$, and the assertion follows from
$T_4\in\mathcal U$.
\end{proof}

\subsection{Bounded paths and hereditary finite basability}

To establish the limit property, we place every proper
subvariety of $\mathcal U$ inside a finitely based variety
with a finite subvariety lattice.
We use two results of Jackson--Ren--Zhao
\cite[Lemmas~2.1--2.2]{JRZ22}. Every subdirectly
irreducible member of the variety generated by flat
ai-semirings is flat; we require only this direction,
not the converse assertion that every flat ai-semiring is
subdirectly irreducible. The variety is axiomatized,
within the class of ai-semirings, by the $16$ identities
\begin{equation}\label{eq:path-flat-identity}
 x_1ux_2+y_1uy_2+y_1vy_2
 \approx x_1vx_2+y_1uy_2+y_1vy_2,
\end{equation}
where each of the four contexts $x_1,x_2,y_1,y_2$ may
independently be omitted. Flat ai-semirings are also
\emph{$\infty$-cancellative}: whenever $ab=ac\ne\infty$, one has $b=c$; likewise
$ba=ca\ne\infty$ entails $b=c$.
Here omission of a context in \eqref{eq:path-flat-identity} is an
abbreviation for an identity in our unchanged language, not an
introduction of a multiplicative identity.

Define the ambient variety $\mathcal F$ by the following finite
set of identities. Throughout this paragraph, $\kappa$ denotes
a \emph{term abbreviation} for the common value of all cubes,
not a new constant symbol:
\begin{equation}\label{eq:path-base-F}
 x^3\approx y^3,\quad x+x^3\approx x^3,\quad
 x^2\approx x^3,\quad x^4\approx x^3,\quad
 xyzt\approx x^3.
\end{equation}
In $\mathcal F$, $\kappa$ is both the additive maximum and
multiplicative zero, represented by a term rather than a named
constant. In the next family, $\mathbf{w}\in\{ayr,rya\}$ and
$\mathbf{w}'\in\{bys,syb\}$ abbreviate the corresponding finitely many
word identities; $\mathbf{w},\mathbf{w}'$ are not new variables. In addition to the ai-semiring
axioms and \eqref{eq:path-flat-identity} and
\eqref{eq:path-base-F}, require
\begin{align}
 xyz+uvw&\approx xyz+uvw+xvz, \label{eq:path-M}\\
 xyz+yt&\approx\kappa,\qquad xyz+ty\approx\kappa,
   \label{eq:path-E}\\
 \mathbf{w}+\mathbf{w}'+ab&\approx\kappa, \label{eq:path-V}\\
 \mathbf{w}+ab+bd&\approx\kappa,\qquad
 \mathbf{w}+db+ba\approx\kappa, \label{eq:path-P}\\
 xy+yz+zt&\approx\kappa. \label{eq:path-Z}
\end{align}
Here $\kappa$ abbreviates the cube of a fresh variable;
all the displayed identities belong to the constant-free language.

We verify the five additional families in $T_\infty$.
A sum in a flat semiring is different from $\infty$ only
when all its summands have the same non-top value. The only
non-top triple products in $T_\infty$ are
$u_i e u_{i+1}=\omega$, and the only binary products
with value $\omega$ are
\[
 p_i u_{i+1}=\omega=u_iq_{i+1}\qquad(i\ge1).
\]
For~\eqref{eq:path-M}, the identity is automatic if
one of $xyz,uvw$ equals $\infty$. Otherwise both are
$\omega$, and their middle factors $y,v$ are both $e$;
thus $xvz=xyz=\omega$.
For~\eqref{eq:path-E}, if $xyz=\omega$, then $y=e$;
no product $et$ or $te$ equals $\omega$, so each displayed
sum equals $\infty$. For~\eqref{eq:path-V}, if both
$\mathbf{w},\mathbf{w}'$ have value $\omega$, their distinguished variables
$a,b$ are vertices $u_i,u_j$, respectively, and
$ab=\infty$. This argument applies separately to both
placements of $a$ and both placements of $b$ in the
abbreviations defining $\mathbf{w},\mathbf{w}'$.
For~\eqref{eq:path-P}, a non-top $\mathbf{w}$ again forces
$a=u_i$ for some $i$. The only possible non-top $ab$ is
$u_iq_{i+1}=\omega$, whereas every product with left
factor $q_{i+1}$ equals $\infty$; hence $ab$ and $bd$
cannot both equal $\omega$. Similarly, if $ba=\omega$,
then $b=p_{i-1}$ (when defined), and no product $db$
equals $\omega$. Thus each of the two identities in
\eqref{eq:path-P} holds. Finally, the directed graph of
the binary products with value $\omega$ has only the
one- and two-edge chains
$p_i,u_{i+1},q_{i+2}$, whose consecutive products are
$\omega$ (with any missing terminal edge omitted).
In particular, there is no chain of three such products,
which proves~\eqref{eq:path-Z}. Hence
\begin{equation*}
 \mathcal U\subseteq\mathcal F.
\end{equation*}

The identities $\chi_m$ in~\eqref{eq:path-cycle} involve a closed
cycle, whereas the following identities describe open directed
paths. They are not interchangeable by reindexing.
For $N\ge1$, define $\mathcal D_N\subseteq\mathcal F$ by
\begin{equation*}
 \Phi_N:\quad \sum_{i=0}^{N}x_i y x_{i+1}\approx\kappa.
\end{equation*}
The displayed axioms form a finite basis for $\mathcal D_N$.
The same variety is locally finite: by \eqref{eq:path-base-F}, every term is a sum of words
of length at most three or has value $\kappa$. Thus an
$r$-generated member has at most $2^{r+r^2+r^3}+1$ elements.

\begin{proposition}\label{lem:path-components}
Let $S$ be a nontrivial finite subdirectly irreducible member
of $\mathcal F$. There is a unique element $\omega\ne\infty$
with $\omega S=S\omega=\{\infty\}$.
If $S$ has a nonabsorbing triple product, all such products equal
$\omega$ and have the same middle factor $e$. Put
\[
 A=\{a:a\text{ is an endpoint of a nonabsorbing triple}\}.
\]
Define a directed edge $a\to b$ on $A$ precisely when
$aeb=\omega$.
\[
 \mathcal B=\{p_a=ae:a\to b\text{ for some }b\},\qquad
 \mathcal C=\{q_b=eb:a\to b\text{ for some }a\},
\]
and let $\mathcal R=S\setminus(\{\infty,\omega,e\}\cup A\cup \mathcal B\cup \mathcal C)$.
These sets form a disjoint union
\[
 S=\{\infty,\omega,e\}\,\dot\cup\, A\,\dot\cup\,\mathcal B
   \,\dot\cup\,\mathcal C\,\dot\cup\,\mathcal R.
\]
The digraph $(A,\to)$ has indegree and outdegree at most one.
Each of its finite acyclic components is a directed path; it
may have at most one incoming $\mathcal R$-leaf at its initial vertex
and one outgoing $\mathcal R$-leaf at its terminal vertex.
The other members of $\mathcal R$ form disjoint directed paths of one or
two edges under $r s=\omega$. Products of elements from
distinct components are $\infty$, apart from interactions
with the common element $e$.
If there is no nonabsorbing triple product, the digraph
with vertex set $S\setminus\{\infty,\omega\}$ and edge relation
$x\rightsquigarrow y$ defined by $xy=\omega$ is a disjoint union of directed
paths with one or two edges.
\end{proposition}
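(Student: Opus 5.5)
The plan is to first use the Jackson--Ren--Zhao result quoted above: $\mathcal F$ lies in the variety generated by flat ai-semirings, because it satisfies \eqref{eq:path-flat-identity}. Hence a finite subdirectly irreducible $S\in\mathcal F$ is flat and $\infty$-cancellative, with $\infty$ equal to the value of $\kappa$. This is the additive top and the multiplicative zero.

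\emph{Existence and uniqueness of $\omega$.} By \eqref{eq:path-base-F} every product of four elements and every square is $\infty$. Consider the set $N=\{s: sS=Ss=\{\infty\}\}$. Suppose $|N\setminus\{\infty\}|\ge2$, say $\omega_1\ne\omega_2$. The equivalence collapsing $\{\omega_1,\infty\}$ is then a congruence: flatness makes $\omega_1+x\in\{\omega_1,\infty\}$, and the products are $\infty$. The same holds for $\{\omega_2,\infty\}$. These two congruences meet trivially, contradicting subdirect irreducibility. Existence comes from nontriviality. Take any non-$\infty$ product $ab$ (if every product were $\infty$, then any non-$\infty$ element already lies in $N$). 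If $ab\notin N$, extend it to a longer nonabsorbing product; this stops by length $3$, and the last nonabsorbing product lies in $N$.

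\emph{Triple products and the shared middle.} If $xyz\ne\infty$, then $xyz\in N$ because four-fold products vanish. So all nonabsorbing triples equal $\omega$. Identity \eqref{eq:path-M} says that two nonabsorbing triples $xyz,uvw$ give $xvz=\omega$. Then $\infty$-cancellation in $xyz=xvz$ yields $y=v=:e$. Next I show disjointness of the five pieces, which I expect to be routine from the laws.
\begin{itemize}
\item Identity \eqref{eq:path-E} forbids $e$ from being an endpoint or from appearing as a factor of a non-$\infty$ binary product alongside a triple.
\item Identity \eqref{eq:path-V} forbids $A\cap\mathcal B$ and $A\cap\mathcal C$ overlaps: a vertex times a vertex cannot be $\omega$.
\item Identity \eqref{eq:path-P} handles the $p$/$q$ collisions.
\item Cancellation gives $p_a=p_{a'}\Rightarrow a=a'$.
\end{itemize}

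\emph{Degree bounds and components.} Suppose $a\to b$ and $a\to b'$. Then $p_ab=p_ab'=\omega$, and cancellation gives $b=b'$; indegree is handled the same way. So $(A,\to)$ is a union of directed paths and cycles, and finiteness together with the acyclic hypothesis leaves only paths in the components considered. For elements $r\in\mathcal R$ with $rs=\omega$:
\begin{itemize}
\item Identity \eqref{eq:path-Z} bounds chains of $\omega$-products by two edges.
\item Identity \eqref{eq:path-P}, with $\mathbf w$ chosen to carry the vertex $a$, shows that an $\mathcal R$-element can attach to a path component only as a single leaf before the initial vertex or after the terminal vertex. The attachment is via $rp$ or $qr$-type products. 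Cancellation makes such a leaf unique.
\end{itemize}
Cross-component products are $\infty$ by \eqref{eq:path-V} and cancellation, except those involving $e$. The final sentence of the statement, where there is no nonabsorbing triple, follows from \eqref{eq:path-Z} and cancellation alone.

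The main obstacle I expect is the leaf analysis. I must show that an $\mathcal R$-element cannot multiply a path vertex or a $p$/$q$ element into $\omega$ except at the ends. This requires choosing the right instances of \eqref{eq:path-P} and \eqref{eq:path-V} for each position. For that step I would try an exhaustive case split on which of $A,\mathcal B,\mathcal C,\mathcal R$ the two factors of $xy=\omega$ lie in.
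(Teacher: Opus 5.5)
Your skeleton matches the paper's: flatness via Jackson--Ren--Zhao, Rees congruences on $\{\infty,w\}$ for the uniqueness of $\omega$, and \eqref{eq:path-M} with two-sided $\infty$-cancellation for the common middle factor $e$. Your alternative disjointness arguments also work. If $a=p_{a'}$ with $a'\to b'$, then $ab'=\omega$ with $a,b'\in A$, contradicting \eqref{eq:path-V}. If $p_a=q_b=d$, then $b'\rightsquigarrow d\rightsquigarrow a'$ is a two-edge walk starting in $A$, contradicting \eqref{eq:path-P}.

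\emph{Missing lemma.} The proposal lacks the lemma on which the rest of the paper's proof rests. Suppose $xy=d\notin\{\infty,\omega\}$. By uniqueness of $\omega$, $d$ is not terminal, so $xyz=\omega$ or $zxy=\omega$ for some $z$. The common middle factor then forces either $y=e$, $x\in A$ and $d=p_x$, or $x=e$, $y\in A$ and $d=q_y$. Your planned case split ranges only over products with $xy=\omega$. It therefore cannot show that no other nonabsorbing products exist, which is what ``products of elements from distinct components are $\infty$'' requires. The lemma, together with uniqueness of $\omega$, also shows that every unattached $r\in\mathcal R$ lies on a $\rightsquigarrow$-edge. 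Indeed, $r\notin A\cup\{e\}$, so any non-$\infty$ product involving $r$ equals $\omega$, and an $r$ with none would be a second terminal element. The same reasoning is needed in the triple-free case. There, \eqref{eq:path-Z} and cancellation give in- and outdegree at most one and no three-edge walks. They do not exclude other nonabsorbing values, or isolated elements of $S\setminus\{\infty,\omega\}$. Both exclusions need the extension argument and the uniqueness of $\omega$. Note also that \eqref{eq:path-E} only yields $et\ne\omega\ne te$. It cannot keep $e$ out of non-$\infty$ binary products, since $ae$ and $eb$ are such products.

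\emph{Leaf mechanism.} Your description of how leaves attach is wrong. The products $rp_a=r(ae)$ and $q_br=(eb)r$ are always $\infty$. If nonabsorbing, each would be a nonabsorbing triple with middle factor $a$ or $b\in A$, whereas $e\notin A$ because $e^2=\infty$. Likewise, a nonabsorbing $p_ar$ or $rq_b$ would force $r\in A$. Leaves are instead given by $ra=\omega$ or $ar=\omega$ with $a\in A$.

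The restriction to the initial or terminal vertex does not come from \eqref{eq:path-P}; it comes from cancellation. If $b\to a$, then $p_ba=\omega=ra$, and right $\infty$-cancellation gives $r=p_b\in\mathcal B$. Dually, $a\to c$ gives $aq_c=\omega=ar$, so $r=q_c\in\mathcal C$. What \eqref{eq:path-P} supplies is that the leaf carries no second edge, since $s\rightsquigarrow r\rightsquigarrow a$ would end in $A$. Cancellation then gives uniqueness of the leaf at each end.

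With the extension lemma and these corrections, your exhaustive case split becomes the multiplication table in Steps~3--4 of the paper's proof.
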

\begin{proof}
\emph{1. The unique terminal element.}
Fix $s\ne\infty$. Starting with the one-factor word $s$,
extend it on either side as long as its value remains
non-top. No non-top product has four factors, so a word of
maximal possible length is reached after at most three
factors. Its value $w$ satisfies $wS=Sw=\{\infty\}$;
otherwise another non-top extension would be possible.
Thus for every non-top element $s$ there is a finite sequence
of non-top values
\[
 s=s_0,s_1,\ldots,s_r=w,\qquad wS=Sw=\{\infty\},
\]
where each $s_{i+1}$ is obtained by multiplying $s_i$
on the left or right by an element of $S$.
If $w\ne w'$ are terminal, then
$\{\infty,w\}$ and $\{\infty,w'\}$ are ideals: flat addition
keeps sums with these sets inside them, and multiplication
always gives $\infty$. Their Rees congruences identify respectively just the
pairs $(\infty,w)$ and $(\infty,w')$ (and their reverses),
so each is nontrivial while their intersection is the diagonal
congruence. This contradicts subdirect irreducibility.
Thus the terminal element $\omega$ is unique. Every
nonabsorbing triple product is terminal and hence equals $\omega$.

\medskip\noindent\emph{2. Common middle factor of nonabsorbing triples.}
Suppose now that a nonabsorbing triple exists. If
$abc=a'b'c'=\omega$, substitute the two triples into
\eqref{eq:path-M}. Since the first two summands both
have value $\omega$, flat addition forces the added
summand $ab'c$ to have that same value $\omega$:
otherwise the sum would be $\infty$. From $abc=ab'c=\omega\ne\infty$, left
$\infty$-cancellation gives $bc=b'c$; right
$\infty$-cancellation then gives $b=b'$. Call this common
middle factor $e$. Its square is $\infty$, and neither $e$
nor any endpoint of a nonabsorbing triple can be the value
of a nonabsorbing binary product: substituting such a
binary product into the indicated triple would give a
nonabsorbing product of four factors.

\medskip\noindent Let $xy=d\notin\{\infty,\omega\}$. Since $d$ is not
terminal, some $z$ satisfies $dz=\omega$ or $zd=\omega$.
If $xyz=\omega$, uniqueness of the middle factor gives
$y=e$, $x,z\in A$, and $d=xe$. If $zxy=\omega$, it gives
$x=e$, $z,y\in A$, and $d=ey$.
Thus every nonterminal binary value has the form $ae$ or
$eb$ with $a,b\in A$. These two forms cannot represent
the same nonterminal element. Indeed, suppose
$d=ae=eb'$ for $a,b'\in A$. From $d=ae$ and the
nonterminality of $d$, its extension to a nonabsorbing
triple must be on the right: there is $b\in A$ with
\[
              db=(ae)b=\omega.
\]
But then $(eb')b=\omega$ is a nonabsorbing triple with
middle factor $b'\ne e$, contrary to the uniqueness of
the middle factor $e$. Hence the two possibilities are
disjoint.
Moreover, applying \eqref{eq:path-E} to a triple with
middle factor $e$ shows that $et\ne\omega$ and
$te\ne\omega$ for every $t$. Thus $ae$ or $eb$, if
nonabsorbing, is nonterminal. It must extend in a triple:
$ae$ cannot extend to the left, as the resulting triple
would have middle factor $a\ne e$, and $eb$ cannot extend
to the right. Consequently, $ae$ is nonabsorbing if and only if
$a\to b$ for some $b$. Likewise, $eb$ is nonabsorbing
if and only if $a\to b$ for some $a$.
The $p_a$ are pairwise distinct, as are the $q_b$, by
cancellation. Nor can $p_a=q_b=d$: a nonterminal element
$d$ extends to $dz=\omega$ or $zd=\omega$; the first
would make $ebz=\omega$ with middle factor $b\ne e$,
while the second would make $zae=\omega$ with middle
factor $a\ne e$. The same length-four argument above
separates $\mathcal B,\mathcal C$ from $A\cup\{e\}$, and $\omega$ is
terminal. This proves the asserted disjointness.

Cancellation makes the edge relation functional in both
directions: if $a\to b$ and $a\to c$, then $b=c$;
if $a\to c$ and $b\to c$, then $a=b$.
Associativity also identifies the two products associated
with an edge. Whenever $p_a$ and $q_b$ are defined, each
of the following conditions holds if and only if the others do:
\begin{equation}\label{eq:path-edge-equivalence}
 p_a b=\omega,\qquad a\to b,\qquad a q_b=\omega.
\end{equation}

\medskip\noindent\emph{3. Classification of nonabsorbing products.}
We classify products by ordered factor type. Apart from
$Ae\subseteq \mathcal B\cup\{\infty\}$,
$eA\subseteq \mathcal C\cup\{\infty\}$ and
\eqref{eq:path-edge-equivalence}, only $A\mathcal R$, $\mathcal R A$ and $\mathcal R\mathcal R$
may take the nonabsorbing value $\omega$. To exclude the
other cases, note that $AA=\infty$ by~\eqref{eq:path-V}
and the nonterminal-value classification. Products with two
factors in $\mathcal B\cup \mathcal C$ have length at least four.
The types $A\mathcal B,\mathcal C A,e\mathcal B,\mathcal C e,\mathcal R\mathcal B,\mathcal C\mathcal R$ would give a triple with
middle factor other than $e$; $\mathcal B e,e\mathcal C$ contain $e^2$;
and $\mathcal B\mathcal R,\mathcal R\mathcal C$ would force an $\mathcal R$-element to be a triple
endpoint. Finally, $e\mathcal R,\mathcal R e$ cannot be nonterminal and are
excluded by~\eqref{eq:path-E}. Products involving
$\omega$ or $\infty$ equal $\infty$. Thus the complete
table is as follows, where $\omega/\infty$ denotes the
two permitted possibilities subject to the edge relations:
\[
\begin{array}{c|ccccc}
\cdot&A&e&\mathcal B&\mathcal C&\mathcal R\\\hline
 A&\infty&\mathcal B/\infty&\infty&\omega/\infty&\omega/\infty\\
 e&\mathcal C/\infty&\infty&\infty&\infty&\infty\\
 \mathcal B&\omega/\infty&\infty&\infty&\infty&\infty\\
 \mathcal C&\infty&\infty&\infty&\infty&\infty\\
 \mathcal R&\omega/\infty&\infty&\infty&\infty&\omega/\infty
\end{array}
\]
To read the table, note for example that a non-top
product in $\mathcal B A$ must be $p_ab=(ae)b=\omega$, hence
comes from the triple $aeb$; conversely every edge
$a\to b$ forces this product and also $a(eb)=\omega$.
For a factor in $\mathcal C$ on the left, $(eb)t$ would be a
non-top triple with middle factor $b\ne e$; for a factor
in $\mathcal B$ on the right, $t(ae)$ would have middle factor
$a\ne e$. These eliminate entire ordered rows or
columns, rather than relying on an implicit symmetry.
The table describes possible values, not independent choices
for its $\omega/\infty$ entries. The products involving $e$
and the edge products $p_ab,aq_b$ are determined by $\to$;
the only residual types are $A\mathcal R$, $\mathcal R A$, and $\mathcal R\mathcal R$, whose
nonabsorbing values must equal $\omega$. We now determine
which such products can occur.

\medskip\noindent\emph{4. Directed paths and possible attached leaves.}
It remains to restrict the terminal products of types
$A\mathcal R$, $\mathcal R A$, and $\mathcal R\mathcal R$. Write $x\rightsquigarrow y$
when $xy=\omega$. Fix a nonabsorbing triple $aeb=\omega$ with $a,b\in A$.
For any $a,b\in A$, choose a non-top triple with $a$ as
an endpoint and another with $b$ as an endpoint. Each can
be written in one of the forms $ayr,rya$ and $bys,syb$,
respectively. If $ab=\omega$, all three summands in the
corresponding instance of~\eqref{eq:path-V} equal
$\omega$, contradicting that identity. Hence $ab\ne\omega$.
This argument covers all four choices of endpoint positions,
not only triples having $a$ and $b$ in their first positions. No such product can be a
nonterminal value either, by the already established
classification of nonterminal binary products. Thus $AA=\infty$.

The remaining possibilities are excluded
without any graph terminology. If $x\in \mathcal B$ and $xy\ne\infty$,
write $x=ae$; then $(ae)y=\omega$ and $aey=\omega$, so
$y\in A$. If $y\in \mathcal C$ and $xy\ne\infty$, write $y=eb$;
then $x(eb)=\omega$ and $xeb=\omega$, so $x\in A$.
If $x\in \mathcal C$, a nonabsorbing $xy$ would make $eby$ a triple
with middle factor $b\in A$, contrary to the uniqueness of
$e$. Dually, a nonabsorbing $xy$ with $y\in \mathcal B$ would make
$xae$ a triple with middle factor $a\in A$.
A nonabsorbing product involving $e$ can only be $ae\in \mathcal B$
or $eb\in \mathcal C$: the other alternatives either force a triple
with middle factor distinct from $e$, or contradict
\eqref{eq:path-E} when their value is $\omega$.
Finally, no element of $\mathcal R$ can be a factor
of a nonterminal product. To see this, suppose
$xy=d\notin\{\infty,\omega\}$. By the preceding
extension argument, either $xyz=\omega$ for some $z$,
in which case $y=e$ and $x\in A$, or $zxy=\omega$,
in which case $x=e$ and $y\in A$. Thus neither $x$
nor $y$ belongs to $\mathcal R$. In particular, for
$r\in\mathcal R$, neither $er$ nor $re$ can be
nonterminal; neither can equal $\omega$, by
\eqref{eq:path-E}. Hence
\[
                 e\mathcal R\subseteq\{\infty\},\qquad\mathcal R e\subseteq\{\infty\}.
\]
The same argument excludes $\mathcal R$ from all other
nonterminal product types. This proves that the displayed
multiplication table lists all nonabsorbing binary products.

Cancellation makes the relation $\rightsquigarrow$ have at most
one outgoing and one incoming edge at each vertex. Identity
\eqref{eq:path-Z} forbids a walk of three such edges: if
$x\rightsquigarrow y\rightsquigarrow z\rightsquigarrow t$,
then $xy+yz+zt=\omega\ne\kappa$. Since every square is
$\kappa=\infty$, loops are impossible. A two-cycle would
contradict \eqref{eq:path-Z} by taking $(x,y,z,t)=(r,s,r,s)$;
a cycle of length at least three contains a three-edge walk
and is also excluded.

Identity \eqref{eq:path-P} forbids a two-edge
$\rightsquigarrow$-walk whose initial or terminal vertex
belongs to $A$. To see the first assertion, let $a\in A$
and choose one of the words $\mathbf{w}=ayr$ or $\mathbf{w}=rya$ which
expresses a nonabsorbing triple with $a$ in the indicated
end position. If $a\rightsquigarrow b\rightsquigarrow d$,
then $\mathbf{w}+ab+bd=\omega\ne\kappa$, contradicting the first
identity in \eqref{eq:path-P}. If instead
$d\rightsquigarrow b\rightsquigarrow a$, use
$\mathbf{w}+db+ba$ and the second identity. The choice of $\mathbf{w}$ is
possible because every $a\in A$ is an endpoint of a triple.
If $r\rightsquigarrow a$ for $r\in \mathcal R$, $a\in A$, then
$a$ has no incoming edge $b\to a$, since otherwise
$p_ba=ra=\omega$ and right cancellation gives
$r=p_b\in \mathcal B$. As every member of $A$ occurs in some
triple, $a$ is therefore the initial vertex of its
$\to$-component. Likewise, $a\rightsquigarrow r$
can occur only at the terminal vertex. There is at most
one such leaf at either end, and an attached $\mathcal R$-vertex
has no further incident $\rightsquigarrow$-edge, by the
two-edge restriction (and cancellation).
Every remaining $r\in \mathcal R$ has an incident
$\rightsquigarrow$-edge: otherwise $r$ would be another
terminal element. Its edges join it only to members of
$\mathcal R$, by the preceding multiplication classification.
Such components are directed paths of one or two edges.
Here a component includes its $A$-vertices, the associated
$p_a,q_b$, and any $\mathcal R$-leaves attached to its endpoints;
other $\mathcal R$-paths are separate components. Every non-top
product listed in the table either has both non-$e$ factors
in the same such component or involves $e$. Thus products
between distinct components are $\infty$, with $e$
serving as the only common nonterminal multiplier.

Thus no other residual nonabsorbing product is possible:
$\mathcal R A$ and $A\mathcal R$ can attach leaves only at the initial and
terminal vertices, respectively, while $\mathcal R\mathcal R$-edges form
directed paths of at most two edges. The exclusions above
follow from cancellation and \eqref{eq:path-P}--\eqref{eq:path-Z}.

\medskip\noindent\emph{5. The case without nonabsorbing triples.}
If there is no nonabsorbing triple,
every nonabsorbing binary product must equal $\omega$,
since any other binary value could be extended to such a triple.
Cancellation and \eqref{eq:path-Z} make the resulting
digraph a union of directed paths of at most two edges;
all vertices outside $\{\infty,\omega\}$ lie on such an
edge. Indeed, an element outside this pair with no
incident non-top product would itself be terminal,
contradicting uniqueness of $\omega$. This argument
is independent of the case with a common middle factor.
\end{proof}

A finite semiring $S$ is called \emph{critical} if it does
not belong to the variety generated by its proper subsemirings
and proper homomorphic images.

\begin{proposition}\label{lem:path-DN-finite-lattice}
For every $N\ge1$, $\mathcal D_N$ has finitely many
subvarieties, and each of them is finitely based.
\end{proposition}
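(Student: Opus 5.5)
The plan is to show that $\mathcal D_N$ is generated by finitely many finite subdirectly irreducible members of bounded size. Subvarieties then correspond to subsets of a finite set of finite critical semirings, so there are only finitely many of them. Finite basability then follows from a Lyndon/Oates--Powell type argument.

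\emph{Step 1: local finiteness and reduction to finite SI members.} By the remark after the definition of $\mathcal D_N$, the variety is locally finite, and every term is $\kappa$ or a sum of words of length at most three. Each subvariety $\mathcal K\subseteq\mathcal D_N$ is generated by its finite subdirectly irreducible members. By Jackson--Ren--Zhao these are flat, since $\mathcal F$ contains the flat-generating identities \eqref{eq:path-flat-identity}. Thus $\mathcal K$ is determined by which finite flat SI members of $\mathcal D_N$ it contains.

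\emph{Step 2: bounding the SI members.} Apply Proposition~\ref{lem:path-components} to a finite SI member $S$ of $\mathcal D_N$. Its structure is the element $e$, a disjoint union of directed $\to$-paths on $A$ (no cycles survive, since finiteness and indegree and outdegree at most one leave only paths and cycles). Each path may carry possible $\mathcal R$-leaves, and there are separate $\mathcal R$-paths of one or two edges.
\begin{itemize}
\item The identity $\Phi_N$ fails on any $\to$-path with $N+2$ vertices (evaluate $x_i$ at consecutive vertices and $y$ at $e$). Hence every $\to$-path has at most $N+1$ vertices.
\item Subdirect irreducibility should force a single component. Distinct components give ideals $\{\infty,\omega\}\cup(\text{component})$, and their Rees-type quotients separate points, so $S$ would be a subdirect product of the one-component pieces. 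I expect this to need care because $e$ is shared, but the component set together with $e$ should still yield a subdirect decomposition.
\end{itemize}
Hence $|S|\le c(N)$ for an explicit bound such as $3(N+1)+7$. So there are only finitely many SI members up to isomorphism, and $\mathcal D_N$ has finitely many subvarieties, each generated by a finite set of finite semirings.

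\emph{Step 3: finite basability.} Each subvariety $\mathcal K$ is locally finite and has SI members of bounded size $c(N)$. Consider a candidate model $S$ satisfying the finitely many axioms of $\mathcal D_N$ together with the identities of $\mathcal K$ in $c(N)$ variables. Since $S\in\mathcal D_N$, its finite SI quotients have at most $c(N)$ elements. Each such quotient $Q$ satisfies all identities of $\mathcal K$ in $c(N)$ variables. Because $Q$ is generated by at most $c(N)$ elements, it is a quotient of the free $c(N)$-generated algebra of $\mathcal K$, so $Q\in\mathcal K$.

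The $c(N)$-variable identities of $\mathcal K$ are finitely axiomatizable modulo $\mathcal D_N$ by local finiteness: the free algebra on $c(N)$ generators is finite, so finitely many identities define it. Together with the finite basis of $\mathcal D_N$ these give a finite basis of $\mathcal K$.

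The main obstacle is Step 2, in particular proving that a finite SI semiring consists of one component and that the leaf configurations are bounded. If the shared element $e$ spoils the direct subdirect decomposition, I would instead bound the number of components by hand. The congruence collapsing an entire component (other than its $e$-products) into $\infty$ is nontrivial and would intersect trivially with the one collapsing another component.
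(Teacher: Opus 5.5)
\textbf{There is a genuine gap in Step~2, and Step~3 inherits it.} Subdirect irreducibility does \emph{not} force a single component. In fact the finite subdirectly irreducible members of $\mathcal D_N$ have unbounded size.

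In a flat member of $\mathcal F$, a congruence $\theta$ with $(s,t)\in\theta$ and $s\ne t$ gives $s=s+s\mathrel{\theta}s+t=\infty$. Moreover, every non-top element can be multiplied up to a terminal element; this is how uniqueness of $\omega$ is proved in Proposition~\ref{lem:path-components}. Hence $(\omega,\infty)$ lies in \emph{every} nontrivial congruence of an SI member, however many components it has.

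The obstruction is the shared $\omega$, not the shared $e$. Your fallback congruence, collapsing a component into $\infty$, also collapses $p_ab=\omega$. So two such congruences always meet in a congruence containing $(\omega,\infty)$, never in the diagonal.

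Here is a concrete counterexample. Let $S_k$ consist of $\infty,\omega,e$ together with $a_j,b_j,a_je,eb_j$ for $1\le j\le k$. Its only nonabsorbing products are $a_je$, $eb_j$ and $(a_je)b_j=a_j(eb_j)=\omega$.
\begin{enumerate}
\item $S_k$ is the Rees quotient of the subsemiring of $T_\infty$ generated by $e$ and $u_{3j},u_{3j+1}$ ($1\le j\le k$), modulo the ideal $\{\infty\}\cup\{p_{3j+1},q_{3j}:1\le j\le k\}$. Hence $S_k\in\mathcal U\subseteq\mathcal F$.
\item It has no two-edge walks, so it satisfies every $\Phi_N$.
\item $\omega$ is its only terminal element, so it is subdirectly irreducible.
\end{enumerate}
So $S_k$ is an SI member of $\mathcal D_N$ with $4k+3$ elements. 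Consequently the bound $|S|\le c(N)$ fails, and the set of SI members is infinite, so Step~1 gives no finiteness. The assertion in Step~3 that finite SI quotients have at most $c(N)$ elements is false as well.

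\textbf{The missing idea is to bound \emph{critical} members rather than SI members.} That is, you must show that repeated components are redundant at the level of varieties. The paper proves that if a component type occurs $r\ge3$ times, then $S\in\mathrm{HS}\bigl((S_{\mathrm{red}})^d\bigr)$, where $S_{\mathrm{red}}$ deletes one copy. The construction has two steps.
\begin{enumerate}
\item Send the $i$th copy coordinatewise to one of two retained copies, according to distinct codes $c_i\in\{1,2\}^d$.
\item Pass to the Rees quotient of the generated subalgebra by the ideal of vectors having some coordinate equal to $\infty$.
\end{enumerate}
Two retained copies are genuinely necessary. Indeed, $S_1\cong S(abc)$ satisfies $xyz+x'yz'\approx xyz+x'yz'+xyz'$. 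This identity fails in $S_2$ at $x=a_1$, $y=e$, $z=b_1$, $x'=a_2$, $z'=b_2$. So no single-component reduction is possible even up to varieties.

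Critical members are SI, so each has at most two components of each of the boundedly many types. This gives $|S|\le 3+2(4N+2)(3N+3)$, and hence finitely many subvarieties.

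\textbf{Repairing Step~3.} Once this bound $c$ on critical algebras is available, your Lyndon-type argument can be run on the critical members of $\Var(S)$ instead of the SI quotients of $S$. These members generate $\Var(S)$, have at most $c$ elements, and satisfy the $c$-variable identities of $\mathcal K$. Alternatively, as in the paper, use that a finitely based variety with finitely many subvarieties has every subvariety finitely based, via one separating identity per cover.
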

\begin{proof}
\emph{1. Bounded component types.}
Let $S$ be a finite nontrivial subdirectly irreducible
member of $\mathcal D_N$. The graph $(A,\to)$ of
Proposition~\ref{lem:path-components}, when present, has no
directed walk $a_0\to a_1\to\cdots\to a_{N+1}$.
Indeed, the substitution $x_i\mapsto a_i$ and $y\mapsto e$
would send every summand $x_i y x_{i+1}$ of $\Phi_N$
to $\omega$, and hence its left side to $\omega\ne\infty=\kappa$.
This contradicts $S\models\Phi_N$.
Thus its components are directed paths of at most $N$
edges. A component with $j$ edges has $j+1$ vertices,
$j$ elements of $\mathcal B$, $j$ of $\mathcal C$ and at most two attached
$\mathcal R$-leaves, hence at most $3j+3$ elements outside the
common part $K=\{\infty,\omega,e\}$. Up to
isomorphism fixing $K$, there are at most four types for
each $1\le j\le N$, according to the two optional
leaves. There are also the one- and two-edge components
made entirely of $\mathcal R$-vertices. In the no-triple case the
common part is $K=\{\infty,\omega\}$ and only the last two
types occur. Therefore there are at most $4N+2$ types
in either case, each with at most $3N+3$ elements outside $K$.
The set $K$ is a common subsemiring, not the kernel of a
homomorphism. Both operations on $K$ close in $K$;
the component classification shows that products of elements
from distinct components are $\infty$, whereas operations
within one component, including those with $e\in K$, stay
in that component together with $K$. For addition, distinct
non-top elements likewise sum to $\infty$.

\medskip\noindent\emph{2. Removing repeated components.}
Suppose a component type occurs $r\ge3$ times.
We show that one of these components can be removed without
changing the generated variety. Components here
are the blocks outside $K$, with the same common part $K$;
an occurrence of a type means an isomorphism fixing $K$
and preserving all products with elements of $K$.
Let $S_{\mathrm{red}}$ be obtained by deleting the elements outside $K$ of
one such component. The multiplication description in
Proposition~\ref{lem:path-components} shows that $S_{\mathrm{red}}$ is a
proper subsemiring. Choose two retained components of
this type, numbered $1,2$, and distinct words
$c_1,\ldots,c_r\in\{1,2\}^d$ with $2^d\ge r$.
Fix isomorphisms of all $r$ components onto each of the
two retained components, acting identically on $K$.
For an element $s$ in component $i$ let the $t$-th
coordinate of $\iota(s)$ be its image in component
$c_i(t)$. Map the elements of $K$ and of other types
diagonally. The resulting map of underlying sets
$\iota:S\to(S_{\mathrm{red}})^d$ is injective, but need not
preserve the operations.
Indeed, distinct original components may share the same retained
component in some coordinates, so a product that is $\infty$
in $S$ need not have value $\iota(\infty)$ in every coordinate.
The homomorphism will be obtained only after taking a Rees quotient.
Let
$C=\langle\iota(S)\rangle$ and set
\[
 I=\{v\in C:\text{some coordinate of }v\text{ equals }\infty\}.
\]
The vector $(\infty,\ldots,\infty)=\iota(\infty)$ lies
in $I$, whereas $\iota(s)\notin I$ whenever
$s\ne\infty$, so $I$ is nonempty and proper.
Since a coordinate equal to $\infty$ stays equal to
$\infty$ under either addition or multiplication with
\emph{any} other coordinate, $I$ is absorbing under both
operations against elements of $C$. The relation identifying
all members of $I$ and leaving $C\setminus I$ as
singletons is therefore a congruence for each basic operation;
the Rees quotient $C/I$ is a semiring.

\medskip\noindent\emph{3. Verification of the quotient homomorphism.}
We verify the induced map into the Rees quotient for both
basic operations and then prove injectivity.
For $\circ\in\{+,\cdot\}$, first suppose
$s\circ t\ne\infty$. Then either $s,t$ belong to the same
component together with $K$, or one belongs to $K$. The
distinguished common part is fixed pointwise by every chosen
component isomorphism; in particular, this includes the cases
$ea$ and $ae$, as well as products involving $\omega$.
On each coordinate,
the chosen component isomorphism preserves $\circ$;
hence
\[
 \iota(s)\circ\iota(t)=\iota(s\circ t).
\]
If $s\circ t=\infty$, either the corresponding
operation is already $\infty$ inside one component or
with $K$ (and hence is $\infty$ in every
coordinate), or $s,t$ belong to different components.
For two \emph{different original components} of the same
repeated type, their codes differ in some coordinate. In that
coordinate their images outside $K$ lie in different retained
components, so both their sum and product are $\infty$.
Thus distinct binary codes are essential: assigning every
original component to a single retained copy would not preserve
the absorbing products. Components of different types remain
distinct in every coordinate and give the same conclusion.
Thus in all cases $\iota(s)\circ\iota(t)\in I$.
The case $s=t$ for addition follows by idempotence.

Thus, for $\circ\in\{+,\cdot\}$, we have established
\[
\begin{cases}
 \iota(s)\circ\iota(t)=\iota(s\circ t),&s\circ t\ne\infty,\\
 \iota(s)\circ\iota(t)\in I,&s\circ t=\infty.
\end{cases}
\]
All products whose original value is $\infty$
therefore become the ideal class of the Rees quotient. No $\iota(s)$ with $s\ne\infty$ belongs to $I$:
component isomorphisms take non-top elements to non-top elements,
and $K$ is mapped diagonally. If $s,t$ are distinct members
of a single component, injectivity of its component isomorphism
separates their images in every coordinate. If they belong to
different repeated components, their distinct binary codes separate
them in at least one coordinate, where their images lie in
different retained components; distinct component types remain
separate in every coordinate. Elements of $K$ are
also distinct from images outside $K$. Thus different non-top
elements have different images in the Rees quotient.
Define
\[
 \pi:S\longrightarrow C/I,\qquad
 \pi(s)=
 \begin{cases}
  I,&s=\infty,\\
  [\iota(s)],&s\ne\infty.
 \end{cases}
\]
The two cases above yield, for both $\circ=+$ and
$\circ=\cdot$,
\[
 \pi(s\circ t)=\pi(s)\circ\pi(t):
\]
 if $s\circ t\ne\infty$,
its image remains a nonideal class and coordinatewise operations
agree; if $s\circ t=\infty$, at least one coordinate has
value $\infty$ and both sides are the ideal class.
The separation argument proves that $\pi$ is injective.
For surjectivity, we prove by induction on the formation
of terms over the generating set $\iota(S)$ that every
element $v\in C$ either lies in $I$ or equals
$\iota(s)$ for some $s\in S\setminus\{\infty\}$.
For a generator, this holds by definition of $\iota$
(the generator $\iota(\infty)$ lies in $I$).
Suppose it holds for $v_1,v_2\in C$ and put
$v=v_1\circ v_2$, with $\circ\in\{+,\cdot\}$.
If some $v_i\in I$, then $v\in I$ because $I$ is
absorbing under both operations. Otherwise write
$v_i=\iota(s_i)$ with $s_i\ne\infty$. The two-case
calculation above gives either
$v=\iota(s_1\circ s_2)$ when $s_1\circ s_2\ne\infty$,
or $v\in I$ when $s_1\circ s_2=\infty$.
This completes the induction and yields
\[
 C\setminus I=\iota(S\setminus\{\infty\}).
\]
Thus every nonideal class of $C/I$ is $\pi(s)$ for
some $s\ne\infty$, while the ideal class is
$\pi(\infty)$. Therefore $\pi$ is surjective, and
\[
 S\cong C/I\in\mathrm{HS}((S_{\mathrm{red}})^d).
\]
In particular, $S$ belongs to the variety generated by
the \emph{proper} subsemiring $S_{\mathrm{red}}$, not merely
to a class generated by another semiring of unbounded size.
Consequently a critical member of $\mathcal D_N$
cannot contain three components of the same type. The common part $K$, including $e$ when present, is retained.
The two surviving copies provide the distinct code symbols
needed to separate original components.

\medskip\noindent\emph{4. Uniform bound for critical members.}
We now obtain a uniform bound for critical algebras.
A critical finite semiring must be subdirectly
irreducible: otherwise two nontrivial congruences with diagonal
intersection provide a subdirect embedding into a product of
proper homomorphic images. This contradicts criticality.
The same argument applies to a finite subdirect
decomposition by selecting a finite separating family of kernels. Thus every critical member of
$\mathcal D_N$ satisfies
\[
\begin{aligned}
 |S|&\le |K|+2\cdot
       (\text{number of component types})
       \cdot(\text{maximum component size})\\
    &\le 3+2(4N+2)(3N+3).
\end{aligned}
\]
There are only finitely many isomorphism types below this bound
because the language has two binary operations and a finite
universe admits only finitely many such operation tables.
Every subvariety of the locally finite $\mathcal D_N$ is generated
by its finite members and, by induction on their orders, by its
critical members: each noncritical member belongs to the variety
generated by its proper subsemirings and homomorphic images.
Hence $\mathcal D_N$ has only finitely many subvarieties.

\medskip\noindent\emph{5. Finite subvariety lattices and finite bases.}
Let
$\mathcal X\subsetneq\mathcal Y$ be a cover relation
in the finite lattice of subvarieties of $\mathcal D_N$.
Choose an identity valid in $\mathcal X$ but failing in
$\mathcal Y$. Within $\mathcal Y$, the models of this
identity form a subvariety containing $\mathcal X$ but
properly contained in $\mathcal Y$; by the cover property,
this subvariety must equal $\mathcal X$.
Choose a finite saturated chain descending from
$\mathcal D_N$ to an arbitrary subvariety. One separating
identity for each cover in the chain, together with the finite
axiom set of $\mathcal D_N$, defines the final subvariety.
This proves its finite basability.
\end{proof}

\begin{theorem}\label{thm:path-limit}
The variety $\Var(T_\infty)$ is a limit variety.
Moreover, for each $n\ge2$,
\[
 \Var(T_\infty)\subseteq\W_n,
 \qquad \Var(T_\infty)\nsubseteq\V_n.
\]
\end{theorem}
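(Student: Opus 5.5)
The two inclusion statements are already available: Proposition~\ref{lem:path-embedding} gives $\Var(T_\infty)\subseteq\W_n$ for all $n\ge2$, and Proposition~\ref{prop:path-separation} gives $\Var(T_\infty)\nsubseteq\V_n$. Lemma~\ref{lem:path-NFB} shows that $\mathcal U=\Var(T_\infty)$ is nonfinitely based. So the work is to show that every proper subvariety $\mathcal X\subsetneq\mathcal U$ is finitely based. The plan is to place each such $\mathcal X$ inside some $\mathcal D_N$ and then invoke Proposition~\ref{lem:path-DN-finite-lattice}.

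Fix a proper subvariety $\mathcal X$ and choose an identity $\sigma$ valid in $\mathcal X$ but failing in $\mathcal U$. By Lemma~\ref{lem:path-flat}, $\mathcal U=\Var(\{T_k:k\ge2\})$, so $\sigma$ fails in some $T_{k_0}$. I would show that $T_{k_0}\in\mathcal X$ whenever $\mathcal X$ contains arbitrarily long directed paths, in the following sense. Every member of $\mathcal X\subseteq\mathcal U\subseteq\mathcal F$ is a subdirect product of finite subdirectly irreducible members. This holds because $\mathcal F$ is locally finite, an identity fails in a finitely generated algebra, and the variety generated by flat semirings has flat subdirectly irreducibles \cite{JRZ22}. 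Each such subdirectly irreducible $S$ has the structure described in Proposition~\ref{lem:path-components}.

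The dichotomy is as follows.
\begin{enumerate}
\item Either for every $N$ some finite subdirectly irreducible $S\in\mathcal X$ has a directed $\to$-path with at least $N$ edges.
\item Or there is a uniform bound $N$, in which case every finite subdirectly irreducible member of $\mathcal X$ satisfies $\Phi_N$.
\end{enumerate}

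To justify the second case: an evaluation of $\Phi_N$ with non-top left side in a flat algebra forces all summands to equal $\omega$, hence $y=e$ and $x_i\to x_{i+1}$, which is a walk of length $N+1$. Since $(A,\to)$ has in- and outdegree at most one, and Proposition~\ref{lem:path-components} shows finite components are paths or possibly cycles, such a walk gives a path of length $N+1$ unless a cycle occurs. So in the second case $\mathcal X\subseteq\mathcal D_N$, and Proposition~\ref{lem:path-DN-finite-lattice} gives that $\mathcal X$ is finitely based.

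In the first case I must derive a contradiction by showing $T_k\in\mathcal X$ for every $k$. Take $S$ with a directed path $a_1\to\cdots\to a_k$. The elements $a_1,\dots,a_k,e$ generate a subsemiring consisting of $\{\infty,\omega,e\}$, the $a_i$, the $p_{a_i}=a_ie$ and the $q_{a_i}=ea_i$, possibly together with a few $\omega$-products. By the multiplication table in Proposition~\ref{lem:path-components}, the nonabsorbing products among these are exactly $a_ie=p_{a_i}$, $ea_i=q_{a_i}$, $p_{a_i}a_{i+1}=\omega$ and $a_iq_{a_{i+1}}=\omega$. Products between generators not adjacent on the path are $\infty$, because $AA=\infty$ and edges are functional. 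The generated subsemiring is therefore isomorphic to $T_k$, possibly with the ends closed up.

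The main obstacle is the case where the long walk lies on a directed cycle. Then the subsemiring generated by the whole cycle is $C_m$ rather than a path. This is handled by taking a subpath with $k<m$ vertices: its generated subsemiring contains no cycle edges outside the chosen vertices, so it is again $T_k$. This yields $T_k\in\mathcal X$ for all $k$, hence $\mathcal U\subseteq\mathcal X$, contradicting properness. One must also check that the attached $\mathcal R$-leaves do not interfere. They do not, since the leaves are not generated by $a_i$ and $e$.

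Combining both cases, every proper subvariety of $\mathcal U$ is finitely based. Together with Lemma~\ref{lem:path-NFB}, this proves that $\mathcal U$ is a limit variety.
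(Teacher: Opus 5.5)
Your overall plan is the paper's proof in contrapositive form: long $\to$-paths in finite subdirectly irreducible members of $\mathcal X$ produce copies of $T_k$, and otherwise $\mathcal X\subseteq\mathcal D_N$, so Proposition~\ref{lem:path-DN-finite-lattice} applies. There is, however, a genuine gap at directed cycles, and you need one more ingredient to close it.

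\textbf{The gap: short directed cycles.} Proposition~\ref{lem:path-components} only says that the \emph{acyclic} components of $(A,\to)$ are paths. The axioms of $\mathcal F$ do not forbid cycles; they occur, for example, in $C_m$ for large $m$. What excludes them in $\mathcal X$ is membership in $\mathcal U$, via the cycle identities $\chi_m$ of Lemma~\ref{lem:path-NFB}. You never invoke these identities, and your subpath trick does not replace them. Suppose a finite subdirectly irreducible $B\in\mathcal X$ contains a directed cycle of some fixed length $m$.
\begin{itemize}
\item If your dichotomy counts simple paths, $B$ is compatible with Case~2 for any bound $N\ge m-1$. Yet $B\not\models\Phi_N$ for every $N$: substitute the cycle vertices cyclically for $x_0,\dots,x_{N+1}$ and $e$ for $y$. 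Every summand becomes $\omega$, while $\kappa=\infty$. So the step ``hence $\mathcal X\subseteq\mathcal D_N$'' fails in exactly the situation you flagged with ``unless a cycle occurs''.
\item If your dichotomy counts walks, $B$ lands in Case~1. But cutting an $m$-cycle only yields $T_k$ for $k<m$, not for every $k$, so properness is not contradicted.
\end{itemize}
The fix is one line. An $m$-cycle $a_1\to\cdots\to a_m\to a_1$ violates $\chi_m$ under $x_i\mapsto a_i$, $y\mapsto e$. Hence every component is a directed path, walks are paths, and your separate cycle discussion in Case~1 becomes unnecessary.

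\textbf{Two smaller corrections.}
\begin{itemize}
\item The subsemiring generated by $a_1,\dots,a_k,e$ is not $T_k$ in general. By Proposition~\ref{lem:path-components}, $ea_1$ is nonabsorbing only if $a_1$ has a $\to$-predecessor, and $a_ke$ only if $a_k$ has a successor. At the ends of a component you therefore lose $q_1$ and $p_k$ and get a $(3k+1)$-element algebra. As in the paper, take a path $a_0\to\cdots\to a_{k+1}$ with $k+1$ edges and generate from the interior vertices $a_1,\dots,a_k$. In Case~1 this costs nothing, since paths are arbitrarily long.
\item A locally finite variety need not consist of subdirect products of \emph{finite} subdirectly irreducibles. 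What you need, and what is true, is that it is generated by its finite subdirectly irreducible members: any identity failing in $\mathcal X$ fails in one of them.
\end{itemize}
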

\begin{proof}
We prove that every proper subvariety
$\mathcal X\subsetneq\mathcal U$ is finitely based.
Since $\mathcal X$ omits an identity of $T_\infty$, it omits
some $T_k$; otherwise it would contain every $T_k$ and hence
$\mathcal U$. Fix $k$ with $T_k\notin\mathcal X$.

\medskip\noindent We examine the finite subdirectly irreducible members of $\mathcal X$. 
Let $B$ be a finite subdirectly irreducible member of
$\mathcal X$. The ambient variety $\mathcal F$ and
Proposition~\ref{lem:path-components} apply. If $B$ has no
nonabsorbing triple product then $B\models\Phi_k$.
Otherwise its triple-edge graph has no directed cycle.
Indeed, a cycle of length $m$ would contradict
$\chi_m$, which belongs to $\Id(\mathcal U)$
by Lemma~\ref{lem:path-NFB}. Hence the finite components
of this functional graph are directed paths.

\medskip\noindent We show that a sufficiently long path would contain a copy of $T_k$.
Suppose, toward a contradiction, that a directed path has
$k+1$ edges,
$v_0\to v_1\to\cdots\to v_k\to v_{k+1}$.
The set
\[
 S=\{\infty,\omega,e\}\cup
   \{v_i,v_ie,ev_i:1\le i\le k\}
\]
is a subsemiring of $B$ isomorphic to $T_k$.
The edges incident with $v_0$ and $v_{k+1}$ ensure that
$ev_1$ and $v_ke$ are nonabsorbing. Cancellation and disjointness of $A,P,Q$
show that $|S|=3k+3$. By
\eqref{eq:path-edge-equivalence}, the only nonabsorbing
products in $S$ are $v_ie,ev_i$ and
$(v_ie)v_{i+1}=v_i(ev_{i+1})=\omega$ for $1\le i<k$.
All other products in $S$ are $\infty$ by the classification in
Proposition~\ref{lem:path-components}; flat addition is closed
because $\infty\in S$. The bijection from $T_k$ taking
$u_i$ to $v_i$, $p_i$ to $v_ie$ and $q_i$ to $ev_i$
is thus an isomorphism. This contradicts $T_k\notin\mathcal X$. The external vertices $v_0,v_{k+1}$ are not in $S$;
they serve only to ensure the presence of $q_1=ev_1$ and
$p_k=v_ke$. Thus $S$ contains all $3k+3$ distinct
elements of $T_k$.

\medskip\noindent The preceding contradiction excludes paths with $k+1$
edges in each finite subdirectly irreducible member of
$\mathcal X$. A nonabsorbing value of the left side of
$\Phi_k$ would require a directed walk with $k+1$ edges.
Therefore every finite subdirectly irreducible member of
$\mathcal X$ satisfies $\Phi_k$. By finite
subdirect representation and preservation of identities under
products and subalgebras, every finite member of
$\mathcal X$ satisfies $\Phi_k$. Since $\mathcal X\subseteq\mathcal F$ is locally
finite, every finitely generated member is finite; hence every
member satisfies $\Phi_k$. Therefore
$\mathcal X\subseteq\mathcal D_k$.
By Proposition~\ref{lem:path-DN-finite-lattice},
$\mathcal X$ is finitely based.

Lemma~\ref{lem:path-NFB} proves that $\mathcal U$
is not finitely based, while Proposition~\ref{lem:path-embedding}
and Proposition~\ref{prop:path-separation} give the asserted
inclusions and noninclusions. 
\end{proof}

\begin{corollary}\label{cor:path-distinct}
The limit variety $\mathcal U$ is distinct from
$\Var(SR_6)$ and is not contained in $\W_1$.
\end{corollary}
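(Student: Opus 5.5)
The plan is to prove both assertions at once by showing $\mathcal U\nsubseteq\W_1$. Distinctness from $\Var(SR_6)$ then follows because Proposition~\ref{lem:sr6-hsp} gives $\Var(SR_6)\subseteq\W_1$. To get the noninclusion, I will exhibit an identity of $\W_1$ that fails in $T_\infty$, or already in some $T_k$.

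The natural candidate is commutativity \eqref{eq:basis-comm}, $xy\approx yx$, which belongs to the basis of $\W_1$ in Proposition~\ref{lem:six-valid}. It can also be checked directly from the Cayley table of $\widehat A_1$, since $\widehat A_1\cong S_c(a^2)^0$ is commutative. In $T_2\le T_\infty$, evaluate at $x=u_1$ and $y=e$. Then $xy=u_1e=p_1$, while $yx=eu_1=q_1$. These are distinct elements, and neither is $\infty$. Hence $T_\infty\not\models xy\approx yx$, so $T_\infty\notin\W_1$ and $\mathcal U\nsubseteq\W_1$.

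Finally, if $\mathcal U=\Var(SR_6)$, then $\mathcal U\subseteq\W_1$ by Proposition~\ref{lem:sr6-hsp}, which is a contradiction. As an alternative argument, $\Var(SR_6)$ is the unique limit subvariety of $\W_1$ by Theorem~\ref{thm:unique-limit}, whereas $\mathcal U$ is a limit variety (Theorem~\ref{thm:path-limit}) not contained in $\W_1$. There is no real obstacle here. The only point to confirm is that $p_1\ne q_1$ in $T_2$, and this holds because they are listed as distinct elements of $T_k$.
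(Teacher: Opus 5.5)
Your proof is correct, but it runs in the opposite direction from the paper's.

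The paper first separates $\mathcal U$ from $\Var(SR_6)$ by the identity $xy+yx\approx yx+y^2$. This identity holds in $T_\infty$, because squares are $\infty$ and $xy$, $yx$ are never equal non-top values. It fails in $SR_6$ at $x=2$, $y=6$. The paper then deduces $\mathcal U\nsubseteq\W_1$ from Theorem~\ref{thm:unique-limit}, using that $\mathcal U$ is a limit variety by Theorem~\ref{thm:path-limit}.

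You instead prove the noninclusion directly. The identity $xy\approx yx$ holds in $\W_1$, while in $T_2$ you have $u_1e=p_1\ne q_1=eu_1$. You then get distinctness from the inclusion $\Var(SR_6)\subseteq\W_1$ of Proposition~\ref{lem:sr6-hsp}.

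What your route buys is that it is more elementary. It needs only one separating identity and no structural input: neither the limit property of $\mathcal U$ nor the hereditary-finite-basis reduction behind Theorem~\ref{thm:unique-limit}. What the paper's route buys is that distinctness comes from an intrinsic separating identity, without appeal to $\Var(SR_6)\subseteq\W_1$. The corollary is also placed there to illustrate the uniqueness theorem.

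Your commutativity observation is exactly what the paper later uses, in Corollary~\ref{cor:five-limits}, to separate $\Var(SR_6)$ from the other four limit varieties.
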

\begin{proof}
The following identity distinguishes the two varieties.
In $T_\infty$ every square equals $\infty$; moreover,
whenever $xy$ and $yx$ are both nonabsorbing, they cannot
take the same value. Consequently
\[
 xy+yx\approx yx+y^2
\]
holds in $\mathcal U$. It fails in $SR_6$:
with the usual table labels, $2\cdot6=6\cdot2=3$ whereas
$6^2=1=\infty$. The two limit varieties are therefore distinct.
Since $\W_1$ has only the limit subvariety $\Var(SR_6)$ by
Theorem~\ref{thm:unique-limit}, $\mathcal U$
cannot be a subvariety of $\W_1$.
\end{proof}

\begin{corollary}\label{cor:non-cross}
None of the varieties $\W_n$ ($n\ge1$) or $\W_\infty$
is a Cross variety. Furthermore, $\W_n$ is not a limit
variety for any $n\ge2$.
\end{corollary}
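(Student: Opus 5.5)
We need to show two things: $\W_n$ and $\W_\infty$ are not Cross varieties, and $\W_n$ is not a limit variety for $n\ge2$. Recall that a Cross variety is finitely based, finitely generated, and has a finite subvariety lattice. The plan is to show that each of the varieties in question has continuum many subvarieties; this alone rules out the Cross property. A separate argument, using the limit variety $\mathcal U$, handles the second assertion.

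\emph{The Cross property.} For $n\ge1$, Theorem~\ref{thm:adjacent-cardinality} places an order-embedded copy of $(\mathcal P(\mathbb N),\subseteq)$ in $[\W_n,\W_{n+1}]$. That copy lies in the subvariety lattice of $\W_{n+1}$, not of $\W_n$, so for $\W_n$ itself we must look below $\W_n$.
\begin{enumerate}
\item For $n\ge2$, Corollary~\ref{cor:zero-extension-continuum} supplies $2^{\aleph_0}$ varieties in $[\V_n,\W_n]$, all of them subvarieties of $\W_n$.
\item For $n=1$ this interval argument is unavailable. Instead we use the limit subvariety $\Var(SR_6)\subseteq\W_1$ from Theorem~\ref{thm:unique-limit}. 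A Cross variety has only finitely based subvarieties, since every subvariety of a Cross variety is again Cross. As $\Var(SR_6)$ is nonfinitely based, $\W_1$ is not Cross.
\item For $\W_\infty$, it suffices to cite Corollary~\ref{cor:infinite-continuum}; alternatively, $\W_\infty$ is nonfinitely based by Theorem~\ref{ind:thm:main}.
\end{enumerate}
For $n\ge2$ one could equally cite Theorem~\ref{thm:main}: $\W_n$ is nonfinitely based, hence not Cross.

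\emph{The limit property.} For $n\ge2$, Theorem~\ref{thm:path-limit} gives $\mathcal U=\Var(T_\infty)\subseteq\W_n$ with $\mathcal U$ nonfinitely based. By Proposition~\ref{prop:path-separation}, $\mathcal U\nsubseteq\V_n$, and $\V_n\subseteq\W_n$, so $\mathcal U\ne\W_n$. Hence $\W_n$ has a proper nonfinitely based subvariety, and therefore $\W_n$ is not a limit variety.

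\emph{Main obstacle.} The only delicate point is the case $n=1$, where Corollary~\ref{cor:zero-extension-continuum} is not available. This is exactly why that case is routed through the nonfinitely based subvariety $\Var(SR_6)$ of the finitely based $\W_1$ rather than through a cardinality count.
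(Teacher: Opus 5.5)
\textbf{Gap in the limit-variety part.} The inference ``$\mathcal U\nsubseteq\V_n$ and $\V_n\subseteq\W_n$, so $\mathcal U\ne\W_n$'' is invalid. $\W_n$ itself satisfies both $\W_n\nsubseteq\V_n$ (by Proposition~\ref{lem:chains}) and $\V_n\subseteq\W_n$, so these two facts cannot distinguish $\mathcal U$ from $\W_n$. Proposition~\ref{prop:path-separation} gives the wrong direction of non-containment. To get $\mathcal U\subsetneq\W_n$ you need a member of $\W_n$ outside $\mathcal U$, i.e.\ an identity of $\mathcal U$ that fails in $\W_n$.

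The paper uses $x^2\approx y^2$. Every square in $T_\infty$ equals $\infty$, whereas in $\widehat A_2\le\widehat A_n$ one has $b\cdot b=bb\ne\infty=a\cdot a$. The same identity fails in $A_n$, since $a_na_n$ is a factor of $\mword_n$ but $a_1a_1$ is not. So it even yields $\V_n\nsubseteq\mathcal U$, which is the direction your argument actually needed. Alternatively, you can bypass $\mathcal U$: for $n\ge2$, Theorem~\ref{thm:unique-limit} and Proposition~\ref{lem:chains} give $\Var(SR_6)\subseteq\W_1\subsetneq\W_n$. Thus $\Var(SR_6)$ is already a proper nonfinitely based subvariety of $\W_n$, using only what you invoked for $n=1$.

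\textbf{The Cross part is sound.} For $\W_n$ with $n\ge2$ and for $\W_\infty$, the paper argues via nonfinite basability (Theorems~\ref{thm:main} and~\ref{ind:thm:main}). For $\W_1$ it uses $\Var(SR_6)$, as you do. Your alternative of citing the continuum-sized subvariety lattices is also valid. For $n=1$ you should justify that every subvariety of a Cross variety is finitely based. The argument is routine: a finitely based variety with finitely many subvarieties defines each subvariety by adjoining finitely many separating identities. The paper includes exactly this argument.
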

\begin{proof}
Recall that a Cross variety is finitely based, is generated
by a finite algebra, and has a finite subvariety lattice.
For
$n\ge2$, $\W_n$ is nonfinitely based by
Theorem~\ref{thm:main}, as is $\W_\infty$ by
Theorem~\ref{ind:thm:main}; hence none is Cross.
The variety $\W_1$ contains the nonfinitely based limit
subvariety $\Var(SR_6)$ by Theorem~\ref{thm:unique-limit}.
To justify the use of the finite-subvariety-lattice condition,
note that every subvariety of a finitely based variety with a
finite subvariety lattice is itself finitely based. Indeed, fix
a subvariety $\mathcal Y$ of such an ambient variety.
For each other subvariety not contained in $\mathcal Y$,
choose an identity holding in $\mathcal Y$ but failing in
that other subvariety. There are only finitely many such
subvarieties; adjoining all selected identities to a finite
basis for the ambient variety defines $\mathcal Y$. Thus $\W_1$ is not
Cross either.
Finally $\mathcal U\subsetneq\W_2$:
containment follows from Theorem~\ref{thm:path-limit}, and it
is proper because $\mathcal U$ satisfies
$x^2\approx y^2$ whereas $\widehat A_2$ does not.
Thus each $\W_n$ for $n\ge2$ contains a proper nonfinitely
based subvariety $\mathcal U$, so it cannot be a
limit variety.
\end{proof}

\Needspace{5\baselineskip}
\subsection{Comparison with the limit varieties of Shao--Gao}\label{subsec:limit-comparison}
Put $\mathcal A=\mathsf V(S(abc))$, where $a,b,c$ are distinct letters.
Put $L_{\mathrm{SG}}=\mathsf V(B_{15})$, and let
$\mathcal M$ be the limit variety whose existence is proved in
Shao--Gao~\cite[Theorem~22.1]{ShaoGao26}. In their notation, let
\[
 R_3=(S(abc)\times D_2)/(\{\infty\}\times D_2),
 \qquad \mathcal R_3=\mathsf V(R_3).
\]
Their results give
\begin{equation}\label{eq:shao-comparison}
 \mathcal A\subsetneq\mathcal M\subseteq\mathcal R_3,
 \qquad \mathcal M\cap L_{\mathrm{SG}}=\mathcal A,
 \qquad L_{\mathrm{SG}}\cap\operatorname{Mod}(\rho_0)=\mathcal A,
\end{equation}
where $\rho_0$ is the rectangular absorption identity
\begin{equation}\label{eq:rectangular}
 \rho_0:\quad xy+xz+tz\approx xy+xz+tz+ty.
\end{equation}
Here $\operatorname{Mod}(\rho_0)$ denotes the variety of ai-semirings
satisfying $\rho_0$. The last equality in \eqref{eq:shao-comparison}
is~\cite[Corollary~7.12]{ShaoGao26}.

\begin{proposition}\label{prop:exact-intersection}
The limit variety $\mathcal U=\mathsf V(T_\infty)$ satisfies
\[
 \mathcal U\cap L_{\mathrm{SG}}=\mathcal A.
\]
Consequently, $\mathcal U$ and $L_{\mathrm{SG}}$ are distinct and
incomparable limit varieties.
\end{proposition}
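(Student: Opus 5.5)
The plan is to sandwich the intersection between $\mathcal A$ and the model class of the rectangular identity $\rho_0$ inside $L_{\mathrm{SG}}$, and then invoke the last equality of \eqref{eq:shao-comparison}. This gives two inclusions to prove:
\[
\mathcal A\subseteq\mathcal U\cap L_{\mathrm{SG}},\qquad
\mathcal U\cap L_{\mathrm{SG}}\subseteq L_{\mathrm{SG}}\cap\operatorname{Mod}(\rho_0)=\mathcal A .
\]

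\emph{Step 1: $\mathcal A\subseteq L_{\mathrm{SG}}$.} This is part of the Shao--Gao package. From \eqref{eq:shao-comparison} we have $\mathcal A=\mathcal M\cap L_{\mathrm{SG}}\subseteq L_{\mathrm{SG}}$.

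\emph{Step 2: $\mathcal A\subseteq\mathcal U$.} I would show $S(abc)\in\mathcal U$ directly. I will first try to locate a copy of $S(abc)$ inside $T_\infty$, or inside a small subsemiring of it. The natural candidate is
\[
a\mapsto u_1,\quad b\mapsto e,\quad c\mapsto u_2,\quad ab\mapsto p_1,\quad bc\mapsto q_2,\quad abc\mapsto\omega,
\]
with $\infty\mapsto\infty$. The factor set of $abc$ is $\{a,b,c,ab,bc,abc\}$. Its only non-top products are $a\cdot b$, $b\cdot c$, $ab\cdot c$ and $a\cdot bc$. In $T_\infty$ these are exactly $u_1e=p_1$, $eu_2=q_2$, $p_1u_2=\omega$ and $u_1q_2=\omega$. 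Every other product among these elements is $\infty$ in both semirings, for instance $u_2e=p_2$. Flat addition is preserved by injectivity.

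The expected obstacle is that the subset $\{\infty,\omega,e,u_1,u_2,p_1,q_2\}$ is not closed in $T_\infty$, because $u_2e=p_2$ and $eu_1=q_1$ are non-top. I would handle this by a Rees-type quotient. Take the subsemiring generated by these elements, namely $\{\infty,\omega,e,u_1,u_2,p_1,p_2,q_1,q_2\}$. The set $I=\{\infty,p_2,q_1\}$ is an ideal of it: products of $p_2$ or $q_1$ with any generated element are $\infty$, because $p_2u_3$ and $u_0q_1$ are unavailable. Addition with $I$ stays in $I$ only after collapsing to $\infty$, so the relation collapsing $I$ is a congruence. The quotient is exactly $S(abc)$. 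This places $S(abc)\in\mathrm{HS}(T_\infty)$. Combining Steps 1 and 2 gives $\mathcal A\subseteq\mathcal U\cap L_{\mathrm{SG}}$.

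\emph{Step 3: $T_\infty\models\rho_0$.} I would argue by flatness. If the left side $xy+xz+tz$ is non-top, then all three products share one non-top value $v$.
\begin{itemize}
\item If $v=\omega$, then $xy=xz=\omega$ with $x\in\{p_i,u_i\}$. Left $\infty$-cancellation forces $y=z$, so $ty=tz=v$ and the identity holds.
\item If $v\in\{p_i,q_i\}$, then $xy=xz$ again gives $y=z$ by cancellation, and the same conclusion follows.
\end{itemize}
In all cases $ty$ coincides with $tz$, so $\rho_0$ holds in $T_\infty$. More generally, $\rho_0$ holds in any flat $\infty$-cancellative semiring. Hence $\mathcal U\cap L_{\mathrm{SG}}\subseteq L_{\mathrm{SG}}\cap\operatorname{Mod}(\rho_0)=\mathcal A$, and together with Step 2 this proves equality.

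\emph{Consequences.} For incomparability, first suppose $\mathcal U\subseteq L_{\mathrm{SG}}$. Then $\mathcal U=\mathcal A$, which is impossible: $\mathcal U$ is nonfinitely based by Lemma~\ref{lem:path-NFB}, whereas $\mathcal A\subsetneq\mathcal M$ is a proper subvariety of a limit variety and hence finitely based. Next suppose $L_{\mathrm{SG}}\subseteq\mathcal U$. Then $L_{\mathrm{SG}}=\mathcal A$, which is impossible for the same reason. The main points needing care are the closure check for the Rees quotient in Step 2 and confirming that $\mathcal A$ is finitely based, which follows since $\mathcal A\subsetneq\mathcal M$ and $\mathcal M$ is a limit variety.
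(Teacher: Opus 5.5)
Your proposal is correct and follows the paper's proof essentially step for step: the subsemiring you generate is exactly $T_2$, your ideal $I=\{\infty,q_1,p_2\}$ and the quotient $T_2/I\cong S(abc)$ are the paper's, and you verify $\rho_0$ by the same left $\infty$-cancellation before sandwiching with $L_{\mathrm{SG}}\cap\operatorname{Mod}(\rho_0)=\mathcal A$. The only variation is at the end: you rule out each containment directly, since either would force a nonfinitely based variety to equal the finitely based $\mathcal A$, whereas the paper uses $\mathcal A\subsetneq L_{\mathrm{SG}}$ together with the fact that distinct limit varieties are incomparable; both routes work, and your remark that $u_2e=p_2$ is ``$\infty$ in both semirings'' is a slip that you correct immediately afterward.
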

\begin{proof}
We first establish $\mathcal A\subseteq\mathcal U$ by
constructing a concrete homomorphic image of $T_2$. In $T_2$ put
\[
 I=\{\infty,q_1,p_2\}.
\]
By the multiplication rules in
Subsection~\ref{sec:path-construct}, each of $q_1$ and
$p_2$ annihilates every element on either side, with product $\infty$.
Since the addition is flat, $I+T_2\subseteq I$ and
$IT_2\cup T_2I\subseteq I$. Thus $I$ is a semiring ideal.
The quotient $T_2/I$ has the seven elements represented by
$\infty,u_1,e,u_2,p_1,q_2,\omega$. The assignment
\[
 [I]\mapsto\infty,\quad
 u_1\mapsto a,\quad e\mapsto b,\quad u_2\mapsto c,
 \quad p_1\mapsto ab,\quad q_2\mapsto bc,\quad\omega\mapsto abc
\]
is a semiring isomorphism $T_2/I\cong S(abc)$: its only
nonabsorbing products of distinct positive factors are
$u_1e=p_1$, $eu_2=q_2$, and
$p_1u_2=u_1q_2=\omega$, precisely corresponding to
$ab$, $bc$, and $abc$. As $T_2$ is a subsemiring of $T_\infty$,
we obtain $\mathcal A\subseteq\mathcal U$.

\medskip\noindent Next we prove $T_\infty\models\rho_0$. Every multiplication row of
$T_\infty$ has pairwise distinct values other than $\infty$,
as observed in the proof of Lemma~\ref{lem:path-flat}. Under an arbitrary evaluation,
if $xy+xz+tz=\infty$, adjoining $ty$ leaves the value unchanged.
Otherwise flatness forces
\[
 xy=xz=tz\ne\infty.
\]
Left cancellation of nonabsorbing products in the row indexed by
$x$ implies $y=z$; consequently $ty=tz$ and the right-hand side
has the same value. Hence
$\mathcal U\subseteq\operatorname{Mod}(\rho_0)$.

\medskip\noindent The Shao--Gao intersection theorem now gives the exact
intersection.
Equation~\eqref{eq:shao-comparison} gives
\[
 \mathcal A\subseteq\mathcal U\cap L_{\mathrm{SG}}
 \subseteq L_{\mathrm{SG}}\cap\operatorname{Mod}(\rho_0)
 =\mathcal A,
\]
which establishes the equality. Since $\mathcal A$ is properly
contained in $L_{\mathrm{SG}}$ and $\mathcal U$ is a limit variety
by Theorem~\ref{thm:path-limit}, the two limit varieties are different. Two distinct
limit varieties cannot be comparable: otherwise the larger would
have a proper nonfinitely based subvariety. Therefore
\[
 \mathcal U\not\subseteq L_{\mathrm{SG}},
 \qquad L_{\mathrm{SG}}\not\subseteq\mathcal U.
\]
\end{proof}

\begin{proposition}\label{prop:other-limit}
With $\mathcal M$ as above, one has
\[
 \mathcal U\not\subseteq\mathcal R_3.
\]
In particular, $\mathcal U\ne\mathcal M$, and these two limit
varieties are incomparable:
\[
 \mathcal U\not\subseteq\mathcal M,
 \qquad\mathcal M\not\subseteq\mathcal U.
\]
\end{proposition}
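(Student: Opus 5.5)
The plan is to find a single identity that holds in $R_3$ but fails in $T_\infty$; since $\mathcal M\subseteq\mathcal R_3$ by \eqref{eq:shao-comparison}, this gives $\mathcal U\not\subseteq\mathcal M$. The natural candidate is the two-edge path identity $\Phi_1$:
\[
 x_0yx_1+x_1yx_2\approx z^3 .
\]
It already separates $\mathcal D_1$ from $\mathcal U$, and it says that no element can be the \emph{right} endpoint of one nonabsorbing triple and the \emph{left} endpoint of another with the same middle factor.

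\emph{Failure in $T_\infty$.} Put $x_0=u_1$, $y=e$, $x_1=u_2$, $x_2=u_3$. Both summands then equal $\omega$, so the left side is $\omega$. The right side is $\infty$, because every square in $T_\infty$ is $\infty$ (Lemma~\ref{lem:path-flat}).

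\emph{Validity in $R_3$.} I will describe $R_3$ explicitly. Its elements are $\infty$ together with the pairs $(w,i)$, where $w\in\{a,b,c,ab,bc,abc\}$ and $i\in\{0,1\}$. The operations are
\[
 (w,i)+(w',j)=\begin{cases}(w,i\vee j),&w=w',\\ \infty,&w\ne w',\end{cases}
 \qquad
 (w,i)(w',j)=\begin{cases}(ww',i\wedge j),&ww'\in\Fac(abc),\\ \infty,&\text{otherwise}.\end{cases}
\]
The verification then runs as follows.
\begin{enumerate}
\item Every cube is $\infty$, since no factor of $abc$ has a nonempty square that is again a factor. So the right side $z^3$ is always $\infty$.
\item A product of three elements is not $\infty$ only if its first coordinates multiply to a factor of $abc$ of length at least three. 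This forces the value $abc$ with first coordinates exactly $a,b,c$ in that order.
\item If both summands on the left were non-$\infty$, then $x_1$ would have first coordinate $c$ (from $x_0yx_1$) and also first coordinate $a$ (from $x_1yx_2$), which is impossible.
\item Hence at least one summand is $\infty$, and flat addition with the absorbing $\infty$ makes the left side $\infty$ as well. So $R_3\models\Phi_1$.
\end{enumerate}

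\emph{Conclusion.} From these two facts, $\mathcal U\not\subseteq\mathcal R_3$, and therefore $\mathcal U\not\subseteq\mathcal M$ and $\mathcal U\ne\mathcal M$. Both $\mathcal U$ (Theorem~\ref{thm:path-limit}) and $\mathcal M$ are limit varieties. As in Proposition~\ref{prop:exact-intersection}, two distinct limit varieties cannot be comparable, since the larger would contain a proper nonfinitely based subvariety. This yields $\mathcal M\not\subseteq\mathcal U$ as well.

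The only step needing care is the description of $R_3$ as a Rees quotient, in particular that all products with value $\infty$ in $S(abc)$ are collapsed to the single absorbing class. Once that description is checked, the validity of $\Phi_1$ in $R_3$ is immediate.
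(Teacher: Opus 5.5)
Your proof is correct and follows the same route as the paper: you exhibit an identity valid in $R_3$ but failing in $T_\infty$, then use $\mathcal M\subseteq\mathcal R_3$ and the minimality of limit varieties. The paper uses the binary identity $\tau\colon xy+yz\approx x^2$ with the witness $x=p_1$, $y=u_2$, $z=q_3$ in $T_3$. This is your witness $u_1,e,u_2,u_3$ for $\Phi_1$ rewritten with $x=u_1e$ and $z=eu_3$, so both separating identities encode the same two-edge path obstruction.
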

\begin{proof}
Consider the identity
\begin{equation}\label{eq:tau-separation}
 \tau:\qquad xy+yz\approx x^2.
\end{equation}
It holds in $S(abc)$. Indeed, every square evaluates to $\infty$,
since the defining word has no repeated letter. If
$xy+yz\ne\infty$, flatness would give $xy=yz\ne\infty$.
The nonempty factor representing $y$ would then be a proper suffix
of the factor representing $xy$ and a proper prefix of that same
factor representing $yz$. This is impossible, because every
nonempty factor of $abc$ has distinct letters and hence no nonempty
proper border. Consequently both sides of $\tau$ always equal
$\infty$ in $S(abc)$.

For every evaluation in $S(abc)\times D_2$, the first
coordinates of both sides of $\tau$ therefore equal
$\infty$. Since the quotient defining $R_3$ collapses
all of $\{\infty\}\times D_2$, it follows that
$R_3\models\tau$, and so $\mathcal R_3\models\tau$.

\medskip\noindent By contrast, in $T_3$, take $x=p_1$, $y=u_2$, and $z=q_3$.
The multiplication rules give
\[
 xy=p_1u_2=\omega,
 \qquad yz=u_2q_3=\omega,
 \qquad x^2=p_1^2=\infty.
\]
Thus $xy+yz=\omega\ne\infty=x^2$, so
$T_3\not\models\tau$. Since $T_3\le T_\infty$,
$\mathcal U\not\subseteq\mathcal R_3$.
On the other hand $\mathcal M\subseteq\mathcal R_3$ by
\cite[Theorem~22.1]{ShaoGao26}. Therefore
$\mathcal U\not\subseteq\mathcal M$ and in particular
$\mathcal U\ne\mathcal M$.
Finally $\mathcal M$ and $\mathcal U$ are both limit varieties
by~\cite[Theorem~22.1]{ShaoGao26} and Theorem~\ref{thm:path-limit}, respectively.
The same minimality argument as in Proposition~\ref{prop:exact-intersection}
excludes the reverse containment.
\end{proof}

\begin{corollary}\label{cor:five-limits}
For every $n\ge2$, the variety $\W_n$ contains at least five
pairwise incomparable limit subvarieties:
\[
 \Var(SR_6),\qquad L_{\mathrm{SG}},\qquad
 \mathcal M,\qquad\mathcal U,\qquad\mathcal V_{\mathrm{ac}},
\]
where $\mathcal V_{\mathrm{ac}}$ is the acyclic directed-graph
limit variety of Gao--Ren as recorded in
Shao--Gao~\cite[Proposition~25.3]{ShaoGao26}.
\end{corollary}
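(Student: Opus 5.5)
The plan splits the statement into two tasks: show that each of the five varieties is a limit subvariety of $\W_n$, and then show they are pairwise incomparable. Incomparability comes almost for free. Two distinct limit varieties can never be comparable, since the larger would contain a proper nonfinitely based subvariety; this is the minimality argument already used in Propositions~\ref{prop:exact-intersection} and~\ref{prop:other-limit}. So it suffices to prove that the five varieties are pairwise distinct.

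For membership, first recall that $\W_1\subseteq\W_2\subseteq\W_n$ by Proposition~\ref{lem:chains}.
\begin{enumerate}
\item $\Var(SR_6)\subseteq\W_1$ by Proposition~\ref{lem:sr6-hsp}, and it is a limit variety by Lyu--Ren--Yue.
\item $\mathcal U\subseteq\W_n$ by Theorem~\ref{thm:path-limit}.
\item For $L_{\mathrm{SG}}$, $\mathcal M$ and $\mathcal V_{\mathrm{ac}}$ I would quote Shao--Gao. Their Theorems~1.2 and~22.1 and Proposition~25.3 place these three limit varieties inside $\Var(S(W)^0)$ for the relevant small words, for instance inside $\Var(S(abc)^0)$ or $\mathcal R_3$.
\item The inclusion $\Var(S(abc)^0)\subseteq\W_2$ was verified in the proof of Lemma~\ref{lem:SG-cyclic-endpoints}, via Shao--Gao's Theorems~23.2 and~22.2. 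Since $\mathcal R_3\subseteq\Var(S(abc)^0)$, this places $\mathcal M$ in $\W_2$ as well.
\end{enumerate}
The main obstacle is $L_{\mathrm{SG}}=\Var(B_{15})$ and $\mathcal V_{\mathrm{ac}}$. I would look for Shao--Gao's statement that $B_{15}$ lies in every zero extension of a word of length at least three, or check directly that $B_{15}\in\Var(S(abc)^0)$. Failing that, I would embed the relevant generator into a power of $\widehat A_2$, in the style of Proposition~\ref{lem:path-embedding}.

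Distinctness comes next. We already have $\mathcal U\ne\Var(SR_6)$ (Corollary~\ref{cor:path-distinct}), $\mathcal U\ne L_{\mathrm{SG}}$ (Proposition~\ref{prop:exact-intersection}) and $\mathcal U\ne\mathcal M$ (Proposition~\ref{prop:other-limit}).
\begin{enumerate}
\item $\Var(SR_6)\subseteq\W_1$, so by the uniqueness part of Theorem~\ref{thm:unique-limit} each of the other four differs from it as soon as it is not contained in $\W_1$. Witnesses should be identities of $\W_1$ such as $xy\approx yx$ or $x^2y\approx xy^2$ that fail in the non-commutative generators. For $\mathcal U$ this has already been shown.
\item $L_{\mathrm{SG}}\ne\mathcal M$ follows from $\mathcal M\cap L_{\mathrm{SG}}=\mathcal A\subsetneq L_{\mathrm{SG}}$ in~\eqref{eq:shao-comparison}.
\item To separate $\mathcal V_{\mathrm{ac}}$ from $\mathcal U$, $L_{\mathrm{SG}}$ and $\mathcal M$, I would use the distinctness recorded by Shao--Gao, who list these as different limit varieties. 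For $\mathcal U$ versus $\mathcal V_{\mathrm{ac}}$ I would exhibit a separating identity: either $\tau$ from~\eqref{eq:tau-separation}, if $\mathcal V_{\mathrm{ac}}\subseteq\mathcal R_3$, or the cycle identities $\chi_m$ compared with the acyclic-graph generators.
\end{enumerate}
The step I expect to be least routine is locating $\mathcal V_{\mathrm{ac}}$ precisely enough to pick a separating identity from $\mathcal U$.

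Once all five are known to be distinct limit subvarieties of $\W_n$, the minimality argument gives pairwise incomparability.
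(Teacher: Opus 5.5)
\textbf{Where the proposal falls short: $\mathcal V_{\mathrm{ac}}$.} Your skeleton is the paper's: membership, then pairwise distinctness, then the minimality argument for incomparability. Your handling of $\Var(SR_6)$, $\mathcal U$, $L_{\mathrm{SG}}$ and $\mathcal M$ is fine. For $L_{\mathrm{SG}}$, Shao--Gao's Theorem~1.2 does give $L_{\mathrm{SG}}\subseteq\Var(S(abc)^0)$, as you guessed. For $\mathcal M$, the chain $\mathcal M\subseteq\mathcal R_3\subseteq\Var(S(abc)^0)\subseteq\W_2$ works.

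The genuine gap is $\mathcal V_{\mathrm{ac}}$, first on the membership side. You hope Shao--Gao place it inside $\Var(S(abc)^0)$ or $\mathcal R_3$, with an unspecified embedding into a power of $\widehat A_2$ as fallback. Their Proposition~25.3 actually gives $\mathcal V_{\mathrm{ac}}\subseteq\Var(S(a^2,ab)^0)$. That ambient variety is not inside $\Var(S(abc)^0)$: the latter satisfies $x^2\approx x^3$, which fails in $S(a^2)^0$ at $x=a$. So the inclusion you already have, $\Var(S(abc)^0)\subseteq\W_2$ from Lemma~\ref{lem:SG-cyclic-endpoints}, does not cover it.

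The missing step is $\Var(S(a^2,ab)^0)\subseteq\W_2$. The paper obtains it in two parts:
\begin{enumerate}
\item Shao--Gao's Corollary~4.2 gives $\Var(S(a^2,ab)^0)=\Var(S(a^2)^0)\vee\Var(S(ab)^0)$.
\item Both $S(a^2)^0$ and $S(ab)^0$ are homomorphic images of $\widehat A_2=S(abba)^0$. For $\mathbf v\in\{bb,ab\}$, fix the nonempty factors of $\mathbf v$, send every other old element to $\infty$, and send the new $0$ to $0$. This respects both operations because the retained set is closed under taking factors. A product with retained value therefore has retained factors, and every other product is $\infty$ on both sides.
\end{enumerate}
Without this, the claim $\mathcal V_{\mathrm{ac}}\subseteq\W_n$ is unproved.

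\textbf{Separating $\mathcal V_{\mathrm{ac}}$ from $\mathcal U$.} This is also left open, and neither of your suggestions works as stated.
\begin{enumerate}
\item The $\tau$ route rests on the unsupported hypothesis $\mathcal V_{\mathrm{ac}}\subseteq\mathcal R_3$.
\item The identities $\chi_m$ hold in $\mathcal U$, so they could separate only by failing in $\mathcal V_{\mathrm{ac}}$. But having no directed cycles is exactly what the acyclic-graph generators share with $T_\infty$. Moreover, in $\mathcal V_{\mathrm{ac}}$ the triple products $x_iyx_{i+1}$ collapse to squares anyway.
\end{enumerate}
The separator the paper uses is $xyz\approx x^2$. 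It holds in $\mathcal V_{\mathrm{ac}}$ but fails in $S(abc)$, where $abc\ne\infty=a^2$. And $S(abc)$ belongs to $\mathcal U$ by Proposition~\ref{prop:exact-intersection}, and to $\mathcal M$ and $L_{\mathrm{SG}}$ by \eqref{eq:shao-comparison}. This one identity separates $\mathcal V_{\mathrm{ac}}$ from all three at once, so no appeal to Shao--Gao's list of distinct limit varieties is needed.

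\textbf{Separating from $\Var(SR_6)$.} The uniqueness theorem in $\W_1$ is a detour. $\Var(SR_6)$ is commutative and the other four are not, which settles it directly. Note also that $x^2y\approx xy^2$ would not be a valid witness against $\mathcal V_{\mathrm{ac}}$, since it follows from $xyz\approx x^2$.
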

\begin{proof}
Proposition~\ref{lem:sr6-hsp} gives $\Var(SR_6)\subseteq\W_1$.
By Shao--Gao~\cite[Theorem~1.2]{ShaoGao26},
$L_{\mathrm{SG}}\subseteq\Var(S(abc)^0)$, and
Lemma~\ref{lem:SG-cyclic-endpoints} puts this linear-word
variety in $\W_2$. Shao--Gao's Theorem~22.2 therefore gives
$\mathcal M\subseteq\W_2$; Theorem~\ref{thm:path-limit}
gives $\mathcal U\subseteq\W_2$.

Both $S(a^2)^0$ and $S(ab)^0$ are homomorphic images of
$S(abba)^0$. To see this, fix $\mathbf{v}\in\{bb,ab\}$ and take
the natural factor map $S(abba)\twoheadrightarrow S(\mathbf{v})$
which fixes the nonempty factors of $\mathbf{v}$ and sends all other
old factors, including the old top, to $\infty$.
This is a homomorphism because every factor of a retained
word is again retained; a discarded factor cannot occur in a
product whose result is retained. Extend this map to the zero
extensions by sending the \emph{new} element $0$ to the new
$0$. This proves the asserted surjections (renaming $b$
to $a$ in the first target). In particular, the map keeps the old top $\infty$ and the
new zero $0$ distinct.
Shao--Gao's Corollary~4.2 gives
\[
 \Var(S(a^2,ab)^0)=\Var(S(a^2)^0)\vee\Var(S(ab)^0)
 \subseteq\W_2.
\]
Their Proposition~25.3 now places $\mathcal V_{\mathrm{ac}}$
in $\W_2$. All five varieties thus belong to $\W_n$ for $n\ge2$.

The variety $\Var(SR_6)$ is commutative, whereas each of
the other four fails commutativity. This separates the first
variety from the other four. Moreover,
$\mathcal V_{\mathrm{ac}}\models xyz\approx x^2$,
while each of $L_{\mathrm{SG}},\mathcal M,\mathcal U$
contains $\Var(S(abc))$, which refutes that identity.
Theorem~22.1 of Shao--Gao separates $L_{\mathrm{SG}}$
from $\mathcal M$, and Propositions~\ref{prop:exact-intersection}
and~\ref{prop:other-limit} distinguish $\mathcal U$ from both.
Hence the five limit varieties are distinct. Two distinct
limit varieties cannot contain one another.
\end{proof}

The preceding propositions distinguish $\mathcal U$ from
both limit varieties of Shao--Gao. We make no claim about
the exact intersection $\mathcal U\cap\mathcal M$.

\section{Conclusion}\label{sec:conclusion}
The mirror-word zero extensions give the strict chain
$\W_1\subsetneq\W_2\subsetneq\cdots$.
By Theorem~\ref{thm:adjacent-cardinality}, every consecutive
interval $[\W_n,\W_{n+1}]$ ($n\ge1$) contains an
order-embedded power-set lattice and hence has cardinality
$2^{\aleph_0}$. For $n\ge2$ the same cardinality holds
for $[\V_n,\W_n]$ by
Corollary~\ref{cor:zero-extension-continuum}; this latter
argument uses the cyclic semirings of Shao--Gao and verifies
the endpoint conditions for the mirror words.
It follows that $|[\V_n,\W_\infty]|=2^{\aleph_0}$
for $n\ge2$. Proposition~\ref{prop:infinite-local}
establishes local finiteness of $\W_\infty$.

Finite local quotients, six-element evaluation kernels and
the uniform short-word bound establish infinite axiomatic rank
throughout $[\W_2,\W_\infty]$; see
Corollary~\ref{ind:cor:complexity} for variable bounds.
The finite-basis classification of individual zero extensions
instead follows from Shao--Gao~\cite{ShaoGao26}.

The variety $\Var(SR_6)$ is the unique limit subvariety of
$\W_1$ by Theorem~\ref{thm:unique-limit}. The directed-path
construction of Theorem~\ref{thm:path-limit} yields a distinct
limit variety $\mathcal U=\Var(T_\infty)$ that is
contained in every $\W_n$ ($n\ge2$) but is not contained in any
$\V_n$. Proposition~\ref{prop:exact-intersection} proves
\[
 \mathcal U\cap\Var(B_{15})=\Var(S(abc)).
\]
Together with the two limit varieties of Shao--Gao and
the previously known acyclic-graph example, this gives at
least five pairwise incomparable limit subvarieties of each
$\W_n$ for $n\ge2$, as recorded in
Corollary~\ref{cor:five-limits}.

\end{document}